\date{\today}
\definecolor{darkgreen}{rgb}{0,0.50,0}
\definecolor{darkred}{rgb}{0.55,0,0}
\definecolor{darkblue}{rgb}{0,0,0.6}
\definecolor{darkteal}{rgb}{0.0, 0.25, 0.5}
\newcommand{\labeltext}[2]{%
  \@bsphack
  \csname phantomsection\endcsname 
  \def\@currentlabel{#1}{\label{#2}}%
  \@esphack
}
\definecolor{color1}{RGB}{12, 12, 149}
\definecolor{color2}{RGB}{12, 32, 161}
\definecolor{color3}{RGB}{9, 146, 212}
\definecolor{color4}{RGB}{8, 184, 226}
\definecolor{color5}{RGB}{28, 253, 175}
\definecolor{color6}{RGB}{39, 255, 151}
\definecolor{darkgreen}{rgb}{0,0.50,0} 
\definecolor{darkred}{rgb}{0.55,0,0}
\definecolor{darkblue}{rgb}{0,0,0.6}
\newtheorem{theorem}{Theorem}[section]
\newtheorem{corollary}{Corollary}[section]
\newtheorem{proposition}{Proposition}[section]
\newtheorem{lemma}{Lemma}[section]
\newtheorem{thmx}{Theorem}
\newtheorem{conjx}[thmx]{Conjecture}
\theoremstyle{definition}
\newtheorem{definition}{Definition}[section]
\newtheorem{convention}{Convention}[section]
\newtheorem{openQuestion}{Open Question}[section]
\newtheorem{example}{Example}[section]
\newtheorem{remark}{Remark}[section]
\newtheorem{expectation}{Expectation}
\let\c@corollary=\c@thm
\let\c@prop=\c@thm
\let\c@proposition=\c@thm
\let\c@theorem=\c@thm
\let\c@lem=\c@thm
\let\c@definition=\c@thm
\let\c@conj=\c@thm
\let\c@defn=\c@thm
\let\c@df=\c@thm
\let\c@example=\c@thm
\let\c@remark=\c@thm
\let\c@lemma=\c@thm
\let\c@sch=\c@thm
\let\c@convention=\c@thm
\let\c@equation=\c@thm
\let\c@observation=\c@thm
\let\c@openQuestion=\c@thm
\newcommand{\Z}{\mathbb{Z}}
\newcommand{\Q}{\mathbb{Q}}
\newcommand{\C}{\mathbb{C}}
\newcommand{\R}{\mathbb{R}}
\newcommand{\CC}{\mathcal{C}}
\newcommand{\Hom}{\operatorname{Hom}}
\newcommand{\Map}{\operatorname{Map}}
\newcommand{\End}{\operatorname{End}}
\newcommand{\Spin}{\operatorname{Spin}}
\newcommand{\Pin}{\operatorname{Pin}}
\newcommand{\rank}{\operatorname{rank}}
\newcommand{\Sq}{\operatorname{Sq}}
\newcommand{\id}{\operatorname{id}}
\newcommand{\kerv}{\operatorname{kerv}}
\newcommand{\String}{\operatorname{String}}
\newcommand{\Fivebrane}{\operatorname{Fivebrane}}
\newcommand{\BOrk}{B\operatorname{Or}_k}
\newcommand{\obj}{\operatorname{obj}}
\newcommand{\Gr}{\operatorname{Gr}}
\newcommand{\Emb}{\operatorname{Emb}}
\newcommand{\sVect}{\mathrm{sVect}}
\newcommand{\Vect}{\mathrm{Vect}}
\newcommand{\SK}{\mathrm{SK}}
\newcommand{\SKK}{\mathrm{SKK}}
\newcommand{\BOr}{B\!\operatorname{Or}}
\newcommand{\Or}{\operatorname{Or}}
\newcommand{\SKKxi}{\mathrm{SKK}^\xi}
\newcommand{\RP}{\mathbb{RP}}
\newcommand{\CP}{\mathbb{CP}}
\newcommand{\HP}{\mathbb{HP}}
\newcommand{\Bord}{\mathrm{Bord}}
\newcommand{\Cob}{\mathrm{Cob}}
\newcommand{\ol}{\overline}
\newcommand{\Cat}{\mathrm{Cat}}
\newcommand{\Gpd}{\mathrm{Gpd}}
\newcommand{\sline}{\mathrm{sline}}
\newcommand\rout{\bgroup\markoverwith
	{\textcolor{magenta}{\rule[.5ex]{2pt}{0.4pt}}}\ULon}
\author[R. S. Hoekzema]{Renee  S. Hoekzema}
\address{Department of Mathematics, Vrije Universiteit Amsterdam}
\email{r.s.hoekzema@vu.nl}
\author[L. Stehouwer]{Luuk Stehouwer}
\address{Department of Mathematics and Statistics, Dalhousie University}
\email{luuk@dal.ca}
\author[S. Vesel\'a]{Simona Vesel\'a}
\address{Universität Bonn and Max Planck Institute for Mathematics}
\email{vesela@math.uni-bonn.de}
\date{\today}
\title{SKK groups of manifolds and non-unitary invertible TQFTs}
\begin{document}

\begin{abstract}  
This work considers the computation of controllable cut-and-paste groups $\SKK^{\xi}_n$ of manifolds with tangential structure $\xi:B_n\to BO_n$.
To this end, we apply the work of Galatius-Madsen-Tillman-Weiss, Genauer and Schommer-Pries, who showed that for a wide range of structures $\xi$ these groups fit into a short exact sequence that relates them to bordism groups of $\xi$-manifolds with kernel generated by the disc-bounding $\xi$-sphere.
The order of this sphere can be computed by knowing the possible values of the Euler characteristic of $\xi$-manifolds. We are thus led to address two key questions: the existence of $\xi$-manifolds with odd Euler characteristic of a given dimension and conditions for the exact sequence to admit a splitting. 
We resolve these questions in a wide range of cases.

$\SKK$ groups are of interest in physics as they play a role in the classification of non-unitary invertible topological quantum field theories, which classify anomalies and symmetry protected topological (SPT) phases of matter. Applying our topological results, we give a complete classification of non-unitary invertible topological quantum field theories in the tenfold way in dimensions 1-5. 
\end{abstract}

\maketitle

\tableofcontents

\section{Introduction}

The study of cut-and-paste invariants of manifolds was initiated in \cite{skbook} by Karras, Kreck, Neumann and Ossa. 
Given a closed smooth manifold, one can cut it along a separating codimension 1 submanifold with trivial normal bundle and paste back the two pieces along a diffeomorphism of the boundary to obtain a new closed manifold, which we say is \emph{cut-and-paste equivalent} to the original.
Cut-and-paste groups, also known as $\SK$ groups of manifolds, are formed by quotienting the monoid of manifolds under disjoint union by this cut-and-paste relation. 

A more refined notion of cut-and-paste equivalence, called $\SKK$ for ``schneiden und kleben kontrollierbar'', or, ``controllable cutting and pasting'', remembers the diffeomorphisms that were used to glue the boundaries. More precisely, we obtain the $\SKK$ groups by quotienting the monoid of manifolds by the four-term \ref{SKKrelations}:\labeltext{SKK relation}{SKKrelations}
\[M_1\cup_\phi M'_1+ M_2\cup_\psi M'_2\sim_{\SKK} M_1\cup_\psi M'_1+ M_2\cup_\phi M'_2,\] where $M_1, M'_1, M_2, M'_2$ are compact manifolds with the same boundary. Rearranging terms, we see that this corresponds to requiring that the difference between two cut and paste equivalent manifolds depends only on the gluing maps $\phi$ and $\psi$, as illustrated in \cref{fig:SKK}.
\begin{figure}[ht!]
	\centering
	    \begin{tikzpicture}[scale=1]
        \node at (0,0) {\includegraphics[width=13cm]{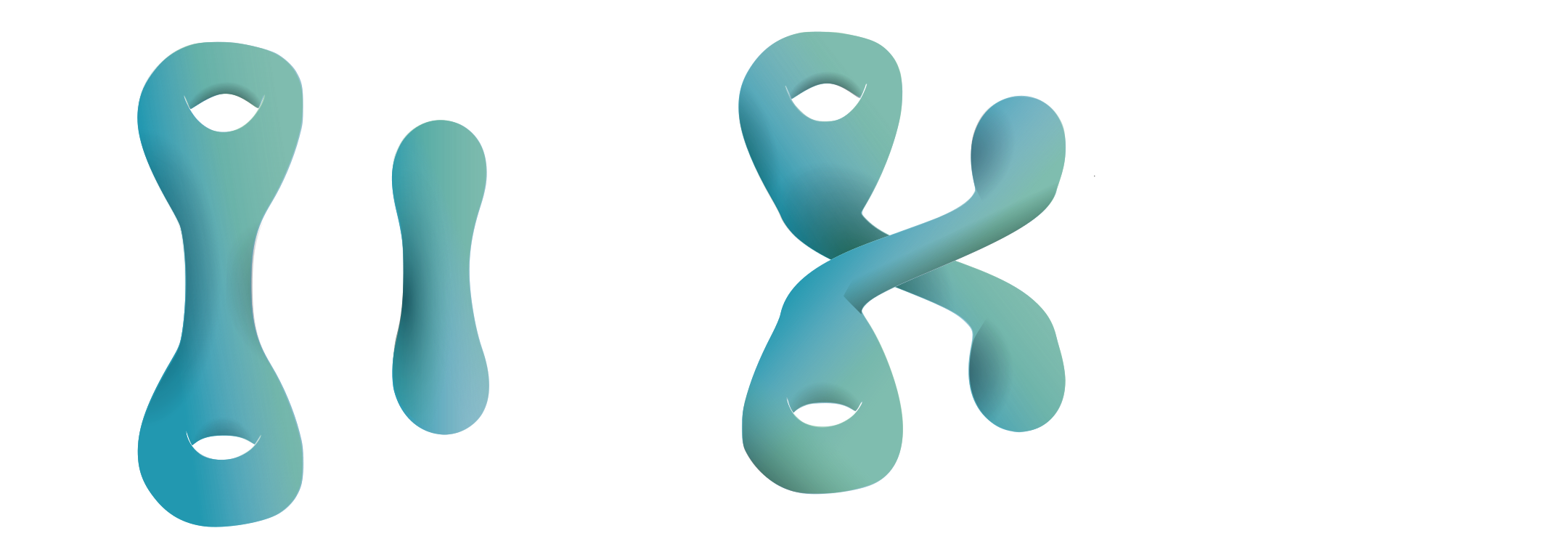}};
        \node at (-5.6,0) {\large{$\varphi$}};
        \node at (-5.3,0) {$\downarrow$};
        \node at (-1.5,0) {$-$};
        \node at (-0.9,0) {\large{$\psi$}   };
        \node at (-0.6,0) {$\downarrow$};
        \node at (3.7,0) {\large{$=f(\varphi,\psi)$}};
\end{tikzpicture}
	\caption{A pair of manifolds glued along a diffeomorphism $\phi$ differs in $\SKK$ from the pair glued along a different diffeomorphism $\psi$ by a manifold $f(\varphi,\psi)$ which depends only on $\phi$ and $\psi$.}
	\label{fig:SKK}
\end{figure}

The $\SKK$ groups of unoriented and oriented manifolds, where for the latter we require that the gluing maps are orientation-preserving, were shown in \cite{skbook} to correspond to Reinhart vector field bordism groups, and moreover they arise as fundamental groups of the unoriented resp. oriented cobordism categories \cite{ebert}. 
Interest in the computation of $\SKK$ groups further grew when it was shown that they play a role in the classification of invertible topological quantum field theories (TQFTs) \cite{KST,schommerpriesinvertible, Carmenitft}, which are important in physics for the classification of anomalies and topological phases of matter \cite{freed2014anomalies, monnieranomalies, freedhopkins}. 
Mathematically, TQFTs can be thought of as functors from a cobordism category into a linear category, as explained further in \cref{sec:physics}.

The systems studied in physics often come with intrinsic \textit{symmetries}, for example in the classification of condensed matter systems known as the tenfold way \cite{kitaev2009periodic}. These symmetries can be interpreted mathematically as \textit{tangential structure} on the manifolds. 
In this work, we treat tangential structures in a very general (not necessarily stabilised) framework, as lifts up to homotopy of the tangent bundle of an $n$-manifold along a map $\xi_n$ from a space $B_n$ to $BO_n$.
Our goal in this paper is to calculate the $\SKK^{\xi}$ groups of $\xi$-manifolds up to $\SKK^{\xi}$-equivalence, where now the manifolds have $\xi$-structures and the gluing maps are $\xi$-preserving, for many interesting and frequently arising $\xi$-structures.

The main tool for computation will be a short exact sequence relating $\SKK$ groups to bordism groups $\Omega_n^\xi$, with kernel given by the subgroup of $\SKK$ generated by the (disc-bounding) sphere:
\begin{equation}\label{eq:introskk}
\begin{tikzcd} 0 \ar[r] &\langle S_b^n \rangle_{\SKKxi_n} \arrow[r]&\SKKxi_n\arrow[r]&\Omega_n^{\xi} \ar[r] & 0.\end{tikzcd}
        \end{equation}
We will refer to this short exact sequence as \ref{SKKseq}. It was established for (un-)oriented manifolds in \cite{skbook}, and is reproven, by different methods for a general setting of twice stabilised $\xi$-structures in \cref{subsec:Genauer}, see also \cite{reutterschommerpries}. 

The structure $\xi$ needs to be once stabilised (defined in at least dimension $n+1$) in order for both the left and the right-hand term of \cref{eq:introskk} to be well defined.
In even dimensions $n$, the kernel $\langle S_b^n \rangle$ is $\Z$ because the Euler characteristic is an $\SKKxi$ invariant. In odd dimensions, given the slightly more restrictive condition of $\xi$ being twice stabilised, the kernel is either 0 or $\Z/2$ depending on whether an odd Euler characteristic $\xi$-manifold does or does not exist in dimension $n+1$ respectively. The first main question of this paper is therefore 
\begin{leftbar}
    \textit{For which tangential structures $\xi$ and dimensions $n$ does there exist a $\xi$-manifold with odd Euler characteristic?}
\end{leftbar}
This question was partially answered in work by the first author for $k$-orientable tangential structures \cite{ReneeEulerChar}, the results of which we extend and use here.
The question will be (at least partially) resolved for $\Pin^\pm$ manifolds by the current authors in \cite{PinPaper}, and we resolve other cases, of interest to physics, in Sections \ref{sec:otherStructures} and \ref{sec:physics}. 

The \ref{SKKseq} was shown to admit a splitting $\SKK_n \rightarrow \langle S^n \rangle$ for orientable manifolds in any dimension
\cite{skbook,ebert}. 
The second and most important question posed in this paper is to investigate when a splitting can be defined for more general tangential structures.
\begin{leftbar}
    \textit{For which tangential structures $\xi$ and dimensions $n$ does the \ref{SKKseq} admit a splitting?}

\end{leftbar}

We have divided our results in this direction into separate sections dealing with the odd-dimensional versus even-dimensional case (Sections \ref{sec:SKKOdd} and \ref{sec:SkkEven} respectively) because of their different nature.

\subsection{Splitting results for odd-dimensional \texorpdfstring{$\SKKxi$}{SKK} groups} 
For odd dimensions, we restrict ourselves to twice stabilised $\xi$ structures, see \cref{subsec:twiceStabXi}. This is automatically satisfied if our tangential structure is stable, i.e.\ arises from a map $\xi \colon B\to BO$, which is the case for most well-known tangential structures. 
If an $(n+1)$-dimensional $\xi$-manifold $M$ with odd Euler characteristic $\chi(M)$ exists, then $\SKKxi_n \cong \Omega^\xi_n$. 
However, for many $\xi$-structures, there are even dimensions in which odd Euler characteristic $\xi$-manifolds do not exist.
For example, oriented manifolds can only have odd Euler characteristic in dimensions $4k$, where there exists, for example $\C \mathbb{P}^{2k}$.
By restricting to more highly connected tangential structures, the dimension where such manifold exists gets restricted to $8k$ for $\Spin$ (quaternionic projective planes) and $16k$ for $\String$ manifolds (octonionic projective plane). 

If an $(n+1)$-dimensional $\xi$-manifold with odd Euler characteristic does not exist, then we have a splitting problem for the sequence 
\begin{equation}\label{seq:introZ2}
\begin{tikzcd} 0 \ar[r] &\Z/2 \arrow[r]&\SKK^{\xi}_n\arrow[r,"p_\xi"]&\Omega_n^{\xi} \ar[r] & 0.\end{tikzcd}
        \end{equation}  
Firstly we obtain the following if and only if statement for a map to give rise to a splitting of \cref{seq:introZ2}.

\begin{thmx}\label{thmIntro:iffInvarian}[\cref{{thm:iffForMfldInvariant}}]
Let be $\kappa$ a homomorphism $\mathcal{M}^\xi_n \to \Z/2,$ for $\mathcal{M}^\xi_n$ the monoid of closed $n$-dimensional $\xi$-manifolds under disjoint union.
Then $\kappa$ induces a splitting of \cref{seq:introZ2} if and only if for all $(n+1)$-dimensional $\xi$-manifolds $W$ with boundary $Y$ we have $$\kappa(Y)=  \chi(W)\mod 2.$$
\end{thmx}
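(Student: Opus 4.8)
The plan is to read the statement as being about a \emph{retraction}. Write $q\colon\mathcal{M}^\xi_n\twoheadrightarrow\SKKxi_n$ for the quotient by the \ref{SKKrelations}; then "$\kappa$ induces a splitting of \eqref{seq:introZ2}" means that $\kappa$ factors as $\bar\kappa\circ q$ for a homomorphism $\bar\kappa\colon\SKKxi_n\to\Z/2$ (so in particular $\kappa$ must respect the \ref{SKKrelations}) and that $\bar\kappa$ retracts the inclusion $\Z/2=\langle S_b^n\rangle\hookrightarrow\SKKxi_n$, i.e.\ $\bar\kappa([S_b^n])=1$. The one nontrivial input I will use is the identity
\[
q\big([\partial W]\big)=\chi(W)\cdot q\big([S_b^n]\big)\qquad\text{in }\SKKxi_n,
\]
valid for every compact $(n+1)$-dimensional $\xi$-manifold $W$; this is the computation underlying the identification of the kernel $\langle S_b^n\rangle$ of \eqref{eq:introskk} and of its order, and I will take it from the construction of the \ref{SKKseq} in \cref{subsec:Genauer}. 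Since $S_b^n=\partial D^{n+1}$ with $\chi(D^{n+1})=1$, the retraction condition $\bar\kappa([S_b^n])=1$ is exactly the $W=D^{n+1}$ case of the asserted equality $\kappa(Y)=\chi(W)\bmod 2$.

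For the forward direction, suppose $\kappa$ induces a splitting $\bar\kappa$. Given a compact $(n+1)$-dimensional $\xi$-manifold $W$ with $\partial W=Y$, apply the homomorphism $\bar\kappa$ to the displayed identity: since $\bar\kappa$ takes values in $\Z/2$ and $\bar\kappa([S_b^n])=1$, this gives $\kappa(Y)=\bar\kappa(q([Y]))=\chi(W)\cdot\bar\kappa([S_b^n])=\chi(W)\bmod 2$ at once.

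For the converse, suppose $\kappa(\partial W)=\chi(W)\bmod 2$ for all such $W$. I first check that $\kappa$ descends, i.e.\ that its extension to the group completion $K(\mathcal{M}^\xi_n)$ kills $\ker q$. Since $p_\xi\circ q$ is the natural map to $\Omega^\xi_n$ and $\ker\big(K(\mathcal{M}^\xi_n)\to\Omega^\xi_n\big)$ is generated by the boundary classes $[\partial W]$, any $x\in\ker q$ can be written $x=\sum_i a_i[\partial W_i]$. On one hand $\kappa(x)=\sum_i a_i\chi(W_i)\bmod 2$ by hypothesis; on the other hand $0=q(x)=\big(\sum_i a_i\chi(W_i)\bmod 2\big)\,q([S_b^n])$ by the displayed identity, and since $q([S_b^n])$ has order two — this is precisely the running hypothesis that no $(n+1)$-dimensional $\xi$-manifold has odd Euler characteristic — the integer $\sum_i a_i\chi(W_i)$ is even, so $\kappa(x)=0$. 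Thus $\kappa=\bar\kappa\circ q$ for a homomorphism $\bar\kappa$, and $\bar\kappa([S_b^n])=\kappa(\partial D^{n+1})=\chi(D^{n+1})\bmod 2=1$, so $\bar\kappa$ is the desired retraction.

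The genuinely substantive step is the displayed identity $q([\partial W])=\chi(W)\,q([S_b^n])$, and this is where I expect the real work to sit. I would either cite it from the material establishing the \ref{SKKseq} (where it is implicit in pinning down $\langle S_b^n\rangle$ and its order), or prove it by induction over a handle decomposition of $W$, each handle performing a surgery on $\partial W$ realizable through the \ref{SKKrelations} and changing both $q([\partial W])$ and $\chi(W)$ by a matching $\pm q([S_b^n])$. The remaining ingredients — the standard presentation of $\ker(K(\mathcal{M}^\xi_n)\to\Omega^\xi_n)$ by boundary classes, the passage to the group completion, and the bookkeeping mod $2$ — are routine.
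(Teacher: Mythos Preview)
Your argument is correct, and the forward direction is essentially identical to the paper's: both apply the surgery lemma $[\partial W]=\chi(W)[S_b^n]$ in $\SKKxi_n$ (this is \cref{lm:BordismEulerCharInSKK}) and then evaluate $\bar\kappa$.

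For the converse, however, you take a genuinely different route. The paper verifies the \ref{SKKrelations} directly by an explicit geometric construction: given $M_1,M_2,M_3,M_4$ with common boundary and gluing maps $f,g$, it builds a manifold $W_{1,2,3,4}$ (by pasting two copies of the ``trouser'' manifold $V_{i,j}$ of \cite[Lemma 1.9]{skbook} along a mapping torus) whose boundary is exactly the four terms of the relation, and then computes $\chi(W_{1,2,3,4})\equiv 0\pmod 2$ by inclusion--exclusion. Your argument instead stays at the level of the \ref{SKKseq}: you observe that $\ker q\subseteq\ker\big(K(\mathcal{M}^\xi_n)\to\Omega^\xi_n\big)$ is generated by boundary classes, apply the surgery lemma to each, and use that $[S_b^n]$ has order exactly two (which you correctly deduce from the hypothesis applied to closed $W$). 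Your approach is cleaner and avoids the ad hoc construction, at the cost of leaning more heavily on \cref{lm:BordismEulerCharInSKK} and \cref{thm:TheExactSequence}; the paper's approach is more self-contained geometrically. One small point worth making explicit in your write-up: an element $[M]-[N]$ of the kernel is a combination of boundary classes because $[M]-[N]=[\partial W]-[\partial(N\times I)]$ when $W$ is a bordism $N\to M$.
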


Our main candidate for a splitting is the Kervaire semi-characteristic over $\Z/2$, defined for a $(2k+1)$-dimensional manifold $M$ as
\[\kerv_{\Z/2}(M)=\sum_{i=0}^k \dim_{\Z/2} H_i(M;\Z/2) \pmod 2.\]
The following is one of our main results.
\begin{thmx}[\cref{thm:OddSplittingTopWuclass}]
If for every closed $(n+1)$-dimensional $\xi$-manifold $W$  the top Wu class $v_{\frac{n+1}{2}}(W)$ vanishes, then there is a split short exact sequence \begin{equation*}\begin{tikzcd} 0 \ar[r] &\Z/2\arrow[r]&\SKK^{\xi}_n\arrow[r,"p_{\xi}"]\arrow[l,bend right,pos=0.4,,"\kerv_{\Z/2}"']&\Omega_n^{\xi} \ar[r] & 0.\end{tikzcd}
        \end{equation*}

More generally, if $\xi$-manifolds are orientable in $F$ homology for some field $F$, then the Kervaire semi-characteristic over $F$ is a splitting if and only if for every $(n+1)$-dimensional $\xi$-manifold $W$, possibly with boundary, the image of the map
$H_{\frac{n+1}{2}}(W;F) \xrightarrow{j_*} H_{\frac{n+1}{2}}(W,\partial W; F)$ has even dimension.
\end{thmx}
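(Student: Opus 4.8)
The plan is to reduce the theorem to the criterion of \cref{thmIntro:iffInvarian} together with one mod-$2$ homological identity, a \emph{semi-characteristic formula for null-bordisms}: for a compact $(n+1)$-manifold $W$ with boundary $Y$ admitting an $F$-fundamental class, and with $\ell=\tfrac{n+1}{2}$,
\[
\chi(W)\ \equiv\ \kerv_F(Y)\ +\ \dim_F\im\!\big(j_*\colon H_\ell(W;F)\to H_\ell(W,Y;F)\big)\pmod 2 .
\]
Granting this, the general ``if and only if'' is immediate: $\kerv_F$ is additive under disjoint union, hence descends to a monoid homomorphism $\mathcal{M}^\xi_n\to\Z/2$, and by \cref{thmIntro:iffInvarian} it splits \cref{seq:introZ2} exactly when $\kerv_F(\partial W)\equiv\chi(W)\pmod 2$ for every $(n+1)$-dimensional $\xi$-manifold $W$ (the closed case is automatic: in the setting where \cref{seq:introZ2} has kernel $\Z/2$ every closed $(n+1)$-dimensional $\xi$-manifold has even Euler characteristic, as noted above), which by the formula says precisely that $\dim_F\im(j_*)$ is even for all such $W$.

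To prove the formula I would write $\chi(W)\equiv\sum_i\dim_F H_i(W;F)\pmod 2$ and push the long exact sequence of $(W,Y)$ through the count. Writing $a_i$ and $b_i$ for the $F$-ranks of $j_*\colon H_i(W)\to H_i(W,Y)$ and $\iota_*\colon H_i(Y)\to H_i(W)$, exactness gives $\dim_F H_i(W;F)=a_i+b_i$ and $\dim_F H_i(Y;F)=b_i+\dim_F H_{i+1}(W,Y;F)-a_{i+1}$; Poincar\'e--Lefschetz duality over $F$ gives $\dim_F H_i(W,Y;F)=\dim_F H_{2\ell-i}(W;F)$ and, through the ladder comparing the homology and cohomology long exact sequences of $(W,Y)$ (with $\operatorname{rk}(f)=\operatorname{rk}(f^*)$ over a field), the symmetry $a_i=a_{2\ell-i}$. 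Feeding these into $\kerv_F(Y)=\sum_{i=0}^{\ell-1}\dim_F H_i(Y;F)$, one first obtains $\kerv_F(Y)\equiv\sum_i b_i\pmod 2$, and then $\chi(W)-\kerv_F(Y)\equiv\sum_i a_i\equiv a_\ell\pmod 2$, the last step using only $H_{2\ell}(Y;F)=0$ and the symmetry. This bookkeeping is the bulk of the proof and the part most exposed to index slips, but it is elementary. (As a check, the formula applied to $W=D^{n+1}$ returns $\kerv_F(S^n)=1$, which is exactly what makes $\kerv_F$ a retraction of $\Z/2\hookrightarrow\SKK^\xi_n$.)

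For the first, cleaner statement I take $F=\Z/2$, so the orientability hypothesis is vacuous, and must show that vanishing of $v_{(n+1)/2}$ on all closed $(n+1)$-dimensional $\xi$-manifolds forces $\dim_{\Z/2}\im(j_*)$ to be even for \emph{every} $(n+1)$-dimensional $\xi$-manifold $W$. For closed $W$ this is classical: by Wu's formula $\langle x\smile x,[W]\rangle=\langle\Sq^\ell x,[W]\rangle=\langle v_\ell(W)\smile x,[W]\rangle$, so $v_\ell(W)=0$ makes the nondegenerate $\Z/2$-intersection form on $H_\ell(W;\Z/2)$ alternating, hence even-dimensional. For $W$ with boundary $Y$, pass to the double $DW=W\cup_Y\bar W$, a closed $(n+1)$-dimensional $\xi$-manifold, whose $\Z/2$-intersection form is then alternating by hypothesis. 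An $\ell$-cycle representing a class $x\in H_\ell(W;\Z/2)$, pushed into the interior of a copy of $W$ in $DW$, has the same mod-$2$ self-intersection computed in $W$ or in $DW$, so the symmetric intersection pairing $\lambda_W$ on $H_\ell(W;\Z/2)$ is itself alternating; and $\operatorname{rad}(\lambda_W)=\ker(j_*)$ by Lefschetz duality, so $\lambda_W$ descends to a nondegenerate alternating form on $H_\ell(W;\Z/2)/\ker(j_*)\cong\im(j_*)$, forcing $\dim_{\Z/2}\im(j_*)$ even. Combined with the general statement this yields the split short exact sequence with splitting $\kerv_{\Z/2}$.

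The main obstacle is the semi-characteristic formula: conceptually routine, but it demands care with the duality ladder and with the boundary contributions. A secondary technical point is that the double must again be a $\xi$-manifold, with the $\xi$-structure on $\bar W$ being that of $W$ reversed at the boundary so that the gluing is $\xi$-admissible; this is guaranteed in the twice-stabilised framework in which we work, and $v_\ell(\bar W)=v_\ell(W)$ since $T\bar W$ and $TW$ agree as $O$-bundles, so the restriction argument above goes through.
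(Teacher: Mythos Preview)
Your proof is correct and follows the same overall strategy as the paper: reduce to the criterion of \cref{thmIntro:iffInvarian}, establish the semi-characteristic identity $\chi(W)\equiv\kerv_F(\partial W)+\dim_F\im(j_*)\pmod 2$ via long-exact-sequence bookkeeping (this is the paper's \cref{lm:Bounding}, whose proof simply truncates the sequence at $\ker j_*$ and uses that the alternating sum of dimensions vanishes---slicker than your $a_i,b_i$ count with the symmetry $a_i=a_{2\ell-i}$, but equivalent), and then for $F=\Z/2$ show the intersection form on $W$ is alternating once $v_\ell$ vanishes on closed $\xi$-manifolds. The one substantive difference is in this last step. You argue geometrically: push a cycle from the interior of $W$ into the double $DW$ and observe that its mod-$2$ self-intersection is the same computed in either ambient manifold, so alternation of the form on $H_\ell(DW;\Z/2)$ descends to $H_\ell(W;\Z/2)$. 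The paper instead develops relative Wu classes for manifolds with boundary, proves a restriction lemma (\cref{lem:KervaireInducedMap}) showing $\iota^*v(M_1\cup M_2)=v(M_1)$, deduces (\cref{cor:closedWuboundaryWu}) that $v_\ell(W)=0$ for $W$ with boundary, and then reads off $\lambda(x,x)=\langle x^2,[W,\partial W]\rangle=\langle v_\ell\smile x,[W,\partial W]\rangle=0$ algebraically. Your route is more self-contained and avoids setting up the relative Wu apparatus; the paper's route has the payoff that the relative Wu-class vanishing is reusable, and indeed it is invoked again for $k$-orientable manifolds (\cref{lm:ReneeKOrientabilityAndWuClassInRelativeCase}) and for $\Pin^\pm$ (\cref{thm:pinWuClasses}).
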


Below, we give a summary of splitting results for well-known tangential structures.

\begin{thmx}[Results in odd dimensions]\label{thmx:odd_dim}
We obtain the splitting results for $\SKK$ groups of manifolds with $\xi$-structures and odd dimensions listed in \cref{tab:SKKResults}.

    \begin{table}[h!]\centering
\adjustbox{scale=0.75}{
\begin{tabular}{|l|l|l|l|}
	\hline
	 odd dimensions $n$ with:& $\SKKxi_n \cong \Omega^\xi_n$ &  $\Z/2 \rightarrow \SKKxi_n \rightarrow \Omega^\xi_n $ 
  & unknown  \\ \cline{1-1}
	$\xi$-structure &  &  split by $\kerv_{\Z/2}$ &  \\
 \hline
 $(BO)_{> 0}\simeq BO\simeq \BOr_0$&$2\mid (n+1)$&-&-\\
  $(BO)_{> 1}\simeq BSO\simeq \BOr_1$&$4\mid (n+1)$& other odd $n$&-\\
  $(BO)_{> 2}\simeq B\Spin\simeq \BOr_2$&$8\mid (n+1)$&other odd $n$&-\\
$(BO)_{> 4}\simeq B\String$&$16\mid(n+1)$&other odd $n$&-\\
$\BOr_3$&$16\mid(n+1)$&other odd $n$&-\\
&&&\\
 $\BOr_k$ ($k$-oriented), $k\geq 4$&?& odd $n$ such that $2^{k+1}\nmid n+1$&$2^{k+1}\mid n+1$\\
 $(BO)_{> b}, b\geq 8$&?& For $k = \phi(b)$ ($k \approx \frac{b}{2}$, see Cor. \ref{cor:connectedCoverAndkOrientable}):&
  
  \\
  && odd $n$ such that $2^{k+1}\nmid n+1$
  
  & $2^{k+1}\mid n+1$\\
  &&&\\
$s_{n+k} \colon *\to BO_{n+k}$, $2\leq k\leq \infty$&-&all odd $n$&-\\
$\Pin^+$&$n\equiv 3,7 \pmod{8}$&$n\equiv 5\pmod{8}$&$n\equiv 1\pmod{8}$\\
&&& split for $n=1$,
\\
&&& not by $\kerv_{\Z/2}$\\
$\Pin^-$&$n\equiv 1,5,7 \pmod{8}$&$n\equiv 4\pmod{8} $&- \\
 \hline
\end{tabular}
}\caption{Results about odd-dimensional $\SKK$ groups summarising \cref{Table:k-orientable}, \cref{lm:Inheritance}, \cref{thm:mainthmforOddDimensionKOrient}, \cref{cor:SKKOfBOb}, \cref{SKKStablyFramed}, and \cref{prop:PinpmSKK}.
Here $(BO)_{> b}$ refers to the $b$-parallellisable tangential structure (\cref{subsec:parallelisable}), $B \Or_k$ refers to the $k$-orientable structure (\cref{def:k-orientable}) and $s_{n+k}$ is the $k$th-stabilised framing (\cref{subsec:FramingStr}).
}\label{tab:SKKResults}
\vspace{-0.8cm}
\end{table}
\end{thmx}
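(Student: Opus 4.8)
The theorem assembles the structure-by-structure computations cited in the caption of \cref{tab:SKKResults}, so the plan is to isolate the single mechanism that produces every row and then indicate which input of the paper supplies each column.

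\textbf{The uniform dichotomy.} Fix an odd dimension $n$ and a twice-stabilised structure $\xi$. By the short exact sequence \cref{eq:introskk} and the description of its kernel recalled in the introduction, the group $\SKKxi_n$ depends on a single question: is there a closed $(n+1)$-dimensional $\xi$-manifold of odd Euler characteristic? If yes, then $\langle S_b^n\rangle_{\SKKxi_n}=0$ and $\SKKxi_n\cong\Omega^\xi_n$, the first column of the table. If no, we face the extension \cref{seq:introZ2}, and its splitting is pinned down by \cref{thmIntro:iffInvarian} together with \cref{thm:OddSplittingTopWuclass}: it suffices to check that the top Wu class $v_{(n+1)/2}$ vanishes on every closed $(n+1)$-dimensional $\xi$-manifold (or, over a field $F$ in which $\xi$-manifolds are $F$-orientable, that the image of $H_{(n+1)/2}(W;F)\to H_{(n+1)/2}(W,\partial W;F)$ always has even dimension), and then $\kerv_{\Z/2}$, respectively the $F$-semicharacteristic, is a splitting. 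The two halves of the dichotomy dovetail because, writing $n+1=2m$, one has $\chi(W)\equiv\langle v_m(W)^2,[W]\rangle\bmod 2$, so vanishing of the top Wu class already forces the ``no odd Euler characteristic'' hypothesis; hence for each $\xi$ the table is obtained by locating the dimensions that carry an odd-$\chi$ $\xi$-manifold and verifying Wu-class vanishing in the complementary odd dimensions.

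\textbf{Filling in the rows.} For the $k$-orientable structures $\BOr_k$ (\cref{def:k-orientable}) I would invoke \cref{thm:mainthmforOddDimensionKOrient}, \cref{Table:k-orientable} and \cite{ReneeEulerChar}: $k$-orientability forces characteristic classes to vanish in a range, one propagates this to the top Wu class of a $(2m)$-dimensional $k$-orientable manifold whenever $2^{k+1}\nmid 2m$, and conversely an odd-$\chi$ witness is available only in the small cases --- the projective planes $\CP^{2m}$, $\HP^{m}$, $\mathbb{OP}^2$ in dimensions $4m$, $8m$, $16m$ give the isomorphism column for $\BOr_{\leq 2}$, $\BOr_3$ and $B\String\simeq(BO)_{>4}$ --- while for $k\geq 4$ and $2^{k+1}\mid 2m$ neither is at hand, producing the ``unknown'' column. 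The connected covers $(BO)_{>b}$ with $b\geq 8$ reduce to this case via \cref{cor:connectedCoverAndkOrientable} and \cref{cor:SKKOfBOb}, since such a cover refines a $\phi(b)$-orientation with $\phi(b)\approx b/2$; the classical cases $BO\simeq\BOr_0$, $BSO\simeq\BOr_1$, $B\Spin\simeq\BOr_2$ come out the same way, or by \cref{lm:Inheritance} from naturality in $\xi$. For the stabilised framings $s_{n+k}$ with $2\leq k\leq\infty$, \cref{SKKStablyFramed} applies: a stably framed manifold has vanishing Stiefel--Whitney classes and hence vanishing Wu classes, so its Euler characteristic is always even and \cref{thm:OddSplittingTopWuclass} yields the $\kerv_{\Z/2}$ splitting in every odd dimension. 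Finally the $\Pin^\pm$ rows are \cref{prop:PinpmSKK}, building on \cite{PinPaper}: the residues mod $8$ come from the $\Pin^\pm$ bordism groups and the accompanying Wu-class computations, the odd-$\chi$ $\Pin^\pm$-manifolds accounting for the isomorphism residues; the one extra point is $n=1$ in the $\Pin^+$ case, where $\kerv_{\Z/2}$ provably does not split \cref{seq:introZ2} but an explicit alternative $\Z/2$-valued bordism invariant does, and this must be produced by hand.

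\textbf{Expected main obstacle.} The real content behind the table is the top-Wu-class vanishing for the $k$-orientable structures, i.e.\ that a $(2m)$-dimensional $k$-orientable manifold has $v_m=0$ when $2^{k+1}\nmid 2m$; this is a computation in $H^*(\BOr_k;\Z/2)$ using the Wu formula and what is known about the low-degree homotopy of $\BOr_k$, and it is exactly the step that fails when $2^{k+1}\mid n+1$, where simultaneously no odd-$\chi$ construction is known --- hence the genuinely open ``unknown'' entries. The $\Pin^+$, $n\equiv 1\bmod 8$ row is a small instance of the same difficulty, and the fact that for $n=1$ a splitting exists but only through a non-$\kerv_{\Z/2}$ invariant signals that settling the $2^{k+1}\mid n+1$ cases in general will require input beyond Wu classes.
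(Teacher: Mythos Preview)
Your proposal is correct and follows the paper's approach exactly: the theorem is a compilation result, and the paper proves it by the same mechanism you describe --- the odd-$\chi$ dichotomy from the \ref{SKKseq}, the Wu-class criterion \cref{thm:OddSplittingTopWuclass} for the $\kerv_{\Z/2}$ splitting, and the structure-specific inputs (\cref{thm:mainthmforOddDimensionKOrient}, \cref{cor:SKKOfBOb}, \cref{SKKStablyFramed}, \cref{prop:PinpmSKK}) that populate each row.

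One small correction on the $\Pin^+$, $n=1$ case: you say ``an explicit alternative $\Z/2$-valued bordism invariant does [split], and this must be produced by hand.'' In fact nothing needs to be produced: since $\Omega_1^{\Pin^+}=0$, the sequence \cref{seq:introZ2} degenerates to $\SKK_1^{\Pin^+}\cong\Z/2$ and the splitting is automatic (\cref{Ex:pindim1}). The point of that example, and of \cref{prop:PinPlus8kplus1}, is rather that the isomorphism $\SKK_1^{\Pin^+}\to\Z/2$ necessarily depends on the $\Pin^+$-structure and cannot be realised by any invariant of the underlying manifold alone --- so in particular it is not a ``bordism invariant'' in the usual sense. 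Also, the mod-$8$ residues in the $\Pin^\pm$ rows come from the parity-of-$\chi$ analysis of \cref{Table:PinEulerChar} and the Wu-class vanishing of \cref{thm:pinWuClasses}, not from the $\Pin^\pm$ bordism groups themselves.
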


\noindent For example, in the case of $\Pin^+$ manifolds, \cref{tab:SKKResults} tells us that we have the following:
\begin{enumerate}[label=(\roman*)]
    \item For $n\equiv 3,7\pmod{8}$ we have an isomorphism
$$p_{\Pin^+} \colon \SKK_n^{\Pin^+}\cong  \Omega_n^{\Pin^+}.$$ 
\item  For $n\equiv 5\pmod{8}$ we have an isomorphism
$$(\kerv_{\Z/2},p_{\Pin^+}) \colon \SKK_n^{\Pin^+}\xrightarrow{\cong} \Z/2\times \Omega_n^{\Pin^+}.$$
\item For $n\equiv 1\pmod 8$ we do not know $\SKK_n^{\Pin^+}$, because we do not know the kernel of the map $\SKK_{8k+1}^{\Pin^+}\to  \Omega_{8k+1}^{\Pin^+}$, since it is unknown whether there exists an odd Euler characteristic $\Pin^+$ manifold of dimension $8k+2$.
\end{enumerate}
We know, by computation, that there is no odd Euler characteristic $\Pin^+$ manifold in dimensions 2 and 10 \cite{PinPaper}. Furthermore, in the case that such a manifold does not exist in dimension $8k+2$, we show that the Kervaire semi-characteristic, or any other invariant which depends only on the manifold and not on the $\Pin^+$ structure, cannot give a splitting of the SKK sequence for $\Pin^+$ in dimension $8k+1$ (\cref{prop:PinPlus8kplus1}). 
In dimension 1, the extension problem is trivial since $\Omega_1^{\Pin^+}=0$ and hence $\SKK_1^{\Pin^+}\cong\Z/2$ (see \cref{Ex:pindim1}).

For $k$-orientable manifolds with $k\geq 4$, in particular 8-parallelisable manifolds, it is unknown whether any odd Euler characteristic manifolds exist (if they do they would live in dimensions multiples of $2^{k+1}$, see \cref{section:k-orientability}) or whether the sequence splits if this is not the case.

In \cref{tab:SKKResults} we list the splittings by $\kerv_{\Z/2}$, which is the most general case.
Other splittings are possible, for example for oriented manifolds $\kerv_\Q$ is also a (potentially different) splitting in dimensions $4k+1$ whenever $\kerv_{\Z/2}$ is, but may not be a splitting in dimensions $4k+3$, see
\cref{rm:differentKervaire}.

Inspired by \cref{thmx:odd_dim}, we conjecture the following:

\begin{conjx}
\label{conjecture}
    For every twice stabilised structure $\xi$ and every odd dimension $n$, the \ref{SKKseq} is split.
\end{conjx}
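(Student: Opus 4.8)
\smallskip

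\noindent\textbf{Towards \cref{conjecture}.}
In support of the conjecture, note that it holds in every case of \cref{tab:SKKResults} for which $\SKKxi_n$ has been computed. Here is a possible route to the general statement. If some closed $(n+1)$-dimensional $\xi$-manifold has odd Euler characteristic then $\langle S_b^n\rangle_{\SKKxi_n}=0$, so $\SKKxi_n\cong\Omega_n^\xi$ and there is nothing to prove; so assume every closed $(n+1)$-dimensional $\xi$-manifold has even Euler characteristic, so that we are in the situation of \cref{seq:introZ2}. By \cref{thmIntro:iffInvarian} a splitting is exactly a monoid homomorphism $\kappa\colon\mathcal{M}_n^\xi\to\Z/2$ with $\kappa(\partial W)\equiv\chi(W)\pmod 2$ for all compact $(n+1)$-dimensional $\xi$-manifolds $W$. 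Dually: since $\Z/2$ is $2$-torsion, the sequence \cref{seq:introZ2} splits if and only if it splits over the $2$-torsion subgroup $(\Omega_n^\xi)[2]$, and its extension class is the homomorphism $o\colon(\Omega_n^\xi)[2]\to\Z/2$ (an element of $\Ext^1_\Z(\Omega_n^\xi,\Z/2)\cong\Hom((\Omega_n^\xi)[2],\Z/2)$) determined by $o([M])\equiv\chi(W)\pmod 2$ whenever $M\sqcup M=\partial W$; this is well defined because no closed $(n+1)$-dimensional $\xi$-manifold has odd Euler characteristic and $\chi$ vanishes in odd dimensions. The conjecture is thus equivalent to the vanishing of $o$.

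One first disposes of the regime covered by \cref{thm:OddSplittingTopWuclass}: if $\kerv_{\Z/2}$ is a splitting---for instance when the top Wu class $v_{(n+1)/2}$ vanishes on all closed $(n+1)$-dimensional $\xi$-manifolds---we are done. Writing $n+1=2m$ and using $\chi(W)\equiv\langle v_m(W)^2,[W]\rangle\pmod 2$, the standing hypothesis is precisely that the characteristic number $v_m^2$ vanishes on $\Omega_{n+1}^\xi$; so the remaining, delicate regime is the one where $v_m$ can be nonzero on a closed $(n+1)$-dimensional $\xi$-manifold even though $\langle v_m^2,[W]\rangle$ always vanishes, and where $\kerv_{\Z/2}$ need not split by the criterion in \cref{thm:OddSplittingTopWuclass}. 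In that regime one attacks $o=0$ directly: given $M\sqcup M=\partial W$, glue the two boundary copies of $M$ to form a \emph{closed} $(n+1)$-dimensional manifold $\widehat W$ with $\chi(\widehat W)=\chi(W)$, necessarily even, forcing $o([M])=0$. The obstruction to making this gluing $\xi$-equivariant is a co-orientation-reversing $\xi$-self-diffeomorphism of $M$, i.e.\ an identification $M\cong\overline M$ of $\xi$-manifolds; and $2[M]=0$ already gives $[\overline M]=[M]$ in $\Omega_n^\xi$. The task is then to upgrade this bordism to the required diffeomorphism---after suitable stabilisation, surgery and connected sum---or to reduce the vanishing of $o$ to a generating set of $(\Omega_n^\xi)[2]$ realised by manifolds for which $M\cong\overline M$ is transparent.

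Where this cannot be arranged, one is forced to refine $\kerv_{\Z/2}$ using the tangential structure itself: a semi-characteristic with coefficients in a local system or in a $K$- or $KO$-theory class built from $\xi$, or a secondary invariant attached to the operation $x\mapsto x^2$ on $v_m$ (now ``decomposable'', since $v_m^2=0$), each time verifying $\kappa(\partial W)\equiv\chi(W)\pmod 2$ via \cref{thmIntro:iffInvarian}. A more structural route is to present the \ref{SKKseq} from the cofibre sequence of Thom spectra of \cref{subsec:Genauer} and to identify its extension class with the image of the vanishing class $v_m^2$ under a natural transformation; it may help to first reduce to a short list of ``building-block'' tangential structures using \cref{lm:Inheritance}, and then conclude by the computations underlying \cref{tab:SKKResults}.

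The main obstacle is exactly the regime $v_m\neq 0$, $v_m^2=0$: there \emph{no} structure-independent invariant can split the sequence---this already happens for $\Pin^+$ in dimension $8k+1$ when no $\Pin^+$-manifold of dimension $8k+2$ has odd Euler characteristic (\cref{prop:PinPlus8kplus1})---so one genuinely needs an invariant manufactured from $\xi$, and at present no uniform construction is known to produce one. Compounding this, in every case currently left open in \cref{tab:SKKResults} it is not even known whether an odd Euler characteristic $(n+1)$-manifold exists, so both inputs to the strategy above---the non-existence of odd-$\chi$ manifolds, and then the splitting---are themselves hard.
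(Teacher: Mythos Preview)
The statement is a \emph{conjecture}, and the paper does not prove it: it is stated immediately after \cref{thmx:odd_dim} as motivated by the evidence in \cref{tab:SKKResults}, with no proof offered. Your proposal is accordingly not a proof either, and you correctly frame it as ``Towards \cref{conjecture}'', a survey of possible strategies with explicit acknowledgement of the remaining obstacles.

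As such a survey, your discussion is mathematically sound and contains content that goes beyond what the paper says about the conjecture. In particular, your reformulation of the splitting problem via the extension class $o\in\Ext^1_\Z(\Omega_n^\xi,\Z/2)\cong\Hom((\Omega_n^\xi)[2],\Z/2)$, with the explicit formula $o([M])\equiv\chi(W)\pmod 2$ for any $W$ with $\partial W=M\sqcup M$, is correct (this follows from \cref{lm:BordismEulerCharInSKK} applied to $W$ as a nullbordism of $M\sqcup M$, using $\chi(M)=0$). The identification $\chi(W)\equiv\langle v_m^2,[W]\rangle\pmod 2$ for closed $(n+1)$-manifolds, via the Wu formula $w_{2m}=v_m^2$, correctly isolates the delicate regime as the one where $v_m$ is not forced to vanish on closed $(n+1)$-dimensional $\xi$-manifolds but the characteristic number $v_m^2$ is. Your gluing argument is also correctly analysed: the obstruction to closing up $W$ into a $\xi$-manifold $\widehat W$ is precisely a $\xi$-diffeomorphism $M\cong\overline M$, and $2[M]=0$ only guarantees this at the level of bordism.

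One small caveat: your identification $\Ext^1_\Z(\Omega_n^\xi,\Z/2)\cong\Hom((\Omega_n^\xi)[2],\Z/2)$ is correct for finitely generated $\Omega_n^\xi$ (which holds under the mild finiteness hypothesis on $B$ used elsewhere in the paper), but you should flag that assumption. Otherwise, your outline is a reasonable research programme; it is not a proof, and neither you nor the paper claims one exists.
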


    Without the assumption that $\xi$ is twice stabilised, the kernel of the map $\SKK^{\xi}_n\to \Omega_n^{\xi}$ in odd dimensions does not have to be $0$ or $\Z/2$, and there exists a tangential structure for which this is the case and the SKK sequence is known to not split. This is discussed in \cite{KST}.

\subsection{Splitting results for even-dimensional \texorpdfstring{$\SKKxi$}{SKK} groups} 
For $n$ even, the \ref{SKKseq} looks like 
\begin{equation}\label{seq:introZ}
\begin{tikzcd} 0 \ar[r] &\Z\arrow[r]&\SKK^{\xi}_n\arrow[r]&\Omega_n^{\xi} \ar[r] & 0,\end{tikzcd}
        \end{equation}
for any once stabilised structure $\xi \colon B_{n+1}\to BO_{n+1}$. We give a complete criterion for splitting for $\xi$ structures satisfying a mild finiteness condition:
\begin{thmx}[\cref{thm:slittingSKKevenEvenChi} and \cref{thm:torsionWithOddEuler}]
    Let $n$ be even and $\xi_{n+1}\colon B_{n+1} \rightarrow BO_{n+1}$ a (once stabilised) tangential structure.
    If there exists a torsion class $[M] \in \Omega^{\xi}_{n}$ with $\chi(M)$ odd, then \cref{seq:introZ} does not split. 
    
    Moreover, if $B_{n+1}$ has finitely generated homology in all degrees, then the converse holds: if all manifolds $M^n$ with odd Euler characteristic have infinite order in $\Omega^\xi_n$, then the same sequence splits non-canonically (i.e. depending on a choice of generating manifolds for $\Omega^{\xi}_{n}$). 
   
    In the special case where every $n$-dimensional closed $\xi$-manifold has even Euler characteristic, we get an explicit splitting
    \[\SKK^{\xi}_n\xrightarrow{\cong}\Z\times \Omega_n^{\xi}\quad [M] \mapsto (\chi(M)/2, [M]).\]
\end{thmx}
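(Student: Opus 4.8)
The plan is to treat the three claims separately, since they are logically distinct: the obstruction direction (torsion odd-$\chi$ class $\Rightarrow$ no splitting), the converse under finite generation, and the clean special case. I will build everything on the \ref{SKKseq} in the form \cref{seq:introZ}, together with the fact that in even dimensions the kernel is $\Z$ detected by the Euler characteristic, so that for a generator $S_b^n$ of the kernel one has $\chi(S_b^n)=2$ (the disc-bounding sphere), and a lift of a class $[M]\in\Omega_n^\xi$ to $\SKKxi_n$ is well defined up to adding multiples of $[S_b^n]$; equivalently, a splitting $\Omega_n^\xi\to\SKKxi_n$ is the same as a choice of lift $s$ that is additive, and composing with $\chi$ gives a function $\tilde\chi\colon\Omega_n^\xi\to\Z$ which is additive and satisfies $\tilde\chi([M])\equiv\chi(M)\pmod 2$ for all closed $M$ (because any two lifts of $[M]$ differ by a multiple of $[S_b^n]$, i.e.\ by an even integer, and one of them is $\chi(M)$ itself after the usual normalization). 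Conversely any such additive $\Z$-valued refinement of $\chi\bmod 2$ on $\Omega_n^\xi$ produces a splitting via $[M]\mapsto$ (the unique lift with $\chi=\tilde\chi([M])$). So the whole theorem reduces to: \emph{when does there exist a group homomorphism $\tilde\chi\colon\Omega_n^\xi\to\Z$ lifting the homomorphism $\chi\bmod 2\colon\Omega_n^\xi\to\Z/2$?}

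With that reformulation the obstruction direction is immediate: if $[M]$ is torsion with $\chi(M)$ odd, then $\tilde\chi([M])$ would have to be an integer with $N\tilde\chi([M])=\tilde\chi(N[M])=\tilde\chi(0)=0$ for the order $N$, forcing $\tilde\chi([M])=0$, which is even, contradicting $\tilde\chi([M])\equiv\chi(M)\equiv 1$. This gives \cref{thm:torsionWithOddEuler}. For the converse I would argue as follows. Assume $B_{n+1}$ has finitely generated homology in all degrees; standard bordism computations (Atiyah--Hirzebruch, or the Thom-spectrum description $\Omega_n^\xi=\pi_n(MT\xi)$) then show $\Omega_n^\xi$ is a finitely generated abelian group, say $\Omega_n^\xi\cong\Z^r\oplus T$ with $T$ finite. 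The Euler characteristic mod $2$ vanishes on $T$ under our hypothesis (every torsion class has even $\chi$), so $\chi\bmod 2$ factors through the free quotient $\Z^r$. Pick a basis $e_1,\dots,e_r$ of the free part represented by closed $\xi$-manifolds $M_1,\dots,M_r$; define $\tilde\chi$ on the free part by sending $e_i\mapsto\chi(M_i)$ (an integer of the correct parity) and extending linearly, and send $T\mapsto 0$. This is a homomorphism $\Omega_n^\xi\to\Z$ reducing to $\chi\bmod 2$ (on the free part the two agree mod $2$ by construction and bilinearity of $\chi$ over disjoint union; on $T$ both are $0$), hence yields the desired non-canonical splitting, visibly depending on the chosen generating manifolds.

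For the special case, assume every closed $n$-dimensional $\xi$-manifold has even Euler characteristic. Then $\chi(M)/2\in\Z$ always, and $[M]\mapsto(\chi(M)/2,[M])$ is a well-defined monoid homomorphism $\mathcal M_n^\xi\to\Z\times\Omega_n^\xi$ which descends to $\SKKxi_n$ because $\chi$ is an $\SKK$ invariant and the map to $\Omega_n^\xi$ is the defining quotient; it is surjective because $[S_b^n]$ hits $(1,0)$ (as $\chi(S_b^n)=2$ and $S_b^n$ is null-bordant) and lifts of classes in $\Omega_n^\xi$ hit $(\ast,[M])$, and it is injective because the kernel maps to $0$ in $\Omega_n^\xi$, hence lies in $\langle S_b^n\rangle$, on which $\chi/2$ is the identity $\Z\to\Z$. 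I expect the main obstacle to be the converse: specifically, pinning down exactly which finiteness hypothesis on $B_{n+1}$ guarantees $\Omega_n^\xi$ is finitely generated (and being careful that ``finitely generated homology in all degrees'' is what one actually needs for the AHSS to converge to a finitely generated group in degree $n$), and then being honest that the splitting genuinely depends on the choice of manifold basis — i.e.\ there is in general no canonical such $\tilde\chi$ unless the parity homomorphism $\chi\bmod 2$ happens to be everywhere zero, which is precisely the special case handled last. A minor subtlety to check along the way is the normalization $\chi(S_b^n)=2$ versus conventions where the kernel generator might be taken with a sign, but this only affects notation.
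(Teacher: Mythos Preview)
Your proposal is correct and follows essentially the same approach as the paper. The paper first establishes the pullback description $\SKK^\xi_n\cong\Omega^\xi_n\times_{\Z/2}\Z$ (via $[X]\mapsto([X],\chi(X))$) and then phrases a splitting as a retraction $\psi\colon\Omega^\xi_n\times_{\Z/2}\Z\to\Z$ with $\psi(0,2r)=r$; your reformulation in terms of a homomorphism $\tilde\chi\colon\Omega^\xi_n\to\Z$ lifting $\chi\bmod 2$ is exactly the dual way of saying the same thing (section versus retraction of the same extension), and your explicit construction on a chosen basis of the free part coincides with the paper's formula $\psi(t,\sum\alpha_i x_i,r)=(r-\sum\alpha_i\chi(M_i))/2$. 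Your obstruction argument is in fact slightly slicker than the paper's, which passes through the pullback coordinates, but the content is identical.
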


\begin{thmx}[Results in even dimensions]

In even dimensions the \ref{SKKseq} \cref{seq:introZ} has the following splitting status:

\begin{enumerate}[label=(\roman*)]
    \item For $\xi=BO$ the SKK sequence does {\bf not} split for any even $n$ by \cref{cor:unoriented_even_notsplit}.
    \item $\xi=BSO$  the SKK sequence  \begin{enumerate}
        \item splits by $\frac{\chi-\sigma}{2}$ for $n = 0 \pmod{4}$~\cite{ebert}.
        \item splits by $\frac{\chi}{2}$ for $n = 2 \pmod{4}$~\cite{ebert}.
    \end{enumerate}  
    \item For any orientable $\xi$ e.g. $\BOr_k$ for $k>0$, $(BO)_{> b}$ for $b\geq 1$ or $s_{n+k}$ for $k\geq 1$ the SKK sequence 
    \begin{enumerate}
        \item splits by $\frac{\chi-\sigma}{2}$ for $n = 0 \pmod{4}$ by \cref{cor:orientableeven}.
        \item splits by $\frac{\chi}{2}$ for $n = 2 \pmod{4}$ by \cref{cor:orientableeven}.
    \end{enumerate} 
    \item For $\Pin^+$  the SKK sequence
    \begin{enumerate}
        \item does {\bf not} split for $n= 0,4 \pmod{8}$ by \cref{cor:pin_even_dim}.
        \item splits by $\frac{\chi}{2}$ for $n=6 \pmod{8}$ and $2, 10$ by \cref{thm:slittingSKKevenEvenChi}.
    \end{enumerate}
    (The general case of $n= 2 \pmod{8}$ is currently open).
    \item For $\Pin^-$  the SKK sequence
    \begin{enumerate}
        \item does {\bf not} split for $n= 0,2,6 \pmod{8}$ by  \cref{cor:pin_even_dim}.
        \item splits by  $\frac{\chi}{2}$ for  $n=4 \pmod{8}$ by \cref{thm:slittingSKKevenEvenChi}.
    \end{enumerate}
\end{enumerate}
\end{thmx}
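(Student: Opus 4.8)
The plan is to deduce every item from the two general criteria \cref{thm:slittingSKKevenEvenChi} and \cref{thm:torsionWithOddEuler}, together with Ebert's computation in the oriented case. For a non-splitting assertion one exhibits a closed $\xi$-manifold $M^n$ with $\chi(M)$ odd whose class $[M]\in\Omega^\xi_n$ is torsion, and applies \cref{thm:torsionWithOddEuler}. For a splitting assertion one produces an integer-valued homomorphism $\kappa\colon\mathcal M^\xi_n\to\Z$ that factors through $\SKK^\xi_n$ and sends the disc-bounding sphere $S^n_b$ (which has $\chi(S^n)=2$) to $1$; then $\kappa$ retracts the inclusion $\langle S^n_b\rangle\cong\Z\hookrightarrow\SKK^\xi_n$ and \cref{thm:slittingSKKevenEvenChi} gives the splitting.

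Item (i) is immediate: $\Omega^O_*$ is a polynomial $\Z/2$-algebra, so all classes are torsion, and $\RP^n$ (a point when $n=0$) has $\chi=1$; \cref{thm:torsionWithOddEuler} then forbids splitting in every even dimension. For (ii) and (iii) the relevant invariants are $\tfrac{\chi-\sigma}{2}$ when $n\equiv0\pmod4$ and $\tfrac\chi2$ when $n\equiv2\pmod4$. Here $\chi$ is an $\SKK$-invariant by construction, and the signature satisfies the four-term $\SKK$ relation by Novikov additivity, hence descends to a homomorphism $\SKK^{SO}_{4k}\to\Z$; since $\chi\equiv\sigma\pmod2$ for closed oriented $4k$-manifolds and $\chi$ is even for closed oriented $(4k+2)$-manifolds (the intersection form in the middle dimension over $\Q$ is alternating), both expressions are $\Z$-valued and equal $1$ on $S^n_b$. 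Pulling these invariants back along $B_{n+1}\to BSO$ handles every orientable $\xi$ at once and recovers Ebert's results~\cite{ebert} for $\xi=BSO$; this is \cref{cor:orientableeven}.

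For (iv)--(v) one uses that $\Omega^{\Pin^\pm}_*$ is finite in each degree. Thus the non-splitting statements only require \emph{some} $\Pin^\pm$-manifold of odd Euler characteristic in the given dimension: a point for $n=0$, and $\RP^n$ otherwise, using that $\RP^n$ is $\Pin^+$ exactly when $\binom{n+1}{2}$ is even and $\Pin^-$ exactly when $\binom{n+1}{2}$ is odd, supplemented by the manifolds provided in \cref{cor:pin_even_dim} for $\Pin^-$ and $n\equiv0\pmod8$. The splitting statements instead require that \emph{every} $\Pin^\pm$-manifold in the given dimension has even Euler characteristic, so that $\tfrac\chi2$ is $\Z$-valued and \cref{thm:slittingSKKevenEvenChi} applies. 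The input here is the Wu-formula identity that on a closed $2m$-manifold $M$ the mod-$2$ intersection pairing on $H^m(M;\Z/2)$ satisfies $x\cup x=v_m(M)\cup x$: if the top Wu class $v_m(M)$ vanishes the pairing is alternating, so $\dim_{\Z/2}H^m(M;\Z/2)$, and hence $\chi(M)$, is even. A Wu-formula computation with $w_2=0$ (for $\Pin^+$) resp.\ $w_2=w_1^2$ (for $\Pin^-$) then shows $v_m$ vanishes for $\Pin^+$-manifolds when $m\equiv3\pmod4$ and for $\Pin^-$-manifolds when $m\equiv2\pmod4$, covering $n\equiv6\pmod8$ for $\Pin^+$ and $n\equiv4\pmod8$ for $\Pin^-$. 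The exceptional cases $n=2,10$ for $\Pin^+$, where $v_m$ need not vanish (one computes $v_1=w_1$ and $v_5=w_1w_4$), are handled by the explicit bordism computations of \cite{PinPaper}.

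The main obstacle is this last point: pinning down the parity of $\chi$ for $\Pin^\pm$-manifolds. The Wu-class vanishing requires some bookkeeping in $H^*(B\Pin^\pm;\Z/2)$, and in dimensions $2$ and $10$ no characteristic-class argument suffices, so one must import the bordism-theoretic nonexistence result from \cite{PinPaper}. Everything else --- Novikov additivity, the congruence $\chi\equiv\sigma\pmod2$, finiteness of $\Pin^\pm$-bordism, and the $\RP^n$ Stiefel--Whitney computations --- is classical.
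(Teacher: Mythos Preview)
Your outline is correct and follows essentially the same route as the paper. Item (i) is exactly \cref{cor:unoriented_even_notsplit}; items (ii)--(iii) are the paper's \cref{thm:SKbookEvenDim} and \cref{cor:orientableeven}, and your justification via Novikov additivity and the congruence $\chi\equiv\sigma\pmod2$ is precisely what underlies Ebert's argument and the inheritance lemma (\cref{lm:Inheritance}); items (iv)--(v) combine finiteness of $\Omega^{\Pin^\pm}_*$ with the parity data in \cref{Table:PinEulerChar}, just as in \cref{cor:pin_even_dim}.

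Two minor points of presentation. First, the odd-$\chi$ $\Pin^-$-manifold in dimensions $8k$ is $\HP^{2k}$ (via $\Spin\to\Pin^-$), not something ``provided in \cref{cor:pin_even_dim}'' --- that corollary is the conclusion, not the source of examples; the examples live in \cref{Table:PinEulerChar}. Second, your claim that $v_m$ vanishes for $\Pin^+$-manifolds when $m\equiv3\pmod4$ and for $\Pin^-$-manifolds when $m\equiv2\pmod4$ is correct and matches \cref{thm:pinWuClasses}, but calling it ``a Wu-formula computation'' undersells it: the paper defers this to \cite{PinPaper}, and while the bookkeeping is not deep it is not a one-line consequence of $w_2=0$ or $w_2=w_1^2$ either. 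You acknowledge this in your final paragraph, so this is only a caution about tone.
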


\subsection{Implications for physics}

It is standard lore in the physics literature that unitary invertible field theories are classified by bordism invariants \cite{freedhopkins, yonekura}.
Mathematically, unitarity of TQFTs can be defined using dagger categories, see \cref{def:unitaryTQFT}.

Even though unitarity is a core principle in QFT, non-unitary QFTs also play an important role in the physics literature. 
In condensed matter physics, for instance, non-Hermitian systems have attracted attention for their unique physical properties~\cite{nonhermitian}.
In mathematical physics, important examples of non-unitary QFTs include topological twists of supersymmetric theories, which violate the spin-statistics theorem.
Moreover, dualities in the non-invertible symmetries (such as the classic Kramers-Wannier symmetry) naturally give rise to non-unitary operators~\cite{shao2023s, Li_2023}. Non-unitary invertible TQFTs also appear in the analysis of global anomalies in non-unitary quantum field theories~\cite{chang2021exotic,TachikawaYonekura}.
In \cref{sec:physics}, we explain that invertible TQFTs that are not necessarily unitary are classified by $\SKK$ groups, a perspective we learned from \cite{KST}.

To take symmetries of the TQFT into account, we take our bordisms to come equipped with certain tangential structures.
A specific collection of ten symmetry groups called the tenfold way \cite{altlandzirnbauer,kitaev2009periodic} is of special interest in condensed matter physics. 
Our theorems above together with some additional calculations in \cref{sec:otherStructures} and \cref{sec:physics} provide a complete list of $\SKKxi$ groups classifying not necessarily unitary invertible TQFTs in spacetime dimensions up to $5$ for the $10$ induced tangential structures and several others in \cref{tab:physics} and \cref{subsection:classification_physics}. 
A key role is played by what we call Kervaire TQFTs, which are certain invertible TQFTs with partition function equal to $Z(M) = (-1)^{\kerv_{\Z/2}(M)}$.
Kervaire TQFTs for the Kervaire semi-characteristic over $\Q$ have appeared in the literature \cite[Example 6.15]{freed2019lectures}, but we are unaware of previous work which emphasises the fact that the Kervaire semi-characteristic over $\Z/2$ gives a source of TQFTs for a much more general class of structures and dimensions.
In conclusion, our results generalise the computations in \cite[Section 9.3]{freedhopkins} from unitary to non-unitary invertible TQFTs.

Our computations of $\SKK$ groups classify `discrete' invertible TQFTs.
For some applications in physics, continuous invertible TQFTs (\cref{def:continuousITQFT}) are more relevant, see \cite[Sections 5.3 and 5.4]{freedhopkins} for further discussion.
In \cref{th:ctsITFT}, we prove that even though torsion elements of continuous and discrete theories agree, other elements are related to bordism groups with two vector fields instead.

\subsection*{Acknowledgements}
The authors would like to thank Matthias Kreck, Stephan Stolz and Peter Teichner for many discussions about their unpublished work \cite{KST}.
Additionally, we are grateful to Arun Debray, Daniel Galvin, Andrea Grigoletto, Achim Krause, Lukas M\"uller, Thomas Rot, Carmen Rovi, Julia Semikina, Alba Send\'on Blanco, Yuji Tachikawa and Rolf Vlierhuis for useful discussions. RH was supported by the Dutch Research Council (NWO) through the grant VI.Veni.212.170.
LS is supported by the Atlantic Association for Research in the Mathematical Sciences and the Simons Collaboration on Global Categorical Symmetries. 
LS is grateful to Dalhousie University for providing the facilities to carry out his research.
SV is supported by the Deutsche Forschungsgemeinschaft (DFG, German Research Foundation) under Germany’s Excellence Strategy – EXC-2047/1 – 390685813 and the Max Planck Institute for Mathematics. The authors thank the Vrije Universiteit Amsterdam and the Max Planck Institute for Mathematics in Bonn for their hospitality. 

\section{Tangential structures and the parity of the Euler characteristic}\label{section:tangential}

\subsection{Background on tangential structures on manifolds and cobordisms}\label{subsec:TangStruc}

The theory of smooth manifolds becomes more interesting if we consider an extra geometric structure on them, such as orientations, spin structures et cetera.
From a homotopy-theoretic point of view, such structures can be defined as lifts of the classifying map of the tangent bundle to some space $B_n$.
Let $BO_n$ be the classifying space of the orthogonal group of $\R^n$.
 Let $BO = BO_\infty$ be the colimit over $n$ induced by group homomorphisms $O_k \to O_{n}$ given by
\[
A \mapsto 
\begin{pmatrix}
A & 0\\
0 & 1
\end{pmatrix}.
\]

\begin{definition}
    An $n$\emph{-dimensional tangential structure} consists of a pointed topological space $B_{n}$ and a pointed map $\xi_{n}\colon B_{n} \to BO_{n}$. 
 A \emph{stable tangential structure} consists of a pointed topological space $B$ and a pointed map $\xi\colon B \to BO$.
    \end{definition}

\begin{definition}\label{def:xiStructure}
Let $\xi_n\colon B_n \to BO_n$ be an $n$-dimensional tangential structure.
    If $X \to BO_{n}$ classifies an $n$-dimensional vector bundle $E \to X$, a homotopy filling the triangle
\[
\begin{tikzcd}
     & B_n \ar[d, "\xi_n"]
    \\
    X \ar[r]\arrow[r, ""{name=U, below}]{} \ar[ru] & BO_n  \arrow[Rightarrow,from=1-2,to=U,shorten=4.5mm]
\end{tikzcd}
\]
is called a $\xi_{n}$-structure on $E$. By a $\xi_n$-structure on an $n$-dimensional manifold $M$ we mean $\xi_n$-structure on its tangent bundle $TM$ represented by some fixed classifying map $M\xrightarrow{TM}BO_n$
\[
\begin{tikzcd}
     & B_n \ar[d, "\xi_n"]
    \\
    M \ar[r,"TM"]\arrow[r, ""{name=U, below}]{} \ar[ru] & BO_n  \arrow[Rightarrow,from=1-2,to=U,shorten=4.5mm]
\end{tikzcd}.
\]
By a $\xi_n$-manifold we mean a manifold with a manifold with a chosen $\xi_n$-structure. 
\end{definition}

\begin{remark}\label{rm:fibrationOrNot}
    It is customary to assume that the map $\xi_n\colon B_n \to BO_n$ is a fibration. 
    In this case, it can be assumed without loss of generality that a $\xi$-structure is a map $M\to B_n$ which lifts the map to $BO_n$ on the nose.
\end{remark}

The following way of lowering the dimension of tangential structures will be employed throughout the paper.

\begin{definition}\label{def:homotopyPullbackB_k}
For any $n\leq \infty$, let $\xi_n \colon B_n\to BO_n$ be a tangential structure. For any $k \leq n$, we define $B_k$ to be the homotopy pullback of $\xi_n\colon B_{n} \rightarrow BO_{n}$ along the stabilisation map $BO_k \to BO_{n}$:
\[
\begin{tikzcd}
B_k \arrow[rd,phantom, "\lrcorner", very near start]\arrow[d, "\xi_k"] \arrow[r] & B_n \arrow[d, "\xi_n"] \\
BO_k \arrow[r]                   & BO_n .                 
\end{tikzcd}
\]
In particular, if we have a map $\xi \colon B \to BO$, we can define $\xi_k$ for every $k$ by homotopy pullback.
\end{definition}

If $\xi_n\colon B_{n} \to BO_{n}$ is a tangential structure, a $\xi_k$-structure on a $k$-dimensional vector bundle $E$ for some $k \leq n$ is equivalent to a $\xi_n$-structure on $E \oplus \underline{\R}^{n-k}$, see \cref{lm:stableEquiv}.
This justifies the following abuse of notation: we sometimes refer to $\xi_k$-structures on a $k$-dimensional vector bundle for some $k 
\leq n$ simply as a $\xi_n$-structure (or even $\xi$-structure).
In practice, a $k$-dimensional tangential structure $\xi_k \colon B_k\to BO_k$ often comes from a stable tangential structure:

\begin{definition}\label{def:stabilOFTangentialStrucutre}
     For $k\leq n\leq \infty$ a \emph{stabilisation} of $\xi_k\colon B_k\to BO_k$ is a tangential structure $\xi_n \colon B_n \to BO_n$ such that $\xi_{k}$ is the homotopy pullback of $\xi_n$ along $BO_{k} \to BO_n$ as in \cref{def:homotopyPullbackB_k}.
Similarly we say a structure $\xi_k\colon B_k\to BO_k$ is $i$ times stabilised {(\it once stabilised, twice stabilised,...)} if there exists a $(k+i)$-dimensional stabilisation  $\xi_{k+i}\colon B_{k+i}\to BO_{k+i}$.
\end{definition}

Strictly speaking, a stabilisation of a tangential structure is data, see \cref{ex:twoDiffStabilisationsOfStructures}.

\begin{example}
Some of the most commonly considered $\xi$-structures on a manifold stem from the Whitehead tower $\{(BO)_{\geq k}\}_k$ of connective covers of $BO$, where $(BO)_{\geq k}$ has the property that $\pi_i (BO_{\geq k}) = \pi_i (BO)$ for $i\geq k$ and 0 below. The first few connective covers are known under special names
\[BO\leftarrow BSO\leftarrow B\Spin \leftarrow B\String\leftarrow B\Fivebrane\leftarrow\cdots\]
The following well-known obstruction theoretic properties tell us whether manifolds admit a tangential structure lifting to the first stages of the Whitehead tower:
\begin{itemize}
    \item Every manifold $M$ has a canonical $BO$ structure.
    \item A manifold $M$ admits an $SO$-structure (orientation) if and only if the first Stiefel-Whitney class $w_1(M)$ vanishes.
    \item A manifold $M$ admits a $\Spin$ structure if and only if it admits an $SO$ structure and $w_2(M)$ vanishes.
    \item A manifold $M$ admits a $\String$ structure if and only if it admits a $\Spin$ structure and $\frac{1}{2}p_1(M)=0$.
    \item A manifold $M$ admits a $\Fivebrane$ structure if and only if it admits a $\String$ structure and $\frac{1}{6}p_2(M)=0$ \cite{fivebrane09}.
\end{itemize}
\end{example}

Next, we want to define $(n+1)$-dimensional bordism with $\xi_{n+1}$-structure between closed $n$-manifolds with $\xi_n$-structures.
For this, we will need to require compatibility between the $\xi_n$-structure on a boundary with the $\xi_{n+1}$-structure of the bordism.
The main subtlety that comes up is that we have to make a choice of in- versus outgoing normal vector field:

\begin{remark}\label{rem:ConventionMfldWithBoundary}
Given an $n$-dimensional manifold $M$ with boundary, a choice of normal vector field at the boundary induces a vector bundle isomorphism $TM|_{\partial M} \cong \underline{\R} \oplus T \partial M$. 
This normal vector field always exists and there are two homotopy classes of such isomorphisms, corresponding to the choice of in- versus outward normal.
Since a $\xi$-structure on $\underline{\R} \oplus T \partial M$ is equivalent to a $\xi$-structure on $T\partial M$, this in particular implies that a $\xi$-structure on $M$ and a choice of normal direction induces a $\xi$-structure on its boundary.
\end{remark}

\begin{convention}
\label{conv:xiStructureOnABoundary}
    If $M$ is an $n$-dimensional $\xi$-manifold with boundary, we take the $\xi$-structure on $\partial M$ corresponding to the outward-pointing normal. 
\end{convention}

The normal vector should get reversed for in- versus outgoing parts of the bordism, which we can express using orientation reversal:

\begin{definition}
\label{def:orientationreversal}
    Let $\xi_{n+1} \colon B_{n+1}\to BO_{n+1}$ be a tangential structure and let $M$ be a closed $n$-dimensional $\xi$-manifold. 
    Then $TM \oplus \underline{\R}$ has a canonical $\xi_{n+1}$-structure.
    We obtain a new $\xi_{n+1}$-structure by composing the previous homotopy with the self-homotopy of $TM \oplus \R$ induced by reflection in the $(n+1)^{st}$ coordinate $\id_{TM} \oplus - \id_{\underline{\R}}$.
    Using the fact that $\xi_{n+1}$-structures on $TM \oplus \R$ are equivalent to $\xi_n$-structures on $TM$, we obtain a $\xi_n$-structure $\overline{M}$ on $M$ that we will call the \emph{orientation reversal of $M$}.
\end{definition}

    We refer to \cref{rem:orientationreversaldetails} for details on the above definition.

\begin{definition}\label{def:xiBordism}
  Let $\xi_n \colon B_n \to BO_{n+1}$ be a tangential structure and let $M_0,M_1$ be (possibly empty) closed $n$-dimensional manifolds with $\xi$-structure.
    An \emph{$(n+1)$-dimensional $\xi$-bordism from $M_0$ to $M_1$} consists of an $(n+1)$-dimensional compact manifold $W$ with $\xi$-structure together with 
    \begin{enumerate}[label=(\roman*)]
        \item a splitting of $\partial W$ into two components $\partial W = \partial_{in} W \sqcup \partial_{out} W$;
        \item  $\xi$-diffeomorphisms $\overline{M_0} \cong \partial_{in} W$ and $M_1 \cong \partial_{out} W$.
    \end{enumerate} 
    We say that $M_0$ is \emph{$\xi$-bordant} to $M_1$ if there exists a $\xi$-bordism from $M_0$ to $M_1$.
\end{definition}

Here, a $\xi$-diffeomorphism is a diffeomorphism $f$ equipped with a datum specifying how the $\xi$-structures get transported under $df$, see \cref{def:xidiffeoTetrahedron}. 
Note that a manifold $W$ with boundary $M$ according to \cref{conv:xiStructureOnABoundary} is a bordism from $\varnothing$ to its boundary, or equivalently a bordism from $\ol{M}$ to $\varnothing$.

Next, we will define bordism groups for manifolds with $\xi$-structure.
In the generality we are working in, there is a subtlety: for a general (nonstable) tangential structure $\xi\colon B_{n+1} \to BO_{n+1}$, it might not be the case that being bordant is a symmetric relation:

\begin{remark}\label{rm:BordantIsNotSymmetric}
    Suppose $n = 1$ and $B_{n+1} = *$.
    Then a $\xi$-structure on a circle is a trivialisation of $T S^1 \oplus \underline{\R}$.
    There are $\Z$-many such framings and the boundary of the two-dimensional disc induces the framings corresponding to $\pm 1 \in \Z$ depending on whether we take it to be induced by the in- or outpointing normal vector on the boundary.
    In other words, the disc defines bordisms $\varnothing \to S^1$ and $S^1 \to \varnothing$, but these circles have different framings.\footnote{Another way to see that they must have different framings is to note that if they were the same their composition would give a framing of $S^2$, which does not exist.}
    Therefore, in defining the bordism groups we need to quotient out by the equivalence relation generated by bordism, i.e. we impose that $Y \sim Y'$ in the framed bordism group if and only if there exists a string of bordisms
\[\begin{tikzcd}
        Y\ar[r,"X_1"]& Y_1& Y_2 \ar[l,"X_2"']\ar[r,"X_3"]&\dots &Y'\ar[l,"X_n"'].
    \end{tikzcd}
  \]
\end{remark}

This motivates the following definition:

\begin{definition}
Let $\xi_{n+1}\colon B_{n+1} \to BO_{n+1}$ be a tangential structure.
    The \emph{$\xi$-bordism group} $\Omega^\xi_n$ of dimension $n$ is defined to be the set of closed $n$-dimensional $\xi$-manifolds modulo the equivalence relation generated by $(n+1)$-dimensional $\xi$-bordism. 
    The group operation is given by disjoint union.
\end{definition}

\begin{remark}
Given an $(n+1)$-dimensional tangential structure, we can only define the bordism group up to dimension $n$.
\end{remark}

The bordism groups are indeed groups, with the empty $n$-manifold being the neutral element.
Indeed, by definition of bordism and orientation reversal, the cylinder $M \times [0,1]$ on a $n$-dimensional $\xi$-manifold $M$ defines a bordism from $M \sqcup \overline{M}$ to $\varnothing$, therefore $[\overline{M}]$ is the inverse of $[M]$.

Given $\xi_n\colon B_n \rightarrow BO_n$, a $\xi_n$-structure on $S^n$ may or may not exist.
    For example, if $B_n$ is contractible, a $\xi_n$-structure is a framing of $TM$, and any sphere that is not also the underlying manifold of a Lie group (which happens only in dimensions 0, 1, 3 and 7) does not admit a framing.

\begin{definition}\label{def:boundingSphere}
    Let $\xi_{n+1}\colon B_{n+1} \to BO_{n+1}$ be a tangential structure. 
    Then the \emph{disc-bounding $n$-sphere} $S^n_b$ (or shortened to \emph{bounding sphere}) is the $\xi_{n+1}$-structure on $S^n$ defined uniquely by restricting the canonical $\xi_{n+1}$-structure on $D^{n+1}$ to the boundary (in accordance to the convention of the boundary being outward-pointing as specified in the \cref{conv:xiStructureOnABoundary}).
\end{definition}

Here, the canonical $\xi$-structure on $D^{n+1}$ consists of the map to the basepoint of $B_{n+1}$ and a (contractible) choice of nullhomotopy of the tangent bundle $D^{n+1} \to BO_{n+1}$.
The notion of bounding sphere in dimension $n$ only makes sense if we can talk about $(n+1)$-dimensional $\xi$-structures.

The following example of a bounding sphere additionally demonstrates the non-uniqueness of stabilisations of tangential structures, as discussed in \cref{def:stabilOFTangentialStrucutre}. 

\begin{example}\label{ex:twoDiffStabilisationsOfStructures}
    Consider $B_n = B\Spin_n$ and $B_n' = BSO_n \times B\Z/2$ with $\xi_n'$ its projection map to the first factor.
    Then we have $B_1 \cong B_1' \cong B\Z/2$ as tangential structures and so a $\xi_1$-structure on a one-dimensional manifold is the same as a $\xi'_1$-structure, which is a double cover.
    However, the $\xi$-bounding circle is the anti-periodic (connected) double cover of $S^1$, while the $\xi'$-bounding circle is the periodic (disconnected) double cover of $S^1$.
\end{example}

\begin{remark}\label{rm:boundingCircle}
    The term bounding sphere is slightly misleading because there can be other $\xi$-structures on $S^n$ that are not $\xi$-diffeomorphic to $S^n_b$ but are still trivial in the bordism group. 
    
    For a concrete example of this phenomenon, consider $B = B\Pin^+$ in dimension $n = 1$ (see \cref{def:Pin}).
    We have that $B\Pin^+_1 \cong B\Z/2 \times B\Z/2$, with the map to $BO_1$ projection onto one of the factors. 
    We see that a $B\Pin^+_1$ structure on a circle corresponds to a $\Z/2$ bundle, of which there are two, the period and the anti-periodic (M\"obius) bundle.
    Both of these are induced by $\Spin$ structures on the circle, and the anti-periodic $\Spin$ circle bounds the disc (\cref{ex:twoDiffStabilisationsOfStructures}).
    
    However, contrary to the $\Spin$ case, we have $\Omega^{\Pin^+}_1 = 0$ \cite{kirbytaylor} and so the periodic circle also bounds a two-dimensional $\Pin^+$-manifold, which happens to be the M\"obius strip.
\end{remark}

\begin{definition}\label{def:cob_cat}
\label{def:cob}
Let $\xi\colon B_n \to BO_{n}$ be a tangential structure.
    The \emph{$n$-dimensional $\xi$-bordism category} $\Cob^\xi_{n-1,n}$ is the category in which
    \begin{itemize}
        \item objects $(n-1)$-dimensional closed manifolds with $\xi$-structure;
        \item morphisms from $Y_1$ to $Y_2$ are $\xi$-bordisms up to $\xi$-diffeomorphism relative boundary.
    \end{itemize}
\end{definition}

    In order to make \cref{def:cob} rigorous, one needs to provide the $\xi$-structure on the composition of two bordisms.
Note that $\Cob^\xi_{n-1,n}$ is a symmetric monoidal category under disjoint union.
We refer to \cite{milnorhcobordism, tillmann1996classifying, kock2004frobenius} for details.

\begin{definition}[See also \cref{def:reversible}]\label{def:reversible_text}
    A category $\mathcal{C}$ is called \textit{reversible} (at every object) if whenever there is a morphism $f\colon X \to Y$, there also exists a morphism $f'\colon Y \to X$.
\end{definition}

The category $\Cob^\xi_{n-1,n}$ is in most cases reversible, but not always. A counterexample is given by the $\Cob^\xi_{1,2}$ for $\xi$ the 2-dimensional framing (see \cref{rm:BordantIsNotSymmetric}).
If $\xi$ is twice stabilised with respect to $n-1$, then $\Cob^\xi_{n-1,n}$ is guaranteed to be reversible because the bordisms admit orientation-reversal, see \cref{prop:turningBordismsAround}.

\subsection{Twice stabilised tangential structures}\label{subsec:twiceStabXi}

Let $\xi_n\colon B_n\to BO_n$ be a tangential structure.  In \cref{subsec:shortExactSeq}, we derive a short exact sequence (\ref{SKKseq}) relating the $\SKK_n^{\xi_n}$ group and the bordism group $\Omega_n^{\xi_n}$ of $n$-dimensional manifolds with a $\xi$-structure. For the bordism group $\Omega_n^{\xi_n}$ to be defined, our $\xi$-structure needs to be {\it once stabilised} with respect to $n$ (see \cref{def:stabilOFTangentialStrucutre}). In the case that $n$ is even this is a sufficient assumption to prove that the \ref{SKKseq} holds and that the kernel of the obvious map $\SKK_n^{\xi}\to \Omega_n^{\xi}$ is $\Z$. In the case that $n$ is odd, we assume that $\xi_n$ is {\it twice stabilised} in order to prove the \ref{SKKseq}. Under this assumption, we will see that the kernel of the map $\SKK_n^{\xi}\to \Omega_n^{\xi}$ is either $0$ or $\Z/2$ for $n$ odd (\cref{thm:TheExactSequence}.) From \cite{KST} we know that the latter is not true if we omit a higher stabilisation requirement in odd dimensions. However, their work shows that the weaker assumption that the sphere $S^{n+1}$ admits a $\xi$-structure and the category $\Cob^\xi_{n,n+1}$ is reversible (see \cref{def:reversible_text}) suffice to prove the \ref{SKKseq} and the surgery lemma
(\cref{lm:BordismEulerCharInSKK}), and with that our main result \cref{thm:iffForMfldInvariant} is still valid in this case. 
In the current work, we work with the twice stabilised assumption since \cite{KST} is currently unpublished.

\subsection{The parity of the Euler characteristic of a manifold}\label{subsec:EulerChar}

In \cref{sec:SKK} we will need to determine possible parity of Euler characteristic of manifolds admitting a given structure $\xi\colon B_n \to BO_{n}$. This will help us with calculation of the group $\SKK^{\xi}_n$.

This subsection proves the following basic fact.

\begin{lemma}
    Let $M$ be a closed manifold of dimension $2k$. Assume that the top Stiefel-Whitney class $w_{2k}(M)$ or the top Wu class $v_k(M)$ vanishes. Then the Euler characteristic $\chi(M)$ is even.
\end{lemma}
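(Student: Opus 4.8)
The plan is to recall two classical facts about closed manifolds of even dimension $2k$ and combine them. First, $\chi(M) \bmod 2$ is detected by the top Stiefel--Whitney number: by a standard result (Milnor--Stasheff), $\chi(M) \equiv \langle w_{2k}(M), [M]_{\Z/2}\rangle \pmod 2$, where $[M]_{\Z/2}$ is the $\Z/2$-fundamental class. This follows because the Euler class of $TM$ reduces mod $2$ to $w_{2k}(M)$, and the Euler class evaluated on the fundamental class computes $\chi(M)$ over $\Z$. So if $w_{2k}(M) = 0$, then $\chi(M)$ is even immediately.

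For the Wu class statement, I would use the Wu formula, which relates Stiefel--Whitney classes to the Wu classes $v_i(M)$ via the total Steenrod square: $w(M) = \Sq(v(M))$, so in particular $w_{2k}(M) = \sum_{i} \Sq^{2k-i} v_i(M)$. By definition of the Wu class, $v_i(M)$ is the unique class with $\langle v_i(M) \cup x, [M]\rangle = \langle \Sq^i x, [M]\rangle$ for all $x \in H^{2k-i}(M;\Z/2)$, and $v_i$ vanishes for $i > k$ (since $\Sq^i$ vanishes on cohomology of degree less than $i$). Hence $w_{2k}(M) = \sum_{i=0}^{k} \Sq^{2k-i} v_i(M)$. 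Now I would evaluate $\langle w_{2k}(M), [M]\rangle$ term by term: the term with index $i$ is $\langle \Sq^{2k-i} v_i(M), [M]\rangle = \langle v_{2k-i}(M) \cup v_i(M), [M]\rangle$ by the defining property of the Wu class (applied to $x = v_i(M)$, which has degree $i$, so $2k-i$ is the relevant Steenrod degree). Since $2k - i > k$ whenever $i < k$, we have $v_{2k-i}(M) = 0$ for all $i < k$, so the only surviving term is $i = k$, giving $\langle w_{2k}(M), [M]\rangle = \langle v_k(M)^2, [M]\rangle$. Therefore $\chi(M) \equiv \langle v_k(M)^2, [M]\rangle \pmod 2$, and if $v_k(M) = 0$ this is $0$.

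Putting these together: $\chi(M) \equiv \langle w_{2k}(M),[M]\rangle \equiv \langle v_k(M)^2, [M]\rangle \pmod 2$, so the vanishing of either $w_{2k}(M)$ or $v_k(M)$ forces $\chi(M)$ to be even. I would present the argument in exactly this order — first the Euler-class/top-Stiefel--Whitney identity, then the Wu-formula reduction expressing $w_{2k}$ in terms of $v_k^2$ modulo the vanishing higher Wu classes.

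The main obstacle, such as it is, is bookkeeping with the Wu formula and the degree constraints: one must be careful that $\Sq^j$ annihilates classes of degree less than $j$ (so that $\Sq^{2k-i} v_i = 0$ automatically when $2k - i > i$, i.e. $i < k$, anyway, but the cleaner route is via $v_{2k-i} = 0$), and that the Wu classes vanish above degree $k$ on a $2k$-manifold. None of this is deep, but it is the only place where an error could creep in; everything else is a direct citation of the Euler class $\equiv w_{2k}$ identity and the definition of the fundamental class pairing.
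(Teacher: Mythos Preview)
Your proposal is correct. The $w_{2k}$ case is handled identically to the paper. For the $v_k$ case, however, you take a different route: you reduce to the $w_{2k}$ case by showing $\langle w_{2k},[M]\rangle = \langle v_k^2,[M]\rangle$ via the Wu formula and the vanishing of $v_i$ for $i>k$. The paper instead argues through the intersection form: if $v_k=0$ then $\lambda(x,x)=\langle x^2,[M]\rangle=\langle v_k x,[M]\rangle=0$ for all $x\in H^k(M;\Z/2)$, so $\lambda$ is an even non-degenerate form over $\Z/2$, hence has even rank; Poincar\'e duality then gives $\chi(M)\equiv \dim H^k(M;\Z/2)\equiv 0\pmod 2$. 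The paper does note afterwards that $w_{2k}=v_k^2$, but only as a remark. Your argument is more direct and purely algebraic; the paper's intersection-form approach is more geometric and, importantly, sets up exactly the machinery (evenness of the intersection form and its rank parity) that is reused later in the paper for manifolds with boundary when proving that the Kervaire semi-characteristic splits the SKK sequence.
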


Recall that for an $n$-dimensional closed manifold $M$, the top Stiefel-Whitney class $w_n(M)$ relates to the parity of the Euler characteristic of the manifold in the sense that
\[\langle w_n(M),[M]\rangle\equiv\chi(M) \pmod{2},\]
where $[M]\in H_n(M;\Z/2)$ is the mod 2 fundamental class of the manifold.

For $n=2k$ even and $M$ a connected manifold, the cup product gives rise to a non-degenerate intersection form on the middle-dimensional cohomology 
\begin{align*}
   \lambda \colon H^k(M;\Z/2)&\times H^k(M;\Z/2) \rightarrow \Z/2\\
   (x, y) &\mapsto \langle x y,[M]\rangle.
\end{align*}

Since the cup square on $H^k(M;\Z/2)$ is represented by cupping with the top Wu class $v_k$, i.e. $x^2\equiv v_k x \in H^{2k}(M;\Z/2)\cong \Z/2$ for any $x \in H^k(M;\Z/2)$, we have that the form $\lambda(x,x)=0$ precisely if $v_k=0$.

\begin{definition}
    A $\Z$ or $\Z/2$ valued bilinear form $\lambda$ is called even if $\lambda(x,x)$ is even for every $x$.
\end{definition}
A non-degenerate even intersection form over $\Z/2$ is necessarily even ranked, i.e. the underlying vector space is even-dimensional.
By Poincar\'e duality we have that $rk_{\Z/2}H^k(M;\Z/2)\equiv \chi(M) \pmod{2}$. So we get that if $v_k=0$ then $\chi(M)$ is even. 
Indeed, the Wu formula dictates $w_{2k}=v_k^2$ in the cohomology of a manifold, so $v_k=0$ implies $w_{2k}$ vanishing, although the first condition is strictly stronger than the second, see the following example. 

\begin{example}\label{example_klein}
The Klein bottle has non-vanishing $v_1=w_1$, but has $w_2=0$.
\end{example}

\subsection{Relative Wu and Stiefel-Whitney classes}\label{sec:RelativeCharClasses}
The purpose of this section is to summarise some results about Wu and Stiefel-Whitney classes for manifolds with boundary.

Let $M$ be a manifold, possibly with boundary. Recall that the $\Z/2$ cohomology of the infinite Grassmannians $BO$ is generated as a ring by the Stiefel-Whitney classes. The total Stiefel-Whitney class is $w=1+w_1+w_2+\cdots$. Let $TM \colon M\to BO$ be the classifying map of the tangent bundle of $M$; for manifolds with boundary, this can be defined by restricting the tangent bundle of a double $DM$.
Then the total Stiefel-Whitney class of $M$ is $w(M)=TM^*(w)$. 

Recall that for $M$ a closed manifold, the total Wu class of the manifold $M$ is defined as $v(M)=1+v_1(M)+v_2(M)+\cdots$, where $v_{k}(M)$ is defined through the requirement that 
\[\langle x\smile v_k,[M]\rangle=\langle\Sq^{k}(x),[M]\rangle\quad \forall\; x\in H^{n-k}(M;\Z/2).\]
Stiefel-Whitney and Wu classes for closed manifolds come together in the formula
\[\Sq(v)=w,\]
where $\Sq=\Sq^0+\Sq^1+\Sq^2+\cdots$ is the total Steenrod square.

In a similar sense, we can now define Wu classes for manifolds with boundary. 

\begin{definition}[{\cite[§7, pg 532]{Kervaire57}}]\label{def:Wuclassboundary}
Let $M$ be a manifold with boundary.  
Given an integer $k$, let $$f\colon H^{n-k}(M,\partial M;\Z/2)\to \Z/2$$ be the homomorphism given by $x\mapsto \langle\Sq^{k}(x),[M,\partial M]\rangle$. Under the following composition of isomorphisms 
\[\Hom(H^{n-k}(M,\partial M;\Z/2),\Z/2)\cong H_{n-k}(M,\partial M;\Z/2)\cong H^{k}(M;\Z/2),\]
define the \textit{(absolute) Wu class} $v_{k}\in H^{k}(M;\Z/2) $ to be the image of the homomorphism $[f]\in\Hom(H^{n-k}(M,\partial M;\Z/2),\Z/2)$ in $H^{k}(M;\Z/2)$.
\end{definition}
We then have for $x \in H^{n-k}(M,\partial M;\Z/2)$, \[\langle\Sq^{k}x,[M,\partial M]\rangle=\langle x,PD(v_{k})\rangle=\langle x ,v_{k}\frown [M,\partial M]\rangle=\langle x\smile v_{k},[M,\partial M] \rangle.\] The total Wu class of a manifold is again given as the sum $v(M)=1+v_1+v_2+v_3+\cdots$. 
\begin{remark}
In \cite{Kervaire57} there is also a notion of \textit{relative} Wu class that lives in the relative cohomology $H^{k}(M,\partial M;\Z/2)$. This definition requires the vector bundle to be trivial on the boundary. We will not use this definition.
\end{remark}

The following is a generalisation of \cite[Lemma 7.3]{Kervaire57}.

\begin{lemma}\label{lem:KervaireInducedMap}
Let $M_1$ and $M_2$ be manifolds with boundary along with an identification $\partial M_1=\partial M_2$. Denote $W=M_1\cup M_2$. Then the inclusion
\[\iota\colon M_1 \hookrightarrow W\]
induces a map of the Wu classes $\iota^* v(W)=v(M_1)$.
\end{lemma}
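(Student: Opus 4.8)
The plan is to reduce the statement to a naturality property of the Wu class with respect to the inclusion $\iota\colon M_1 \hookrightarrow W$, using the definition of the Wu class of a manifold with boundary (\cref{def:Wuclassboundary}). First I would set up notation: write $n = \dim W = \dim M_1 = \dim M_2$, let $N = \partial M_1 = \partial M_2 \subset W$ be the separating hypersurface, and note that $(M_1, \partial M_1)$, $(M_2, \partial M_2)$ and $(W, \varnothing)$ all carry the relevant fundamental classes. The key compatibility is that the excision/collapse map relates the fundamental class of $W$ to that of the pair $(M_1, \partial M_1)$: collapsing $M_2$ to a point induces $H_n(W) \xrightarrow{\cong} H_n(W, M_2) \cong H_n(M_1, \partial M_1)$ sending $[W] \mapsto [M_1, \partial M_1]$ (mod $2$, so no orientation issues arise). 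Dually, there is a collapse map $q\colon W \to W/M_2 \simeq M_1/\partial M_1$ inducing $q^*\colon H^*(M_1, \partial M_1; \Z/2) \to H^*(W; \Z/2)$, and the inclusion $\iota$ factors the absolute cohomology $H^*(W) \to H^*(M_1)$ compatibly with the map $j^*\colon H^*(M_1, \partial M_1) \to H^*(M_1)$ from the long exact sequence of the pair.

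The main computation is then to unwind both sides against the defining property of the Wu class. For $v(W)$: by the closed-manifold definition, $\langle \Sq^k(y) \smile 1, [W]\rangle = \langle y \smile v_k(W), [W]\rangle$ for all $y \in H^{n-k}(W;\Z/2)$; I want to show $\iota^* v_k(W) = v_k(M_1)$ where the right side is the Wu class of \cref{def:Wuclassboundary}. Pair $\iota^* v_k(W)$ against an arbitrary $x \in H^{n-k}(M_1, \partial M_1;\Z/2)$: using the collapse map and the fact that $q^* x$ pulls back appropriately, $\langle \iota^* v_k(W) \smile x, [M_1,\partial M_1]\rangle = \langle v_k(W) \smile q^* x, [W] \rangle = \langle \Sq^k(q^* x), [W]\rangle$, where I use naturality of $\Sq^k$ and the closed Wu formula for $W$. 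Then $\langle \Sq^k(q^* x), [W]\rangle = \langle \Sq^k(x), [M_1, \partial M_1]\rangle$ again by naturality of Steenrod squares under $q$ and $q_*[W] = [M_1,\partial M_1]$. But by \cref{def:Wuclassboundary} the last quantity equals $\langle x \smile v_k(M_1), [M_1, \partial M_1]\rangle = \langle v_k(M_1) \smile x, [M_1, \partial M_1]\rangle$. Since the pairing $H^k(M_1;\Z/2) \times H^{n-k}(M_1,\partial M_1;\Z/2) \to \Z/2$ given by $(a, x) \mapsto \langle a \smile x, [M_1,\partial M_1]\rangle$ is perfect by Lefschetz–Poincaré duality, comparing for all $x$ forces $\iota^* v_k(W) = v_k(M_1)$, and summing over $k$ gives $\iota^* v(W) = v(M_1)$.

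The main obstacle I anticipate is bookkeeping the maps carefully enough that the two applications of naturality of $\Sq^k$ (once under $\iota^*$/$q^*$ landing in $H^*(W)$, once pushing the fundamental class forward under $q$) compose correctly — in particular checking that $q^* x$, for $x$ a relative class on $M_1$, really is the pullback one wants and that it restricts to $\iota^* v_k(W)$'s pairing partner, i.e. that the square relating the collapse map, the pair sequence of $(M_1,\partial M_1)$, and the inclusion $\iota$ commutes on (co)homology. Once that diagram is in place the rest is formal: naturality of Steenrod operations, $q_*[W] = [M_1,\partial M_1] \bmod 2$, and nondegeneracy of the Lefschetz duality pairing. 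I would also remark that the hypothesis is symmetric, so the same argument gives $v(M_2)$ via the other inclusion, though only the stated form is needed; and that everything is over $\Z/2$, so I never need to worry about signs or orientations, only about $\Z/2$-fundamental classes which always exist. One should double-check that \cref{def:Wuclassboundary} is being applied with the correct identification $\Hom(H^{n-k}(M,\partial M),\Z/2) \cong H^k(M)$ so that "image of $[f]$" matches the pairing characterization used above; this is exactly the content of the displayed formula following that definition, so I would cite it directly.
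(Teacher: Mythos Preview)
Your proposal is correct and follows essentially the same approach as the paper: both arguments use excision to relate $[W]$ and $[M_1,\partial M_1]$, naturality of Steenrod squares, and nondegeneracy of the Poincar\'e--Lefschetz pairing. The paper packages this into a commutative diagram built from cap products along the excision map $e\colon (M_1,\partial M_1)\to (W,M_2)$, whereas you unwind the same content as a direct pairing computation via the dual collapse map $q$; the bookkeeping you flag as the ``main obstacle'' is exactly what the paper's diagram encodes.
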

\begin{proof}
    Let $e\colon  (M_1,\partial M_1)\to (W,M_2)$ denote the map of pairs.
    Note that $e$ satisfies excision.  
    Consider the following diagram
\[\adjustbox{scale=0.77}{
\begin{tikzcd}
H^*(M_1;\Z/2) \arrow[d,"\cong"]  \arrow[d,"-\cap {[M_1,\partial M_1]}"']                              & H^*(W;\Z/2) \arrow[l,"\iota"] \arrow[d]\arrow[d,"-\cap {e_*[M_1,\partial M_1]}"'] & H^*(W;\Z/2) \arrow[d,"\cong"]\arrow[d,"-\cap {[W]}"'] \arrow[l, equal] \\
{H_{n-*}(M_1,\partial M_1;\Z/2)} \arrow[r,"\cong"] \arrow[r,"e"'] \arrow[d,"\cong"] & {H_{n-*}(W,M_2;\Z/2)} \arrow[d,"\cong"] & H_{n-*}(W; \Z/2) \arrow[d,"\cong"] \arrow[l]                       \\
{\Hom(H^{n-*}(M_1,\partial M_1;\Z/2),\Z/2)} \arrow[r,"\cong"] \arrow[r,"e"'] & {\Hom(H^{n-*}(W,M_2;\Z/2),\Z/2)} & {\Hom(H^{n-*}(W;\Z/2),\Z/2).} \arrow[l]               
\end{tikzcd}
}
\]
The top squares commute by naturality of the cap product and the bottom squares commute by naturality of the universal coefficient sequence.

The Wu classes $v(M_1)\in H^*(M_1;\Z/2)$, $v(W)\in H^*(W;\Z/2)$ are preimages of the following classes
\begin{align*}
    \left<\Sq^k(-),[M_1,\partial M_1] \right>&\in \Hom(H^{n-*}(M_1,\partial M_1;\Z/2),\Z/2)\\
    \left<\Sq^k(-),[W] \right>&\in \Hom(H^{n-*}(W;\Z/2),\Z/2)
\end{align*}
respectively.
From the naturality of the Steenrod squares, we can then deduce that $\iota(v(W))=v(M_1)$.
\end{proof}

As a corollary we have:
\begin{corollary}
    For a manifold $M$ possibly with boundary we have
\[\Sq(v(M))=w(M).\]
\end{corollary}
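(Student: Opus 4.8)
The plan is to deduce the relation $\Sq(v(M)) = w(M)$ for a manifold $M$ possibly with boundary from the corresponding relation on a closed manifold together with \cref{lem:KervaireInducedMap}. The natural choice of closed manifold is the double $DM = M \cup_{\partial M} M$, which contains two copies of $M$ as submanifolds-with-boundary glued along $\partial M$.

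First I would invoke the classical fact, already recalled in the excerpt, that for the \emph{closed} manifold $DM$ the total Wu and Stiefel--Whitney classes satisfy $\Sq(v(DM)) = w(DM)$. Next, let $\iota \colon M \hookrightarrow DM$ be the inclusion of one of the two copies. By \cref{lem:KervaireInducedMap} (applied with $M_1 = M_2 = M$ and $W = DM$) we have $\iota^* v(DM) = v(M)$. On the Stiefel--Whitney side, recall that $w(M)$ was \emph{defined} in \cref{sec:RelativeCharClasses} precisely by restricting the tangent bundle of a double: the classifying map $TM \colon M \to BO$ factors as the composite $M \xrightarrow{\iota} DM \xrightarrow{T(DM)} BO$, so $w(M) = \iota^* w(DM)$. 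Applying $\iota^*$ to the closed-manifold identity and using naturality of the total Steenrod square $\Sq$ (so that $\iota^* \Sq(v(DM)) = \Sq(\iota^* v(DM)) = \Sq(v(M))$) then gives
\[
\Sq(v(M)) = \iota^*\Sq(v(DM)) = \iota^* w(DM) = w(M),
\]
as desired.

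The only point requiring a little care — and the main obstacle, such as it is — is checking that the tangent bundle of $M$ as used to define $w(M)$ really does agree with the restriction $\iota^* T(DM)$, i.e. that the double $DM$ can be equipped with a smooth structure near $\partial M$ for which $T(DM)|_M$ is the tangent bundle of $M$ (stabilised appropriately); this is the standard collar-neighbourhood construction and is implicitly the definition adopted in the paragraph preceding \cref{def:Wuclassboundary}, so no new work is needed. Everything else is formal naturality of $\Sq$ and $\iota^*$ together with the already-proven \cref{lem:KervaireInducedMap}.
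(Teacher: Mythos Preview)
Your proposal is correct and follows essentially the same approach as the paper's own proof: reduce to the closed case via the double $DM$, use \cref{lem:KervaireInducedMap} to pull back the Wu class, use the definition of $w(M)$ via restriction from $DM$ to pull back the Stiefel--Whitney class, and conclude by naturality of $\Sq$. The paper presents this as a one-line chain of equalities, while you spell out the justification for $w(M) = \iota^* w(DM)$ more carefully, but the argument is the same.
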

\begin{proof}
    The closed case is classical. The case with boundary follows from \cref{lem:KervaireInducedMap}:
    \[Sq(v(M))=Sq(\iota^*v(DM))=\iota^*Sq(v(DM))=\iota^*(w(DM))=w(M).\qedhere \]
\end{proof}

For the future, we record the following:
\begin{corollary}\label{cor:closedWuboundaryWu}
Let $\xi\colon B_{n+2}\to BO_{n+2}$ be a tangential structure. Then if all closed $(n+1)$-dimensional $\xi$-manifolds for $n$ odd have vanishing top Wu class $v_{\frac{n+1}{2}}$, then for all $(n+1)$-dimensional $\xi$-manifolds with boundary, the top relative Wu class (see \cref{def:Wuclassboundary}) vanishes as well.
\end{corollary}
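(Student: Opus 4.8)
The plan is to reduce the bounded case to the closed case by doubling, and then to transport the vanishing of the top Wu class back to $M$ along the inclusion using \cref{lem:KervaireInducedMap}. So let $M$ be an $(n+1)$-dimensional $\xi$-manifold with boundary $Y=\partial M$, with $n$ odd, and equip $Y$ with the $\xi_n$-structure induced by the outward normal (\cref{conv:xiStructureOnABoundary}). The one delicate step is to build a \emph{closed} $(n+1)$-dimensional $\xi$-manifold $W$ containing $M$ as a codimension-zero submanifold with $W=M\cup_{Y}M'$ for some $M'$. I would take $W$ to be the double, with $\xi$-structure obtained as follows. Because $\xi$ is defined up to dimension $n+2$, the orientation reversal $\overline{M}$ of $M$ is a well-defined $(n+1)$-dimensional $\xi$-manifold (one reflects the induced $\xi_{n+2}$-structure on $TM\oplus\underline{\R}$ in the last coordinate, exactly as in \cref{def:orientationreversal}), and $\partial\overline{M}=\overline{Y}$. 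Reading $M$ as a $\xi$-bordism $\varnothing\to Y$ and $\overline{M}$ as a $\xi$-bordism $Y\to\varnothing$ — which is legitimate because $\Cob^\xi_{n,n+1}$ is reversible under the twice-stabilised hypothesis (\cref{prop:turningBordismsAround}) — their composite $W:=M\cup_Y\overline{M}$ is a closed $(n+1)$-dimensional $\xi$-manifold: the $\xi_{n+1}$-structures on the two halves agree along the collar of $Y$ precisely because crossing the seam reflects the collar direction, turning the boundary structure induced by $M$ into its orientation reversal and so matching $\partial\overline{M}=\overline{Y}$. As an underlying smooth manifold $W$ is just the double $M\cup_Y M$, and in particular $M\hookrightarrow W$ is a codimension-zero submanifold.

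With $W$ in hand the rest is formal. Since $W$ is a closed $(n+1)$-dimensional $\xi$-manifold and $n+1$ is even, the hypothesis gives $v_{\frac{n+1}{2}}(W)=0$. On the other hand \cref{lem:KervaireInducedMap} applies to $W=M\cup_Y\overline{M}$ — the lemma only involves the underlying manifolds and their $\Z/2$-cohomology, not the $\xi$-structures — so the inclusion $\iota\colon M\hookrightarrow W$ satisfies $\iota^*v(W)=v(M)$. Comparing the two total Wu classes in degree $\tfrac{n+1}{2}$ yields
\[
v_{\frac{n+1}{2}}(M)=\iota^*\,v_{\frac{n+1}{2}}(W)=\iota^*(0)=0
\]
for the Wu class of \cref{def:Wuclassboundary}, which is exactly the claim.

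The only place where real care is needed is the construction of the $\xi$-structure on the double: one must check that the orientation reversal and the collar gluing are compatible, so that $W$ is genuinely a $\xi$-manifold restricting to the given structure on $M$. This is precisely the orientation-reversal and reversibility package already established in the paper (\cref{def:orientationreversal}, \cref{prop:turningBordismsAround}), and it is what forces the hypothesis that $\xi$ be defined in dimension $n+2$ rather than merely in dimension $n+1$; everything after that is the one-line cohomology comparison above.
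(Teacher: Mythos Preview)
Your proof is correct and follows essentially the same approach as the paper: cap off $M$ to a closed $\xi$-manifold using reversibility, invoke the hypothesis, and pull back via \cref{lem:KervaireInducedMap}. The only cosmetic difference is that you explicitly take $M'=\overline{M}$ (the double), whereas the paper simply cites \cref{prop:turningBordismsAround} to obtain some $M'$ with $\partial M'=\overline{\partial M}$; since the proof of that proposition is precisely the orientation-reversal construction you spell out, you have just unpacked the reference.
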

\begin{proof}
     Let $M$ be an $(n+1)$-dimensional $\xi$-manifold with boundary. Then there exists a manifold $M'$ with boundary $\ol{\partial M}$ (see \cref{prop:turningBordismsAround}). Then by assumption $v_{\frac{n+1}{2}}(M\cup M')$ vanishes. By \cref{lem:KervaireInducedMap} we get $v_{\frac{n+1}{2}}(M)=0$.
\end{proof}

\subsection{\texorpdfstring{$k$}{k}-orientability}\label{section:k-orientability}
The following sequence of tangential structures plays a central role in this paper.

\begin{definition}[$k$-orientability]\label{def:k-orientable}
Let $\BOr_k$ be the homotopy fibre of the map $$BO\xrightarrow{(w_{2^0},w_{2^1},\cdots ,w_{2^{k-1}})} \prod_{i=0}^{k-1}K(\Z/2,2^i).$$ Manifolds with a $\BOr_k$-structure are called {\it k-orientable}. 

\end{definition}
We sometimes use $\Or_k$ instead of $\BOr_k$ in the superscript like in $\Omega^{\Or_k}_n$ to make it consistent with the classical notation $\Omega^{O}_n,\Omega^{SO}_n$ etc.

A manifold is $k$-orientable if and only if $w_i(M)=0$ for $1\leq i<2^k$, since the vanishing of Stiefel-Whitney classes degrees $2^0,2^1, \cdots 2^{k-1}$  ensures the vanishing of all Stiefel-Whitney classes up to degree $2^k-1$. The concept of $k$-orientability was introduced by the first author in \cite{ReneeEulerChar}. 
Every manifold $M$ is $0$-orientable, i.e. $BO = \BOr_0$.
A manifold is 1-orientable if and only if it is orientable, i.e. $BSO = \BOr_1$.
A manifold is 2-orientable if it has vanishing $w_1,w_2$ and this is equivalent to having a spin structure, so $\BOr_2=B\Spin$.

A 3-orientable manifold is {\it not} the same as a manifold with a $\String$ structure. Every $\String$ manifold has vanishing $w_1,\cdots ,w_4$, which implies that there is a map $B\String\to \BOr_3$ over $BO$, hence every $\String$ manifold is 3-orientable (this will be used in \cref{ex:BstrigSplitting}). The converse however is not true. An example of a manifold that is 3-orientable but not $\String$ is given by $\mathbb{CP}^3$ (see \cite{douglas2011homological}).
Generally, a lift of the tangent bundle to the $k^{th}$ {\it non-trivial} connective cover of $BO$ (occurring at dimensions 0,1,2,4 mod 8) implies that a manifold is $k$-orientable, as will be discussed below. 
It was shown in \cite{ReneeEulerChar} that $k$-orientable manifolds have the property that many Wu classes vanish.

\begin{theorem}[\cite{ReneeEulerChar} Theorem 5.2]\label{morewus}
Let $M^n$ be an $n$-dimensional manifold that is $k$-orientable, then Wu classes $v_\ell$ vanish for all $\ell$ such that $2^k \nmid \ell$.
\end{theorem}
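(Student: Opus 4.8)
The plan is to transfer the statement from $M$ to the classifying space $\BOr_k$ and reduce it to a computation with Steenrod squares. By Wu's theorem $\Sq(v(M)) = w(M)$ for the total Steenrod square $\Sq = \sum_{i\ge 0}\Sq^i$ (this holds also when $M$ has boundary, by the corollary to \cref{lem:KervaireInducedMap}). Since $\Sq$ is a ring homomorphism with $\Sq^0 = \id$, it is invertible on $\prod_j H^j(M;\Z/2)$ with inverse $\overline{\Sq} = \sum_{i\ge 0}\chi(\Sq^i)$, where $\chi$ is the antipode of the Steenrod algebra; hence $v(M) = \overline{\Sq}(w(M))$. Because $M$ is $k$-orientable the classifying map of $TM$ lifts through $\BOr_k$, so $w(M) = f^*\bar w$ where $f\colon M\to\BOr_k$ and $\bar w\in H^*(\BOr_k;\Z/2)$ is the image of the total Stiefel--Whitney class; as the $\chi(\Sq^i)$ are natural operations this gives $v(M) = f^*\bigl(\overline{\Sq}(\bar w)\bigr)$. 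It therefore suffices to prove the universal statement that $\overline{\Sq}(\bar w)$ is concentrated in degrees divisible by $2^k$. (Via the splitting principle $\overline{\Sq}(\bar w) = \prod_i\bigl(1 + x_i + x_i^2 + x_i^4 + \cdots\bigr)$ with $\deg x_i = 1$, so that $\overline{\Sq}(\bar w)$ is the sum of all monomial symmetric functions whose exponents are powers of two --- which at least makes the assertion plausible.)

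The second step records how the Stiefel--Whitney classes behave in $H^*(\BOr_k;\Z/2)$. By construction of $\BOr_k$ as the homotopy fibre of $(w_{2^0},\dots,w_{2^{k-1}})$ the classes $\bar w_{2^i}$ vanish for $0\le i<k$, and Wu's formula $\Sq^{a}(w_b) = \sum_{t}\binom{b+t-a-1}{t}w_{a-t}w_{b+t}$ expresses each $w_j$ with $j<2^k$ not a power of two as $\Sq^{\,j-2^a}(w_{2^a})$ (with $2^a$ the largest power of two $\le j$, and $\binom{2^a-1}{j-2^a}$ odd) plus a sum of products of Stiefel--Whitney classes of strictly smaller index; an induction on $j$ then shows $\bar w_j = 0$ for all $1\le j<2^k$. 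Feeding this back into Wu's formula produces the relations $\bar w_{2^k+j} = \Sq^j(\bar w_{2^k})$ for $0\le j<2^k$ (since $\binom{2^k-1}{j}$ is odd and the remaining terms of $\Sq^j(w_{2^k})$ carry a factor $w_i$ with $1\le i<2^k$), and more generally $2^k$-periodic relations expressing each $\bar w_{2^km+j}$ with $0<j<2^k$ through Steenrod operations applied to $\bar w$ in degrees which are multiples of $2^k$.

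The heart of the argument is then to construct classes $u_0=1$, $u_1=\bar w_{2^k}$, $u_2$, $u_3,\dots$ with $u_m\in H^{2^km}(\BOr_k;\Z/2)$ such that $\Sq\bigl(\sum_m u_m\bigr) = \bar w$; applying $\overline{\Sq}$ then shows $\overline{\Sq}(\bar w) = \sum_m u_m$ is concentrated in degrees divisible by $2^k$, which by the first step forces $v_\ell(M) = 0$ whenever $2^k\nmid\ell$. One defines $u_m$ recursively as the discrepancy in degree $2^km$ between $\bar w$ and $\Sq(\sum_{i<m}u_i)$; the real content is to check that once these two power series agree in all degrees $\le 2^km$ they agree automatically in degrees $2^km<\ast<2^k(m+1)$. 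This is a bookkeeping argument combining the Cartan formula, the Adem relations, and the $2^k$-periodic Wu relations from the second step, and I expect it to be the only non-formal part: in effect one must show that $\bar w$ is a total Steenrod square of a class supported in degrees divisible by $2^k$. This recovers \cite[Theorem~5.2]{ReneeEulerChar}.
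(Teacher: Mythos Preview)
The paper does not give its own proof of this theorem; it is quoted from \cite{ReneeEulerChar} without argument. Your proposal therefore has to stand on its own.

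Steps~1 and~2 are fine. The reduction to showing that $\overline{\Sq}(\bar w)\in H^*(\BOr_k;\Z/2)$ is concentrated in degrees divisible by $2^k$ is correct, and the relations $\bar w_j=0$ for $0<j<2^k$ and $\bar w_{2^k+j}=\Sq^j\bar w_{2^k}$ for $0\le j<2^k$ do follow from Wu's formula as you say.

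The genuine gap is Step~3, and it is not merely missing bookkeeping. You assert that once $\Sq\bigl(\sum_{i\le m}u_i\bigr)$ agrees with $\bar w$ through degree $2^km$, agreement in the next $2^k-1$ degrees is automatic from ``the Cartan formula, the Adem relations, and the $2^k$-periodic Wu relations''. But already for $k=2$, $m=2$ this fails if one uses only the relations you have recorded. A direct computation from $\Sq(v)=w$, setting $w_1=w_2=w_3=0$, gives
\[
v_{10}\equiv w_5^{\,2}\pmod{(w_1,w_2,w_3)},
\]
which is nonzero in the polynomial ring $\Z/2[w_4,w_5,\dots]$. Equivalently, with $u_1=\bar w_4$ and $u_2=\bar w_8+\bar w_4^{\,2}$ one finds $\Sq^6(u_1)+\Sq^2(u_2)=\bar w_{10}+\bar w_5^{\,2}$, not $\bar w_{10}$. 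The inductive step in degree~$10$ therefore needs the \emph{additional} relation $\bar w_5=0$ in $H^*(\BOr_2)=H^*(B\Spin)$. That relation is true---for instance because $w_5\equiv\Sq^2\Sq^1 w_2$ modulo the ideal $(w_1,w_2)$, or because $\bar w_4$ admits an integral lift on $B\Spin$ so $\Sq^1\bar w_4=0$---but it is not among your Step~2 relations, and analogous hidden relations in $H^*(\BOr_k)$ are required at every subsequent stage of the induction. Identifying and organising all of these is precisely the content of the theorem; your own phrase ``I expect it to be the only non-formal part'' is accurate, and that part is not done.
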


Following the reasoning in \cref{subsec:EulerChar} we obtain the following Corollary of \cref{morewus}.
\begin{corollary}[\cite{ReneeEulerChar} Corollary 5.3]\label{thm:k-orientEulerChar}
A $k$-orientable manifold $M$ has an even Euler characteristic unless its dimension is a multiple of $2^{k+1}$. 
\end{corollary}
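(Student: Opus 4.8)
The plan is to combine \cref{morewus} with the elementary observation recorded in \cref{subsec:EulerChar} that vanishing of the top Wu class forces even Euler characteristic. First I would dispose of the odd-dimensional case: if $\dim M = n$ is odd, then Poincar\'e duality gives $\chi(M) = 0$, which is even, and $n$ is certainly not a multiple of $2^{k+1}$, so there is nothing to prove. Hence I may assume $n = 2m$ is even.

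Next, I would look at the top Wu class $v_m(M) \in H^m(M;\Z/2)$. The contrapositive of the desired statement is: if $n = 2m$ is \emph{not} divisible by $2^{k+1}$, then $\chi(M)$ is even. Since $n = 2m$, the condition $2^{k+1} \nmid n$ is equivalent to $2^{k} \nmid m$. So suppose $2^k \nmid m$. By \cref{morewus}, applied with $\ell = m$, the Wu class $v_m(M)$ vanishes.

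Finally, I would invoke the discussion preceding \cref{example_klein}: for a closed manifold $M$ of even dimension $2m$, the mod $2$ intersection form $\lambda$ on $H^m(M;\Z/2)$ is non-degenerate, and $\lambda(x,x) = \langle v_m \smile x, [M]\rangle$, so $v_m = 0$ implies $\lambda$ is even; a non-degenerate even form over $\Z/2$ has even-dimensional underlying space, and by Poincar\'e duality $\dim_{\Z/2} H^m(M;\Z/2) \equiv \chi(M) \pmod 2$, whence $\chi(M)$ is even. This completes the contrapositive, and therefore the corollary.

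\textbf{Main obstacle.} There is essentially no obstacle here: the corollary is a formal consequence of \cref{morewus} together with material already set up in \cref{subsec:EulerChar}. The only point requiring a sentence of care is making explicit the arithmetic translation ``$2^{k+1}\nmid n \iff 2^k \nmid m$ when $n=2m$'' and separating off the trivial odd-dimensional case before applying \cref{morewus}.
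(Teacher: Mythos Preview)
Your proposal is correct and follows exactly the approach the paper indicates: combine \cref{morewus} (vanishing of $v_m$ when $2^k\nmid m$) with the observation from \cref{subsec:EulerChar} that $v_{n/2}=0$ forces even $\chi$, after disposing of the odd-dimensional case. The paper itself does not spell out a proof beyond pointing to these two ingredients, so your write-up is if anything more explicit than what appears there.
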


\begin{table}[h!]
\begin{tabular}{|clcc|}
	\hline
	$\BOr_k$ & corresponds to & \bf{dimensions with} 	& \bf{example $k$-orientable}\\
	& & \bf{odd $\chi$ possible} & \bfseries{manifolds with odd $\chi$} \\
	\hline
	0 & any manifold & $2m$ & $\mathbb{RP}^{2m}$\\
	1 & orientable manifolds & $4m$ & $\mathbb{CP}^{2m}$\\
	2 & spinnable manifolds & $8m$ & $\mathbb{HP}^{2m}$\\
	3 & implied by stringable & $16m$ & $(\mathbb{OP}^2)^m$\\
	4 & implied by fivebraneable & $32m$ & (unknown) $\mathcal{X}^{32m}$ \\	\hline
\end{tabular}
\caption{Possible dimensions with odd Euler characteristic $k$-orientable manifolds, with example manifolds if known.}
\label{Table:k-orientable}
\end{table}
Implications of this result are summarised in \cref{Table:k-orientable}.
In particular, whether there exists a manifold $\mathcal{X}^{32m}$ with odd Euler characteristic that is $k$-orientable for $k\geq 4$, is an open question. If it does, its dimension is a multiple of 32. Note that in particular, any 8-connected manifold will be 4-orientable.

\begin{openQuestion}\label{OpQues:DoesSuchOddEulerCharExist}
Does there exist a 4-orientable manifold $\mathcal{X}^{32m}$ with odd Euler characteristic? 
More generally, does there exist a $k$-orientable $2^{k+1}m$-dimensional manifold for $k\geq 4$ with odd Euler characteristic?
\end{openQuestion}
This question was posed in \cite{ReneeEulerChar}, and discussed further in \cite{ReneeRosenfeld}.
At the moment of writing it remains open, and as a consequence, some questions in this paper will remain unresolved.

We can state an analogous theorem to \cref{morewus} for manifolds with boundary, whose proof is a direct application of \cref{cor:closedWuboundaryWu}.
\begin{theorem}\label{lm:ReneeKOrientabilityAndWuClassInRelativeCase}
Let $M^n$ be an $n$-dimensional manifold, possibly with boundary, that is $k$-orientable, then the Wu classes $v_l$ vanish for all $l$ such that $2^k \nmid l$.
\end{theorem}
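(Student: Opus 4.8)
The plan is to deduce this statement directly from \cref{cor:closedWuboundaryWu} together with \cref{morewus}. The logical chain is short: \cref{morewus} is a statement about \emph{closed} $k$-orientable manifolds, asserting that $v_\ell$ vanishes whenever $2^k\nmid\ell$; \cref{cor:closedWuboundaryWu} is precisely the bridge that upgrades a vanishing statement about the top Wu class of all closed $(n+1)$-dimensional $\xi$-manifolds to the corresponding statement for $\xi$-manifolds with boundary. So the first step is to set $\xi = \BOr_k$ (suitably stabilised so that the hypotheses of \cref{cor:closedWuboundaryWu} are met — i.e.\ use the $(n+2)$-dimensional version $\BOr_k \to BO_{n+2}$ obtained from \cref{def:homotopyPullbackB_k}, which is legitimate since $\BOr_k$ is a stable structure). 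Then, for each $\ell$ with $2^k\nmid\ell$ and each $n+1$ (i.e.\ running over manifolds of each dimension in which $v_\ell$ is a top class, namely when the manifold has dimension $2\ell$), apply \cref{morewus} to conclude the closed case and then \cref{cor:closedWuboundaryWu} to pass to manifolds with boundary.

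Second, I would note the one subtlety: \cref{cor:closedWuboundaryWu} is phrased for the \emph{top} Wu class $v_{(n+1)/2}$ of an $(n+1)$-manifold, whereas \cref{morewus} and the claimed \cref{lm:ReneeKOrientabilityAndWuClassInRelativeCase} speak about an arbitrary $v_\ell$ of an $n$-manifold. These are reconciled by observing that $v_\ell$ of an $n$-manifold $M^n$ is only potentially nonzero when $\ell \le n/2$ on dimensional grounds, and when $\ell < n/2$ one can reduce to the top-class case by a standard trick: for any $m$-dimensional $\xi$-manifold $N$ (closed, or with boundary), the Wu class $v_\ell(N)$ is detected/controlled by pairing against $H^{m-\ell}(N;\Z/2)$ via Steenrod squares, and cutting down to the top case amounts to looking at submanifolds or to the observation that $v_\ell$ of $N$ agrees with the pullback of $v_\ell$ of $N\times (\text{something})$ of twice the dimension, landing in the top degree — but more simply, one invokes that $v_\ell$ is a characteristic class of the stable tangent bundle via $\Sq(v)=w$, so its vanishing for all closed $2\ell$-dimensional $k$-orientable manifolds (given by \cref{morewus}) combined with \cref{lem:KervaireInducedMap} directly transfers to the bounded case in every dimension. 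Since \cref{lem:KervaireInducedMap} already gives $\iota^* v(W) = v(M_1)$ for $W = M_1 \cup M_2$ and is stated for arbitrary manifolds with boundary, the cleanest route is in fact to run the exact argument of the proof of \cref{cor:closedWuboundaryWu} verbatim: given $M$ an $n$-dimensional $k$-orientable manifold with boundary, use \cref{prop:turningBordismsAround} to find $M'$ with $\partial M' \cong \overline{\partial M}$ (endowed with the induced $\BOr_k$-structure, which exists since $k$-orientability is detected by Stiefel–Whitney classes), glue to form the closed $k$-orientable manifold $W = M \cup M'$, apply \cref{morewus} to get $v_\ell(W) = 0$ for all $\ell$ with $2^k\nmid\ell$, and then pull back along $\iota\colon M\hookrightarrow W$ using \cref{lem:KervaireInducedMap} to conclude $v_\ell(M) = 0$.

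The main obstacle — really the only thing requiring care — is checking that $M'$ can be chosen with a compatible $\BOr_k$-structure: \cref{prop:turningBordismsAround} produces a $\xi$-manifold bounding $\overline{\partial M}$ provided $\xi$ is twice stabilised with respect to $\dim \partial M = n-1$, which holds here because $\BOr_k$ is stable, so this is automatic. One should also confirm that gluing two $\BOr_k$-manifolds along a common boundary yields a $\BOr_k$-manifold, which again follows since $k$-orientability is a condition on Stiefel–Whitney classes (equivalently, $\BOr_k \to BO$ is a map of infinite loop spaces and the composite $W \to BO$ lifts) — alternatively this is subsumed in the composition-of-bordisms discussion referenced after \cref{def:cob}. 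With those bookkeeping points dispatched, the proof is a one-line application of \cref{cor:closedWuboundaryWu} (or equivalently of \cref{lem:KervaireInducedMap}) to \cref{morewus}, exactly as indicated in the statement.
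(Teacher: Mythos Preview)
Your proposal is correct and takes essentially the same approach as the paper: the paper simply declares the result ``a direct application of \cref{cor:closedWuboundaryWu}'', and your final argument (glue $M$ to some $M'$ along $\overline{\partial M}$ using \cref{prop:turningBordismsAround}, apply \cref{morewus} to the closed $k$-orientable manifold $M\cup M'$, then pull back via \cref{lem:KervaireInducedMap}) is exactly that. The detour in your second paragraph worrying about ``top versus arbitrary'' Wu classes is unnecessary, since \cref{lem:KervaireInducedMap} already concerns the \emph{total} Wu class, but you correctly recognise this and land on the clean argument.
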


\begin{remark}
It is important to stress that, while we can generalise the results about vanishing Wu classes to the relative setting, this does not imply that the parity of the Euler characteristic is constrained for $k$-orientable manifolds with boundary, i.e. we do not have a relative version of \cref{thm:k-orientEulerChar}. This is because the top Stiefel-Whitney class does not correspond to the parity of the Euler characteristic for a manifold with boundary. Indeed, any even-dimensional disc $D^n$ is contractible and therefore admits a $\Or_k$-structure for any $k$, but it has Euler characteristic 1.
\end{remark}

\subsubsection{Relationship between \texorpdfstring{$k$}{k}-orientability and \texorpdfstring{$b$}{b}-parallelisability}\label{subsec:parallelisable}

Define $(BO)_{>b} = (BO)_{\geq b+1}$ to be a space over $BO$ with vanishing homotopy groups below $b+1$ and the given map inducing an isomorphism on homotopy groups of degree $\geq b+1$. It is called a $(b+1)$-parallelisable structure or a $b$-connective cover. One can ask for which $k,b$ a map $(BO)_{\geq b+1}\to \BOr_k$ over $BO$ exists, in particular what is the largest such $k$ for a given $b$. Stong computed the persistence of Stiefel-Whitney classes in the stages in the Whitehead tower of $BO$ \cite{Stong63}, see also \cite[pg. 9]{ReneeEulerChar}.

\begin{proposition}[\cite{Stong63}]
Fix $b$ an integer. Define $$\phi(b)= \left|\left\{\; s\; | \;1\leq s\leq b, \; s\equiv 0,1,2,4 \pmod{8}\right\}\right|.$$
Then the reduced cohomology ring $\widetilde{H}^*((BO)_{\geq b+1};\Z/2)$ is trivial for $*<2^{\phi(b)}$. 
\end{proposition}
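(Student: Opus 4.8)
The plan is to recognise this as the ``persistence of Stiefel--Whitney classes'' part of Stong's computation of $H^*((BO)_{\ge b+1};\Z/2)$ and to sketch the argument through the Whitehead tower of $BO$ together with the Wu formula.

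First I would set up the reduction. By Bott periodicity $\pi_s(BO)\neq 0$ exactly for those $s\ge 1$ with $s\equiv 0,1,2,4\pmod 8$, so $(BO)_{\ge b+1}$ is unchanged as $b$ varies between consecutive such $s$, and $\phi(b)$ is precisely the number of nonzero homotopy groups of $BO$ in degrees $\le b$; equivalently $(BO)_{\ge b+1}$ is the $\phi(b)$-th stage of the Whitehead tower $BO\leftarrow BSO\leftarrow B\Spin\leftarrow B\String\leftarrow\cdots$. Since $H^*(BO;\Z/2)=\Z/2[w_1,w_2,\dots]$, the content of the statement is that every Stiefel--Whitney class $w_i$ pulls back to $0$ in $H^*((BO)_{\ge b+1};\Z/2)$ for $i<2^{\phi(b)}$ (this is the image of $\widetilde H^*(BO;\Z/2)$ under pullback, and for the highly connected low-lying covers it is the entire reduced cohomology in that range).

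The algebraic engine is the Wu formula: over any base, if $w_1=\dots=w_{m-1}=0$ then $\Sq^i w_m=\binom{m-1}{i}w_{m+i}$ for $0<i<m$. When $m=2^r$ every coefficient $\binom{2^r-1}{i}$ is odd (Lucas's theorem), so if in addition $w_{2^r}=0$ then $w_{2^r+1}=\dots=w_{2^{r+1}-1}=0$: vanishing of Stiefel--Whitney classes through a power-of-two degree propagates to nearly twice that degree. I would feed this into the Whitehead tower inductively. Each cover is already highly connected, so the low-degree $w_i$ vanish for free; at each non-trivial stage the class $w_{d_j}$ in the ``jump'' degree $d_j$ is killed because the map to $K(\pi_{d_j}(BO),d_j)$ defining that stage is, modulo decomposables, exactly this characteristic class (so e.g.\ $w_1,w_2,w_4,w_8,\dots$ die in turn), while for the intermediate degrees one invokes the power-of-two Wu step. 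Bookkeeping the $8$-periodic spacing of the $d_j$ then produces the bound $2^{\phi(b)}$.

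The hard part is precisely this bookkeeping: one must check that at every stage the Wu relations, together with the characteristic classes killed by the homotopy groups, wipe out the entire range of degrees up to the next relevant power of two without leaving a stray $w_i$ behind. Making this precise requires the detailed structure of $H^*((BO)_{\ge b+1};\Z/2)$, obtained from the Serre spectral sequences of the fibrations $K(\pi_{d_j}(BO),d_j-1)\to(BO)_{\ge d_j+1}\to(BO)_{\ge d_{j-1}+1}$ via Kudo's transgression theorem (the fundamental class of the fibre transgresses to $w_{d_j}$ and its Steenrod operations to the classes $\Sq^I w_{d_j}$, which the Wu formula then identifies with the $w_i$ in the relevant range, allowing one to match them against the admissible basis of the fibre's cohomology). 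This is carried out in full in \cite{Stong63}, which is the reference I would ultimately cite.
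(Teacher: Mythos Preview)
The paper does not give its own proof of this proposition; it simply cites \cite{Stong63}. Your sketch is a reasonable outline of Stong's argument and correctly identifies the main ingredients (Bott periodicity to count the non-trivial stages of the Whitehead tower, the Wu formula $\Sq^i w_{2^r}=\binom{2^r-1}{i}w_{2^r+i}$ with all coefficients odd for propagation past powers of two, and the Serre spectral sequences of the successive fibrations). One point worth flagging: showing that the pulled-back Stiefel--Whitney classes vanish in the range $*<2^{\phi(b)}$ is not a priori the same as showing the entire reduced cohomology vanishes there, since $H^*(BO;\Z/2)\to H^*((BO)_{\ge b+1};\Z/2)$ is not surjective and the cover acquires new classes; you gesture at this, but it is precisely where the substance of Stong's spectral-sequence computation lies, so your deferral to \cite{Stong63} at that step is appropriate.
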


\begin{corollary}\label{cor:CoversOfBOvskOrientability}
    Let $b,k$ be integers such that $k\leq \phi(b)$ (note that $\phi(b)\leq \frac{b}{2}$ and $\phi$ is close to this bound). Then there is a map  $(BO)_{\geq b+1}\to \BOr_k$ over $BO$.
\end{corollary}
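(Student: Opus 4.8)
The plan is to build the map $(BO)_{\geq b+1}\to \BOr_k$ by lifting the constant (basepoint) map along the fibration $\xi_k \colon \BOr_k \to BO$, using obstruction theory. Recall from \cref{def:k-orientable} that $\BOr_k$ is the homotopy fibre of $(w_{2^0},\dots,w_{2^{k-1}})\colon BO\to \prod_{i=0}^{k-1} K(\Z/2,2^i)$. Equivalently, a lift of a map $f\colon X\to BO$ to $\BOr_k$ is precisely a nullhomotopy of the composite $X\to BO \to \prod_{i=0}^{k-1}K(\Z/2,2^i)$, i.e.\ a choice of trivialisations $w_{2^i}(f)=0$ together with compatible nullhomotopies. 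So the first step is to identify what must be killed: we must show that the classes $w_{2^0},w_{2^1},\dots,w_{2^{k-1}}\in H^*(BO;\Z/2)$ pull back to zero under the covering map $q\colon (BO)_{\geq b+1}\to BO$.

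The second step is exactly where Stong's computation enters. Since $k\leq \phi(b)$, every index $2^i$ with $0\leq i\leq k-1$ satisfies $2^i\leq 2^{k-1}<2^{\phi(b)}$, so by the preceding proposition the reduced cohomology $\widetilde H^{2^i}((BO)_{\geq b+1};\Z/2)$ is trivial. Hence $q^*w_{2^i}=0$ in $H^{2^i}((BO)_{\geq b+1};\Z/2)$ for all $i<k$; that is, the composite $(BO)_{\geq b+1}\xrightarrow{q} BO\to \prod_{i=0}^{k-1}K(\Z/2,2^i)$ is nullhomotopic on each Eilenberg--MacLane factor. Since a map into a product is nullhomotopic iff each component is, the whole composite is nullhomotopic.

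Third, a nullhomotopy of $(BO)_{\geq b+1}\to \prod_{i=0}^{k-1}K(\Z/2,2^i)$ is by definition of homotopy fibre a lift of $q$ to $\BOr_k$. More carefully, in the homotopy pullback square defining $\BOr_k$ (with the second projection being the based path space of $\prod K(\Z/2,2^i)$ over the basepoint), the data of such a nullhomotopy together with $q$ assembles to a map $(BO)_{\geq b+1}\to \BOr_k$, and by construction the composite to $BO$ is $q$ itself, so the map lies over $BO$ as required. This yields the desired $(BO)_{\geq b+1}\to \BOr_k$ over $BO$.

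The only genuinely delicate point is a coherence/naturality check rather than a real obstacle: one should make sure the lift is constructed as an honest map over $BO$, not merely fibrewise or up to homotopy over $BO$, so that it induces a map of tangential structures in the sense of \cref{def:stabilOFTangentialStrucutre} and can be iterated/compared as in \cref{cor:connectedCoverAndkOrientable}. This is handled by working with $\xi_k$ as a fibration (\cref{rm:fibrationOrNot}) and choosing the nullhomotopy explicitly from the vanishing of the cohomology groups; no choices beyond the (contractible, since the relevant cohomology vanishes) space of nullhomotopies are needed, so the lift is unique up to homotopy over $BO$ as well. I would also remark, as the corollary's parenthetical already does, that the bound $\phi(b)\le b/2$ and its near-sharpness follow directly from the defining formula for $\phi$, needing no further argument.
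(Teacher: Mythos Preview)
Your proof is correct and is precisely the argument the paper has in mind. The paper states this as an immediate corollary of Stong's vanishing result without giving a proof; you have simply spelled out the standard homotopy-fibre lifting argument (vanishing of $q^*w_{2^i}$ for $i<k$ gives a nullhomotopy of the composite to $\prod K(\Z/2,2^i)$, hence a lift to $\BOr_k$ over $BO$), which is exactly what makes it immediate.
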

In particular, we have that
\begin{corollary}\label{cor:connectedCoverAndkOrientable}
    If a manifold $M$ has a lift of its stable tangent bundle to the $b^{th}$-connected cover $(BO)_{\geq b+1}$ of $BO$ and for an integer $k\leq \phi(b)$ ($k\leq \frac{b}{2}$ suffices). Then $M$ is $k$-orientable.
\end{corollary}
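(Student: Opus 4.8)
The plan is to deduce the statement directly from \cref{cor:CoversOfBOvskOrientability}, which carries all the real content; what is left is a formal argument about lifting classifying maps, so the proof will be short.

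First I would invoke \cref{cor:CoversOfBOvskOrientability}: since $k\le\phi(b)$, there is a map $g\colon (BO)_{\geq b+1}\to \BOr_k$ \emph{over} $BO$, i.e.\ such that the composite $(BO)_{\geq b+1}\xrightarrow{g}\BOr_k\to BO$ is homotopic to the connective-cover map $(BO)_{\geq b+1}\to BO$. To justify the parenthetical claim that the cruder bound $k\le b/2$ already suffices, I would record the elementary inequality $\phi(b)\ge\lfloor b/2\rfloor$: every run of $8$ consecutive integers contains exactly $4$ integers with residue in $\{0,1,2,4\}$ modulo $8$, and a quick case check on $b\bmod 8$ shows $\phi(b)$ never drops below $\lfloor b/2\rfloor$; hence any integer $k\le b/2$ satisfies $k\le\phi(b)$.

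Next I would unwind the hypothesis on $M$: that its stable tangent bundle lifts to $(BO)_{\geq b+1}$ means that a classifying map $\tau_M\colon M\to BO$ of the stable tangent bundle factors up to homotopy as $M\xrightarrow{\widetilde\tau}(BO)_{\geq b+1}\to BO$. Then $g\circ\widetilde\tau\colon M\to\BOr_k$ is the required structure: concatenating the homotopy witnessing that $\widetilde\tau$ is a lift with the homotopy witnessing that $g$ lies over $BO$ shows that $M\xrightarrow{g\widetilde\tau}\BOr_k\to BO$ is homotopic to $\tau_M$, so $g\circ\widetilde\tau$ is a lift of the stable tangent classifying map along $\BOr_k\to BO$. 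By \cref{def:k-orientable} this says precisely that $M$ is $k$-orientable. The only subtle point---and it is mild---is to use the phrase ``over $BO$'' in \cref{cor:CoversOfBOvskOrientability} honestly, so that the pasted homotopy genuinely lands on $\tau_M$ rather than on some unrelated unobstructed map to $BO$; since all the spaces in sight are stable tangential structures, there is no dimension bookkeeping to carry out and the argument concludes.
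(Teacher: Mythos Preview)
Your proof is correct and follows the same approach as the paper, which states the corollary as an immediate consequence of \cref{cor:CoversOfBOvskOrientability} without further argument. Your explicit verification that $\phi(b)\ge\lfloor b/2\rfloor$ (so that $k\le b/2$ indeed forces $k\le\phi(b)$) is a useful addition, and in fact corrects the direction of the inequality in the paper's parenthetical remark in \cref{cor:CoversOfBOvskOrientability}.
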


This estimate is the best possible in a sense that the class $w_{2^k}$ is non-zero in  $H^{2^k}((BO)_{>b};\Z/2)$, see \cite{Stong63, ReneeEulerChar}. In words, if a manifold has a lift to $k^{th}$ {\it non-trivial} connective cover of $BO$ 
then it is $k$-orientable, and the $k^{th}$ non-trivial connective cover is more or less $(BO)_{> 2k}$, i.e. if $M$ is $2k$-parallelisable then it is $k$-orientable.

\begin{example}\label{ex:Bstring3Orientable}
    There is a map $B\String\to \BOr_3$ over $BO$. In particular, for any integer $m$ such that $16\nmid m$ we have that every $m$-dimensional String manifold has even Euler characteristic. 
\end{example}

\subsection{Other tangential structures}\label{sec:otherStructures}

\subsubsection{Unstable and stable framings}\label{subsec:FramingStr}

Consider the structure given by the inclusion of the basepoint $s_n\colon *\to BO_n$.
Observe that if $\zeta$ is an $n$-dimensional vector bundle, then an $s_n$-structure is a trivialisation of $\zeta$, i.e. an ordered $n$-tuple of pointwise linearly independent non-vanishing sections. 
Similarly, the stable structure $s\colon *\to BO$ gives a trivialisation of the stable vector bundle $[\zeta]$. Somewhere in between, we can consider a $k$-dimensional vector bundle $\zeta'$, $k\leq n$. Then an $s_n$ structure on $\zeta'$ is a trivialisation of $\zeta'\oplus \uline{\R}^{n-k}$.
Note that stably framed manifolds (and hence unstably framed manifolds) have vanishing top Stiefel-Whitney class and hence even Euler characteristic.

The structure $s_n$ for $n\neq 1,3,7$ is an example of a non-stabilisable structure:

\begin{lemma}\label{lem:UnstableFramingStructureCannotBeStabilised}
    There does not exist an $(n+1)$-dimensional structure whose pullback to $BO_n$ is $s_n\colon *\to BO_n$ for $n\neq 1,3,7$.
\end{lemma}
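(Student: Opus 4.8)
The plan is to show that the existence of an $(n+1)$-dimensional tangential structure $\xi_{n+1}\colon B_{n+1}\to BO_{n+1}$ restricting to $s_n\colon *\to BO_n$ along $BO_n\to BO_{n+1}$ would force $S^{n+1}$ to admit a framing, which is impossible for $n\neq 1,3,7$ by the classical result that $S^m$ is parallelisable only for $m=1,3,7$.

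First I would unpack what the homotopy-pullback condition means. By \cref{def:homotopyPullbackB_k} (applied with the roles reversed: $\xi_n=s_n$ should be the pullback of $\xi_{n+1}$ along $BO_n\to BO_{n+1}$), there is a homotopy-cartesian square with $B_n\simeq *$ in the top-left corner. Concretely this says: the homotopy fibre of $B_{n+1}\to BO_{n+1}$ over the basepoint must agree with the homotopy fibre of $BO_n\to BO_{n+1}$ over the basepoint, which is $O_{n+1}/O_n\simeq S^n$. Equivalently, any point of $B_{n+1}$ lying over the basepoint of $BO_{n+1}$ comes with a contractible choice of such data, so there is a lift of the constant map $*\to BO_{n+1}$ to $B_{n+1}$, i.e. the basepoint of $B_{n+1}$ maps to the basepoint of $BO_{n+1}$.

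Next I would exhibit a $\xi_{n+1}$-structure on $S^{n+1}$ and extract a contradiction. The sphere $S^{n+1}=\partial D^{n+2}$, and $D^{n+2}$ has a canonical nullhomotopy of its tangent bundle, so $TS^{n+1}\oplus\underline{\R}$ is canonically trivialised; composing with the basepoint map $*\to B_{n+1}$ gives $S^{n+1}$ the structure of the disc-bounding sphere $S^{n+1}_b$ (\cref{def:boundingSphere}). Now the key point: since $\xi_n = s_n$ is literally the basepoint inclusion, a $\xi_n$-structure on an $n$-dimensional bundle is precisely a trivialisation, and the compatibility under stabilisation (the discussion after \cref{def:homotopyPullbackB_k}, i.e. \cref{lm:stableEquiv}) says a $\xi_n$-structure on $TS^{n+1}$... wait, that has the wrong dimension. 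The cleaner route: restrict the $\xi_{n+1}$-structure on $S^{n+1}_b$ to a hemisphere, or rather observe that $S^{n+1}$ with a $\xi_{n+1}$-structure, when we choose a point $p\in S^{n+1}$ and look at $S^{n+1}\setminus\{p\}\cong \R^{n+1}$, carries a $\xi_{n+1}$-structure on a trivial bundle; but this does not yet give framings. Instead I would argue as follows: the homotopy pullback condition forces the map $\xi_{n+1}$ to be, up to the relevant homotopy, such that a $\xi_{n+1}$-structure on an $(n+1)$-bundle $E$ restricts on the underlying... Actually the sharpest formulation is: pulling back along $BO_{n+1}\to BO_{n+1}$ trivially, a $\xi_{n+1}$-structure on $T S^{n+1}$ together with \cref{lm:stableEquiv}-type reasoning (a $\xi_{n+1}$-structure on an $(n+1)$-bundle is a $\xi_{n+1}$-structure, and restricting the tangential structure to dimension $n$ via the pullback square recovers $s_n$) shows that the $(n+1)$-dimensional bundle $T S^{n+1}$ admits a reduction of structure group whose fibre over the basepoint is $S^n$, i.e. an $s_{n+1}$-ish structure. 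The honest contradiction I would aim to produce: $S^{n+1}$ equipped with $S^{n+1}_b$ has a classifying map $S^{n+1}\to B_{n+1}$ lifting $TS^{n+1}$, and composing with $B_{n+1}\to BO_{n+1}\simeq \BO$... I would instead directly use: the $\xi_{n+1}$-structure on $D^{n+2}$ restricted to $S^{n+1}$ exhibits $T S^{n+1}\oplus \underline\R$ as trivial — but that is always true. So the real content must be that the pullback being $*$ forces $B_{n+1}\to BO_{n+1}$ itself to factor suitably; I expect the actual argument shows $T S^{n+1}$ is stably framed \emph{through an $n$-dimensional framing}, hence $T S^{n+1}$ is trivial, contradicting non-parallelisability of $S^{n+1}$ for $n+1\neq 1,3,7$.

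The main obstacle, and the step I would be most careful about, is making precise how the homotopy-pullback condition $B_n\simeq *$ forces an actual (unstable) framing of $T S^{n+1}$ rather than merely a stable one. The resolution should be: take the $\xi_{n+1}$-structure on the hemisphere $D^{n+1}_+\subset S^{n+1}$ inherited as a submanifold — no, better, observe that $S^{n+1}$ itself, being the boundary of $D^{n+2}$, gets $S^{n+1}_b$, and then the fibre-comparison map of the cartesian square, applied to the classifying map of $TS^{n+1}\to BO_{n+1}$ lifted to $B_{n+1}$, descends (because the fibre of $\xi_{n+1}$ over the basepoint is $S^n = $ fibre of $BO_n\to BO_{n+1}$) to a lift of $T S^{n+1}$ along $BO_n\to BO_{n+1}$, i.e. a vector-bundle splitting $T S^{n+1}\cong \underline\R\oplus F$ for some rank-$n$ bundle $F$ — still not a framing. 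I suspect the paper's argument is shorter: since $\xi_n = s_n$, a $\xi_n$-structure \emph{is} a framing, and by \cref{lm:stableEquiv} a $\xi_n$-structure on an $n$-bundle $\zeta$ equals a $\xi_{n+1}$-structure on $\zeta\oplus\underline\R$; hence a $\xi_{n+1}$-structure on $T S^{n+1}$ (which exists, namely $S^{n+1}_b$) would, upon choosing a point and trivialising to reduce dimension, yield a framing of an $n$-dimensional bundle whose total space is parallelisable $S^{n+1}$... I would pin down this last implication carefully, probably by restricting the $\xi_{n+1}=$-structure on $S^{n+1}_b$ to $S^{n+1}\setminus\{pt\}$ and then to an embedded $S^n$, concluding $T S^n\oplus\underline\R$ is framed compatibly with an $s_n$-structure on something of dimension $n+1$, which by a dimension count gives a framing forbidden unless $n\in\{1,3,7\}$; alternatively and most cleanly, I would simply note that the composite $S^{n+1}\xrightarrow{TS^{n+1}} BO_{n+1}$ factors through $B_{n+1}$, and the pullback hypothesis identifies the relevant portion of $B_{n+1}$ with the image of $BO_n$, forcing $TS^{n+1}$ to be \emph{stably} trivial in a way that reduces to rank $n+1$, i.e. $TS^{n+1}$ is trivial — contradiction. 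The crux is therefore converting ``homotopy fibre is $S^n = O_{n+1}/O_n$'' into ``$TS^{n+1}$ is actually trivial,'' and that is where I would spend the effort.
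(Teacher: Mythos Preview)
Your approach has a genuine gap, and in fact an off-by-one error that produces the wrong exceptional set. You aim to show that a stabilisation $\xi_{n+1}$ of $s_n$ would force $S^{n+1}$ to be parallelisable; but $S^{n+1}$ is parallelisable precisely when $n+1\in\{1,3,7\}$, i.e.\ $n\in\{0,2,6\}$, not $n\in\{1,3,7\}$. So even if your argument went through, it would prove the wrong statement. Along the way you also invoke the bounding sphere $S^{n+1}_b$, which per \cref{def:boundingSphere} requires a $\xi_{n+2}$-structure that you do not have; and you never actually extract an unstable framing from a hypothetical $\xi_{n+1}$-structure on $S^{n+1}$ --- the various attempts (hemispheres, fibre comparisons, splitting off a line) all stall, as you noticed.

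The paper's argument is one dimension lower and essentially one line. By \cref{lm:stableEquiv}, a stabilisation $\xi_{n+1}$ with pullback $s_n$ would mean that an $n$-bundle $E$ admits an $s_n$-structure (a framing) if and only if $E\oplus\underline{\R}$ admits a $\xi_{n+1}$-structure. Now take $E=TS^n$: for $n\neq 1,3,7$ this bundle is nontrivial, so has no $s_n$-structure; but $TS^n\oplus\underline{\R}$ is trivial, hence classified by a nullhomotopic map $S^n\to BO_{n+1}$, which lifts to the basepoint of $B_{n+1}$. This gives a $\xi_{n+1}$-structure on $TS^n\oplus\underline{\R}$ and hence, by the pullback hypothesis, an $s_n$-structure on $TS^n$ --- contradiction. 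The correct sphere is $S^n$, and the correct bundle is its tangent bundle viewed as an $n$-dimensional bundle with trivial stabilisation.
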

\begin{proof}
     Assume there was such a structure $s'\colon B_{n+1}\to BO_{n+1}$ whose pullback is $s_n$. For every $n\neq 1,3,7$, there exists a non-trivial $n$-dimensional vector bundle which is trivial under one stabilisation. Such vector bundles would have an $s'$ structure, but not an $s_n$ structure, which is a contradiction. An example of such a bundle is $TS^n$.
\end{proof}

    Note that for $n=1$ we have $* \simeq BSO_1$ and so $s_1$ can be stabilised to $BSO$.     

\subsubsection{\texorpdfstring{$\Pin^\pm$}{Pin}-manifolds}\label{subsec:Pin}

\begin{definition}\label{def:Pin}
    For $0 \leq n < \infty$ let the tangential structures $B\Pin^+_n\to BO_n$ and $B\Pin^-_n\to BO_n$ be the homotopy fibres of the following fibration sequences.
    \begin{align*}
    &B\Pin^+_n\to BO_n\xrightarrow{w_2}K(\Z/2,2)\\
    &B\Pin^-_n\to BO_n\xrightarrow{w_2+w_1^2}K(\Z/2,2)
    \end{align*}
\end{definition}

For a more geometric definition, see \cite{lawsonmichelsohn}.
In upcoming work \cite{PinPaper}, we hope to completely resolve the question of the parity of the Euler characteristic of $\Pin^{\pm}$-manifolds.
We state the results known so far in \cref{Table:PinEulerChar}. 
For convenience, we also include the parity of $\chi$ of manifolds with structure group $BO,BSO$ and $B\Spin$. In cases where an odd $\chi$ manifold exists, we demonstrate the claim with an example manifold in brackets.

\begin{table}[h!]
\begin{tabular}{|l|lllll|}\hline
dim\textbackslash{}structure & $BO$ & $BSO$ & $B\Spin$ & $B\Pin^{-}$ & $B\Pin^{+}$ \\
\hline odd           & 0  & 0   & 0     & 0        & 0        \\
$8k$            & $\Z(\RP^{8k})$ &$ \Z(\CP^{4k}) $ & $\Z(\HP^{2k})$& $\Z(\HP^{2k}) $      & $\Z(\RP^{8k})$       \\
$8k+2 $         & $\Z(\RP^{8k+2})$ & $2\Z $& $2\Z $  & $\Z(\RP^{8k+2})$       & $2\Z \bf{\; for\; } k=0,1$ \\
& & & & &
$\bf{?\; for\; } k\geq 2$       \\
$8k+4 $         & $\Z(\RP^{8k+4})$ & $\Z(\CP^{4k+2})  $& $2\Z $  & $2\Z$      & $\Z(\RP^{8k+4})$       \\
$8k+6$          & $\Z(\RP^{8k+6})$ & $2\Z $& $2\Z $  & $\Z(\RP^{8k+6}) $     & $2\Z$   \\
\hline 
\end{tabular}
\caption{Possible Euler characteristic of manifolds with $O,SO, \Spin,\Pin^{-}$ and $\Pin^{+}$ structures, see \cite{PinPaper}.}
\label{Table:PinEulerChar}
\vspace{-0.8cm}
\end{table}

In \cite{PinPaper} we actually obtain the following result about Wu classes vanishing for $\Pin^{\pm}$ manifolds in certain dimensions.
\begin{theorem}[\cite{PinPaper}]\label{thm:pinWuClasses}
    Let $k$ be an integer. Then any $(8k+4)$-dimensional $\Pin ^-$ manifold has $v_{4k+2}=0$ and any $(8k+6)$-dimensional $\Pin^+$ manifold has $v_{4k+3}=0$.

    Furthermore, the claim holds for manifolds with boundary and their Wu classes.
\end{theorem}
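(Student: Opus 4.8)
The plan is to establish the stronger, dimension‑free statement that a $\Pin^-$‑manifold (closed or with boundary) has $v_a=0$ for every $a\equiv 2\pmod 4$, and a $\Pin^+$‑manifold has $v_a=0$ for every $a\equiv 3\pmod 4$; the theorem is then the case $a=\tfrac12\dim M$, since $8k+4\equiv 4$ and $8k+6\equiv 6\pmod 8$ give half‑dimensions $\equiv 2$ and $\equiv 3\pmod 4$ respectively. The starting point is the identity $\Sq(v(M))=w(M)$ (the corollary following \cref{lem:KervaireInducedMap}, valid for $M$ possibly with boundary and with $v$ the absolute Wu class of \cref{def:Wuclassboundary}). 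Since the total square $\Sq\colon H^*(M;\Z/2)\to H^*(M;\Z/2)$ is a ring automorphism with inverse $\chi(\Sq)$, this gives $v(M)=\chi(\Sq)(w(M))$, so $v_a(M)=\bar v_a(TM)$ for the universal formal Wu class $\bar v:=\chi(\Sq)(w)\in H^*(BO;\Z/2)$.

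Next I would exploit the $\Pin^\pm$ condition (\cref{def:Pin}). A $\Pin^-$‑structure on $M$ is the same datum as a spin structure on $\nu:=TM\oplus\det TM$, and a $\Pin^+$‑structure as a spin structure on $\nu:=TM\oplus 3\det TM$ (both immediate from $w(E\oplus L)=w(E)(1+w_1(L))$). Setting $\ell:=w_1(M)=w_1(\det TM)$ and $\varepsilon:=1$ resp.\ $\varepsilon:=3$, we have $w(TM)=w(\nu)(1+\ell)^{-\varepsilon}$, and since $\chi(\Sq)$ is multiplicative,
\[
v(M)\;=\;\bar v(\nu)\cdot\chi(\Sq)\big((1+\ell)^{-1}\big)^{\varepsilon},
\qquad\text{with}\quad \bar v(\nu):=\chi(\Sq)(w(\nu)).
\]
Two ingredients now suffice. (i) For a spin vector bundle $E$ one has $\bar v_a(E)=0$ whenever $4\nmid a$; in fact $\bar v_a$ lies in the ideal $(w_1,w_2,w_3)\subset H^*(BO;\Z/2)$ for $4\nmid a$, which is the bundle‑level version of \cref{morewus} applied to $2$‑orientable (= spin) structures and is proved by the same degree induction from $\Sq(\bar v)=w$. (ii) From $\Sq(\ell)=\ell+\ell^2$ one computes in $\Z/2[[\ell]]$ that $\chi(\Sq)(\ell)=\sum_{j\ge 0}\ell^{2^j}=:s$, that $s+s^2=\ell$, and hence $\chi(\Sq)\big((1+\ell)^{-1}\big)=(1+s)^{-1}=s/\ell=\sum_{j\ge 0}\ell^{2^j-1}$, which is supported in degrees $\{0\}\cup\{2^j-1:j\ge 1\}$; its cube, being $s^3/\ell^3$ with $s^3=s(1+\ell)+\ell$, is supported in degrees $\{0\}\cup\{2^j-3,\,2^j-2:j\ge 2\}$.

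The conclusion is then a degree count. Writing $v_a(M)=\sum_{a'+b=a}\bar v_{a'}(\nu)\,\big[\chi(\Sq)((1+\ell)^{-1})^{\varepsilon}\big]_b$, a summand can be nonzero only if $4\mid a'$ (by (i), using $w_1(\nu)=w_2(\nu)=w_3(\nu)=0$) and $b$ lies in the support from (ii); but then $b=a-a'\equiv a\pmod 4$. For $\Pin^-$ ($\varepsilon=1$) with $a\equiv 2\pmod 4$ we would need $b\equiv 2\pmod 4$, impossible as $\{0\}\cup\{2^j-1\}$ consists of $0$ and odd integers; for $\Pin^+$ ($\varepsilon=3$) with $a\equiv 3\pmod 4$ we would need $b\equiv 3\pmod 4$, impossible as $\{0\}\cup\{2^j-3,2^j-2:j\ge 2\}$ reduces to $\{0,1,2\}$ modulo $4$. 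Hence $v_a(M)=0$ in both cases. The bounded case needs no separate treatment here, but can alternatively be reduced to the closed case through \cref{cor:closedWuboundaryWu} exactly as in \cref{lm:ReneeKOrientabilityAndWuClassInRelativeCase}.

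The hard part will be ingredient (i): \cref{morewus} is cited from \cite{ReneeEulerChar} in the form ``$k$‑orientable \emph{manifolds} have $v_\ell=0$ for $2^k\nmid\ell$'', whereas here I apply it to the auxiliary bundle $\nu$, which need not be the tangent bundle of any manifold. So one must either check that the argument in \cite{ReneeEulerChar} genuinely yields the universal membership $\bar v_a\in(w_1,w_2,w_3)$ in $H^*(BO;\Z/2)$ for $4\nmid a$ — I expect it does — or else supply the short self‑contained induction. A secondary point to get right is the combinatorial support of $\big(\sum_j\ell^{2^j-1}\big)^3$ in the $\Pin^+$ case; the identity $s+s^2=\ell$ makes this routine, but it should be written out carefully.
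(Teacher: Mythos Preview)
Your approach is sound and in fact supplies an argument where the paper does not: the paper's own proof of the closed case is simply a citation to the companion work \cite{PinPaper}, and the boundary case is reduced to the closed one via \cref{cor:closedWuboundaryWu}. Your argument, by contrast, proves the statement directly (and uniformly for closed manifolds and manifolds with boundary, since the identity $\Sq(v(M))=w(M)$ is established in the paper in that generality). The reduction to a spin bundle $\nu=TM\oplus \varepsilon\det TM$ with $\varepsilon=1$ for $\Pin^-$ and $\varepsilon=3$ for $\Pin^+$ is correct, the computation of $\chi(\Sq)((1+\ell)^{-1})=s/\ell=\sum_{j\ge 0}\ell^{2^j-1}$ is correct, and your support analysis of $(s/\ell)^3$ via $s^2+s=\ell$ is accurate (the nonzero degrees are exactly $\{0\}\cup\{2^j-3,2^j-2:j\ge2\}$, all $\equiv 0,1,2\pmod 4$). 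The final degree count is clean.

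You are right to flag ingredient (i) as the crux. The statement you need is genuinely the \emph{bundle}-level one, namely that the universal class $\bar v_a:=[\chi(\Sq)(w)]_a\in H^*(BO;\Z/2)$ restricts to zero in $H^*(B\Spin;\Z/2)$ whenever $4\nmid a$; \cref{morewus} as stated is about Wu classes of manifolds. A small caution: it is cleaner to phrase (i) as ``$\bar v_a|_{B\Spin}=0$'' rather than ``$\bar v_a\in(w_1,w_2,w_3)$'', since the kernel of $H^*(BO)\to H^*(B\Spin)$ is strictly larger than $(w_1,w_2,w_3)$ (for instance $w_5$ lies in it). This does not affect your argument---$\nu$ is classified through $B\Spin$, so vanishing in $H^*(B\Spin)$ is exactly what you use---but the self-contained induction you allude to should be carried out in $H^*(B\Spin)$ rather than modulo $(w_1,w_2,w_3)$. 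With that adjustment the induction does go through; it is essentially the observation underlying \cref{morewus}, and one can check low degrees by hand (e.g.\ $\bar v_5=\bar v_6=\bar v_7=0$ over $B\Spin$ using $w_5=0$ and $\Sq^2 w_4=w_6$, $\Sq^3 w_4=w_7$ there).
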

\begin{proof}
    The first statement is shown in \cite{PinPaper}. The second statement follows from \cref{cor:closedWuboundaryWu}.
\end{proof}
 We conjecture that the conclusion of the above Theorem does \emph{not} hold in the $\Pin^+$ case in the dimension $8k+2$, but that nonetheless all such manifolds still have even Euler characteristic. 
 
Results about $\SKK$ groups of $\Pin^\pm$-manifolds in odd dimensions can be found in \cref{subsec:SKKPin}. For even dimensions, see \cref{cor:pin_even_dim}. 

\subsubsection{Tangential structures relevant for physics}

We will recall and calculate the $\SKK$ groups for
\begin{enumerate}[label=(\roman*)]
    \item $\Spin^c$ in \cref{ex:spinc};
    \item $\Spin^h$ in \cref{ex:spinh};
    \item $G_{\pm} = \Pin^{\pm} \times_{\Z/2} SU_2$ in \cref{subsection:classification_physics};
    \item $\Pin^{\tilde{c}-}$ in \cref{subsection:classification_physics};
    \item $\Pin^c$ in \cref{subsection:classification_physics}.
\end{enumerate}

We also calculate the SKK group for the following structure in \cref{prop:PinCPlus}.

\begin{definition}\label{def:PinCPlus}
    The $\Pin^{\tilde{c}+}$ tangential structure \cite[Proposition 9.4]{freedhopkins} \cite[Proposition 14]{stehouwermorita} is the structure given by the homotopy pullback
    \[
    \begin{tikzcd}
        B\Pin^{\tilde{c}+}\ar[d] \ar[r] & BO_2 \ar[d]
        \\
        BO \ar[r] & \pi_{\leq 2} BO
    \end{tikzcd}
    \]
    where $\pi_{\leq 2} BO \cong K(\Z/2,1) \times K(\Z/2,2)$ is the Postnikov truncation of $BO$ and the right vertical map is the composition of the stabilisation $BO_2 \to BO$ and the truncation.
\end{definition}

The following is claimed in \cite[Lemma D.8]{shiozaki2018many}, and we include a proof here.
\begin{lemma}
    A manifold $M$ has $\Pin^{\tilde{c}+}$ structure if the Bockstein 
    \[\beta_M \colon H^2(M; \Z/2) \to H^3(M; \Z^{w_1})\]
    for the sequence of $\pi_1(M)$-modules $\Z^{w_1}\to \Z^{w_1}\to \Z/2$ 
    gives $\beta (w_2(M))=0$. 
\end{lemma}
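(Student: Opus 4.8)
The plan is to unwind the homotopy pullback defining $B\Pin^{\tilde{c}+}$, thereby reducing the existence of a $\Pin^{\tilde{c}+}$-structure on $M$ to the existence of a rank-$2$ real vector bundle $E\to M$ with $w_1(E)=w_1(M)$ and $w_2(E)=w_2(M)$, and then to settle that by a twisted Euler class computation. First I would note that, by the universal property of the homotopy pullback $B\Pin^{\tilde{c}+}=BO\times_{\pi_{\leq 2}BO}BO_2$, a lift of the stable tangent bundle map $M\to BO$ to $B\Pin^{\tilde{c}+}$ is the data of a map $M\to BO_2$, i.e.\ a rank-$2$ real bundle $E$, together with a homotopy between $M\to BO\to\pi_{\leq 2}BO$ and $M\to BO_2\to BO\to\pi_{\leq 2}BO$. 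Since $\pi_{\leq 2}BO\simeq K(\Z/2,1)\times K(\Z/2,2)$, the two composites are classified by $(w_1(M),w_2(M))$ and $(w_1(E),w_2(E))$ respectively, and the homotopy exists if and only if these pairs agree. So $M$ admits a $\Pin^{\tilde{c}+}$-structure iff there is a rank-$2$ real bundle $E\to M$ with $w_1(E)=w_1(M)$ and $w_2(E)=w_2(M)$.

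Writing $a=w_1(M)$ and letting $\Z^{a}$ be the integers with the $\pi_1(M)$-action through $a$ (so $\beta_M$ is the Bockstein of $0\to\Z^{a}\xrightarrow{2}\Z^{a}\to\Z/2\to0$), I claim that the classes occurring as $w_2(E)$ for $E$ of rank $2$ with $w_1(E)=a$ are exactly the elements of $\ker\beta_M$; by the previous paragraph this yields the lemma (indeed its converse too). One inclusion is the easy one: every rank-$2$ bundle $E$ carries a twisted Euler class $e(E)\in H^2(M;\Z^{w_1(E)})$ with respect to its orientation local system, whose mod-$2$ reduction is $w_2(E)$; hence $w_2(E)$ lies in the image of $H^2(M;\Z^{a})\to H^2(M;\Z/2)$, which by exactness of the Bockstein sequence is $\ker\beta_M$.

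The direction actually needed — realising every such class — I would obtain from the structure of $BO_2$ as a $2$-stage Postnikov system. Here $BO_2\to BO_1\simeq B\Z/2$ (determinant) has fibre $B\SO_2\simeq\CP^\infty=K(\Z,2)$, on which $\pi_1(B\Z/2)=\Z/2$ acts by $-1$ (a reflection inverts rotations), and the fibration has a section, namely $BO_1\hookrightarrow BO_2$ induced by $O_1\hookrightarrow O_2$, $\pm1\mapsto\mathrm{diag}(\pm1,1)$, which on a line bundle $L$ produces $L\oplus\underline{\R}$. Hence the Postnikov $k$-invariant, lying in $H^3(B\Z/2;\Z^{-})\cong\Z/2$, vanishes, so $BO_2$ over $B\Z/2$ is the trivial twisted $K(\Z^{-},2)$-fibration; pulling it back along $a\colon M\to B\Z/2$, the rank-$2$ bundles $E$ with $w_1(E)=a$ form a torsor over $H^2(M;\Z^{a})$ with distinguished element $L_a\oplus\underline{\R}$, and the torsor coordinate of $E$ is its twisted Euler class $e(E)$. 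Therefore $E\mapsto e(E)$ is a bijection onto $H^2(M;\Z^{a})$, so every class in $\ker\beta_M=\mathrm{im}\!\left(H^2(M;\Z^{a})\to H^2(M;\Z/2)\right)$ is some $w_2(E)$. In particular, if $\beta_M(w_2(M))=0$ then $w_2(M)$ lifts to a twisted integral class, hence equals $w_2(E)$ for a rank-$2$ bundle $E$ with $w_1(E)=w_1(M)$, and by the first paragraph this $E$ equips $M$ with a $\Pin^{\tilde{c}+}$-structure.

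The main obstacle is the last identification: that the twisted Euler class really parametrises the torsor of lifts of $a$, i.e.\ that $e(E)$ sweeps out all of $H^2(M;\Z^{a})$. The clean way I would argue this is that $H^2(M;\Z^{a})$ classifies $a$-twisted complex line bundles, whose underlying rank-$2$ real bundles have $w_1=a$ and twisted Euler class equal to the twisted first Chern class; the twisted line bundle with twisted Chern class a chosen lift $\tilde{c}$ of $w_2(M)$ then does the job. Alternatively one matches the obstruction-theoretic primary difference of two lifts of $a$ with the difference of their twisted Euler classes, using $e(L_a\oplus\underline{\R})=0$. Everything else — the universal property in the first paragraph and the easy inclusion — is standard, modulo care with twisted coefficients and the normalisation of the twisted Euler class.
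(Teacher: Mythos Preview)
Your proof is correct and follows essentially the same approach as the paper: both reduce via the homotopy pullback to finding a rank-$2$ bundle $E$ with $w_1(E)=w_1(M)$ and $w_2(E)=w_2(M)$, then use that such bundles are classified by their twisted Euler class in $H^2(M;\Z^{w_1})$, whose mod-$2$ reduction is $w_2$, so that the desired $E$ exists iff $w_2(M)$ lifts to $H^2(M;\Z^{w_1})$, i.e.\ iff $\beta_M(w_2(M))=0$. The paper simply asserts the twisted Euler class classification as known, whereas you supply a justification via the Postnikov structure of $BO_2\to BO_1$ and the section $L\mapsto L\oplus\underline{\R}$; this extra detail is sound but not a different route.
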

\begin{proof}
Note that the maps from $BO_2$ and $BO$ to $\pi_{\leq 2} BO \cong K(\Z/2,1) \times K(\Z/2,2)$ are given by $(w_1,w_2)$.
Therefore, by the homotopy pullback property, a $\Pin^{\tilde{c}+}$-structure on $TM$ exists if and only if there exists a rank two vector bundle $V$ such that $w_1(TM) = w_1(V)$ and $w_2(TM) = w_2(V)$.
Rank two vector bundles $V$ with given $w_1(V) \in H^1(M;\Z/2)$ are classified by their twisted Euler class $e(V) \in H^2(M;\Z^{w_1(V)})$.
Moreover, the twisted Euler class gets mapped to the class $w_2(V) \in H^2(M;\Z^{w_1(V)})$ under the map $\Z^{w_1(V)} \to \Z/2$ of $\pi_1(M)$-modules.
We see that $M$ admits a $\Pin^{\tilde{c}+}$-structure if and only if there exists a class $e(V) \in H^2(M;\Z_{w_1(M)})$ such that 
\[
e(V) \pmod 2 = w_2(M).
\]
The result follows from the fact that $\beta$ is the next map in the long exact sequence on cohomology induced by 
$\Z^{w_1}\to \Z^{w_1}\to \Z/2$.
\end{proof}

\begin{proposition}
\label{pinc+structure}
    For every even $n$, there is an $n$-dimensional $\Pin^{\tilde{c}+}$-manifold with odd Euler characteristic.
\end{proposition}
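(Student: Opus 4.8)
The plan is to exhibit an explicit family of $\Pin^{\tilde{c}+}$-manifolds with odd Euler characteristic in every even dimension. By the criterion established just above, a manifold $M$ admits a $\Pin^{\tilde{c}+}$-structure precisely when the twisted Bockstein $\beta_M(w_2(M)) \in H^3(M;\Z^{w_1})$ vanishes; equivalently, when $w_2(M)$ lifts to a class in $H^2(M;\Z^{w_1(M)})$ realizable as the twisted Euler class of a rank-two bundle. So the task reduces to finding, for each even $n$, a manifold with odd Euler characteristic satisfying this (fairly weak) cohomological condition.

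First I would reduce to low dimensions using products with spheres: if $M^n$ is $\Pin^{\tilde{c}+}$ with $\chi(M)$ odd, then $M \times S^2$ has Euler characteristic $2\chi(M)$, which is even, so that does not immediately work; instead one should multiply by a surface of odd Euler characteristic. The natural candidates are $\RP^2$ and $\RP^2 \times \RP^2$ (Euler characteristics $1$ and $1$) and, more systematically, products of copies of $\RP^{2}$ together with complex projective spaces $\CP^{k}$, whose Euler characteristics are odd. So I would look for a building block in dimension $2$, namely $\RP^2$ itself: here $w_1 \neq 0$, $w_2 = 0$ (since $v_1 = w_1$ and $\Sq^1 w_1 = w_1^2 \neq 0$... one checks $w_2(\RP^2) = 0$ from $w(\RP^2) = (1+a)^3 = 1 + a + a^2$, so $w_1 = a$, $w_2 = a^2 \ne 0$) — wait, $w_2(\RP^2)=a^2\ne 0$, so one must check $\beta(w_2)$: since $H^3(\RP^2;\,\cdot\,)=0$ for dimension reasons, $\beta_M(w_2)=0$ automatically. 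Hence $\RP^2$ is $\Pin^{\tilde{c}+}$ and has $\chi=1$.

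For general even $n$, I would take $M = \RP^2 \times \CP^{(n-2)/2}$ when $n \equiv 2 \pmod 4$, and $M = \RP^2 \times \RP^2 \times \CP^{(n-4)/2}$ (or just $(\RP^2)^{n/2}$) when $n \equiv 0 \pmod 4$, so that $M$ has odd Euler characteristic in all even dimensions. It then remains to verify that such products are $\Pin^{\tilde{c}+}$. The cleanest route is to observe that the $\Pin^{\tilde{c}+}$ condition $\beta(w_2)=0$ holds whenever $w_2(M)$ itself is in the image of reduction from $H^2(M;\Z^{w_1})$ — for instance whenever $M$ is a product of an orientable manifold admitting $\Spin^c$ with an arbitrary surface, or more generally whenever $w_2(M) = w_1(M)^2 + (\text{integral class})$. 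I would use the Whitney sum formula to compute $w_1, w_2$ of the product and exhibit the required rank-two bundle $V$ explicitly as a sum of line bundles on the $\RP^2$-factors twisted suitably. The main obstacle I anticipate is bookkeeping the twisted coefficient systems $\Z^{w_1}$ on a product where $w_1$ is supported on several factors: one must track the Bockstein and the twisted Euler class carefully across the Künneth decomposition. If a uniform product argument proves cumbersome, the fallback is to handle the two residues $n \equiv 0, 2 \pmod 4$ separately with tailored examples (e.g. $\CP^{n/2}$ for $n \equiv 0 \pmod 4$, which is even $\Spin^c$ hence $\Pin^{\tilde c+}$ trivially once one checks $\chi(\CP^{n/2}) = n/2+1$ is odd — true iff $n/2$ even, i.e. $4 \mid n$ — and $\RP^2 \times \CP^{(n-2)/2}$ for $n \equiv 2 \pmod 4$, with $\chi = 1\cdot((n-2)/2+1) = n/2$, odd iff $n \equiv 2 \pmod 4$).
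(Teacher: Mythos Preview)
Your proposal is correct and follows essentially the same strategy as the paper: split into the cases $n\equiv 0\pmod 4$ and $n\equiv 2\pmod 4$, exhibit in each an explicit manifold with odd Euler characteristic, and verify the Bockstein criterion. Your specific examples differ from the paper's --- you use $\CP^{n/2}$ (via $\Spin^c$) rather than $\RP^{4k}$ (via $w_2=0$) for $4\mid n$, and $\RP^2\times\CP^{(n-2)/2}$ rather than $(\CP^2)^{(n-2)/4}\times\RP^2$ for $n\equiv 2\pmod 4$ --- but both sets of examples work and the verification is the same.

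The one place your proposal stops short is the check that $\RP^2\times\CP^{(n-2)/2}$ is $\Pin^{\tilde c+}$: you flag the twisted-coefficient bookkeeping as the obstacle but do not carry it through. The paper's argument is short and applies verbatim to your example. By K\"unneth over $\Z/2$, the group $H^2(M;\Z/2)$ is spanned by classes pulled back along the projections to the individual factors (the cross-terms vanish since $H^1(\CP^k;\Z/2)=0$). The projections induce maps of $\pi_1$-modules $\Z^{w_1(Y)}\to\Z^{w_1(M)}$, so by naturality of the Bockstein it suffices to see that $\beta_Y$ vanishes on each factor $Y$; but $H^3(\RP^2;\Z^{w_1})=0$ and $H^3(\CP^k;\Z)=0$ for dimension/degree reasons. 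Hence $\beta_M\equiv 0$ on all of $H^2(M;\Z/2)$, in particular on $w_2(M)$. No genuine gap, just the last step left unwritten.
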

\begin{proof}
    Note that for $n = 4k$, the manifold $\RP^{4k}$ is $\Pin^{\tilde{c}+}$ since $w_2(\RP^{4k})=0$.

    For $n\equiv 2\pmod{4}$, we can take $X\coloneqq(\CP^{2})^{\frac{n-2}{4}}\times \RP^2$. 
    It suffices to show that the Bockstein on $X$ is zero. 
    All the projections to $\CP^2$ and the projection to $\RP^2$ induce a map on $\pi_1$-modules and so produce diagrams for $Y=\RP^2$ or $Y=\CP^2$ of the form:
    $$\begin{tikzcd}
        H^2(Y;\Z/2)\ar[r,"\beta_Y"]\ar[d]&H^3(Y;\Z^{w_1(Y)})\ar[d]\\
        H^2(X;\Z/2)\ar[r,"\beta_X"]&H^3(X;\Z^{w_1(X)}).
    \end{tikzcd}$$
    For either $Y$ the group $H^3(Y;\Z^{w_1(Y)})$ vanishes. By the K\"unneth formula for the field $\Z/2$, we have \[H^2((\CP^{2})^{\frac{n-2}{4}}\times \RP^2;\Z/2)\cong H^2(\CP^{2};\Z/2)^{\frac{n-2}{4}}\oplus H^2(\RP^2;\Z/2).\] 
    It follows by naturality that $\beta_X=0$.
\end{proof}

\section{\texorpdfstring{$\SKKxi$}{SKK} groups}\label{sec:SKK}

We will now define the main object of study in this paper: the $\SKKxi_n$ groups.
Let $n>0$ be a positive integer, let $\xi_{n}\colon B_{n} \to BO_{n}$ be a tangential structure and let $Y, Y'$ be closed $n-1$-dimensional manifolds with $\xi$-structure.
Let $N_1,N_2,N_1',N_2'$ be $\xi$-manifolds with identifications as $\xi$-manifolds $\partial N_1=\ol{\partial N_2}=Y$ and $\partial N_1'=\ol{\partial N_2'}=Y'$. 
Let $f,g\colon Y\to Y'$ be $\xi$-diffeomorphisms (\cref{def:xidiffeoTetrahedron}). 
Then the $\SKKxi_n$ relation dictates
\begin{equation}\label{eqn:SKK}
N_1\cup_f N'_1+ N_2\cup_g N'_2\sim N_1\cup_g N'_1+ N_2\cup_f N'_2.
\end{equation}

By rearranging all the $N_1,N_1'$ on one side and all $N_2,N_2'$ on the other side we obtain a useful slogan: ``the SKK relation asserts that gluing together two halves of a manifold in two different ways $f,g$ should only depend on $f,g$ and not on the halves being glued", see \cref{fig:SKK}. We refer to \cref{eqn:SKK} as the \ref{SKKrelations}.

\begin{definition}[The $\SKKxi$ group]
     We define $\SKKxi_n$ to be the Grothendieck completion of the monoid of $n$-dimensional closed $\xi$-manifolds $\mathcal{M}_n^\xi$ under disjoint union quotiented by the \ref{SKKrelations}
    \cref{eqn:SKK}.
\end{definition}

\begin{remark}
\label{rem:maponSKK}
A map of tangential structures 
    \[
\begin{tikzcd}
     & B_n' \ar[d, "\xi'"]
    \\
   B_n \ar[r,"\xi'"]\arrow[r, ""{name=U, below}]{} \ar[ru] & BO_n  \arrow[Rightarrow,from=1-2,to=U,shorten=4.5mm]
\end{tikzcd}
\]
    induces a homomorphism $\SKK^\xi_n \to \SKK^{\xi'}_n$. 
\end{remark}

In \cite{KST}, Kreck, Stolz and Teichner provide the following ``map-free" relation, which will be useful in \cref{ap:ProofPi1ofCategories} about the SKK groups of categories. They show that the two relations are equivalent. Their proof has appeared in the literature in \cite[Proposition A.1.]{lorant}.
\begin{proposition}[Chimaera relation] \label{prop:SecondDefOfTheSKKRelation}
    The following relation on the monoid of $\xi$-manifolds is equivalent to the \ref{SKKrelations}:\labeltext{chimaera relation}{chimaera}
    \begin{equation}\label{eqn:chimaera}
    N_1\cup N_1'+N_2\cup N_2'\sim N_1\cup N_2'+N_2\cup N_1',\end{equation}
    given identifications of $\xi$-manifolds $\partial N_1=\ol{\partial N_1'}=\partial N_2=\ol{\partial N_2'}=N$ for some fixed $\xi$-manifold $N$.
\end{proposition}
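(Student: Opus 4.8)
The plan is to show that the \ref{SKKrelations} and the \ref{chimaera} generate the same equivalence relation on the monoid $\mathcal{M}_n^\xi$, by proving containment in both directions. In one direction, suppose the \ref{SKKrelations} holds. To derive the \ref{chimaera}, I would take four manifolds $N_1, N_1', N_2, N_2'$ with $\partial N_1 = \ol{\partial N_1'} = \partial N_2 = \ol{\partial N_2'} = N$. The point is that the \ref{chimaera} is just the special case of the \ref{SKKrelations} in which the source and target boundaries coincide, i.e.\ $Y = Y' = N$, and the two gluing $\xi$-diffeomorphisms are taken to be $f = \id_N$ and $g$ = some fixed $\xi$-diffeomorphism swapping the roles — actually the cleanest choice is to glue everything with $\id_N$ and observe that the \ref{SKKrelations} with $f=g=\id$ is vacuous, so instead I want to realize the swap $N_1' \leftrightarrow N_2'$ as a reindexing. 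Concretely, apply the \ref{SKKrelations} with the four pieces $N_1, N_2, N_1', N_2'$ all having boundary (a copy of) $N$, with $f = \id$ on one side and on the other side reinterpret which primed piece is glued to which unprimed piece; since a priori the \ref{SKKrelations} as stated fixes a bijection between the left-hand and right-hand pieces via $\partial N_i = \ol{\partial N_i'}$, the honest way is: glue $N_1 \cup_{\id} N_1'$, $N_2 \cup_{\id} N_2'$ on the left, and on the right glue $N_1$ to $N_2'$ and $N_2$ to $N_1'$ — this is legitimate because all four boundaries are identified with $N$, so every pairing makes sense, and this is exactly the content of the \ref{chimaera}. I would need to check carefully that the \ref{SKKrelations} genuinely permits this kind of "crossed" gluing, which may require unwinding the indexing in \cref{eqn:SKK}; if not literally, one obtains it by first applying \cref{eqn:SKK} with $f=\id$, $g = \id$ and the $N$'s relabelled, which is the main fiddly point.

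For the reverse direction, I would assume the \ref{chimaera} and recover the full \ref{SKKrelations}. Given $N_1, N_1', N_2, N_2'$ with $\partial N_1 = \ol{\partial N_2} = Y$, $\partial N_1' = \ol{\partial N_2'} = Y'$ and $\xi$-diffeomorphisms $f, g \colon Y \to Y'$, the trick is to absorb the gluing maps into the pieces: replace $N_1'$ by the "twisted" copy $N_1'{}^f$ obtained by precomposing its boundary identification with $f$ (so that $N_1 \cup_f N_1' = N_1 \cup N_1'{}^f$ with the trivial gluing), and similarly $N_1'{}^g$ for $g$. Now all the relevant pieces have a common boundary after we also take the mapping cylinders of $f$ and $g$: introduce the $\xi$-bordism $C_f$ from $Y$ to $Y'$ given by the cylinder $Y \times [0,1]$ with the $\xi$-structure twisted by $f$ at the outgoing end (and likewise $C_g$), so that $N_1 \cup_f N_1' = (N_1 \cup C_f) \cup N_1'$ with trivial gluing. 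After this rewriting all four "caps" $N_1 \cup C_f$, $N_1 \cup C_g$, $N_2' $ (and the reversed-boundary partners) have matching boundaries, and a single application of the \ref{chimaera} — with the four pieces being $N_1 \cup C_f$, $N_1 \cup C_g$ on one side and $N_1'$, $N_2'$ glued appropriately — yields \cref{eqn:SKK}. The key observation making this work is that $N_1 \cup_f N_1'$ is unchanged if we slide a collar/cylinder across the gluing locus, so inserting $C_f$ costs nothing.

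I expect the main obstacle to be purely bookkeeping: making the identifications of boundaries and the twisting-by-a-$\xi$-diffeomorphism operation precise enough that the "crossed" gluings in both directions are manifestly well-defined $\xi$-manifolds, and verifying that the cylinder-insertion move $N_1 \cup_f N_1' \cong (N_1 \cup C_f) \cup_{\id} N_1'$ is an honest $\xi$-diffeomorphism (this uses that the $\xi$-structure on a cylinder twisted by $f$ restricts correctly on both ends, which in turn relies on \cref{def:xidiffeoTetrahedron} and the conventions of \cref{conv:xiStructureOnABoundary}). Once the cylinder trick is set up cleanly, each direction is a single application of the opposite relation. Since the paper attributes this to \cite{KST} with a written-up proof in \cite[Proposition A.1.]{lorant}, I would follow that argument, and I would present the cylinder-absorption lemma as the crux, relegating the index-chasing to a remark.
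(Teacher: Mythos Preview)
Your direction chimaera $\Rightarrow$ SKK via mapping cylinders is exactly the idea the paper sketches (and attributes to \cite{KST}, \cite{lorant}). One small correction: a single application of the chimaera relation with the pieces you name does not land on \cref{eqn:SKK}; with $A=N_1\cup C_f$, $B=N_2\cup C_g$, $A'=N_1'$, $B'=N_2'$ you obtain $N_1\cup_f N_1' + N_2\cup_g N_2' \sim N_1\cup_f N_2' + N_2\cup_g N_1'$, and a second chimaera (now with $C_g$ attached to $N_1$ and $C_f$ to $N_2$) plus transitivity is needed to reach $N_1\cup_g N_1' + N_2\cup_f N_2'$. This is the bookkeeping you anticipated.

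The direction SKK $\Rightarrow$ chimaera has a genuine gap. The chimaera relation is \emph{not} a specialization of \cref{eqn:SKK}: in the SKK relation the pairing $N_i\leftrightarrow N_i'$ is fixed and only the gluing maps $f,g$ are exchanged, whereas chimaera exchanges which primed piece is glued to which unprimed piece. Your fallback $f=g=\id$ is a tautology, and no relabelling repairs this since \cref{eqn:SKK} never produces the crossed term $N_1\cup N_2'$. The fix is again in the mapping-cylinder spirit, but as a swap rather than an absorption: work over the doubled boundary $N\sqcup N$, set $M_1=N_1\sqcup N_2$, $M_1'=N_1'\sqcup N_2'$, and apply \cref{eqn:SKK} with $f=\id$ and $g$ the $\xi$-diffeomorphism interchanging the two copies of $N$. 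Then $M_1\cup_f M_1'$ and $M_1\cup_g M_1'$ are the two sides of \cref{eqn:chimaera}, and choosing $M_2,M_2'$ symmetric under the swap (say $M_2=N_1\sqcup N_1$, $M_2'=N_1'\sqcup N_1'$) makes the remaining SKK terms coincide, hence cancel in the Grothendieck group. So both directions rest on the same insight---trading pieces for diffeomorphisms and back---but neither is a literal special case of the other.
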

We refer to \cref{eqn:chimaera} as the \ref{chimaera}.
The idea of the proof of \cref{prop:SecondDefOfTheSKKRelation} is to remove reference to the gluing diffeomorphisms by gluing appropriate mapping cylinders to our manifolds.

While $\SKKxi_n$ groups can be defined for any $\xi_n\colon B_n \to BO_n$, we will from now on assume at least the existence of a single stabilisation $\xi_{n+1}\colon B_{n+1} \to BO_{n+1}$. The reason for this is that we want to relate $\SKKxi_n$ to the $n$-dimensional bordism group $\Omega^{\xi}_n$, which is only defined if we assume $\xi_{n+1}\colon B_{n+1} \to BO_{n+1}$ exists.
SKK groups without this stability condition are studied in \cite{KST}.

Recall that in general, the Euler characteristic is not a bordism invariant. However, it is an $\SKKxi$ invariant.
\begin{lemma}
    The Euler characteristic gives a homomorphism $\chi\colon \SKKxi_n \to \Z$ for any $\xi$. 
\end{lemma}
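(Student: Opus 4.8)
The plan is to show that the Euler characteristic, which is already a monoid homomorphism $\chi\colon \mathcal{M}^\xi_n \to \Z$ under disjoint union (since $\chi(M \sqcup M') = \chi(M) + \chi(M')$), descends to the quotient by the \ref{SKKrelations}. Since $\SKKxi_n$ is the Grothendieck completion of $\mathcal{M}^\xi_n / \!\sim_{\SKK}$, and the target $\Z$ is already a group, it suffices to check that $\chi$ respects the relation \cref{eqn:SKK}; the universal property of Grothendieck completion then produces the desired homomorphism on $\SKKxi_n$.

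First I would recall the gluing formula for the Euler characteristic: if $N$ and $N'$ are compact $n$-manifolds with $\partial N = \overline{\partial N'} = Y$, then by inclusion-exclusion (Mayer-Vietoris)
\[
\chi(N \cup_f N') = \chi(N) + \chi(N') - \chi(Y),
\]
and crucially this is independent of the gluing diffeomorphism $f\colon \partial N \to \overline{\partial N'}$, because a diffeomorphism of closed manifolds preserves Euler characteristic and the formula only sees $\chi(N)$, $\chi(N')$, $\chi(Y)$. (One should note $\chi(Y) = \chi(Y')$ here since $f,g$ are diffeomorphisms, but in fact we only need that the two sides below match.)

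Next, applying this to both sides of \cref{eqn:SKK} with the manifolds $N_1, N_2, N_1', N_2'$ and diffeomorphisms $f, g \colon Y \to Y'$, the left-hand side has Euler characteristic
\[
\chi(N_1 \cup_f N_1') + \chi(N_2 \cup_g N_2') = \bigl(\chi(N_1) + \chi(N_1') - \chi(Y)\bigr) + \bigl(\chi(N_2) + \chi(N_2') - \chi(Y)\bigr),
\]
while the right-hand side gives
\[
\chi(N_1 \cup_g N_1') + \chi(N_2 \cup_f N_2') = \bigl(\chi(N_1) + \chi(N_1') - \chi(Y)\bigr) + \bigl(\chi(N_2) + \chi(N_2') - \chi(Y)\bigr),
\]
which are manifestly equal. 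Hence $\chi$ is constant on $\SKK$-equivalence classes, so it defines a monoid homomorphism on $\mathcal{M}^\xi_n/\!\sim_{\SKK}$, which by the universal property of the Grothendieck group factors through a group homomorphism $\chi \colon \SKKxi_n \to \Z$.

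There is essentially no obstacle here; the only point requiring a word of care is justifying the gluing formula for Euler characteristics of manifolds glued along a common boundary (Mayer-Vietoris, using that a collar neighborhood of $Y$ deformation retracts onto $Y$), and observing that it depends only on the underlying manifolds and not on the gluing map — which is exactly what makes the \ref{SKKrelations} compatible with $\chi$. One may also remark that $\chi$ ignores the $\xi$-structure entirely, so no compatibility with $\xi$-diffeomorphisms beyond the topological statement is needed.
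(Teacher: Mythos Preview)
Your proof is correct and follows essentially the same approach as the paper: both use the inclusion-exclusion (Mayer--Vietoris) formula for the Euler characteristic to verify that the two sides of the \ref{SKKrelations} have equal Euler characteristic. Your version is more explicit about the Grothendieck completion step, but the core argument is identical.
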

\begin{proof}
By the inclusion-exclusion principle for the Euler characteristic, we have that $\chi(N_1\cup_f N_1)+ \chi(N_2\cup_g N_2')= \chi(N_1\cup_g N_1)+ \chi(N_2\cup_f N_2)$ for any manifolds $N_1,N_2$ and any $\xi$-diffeomorphisms $f,g\colon \partial N_1\to \partial N_2$.
\end{proof}

\begin{remark}
Let $n$ be an even integer and let $\xi\colon B_{n+1} \to BO_{n+1}$ be an $(n+1)$-dimensional tangential structure so that the bounding sphere $S^n_b$ is defined. It has Euler characteristic 2 and therefore generates a free subgroup of $\langle S^n_b\rangle\subset \SKKxi_n$. 
\end{remark}

The following lemma follows by Novikov additivity of the signature.
\begin{lemma} 
    In dimension $n\equiv 0\pmod{4}$ the signature is an $\SKK^{\xi}_n$-invariant for any orientable $\xi$. 
\end{lemma}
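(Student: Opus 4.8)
The plan is to show that the signature $\sigma\colon \mathcal{M}^\xi_n \to \Z$ descends to a homomorphism on $\SKK^\xi_n$ by checking that it respects the \ref{SKKrelations}. Since disjoint union goes to addition under $\sigma$ and the relation rearranges to the statement that $\sigma(N_1 \cup_f N_1') - \sigma(N_1 \cup_g N_1') = \sigma(N_2 \cup_f N_2') - \sigma(N_2 \cup_g N_2')$, it suffices to prove that, for any orientable $\xi$-manifolds $N, N'$ with $\partial N = \overline{\partial N'} = Y$ and any two $\xi$-diffeomorphisms $f, g\colon Y \to Y'$, the difference $\sigma(N \cup_f N') - \sigma(N \cup_g N')$ depends only on $(f,g)$ and not on $N, N'$. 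This is the standard ``SKK slogan'' reformulation already used in \cref{fig:SKK}.

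First I would invoke Novikov additivity of the signature: for a closed oriented $4k$-manifold cut along a separating hypersurface $Y$ into two pieces $N, N'$, one has $\sigma(N \cup N') = \sigma(N) + \sigma(N')$, where $\sigma$ of a manifold with boundary is defined via the intersection form on the image of $H_{2k}(N) \to H_{2k}(N, \partial N)$. (Here an orientable $\xi$-structure gives each piece a coherent orientation, and the gluing diffeomorphisms are orientation-preserving, so the orientations match up to give a well-defined closed oriented manifold.) Applying this to both gluings gives $\sigma(N \cup_f N') = \sigma(N) + \sigma(N')$ and likewise for $g$, hence the difference $\sigma(N \cup_f N') - \sigma(N \cup_g N')$ is literally zero — it does not even depend on $f$ and $g$. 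In particular the \ref{SKKrelations} is satisfied, so $\sigma$ factors through $\SKK^\xi_n$, and the resulting map is a homomorphism because it is additive under disjoint union.

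The one subtlety to address carefully — and the place I expect the only real work — is the bookkeeping of orientations and $\xi$-structures. One must check that an orientable $\xi$-structure on a manifold-with-boundary $N$ restricts compatibly (via \cref{conv:xiStructureOnABoundary}) to the boundary, that the $\xi$-diffeomorphisms $f, g$ being $\xi$-structure-preserving are in particular orientation-preserving, and hence that $N \cup_f N'$ and $N \cup_g N'$ are genuine closed oriented $4k$-manifolds to which Novikov additivity applies. Once this is in place the argument is immediate. I would also remark that, unlike the Euler characteristic which requires a genuine inclusion–exclusion computation, the signature case collapses entirely because Novikov additivity already makes $\sigma$ depend only on the two pieces separately, so the $\SKK$ relation is automatic; equivalently $\sigma$ is even a \emph{cut-and-paste} ($\SK$) invariant on oriented $4k$-manifolds, a fortiori an $\SKK$ invariant.
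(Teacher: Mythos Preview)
Your proposal is correct and takes the same approach as the paper, which simply states that the lemma ``follows by Novikov additivity of the signature.'' You have supplied the details the paper omits: Novikov additivity gives $\sigma(N \cup_f N') = \sigma(N) + \sigma(N')$ independently of the gluing diffeomorphism, so both sides of the \ref{SKKrelations} have the same signature.
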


\begin{remark}[Inverses in $\SKK^{\xi}_n$]
    Let $\xi\colon B_{n+1} \to BO_{n+1}$ be a tangential structure. In contrast to the bordism group $\Omega^{\xi}_n$, orientation reversal does not give an inverse in $\SKKxi_n$ in general.
        However if $n$ is odd, then it does hold that $[M]=-[\ol{M}]$ in $\SKK^{\xi}_n$, since we will see later in \cref{lm:BordismEulerCharInSKK} that we have $[M] + [\ol{M}]=\chi(M\times I)[S^n]=0$.
\end{remark}

\subsection{A short exact sequence comparing SKK with bordism groups}\label{subsec:shortExactSeq}

The main goal of this section will be to derive a short exact sequence involving the SKK group which will be our main computational tool in further sections.
It reduces the computation of $\SKKxi_n$ to computations of the usual bordism group $\Omega^\xi_n$ up to splitting questions that we will resolve in many cases.

Even though general statements about the computation of $\Omega^\xi_n$ are difficult to obtain, this is a well-studied problem for which there are many techniques such as the Adams spectral sequence, the Atiyah-Hirzebruch spectral sequence and many of its generalisations such as the James spectral sequence \cite{petethesis}.
Therefore we will focus on understanding $\SKKxi_n$ in terms of $\Omega^\xi_n$.

\begin{theorem}
\label{thm:TheExactSequence}
    Let $\xi_{n+1}\colon B_{n+1} \to BO_{n+1}$ be a once stabilised tangential structure.
    Then the canonical map $\SKKxi_n \to \Omega^{\xi}_n$ is well-defined and yields an exact sequence
    \labeltext{SKK sequence}{SKKseq}
    \begin{equation} \label{seq:theSesSKK}
    \begin{tikzcd}
0\ar[r]&\langle S_b^n \rangle_{\SKKxi_n} \ar[r]&\SKK_n^{\xi}\ar[r]& \Omega^\xi_n \ar[r]& 0
\end{tikzcd}
\end{equation}
    Moreover if $n$ is even, then $\langle S_b^n \rangle_{\SKKxi_n} \cong \Z$.
    For $n$ odd, if $\xi$ is twice stabilised (see \cref{subsec:twiceStabXi}) we have
    \begin{enumerate}[label=(\roman*)]
        \item \label{it:ses1}$[S^n_b] = 0 \in \SKKxi_n$ if there exists a closed $(n+1)$-dimensional $\xi$-manifold with odd Euler characteristic;
        \item\label{it:ses2}$\langle S_b^n \rangle_{\SKKxi_n} \cong \Z/2$ if all closed $(n+1)$-dimensional $\xi$-manifolds have even Euler characteristic.
    \end{enumerate}
\end{theorem}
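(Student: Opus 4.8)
The plan is to build the map $\SKKxi_n \to \Omega^\xi_n$, show it is surjective with kernel exactly $\langle S^n_b\rangle$, and then pin down that kernel in each parity. The map itself is easy: sending a closed $\xi$-manifold to its bordism class is well-defined on the monoid $\mathcal M^\xi_n$, and it kills the \ref{SKKrelations} because $N_1\cup_f N_1' \sqcup N_2\cup_g N_2'$ and $N_1\cup_g N_1' \sqcup N_2\cup_f N_2'$ are $\xi$-bordant (glue the two mapping cylinders of $f$ and $g$ appropriately, or observe both are obtained from $N_1\sqcup N_1'\sqcup N_2\sqcup N_2'$ with the same caps up to reordering). Surjectivity is immediate since every bordism class has a closed representative. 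It is clear that $S^n_b \mapsto 0$ since $S^n_b = \partial D^{n+1}$ as $\xi$-manifolds, so $\langle S^n_b\rangle \subseteq \ker$.

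The heart of the argument is the reverse inclusion $\ker \subseteq \langle S^n_b\rangle$, for which I would invoke the surgery lemma referenced in the excerpt (\cref{lm:BordismEulerCharInSKK}, the ``Genauer/GMTW''-style input): if $M_0$ and $M_1$ are $\xi$-bordant via a bordism $W$, then in $\SKKxi_n$ one has $[M_1] - [M_0] = \chi(W)\,[S^n_b]$ (up to the convention that one reverses orientation on an incoming boundary). The proof of this is the standard Morse-theoretic decomposition of $W$ into handle attachments: a single $k$-handle changes the manifold by a controllable cut-and-paste move whose ``defect'' is the bounding sphere weighted by the local Euler-characteristic change $(-1)^k$, and these add up by the \ref{chimaera}/SKK relation to $\chi(W)[S^n_b]$. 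Here is where the twice-stabilised hypothesis in odd dimensions enters: one needs $S^{n+1}$ to carry a $\xi$-structure and $\Cob^\xi_{n,n+1}$ to be reversible (\cref{prop:turningBordismsAround}) so that the handle-trading and the surgeries can be carried out $\xi$-equivariantly in both directions. Granting this lemma, if $[M]\in\ker$ then $M$ bounds a $\xi$-manifold $W$ (a bordism $M\to\varnothing$), so $[M] = -\chi(W)[S^n_b] \in \langle S^n_b\rangle$; this gives the short exact sequence.

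For the structure of $\langle S^n_b\rangle$: when $n$ is even, the Euler characteristic is an $\SKKxi$-invariant (the preceding lemma) and $\chi(S^n_b) = 2 \neq 0$, so $\langle S^n_b\rangle$ is infinite cyclic — it cannot be torsion, and it is cyclic by definition, hence $\cong\Z$. When $n$ is odd, $\chi(S^n_b)=0$ so this argument gives nothing; instead note $S^n_b$ bounds $D^{n+1}$ which has $\chi(D^{n+1})=1$, so if there is \emph{any} closed $(n+1)$-dimensional $\xi$-manifold $V$ with $\chi(V)$ odd, then gluing two copies of $D^{n+1}$ to two copies of a half of $V$ — more precisely, applying the surgery lemma to the closed manifold $V$ viewed as a bordism $\varnothing\to\varnothing$ — yields $0 = [S^{n}_{b,\text{via }\varnothing}] \cdot \text{(something)}$; the clean statement is that $\chi(V)[S^n_b] = 0$ in $\SKKxi_n$ by the surgery lemma applied to $V$ as a null-bordism of $\varnothing$, and combined with $2[S^n_b] = \chi(S^n_b\times I)[S^n_b] = 0$ (the cylinder on $S^n_b$, also via the surgery lemma, since $\chi(S^n\times I)=0$ — wait: one gets $[S^n_b] + [\overline{S^n_b}] = 0$, and since $n$ odd $S^n_b\cong\overline{S^n_b}$, so $2[S^n_b]=0$). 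Thus $[S^n_b]$ has order dividing $2$, and dividing $\chi(V)$; if some $\chi(V)$ is odd the order divides $\gcd(2,\text{odd})=1$ so $[S^n_b]=0$, proving (i). If every closed $(n+1)$-dimensional $\xi$-manifold has even $\chi$, then one must show the order is exactly $2$, i.e.\ $[S^n_b]\neq 0$: this follows because $\kerv_{\Z/2}$ (or, when available, the semi-characteristic over a suitable field) is a well-defined homomorphism $\SKKxi_n\to\Z/2$ detecting $S^n_b$ — here is exactly the place \cref{thmIntro:iffInvarian} is relevant, as the even-$\chi$ hypothesis is what makes $\kerv_{\Z/2}(S^n_b)=1$ consistent and nonzero — so $\langle S^n_b\rangle\cong\Z/2$, giving (ii).

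The main obstacle is the surgery lemma: verifying that each handle attachment contributes precisely $(-1)^{\mathrm{index}}[S^n_b]$ to the $\SKKxi$ class, and that this can be done compatibly with the $\xi$-structures, which is exactly where the twice-stabilised hypothesis is used to reverse bordisms and trade handles. Everything else (the two relations being equivalent, well-definedness of the bordism map, the even-$n$ cyclicity argument, the $2[S^n_b]=0$ computation) is routine once that lemma is in hand; I would state and prove the surgery lemma first in \cref{subsec:Genauer} and then deduce \cref{thm:TheExactSequence} as above.
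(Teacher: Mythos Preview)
Your overall strategy is the geometric one the paper attributes to Kreck--Stolz--Teichner: establish the surgery lemma via a handle-by-handle Morse-theoretic argument, then read off the short exact sequence and the kernel. The paper itself takes a different, homotopy-theoretic route: it obtains the sequence as the tail of the long exact sequence in homotopy for Genauer's cofibre sequence $MT\xi_{n+1}\to\Sigma^\infty_+B_{n+1}\to MT\xi_n$, after identifying $\pi_0 MT\xi_n\cong\SKKxi_n$ via GMTW and the $\SKK$-of-a-category machinery in the appendix. In particular, the paper deduces the surgery lemma \emph{from} this long exact sequence rather than proving it by hand. Your approach to the short exact sequence, to $\langle S^n_b\rangle\cong\Z$ for $n$ even, to $2[S^n_b]=0$, and to part~(i) is essentially correct.

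There is, however, a genuine gap in your argument for part~(ii). You propose to show $[S^n_b]\neq 0$ by exhibiting $\kerv_{\Z/2}$ as an $\SKK^\xi$ invariant detecting it. But $\kerv_{\Z/2}$ is \emph{not} an $\SKK^\xi$ invariant for every $\xi$ satisfying the even-$\chi$ hypothesis: for $\xi=\Pin^+$ in dimension $n=1$ (\cref{Ex:pindim1}) the periodic circle has $\kerv_{\Z/2}=1$ yet is zero in $\SKK^{\Pin^+}_1$, so $\kerv_{\Z/2}$ does not descend; \cref{prop:PinPlus8kplus1} shows more generally that no invariant of the underlying manifold alone can split the sequence for $\Pin^+$ in dimensions $8k+1$. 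Invoking \cref{thmIntro:iffInvarian} is also circular, since that theorem's statement and proof already presuppose that the kernel is $\Z/2$. The paper avoids this entirely: exactness of the Genauer long exact sequence at $\Z\cong\pi_0\Sigma^\infty_+B_{n+1}$ identifies $\ker(1\mapsto[S^n_b])$ with the image of $\chi\colon\SKK^\xi_{n+1}\to\Z$, which under the even-$\chi$ hypothesis (and using that the twice-stabilised assumption gives $S^{n+1}$ a $\xi$-structure, so $2$ lies in the image) is exactly $2\Z$, yielding $\langle S^n_b\rangle\cong\Z/2$ with no detecting invariant required. A purely geometric proof of~(ii) would need to show that the only relations on $[S^n_b]$ in $\SKK^\xi_n$ arise from Euler characteristics of closed $(n+1)$-manifolds; this is precisely the content of exactness at $\Z$ and does not follow from the surgery lemma alone.
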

We refer to \cref{seq:theSesSKK} as the \ref{SKKseq}.

In \cite{KST}, as of now unpublished, Kreck, Stolz and Teichner use geometric arguments to reprove \cref{thm:TheExactSequence}, whereas we use a homotopy theoretic approach.

 In Sections \ref{sec:SKKOdd} and \ref{sec:SkkEven}, we explore whether the \ref{SKKseq} is split, separating the odd- and even-dimensional case because of their distinct character.

\begin{remark}
    Note that in the \ref{SKKseq}, even though the middle term is defined for a tangential structure $\xi_n\colon B_n \to BO_n$, the first and third term crucially use the assumption of the existence of $\xi_{n+1}\colon B_{n+1} \to BO_{n+1}$. The first because we require a stabilisation for the sphere to admit the bounding $\xi$-structure, and the third because we need bordisms to admit a $\xi$-structure.
\end{remark}

We will need the following {\it surgery lemma}, which is proved in \cite{skbook} for $B=BSO$ or $B=BO$. We prove it using homotopy theoretic methods in \cref{subsec:Genauer}.

\begin{lemma}[{The orientable case {\cite[Lemma 4.3]{skbook}}}]\label{lm:BordismEulerCharInSKK}
 Let $B_{n+2} \xrightarrow{\xi_{n+2}} BO_{n+2}$ be a tangential structure.
 Let $W^{n+1}$ be a $\xi_{n+1}$-bordism between two $n$-dimensional $\xi_{n}$-manifolds $M$ and $N$. 
 Then in $\SKK^{\xi}_n$ we have
 \[[M]-[N]=(\chi(M)-\chi(W))[S^n_b].\]
 \end{lemma}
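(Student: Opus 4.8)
\textbf{Proof strategy for the surgery lemma (Lemma \ref{lm:BordismEulerCharInSKK}).}

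The plan is to reduce the general bordism $W$ to a sequence of elementary moves, each of which can be analysed by the \ref{SKKrelations} directly. First I would use that $\xi$ is suitably stabilised (here with respect to $n+2$, so that orientation-reversal of bordisms and the reversibility results of \cref{prop:turningBordismsAround} are available) to put $W$ in a convenient form. Recall that a connected bordism $W$ from $M$ to $N$ admits a handle decomposition relative to $M$; attaching a handle of index $\lambda$ realises a surgery on an embedded sphere $S^{\lambda-1}\subset M$ and changes the Euler characteristic by $(-1)^\lambda$. So it suffices to prove the identity when $W$ is a single elementary bordism $W_\lambda$ obtained from $M\times I$ by attaching one $\lambda$-handle, provided the identity is additive under stacking bordisms — which it is, since both sides of $[M]-[N]=(\chi(M)-\chi(W))[S^n_b]$ telescope when $W=W'\cup_{N'} W''$ (for the left side it is a telescoping sum, for the right side one uses $\chi(W'\cup W'')=\chi(W')+\chi(W'')-\chi(N')$ and $\chi(N')=\chi(M')$ when $N'$ is odd-dimensional, or more simply that Euler characteristics add up correctly in the closed-manifold combinations the \ref{SKKrelations} sees).

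The heart of the argument is then the single-handle case: if $N$ is obtained from $M$ by surgery along $S^{\lambda-1}\times D^{n-\lambda+1}\hookrightarrow M$, I would write $M = M_0 \cup (S^{\lambda-1}\times D^{n-\lambda+1})$ and $N = M_0 \cup (D^{\lambda}\times S^{n-\lambda})$, where $M_0$ is the common complement and the two pieces are glued along $S^{\lambda-1}\times S^{n-\lambda}$. Now apply the \ref{SKKrelations} (in its \ref{chimaera} formulation, \cref{prop:SecondDefOfTheSKKRelation}, which is the cleanest for this): taking the four pieces $N_1 = M_0$, $N_1' = S^{\lambda-1}\times D^{n-\lambda+1}$, $N_2 = D^{\lambda}\times S^{n-\lambda}$ (viewed as bounding the same $S^{\lambda-1}\times S^{n-\lambda}$ after reversal), $N_2' = $ the other cap, one gets that $[M] + [\text{double of the handle region}] = [N] + [\text{a sphere}]$, where all the auxiliary closed manifolds are standard: $S^{\lambda-1}\times S^{n-\lambda+1}$ type manifolds, or $S^n$ with the bounding structure, whose classes in $\SKKxi_n$ are computed by their Euler characteristics via the even-dimensional part of \cref{thm:TheExactSequence} (when $n$ is even) or are $0$ or $[S^n_b]$ (when $n$ is odd). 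Bookkeeping the Euler characteristics of these standard pieces — using $\chi(S^a\times S^b) = (1+(-1)^a)(1+(-1)^b)$ and $\chi(D^\lambda\times S^{n-\lambda}) - \chi(S^{\lambda-1}\times D^{n-\lambda+1}) = (-1)^\lambda = \chi(N)-\chi(M)$ — collapses everything to $[M]-[N] = (\chi(M)-\chi(N))\,[S^n_b] \pm (\text{correction from }W)$; and since $\chi(W_\lambda) = \chi(M) + (-1)^\lambda \cdot(\text{contribution})$, matching constants gives precisely $\chi(M)-\chi(W_\lambda)$ as the coefficient.

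I would also handle two degenerate points separately: the case where $W$ is disconnected (treat each component, noting closed components contribute their Euler characteristic times $[S^n_b]$ directly from the even-dimensional computation, which is why the $\chi(W)$ rather than just $\chi(N)$ appears) and the case $\lambda = 0$ or $\lambda = n+1$ (birth/death of an $n$-sphere component), which is exactly the statement $[M \sqcup S^n_b] = [M] + [S^n_b]$ together with $[S^n_b]$ having Euler characteristic governing its class — essentially the base case. \textbf{The main obstacle} I anticipate is the careful tracking of the $\xi$-structures through the surgery: one must check that the elementary bordism $W_\lambda$, built from $M\times I$ plus a handle, carries a $\xi_{n+1}$-structure restricting correctly to $M$ and $N$ on the two ends, and that the standard pieces $S^{a}\times D^{b}$, $D^{a}\times S^{b}$ inherit $\xi$-structures making the gluings $\xi$-diffeomorphisms so that the \ref{SKKrelations} genuinely applies — this is where the twice-stabilised (here, stabilised with respect to $n+2$) hypothesis does real work, via \cref{rem:ConventionMfldWithBoundary} and \cref{def:orientationreversal}, ensuring the handle attachment regions can be given compatible structures and reversed as needed. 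Once the structure-bookkeeping is set up, the Euler-characteristic arithmetic is routine inclusion–exclusion.
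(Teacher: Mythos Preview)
Your approach is the geometric/handle-theoretic one of \cite{skbook} (and of \cite{KST} for general $\xi$), whereas the paper deliberately takes a different, homotopy-theoretic route. The paper's proof is a two-line consequence of the Genauer sequence set up in \cref{subsec:Genauer}: the commutative square
\[
\begin{tikzcd}
\Z\ar[r,"{[S^n_b]}"] & \SKK_n^{\xi}\\
\pi_1\|\Bord^{\xi,\partial}_{n,n+1}\|\ar[u,"\chi","\cong"']\ar[r,"\partial"] & \pi_1\|\Bord^{\xi}_{n-1,n}\|\ar[u,"\cong"']
\end{tikzcd}
\]
shows that \emph{any} $(n{+}1)$-dimensional $\xi$-manifold $V$ with boundary $X$ satisfies $[X]=\chi(V)[S^n_b]$ in $\SKK^\xi_n$. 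Applying this once to $W$ (viewed as a nullbordism of $N\sqcup\overline{M}$) and once to $M\times I$ and subtracting gives the formula immediately. The heavy lifting is entirely in identifying $\pi_1\|\Bord^{\xi,\partial}\|\cong\Z$ via $\chi$ and the boundary map with ``multiply by $[S^n_b]$''. Your route avoids this categorical machinery at the cost of direct geometric work; the paper's route makes the lemma essentially formal once that machinery is in place.

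There is a genuine gap in your sketch in the odd-dimensional case. You correctly observe that the auxiliary closed pieces arising from the chimaera move (things like $S^\lambda\times S^{n-\lambda}$ and $S^n$ with induced $\xi$-structures) lie in $\langle S^n_b\rangle$, but for $n$ odd their Euler characteristics are all zero, so ``bookkeeping the Euler characteristics of these standard pieces'' is vacuous and cannot decide between $0$ and $[S^n_b]$ in $\Z/2$. Concretely, the single-handle chimaera yields $[M]-[N]=[S^n]-[S^\lambda\times S^{n-\lambda}]$, and you must still show the right-hand side equals $(-1)^{\lambda+1}[S^n_b]$ \emph{without} appealing to the lemma you are proving. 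This can be done (e.g.\ by a further chimaera/induction on the standard pieces, or by identifying the induced $\xi$-structure on the $S^n$ as bounding and on the product sphere via an explicit nullbordism), but it is exactly where the argument needs content beyond Euler-characteristic arithmetic, and your sketch does not supply it. Your acknowledged ``main obstacle'' of $\xi$-structure tracking is intertwined with this: one must verify that the $S^n$ appearing really carries the bounding structure and that the product sphere's class is what the handle index predicts.
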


 \begin{remark}
    If $n$ is even, there is a simpler proof of \cref{lm:BordismEulerCharInSKK} from the \ref{SKKseq}, only requiring $\xi$ to be once stabilised:

    if $M$ and $N$ are $\xi$-cobordant the \ref{SKKseq} shows that there is an integer $k$ such that $[M]-[N]=k[S^n_b]$. 
    
    We then have $\chi(k[S^n_b])=\chi([M]-[N])$ and so $2k=\chi(M)-\chi(N)$. If $W$ is a manifold with boundary $M\sqcup \ol{N}$ then $2\chi(W)=\chi(M)+\chi(\ol{N})=\chi(M)+\chi(N)$ and so $k=\chi(W)-\chi(N)$.
\end{remark}

\begin{lemma}[Inheritance of splittings]\label{lm:Inheritance}

Let $\xi_{n+1}\colon B_{n+1}\to BO_{n+1}$, $\xi'_{n+1}\colon B'_{n+1}\to BO_{n+1}$ be two $(n+1)$-dimensional tangential structures with a map $\varphi\colon B_{n+1}\to B'_{n+1}$ over $BO_{n+1}$(see  \cref{rem:maponSKK} ). 
Assume that the induced map $\langle S^n \rangle_{\SKKxi_n}\xrightarrow{\varphi_*}  \langle S^n \rangle_{\SKK^{\xi'}_n}$ is an isomorphism.
Suppose furthermore that the \ref{SKKseq} for $\xi'$ has a section as below
\[
\begin{tikzcd}
    0 \arrow[r] & \langle S^n \rangle_{\SKKxi_n} \ar[d, "\varphi_*"]\ar[d,"\cong"'] \ar[r] & \SKKxi_n \ar[d,"\varphi_*"] \ar[r] & \Omega^\xi_n \ar[d,"\varphi_*"] \ar[r] & 0
    \\
    0 \arrow[r] & \langle S^n \rangle_{\SKK^{\xi'}_n} \ar[r] & \SKK^{\xi'}_n \ar[r]\ar[l,bend left,pos=0.4, "s"'] & \Omega^{\xi'}_n \ar[r] & 0.
\end{tikzcd}
\]
Then the upper row also splits by the induced section.
\end{lemma}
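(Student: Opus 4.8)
The plan is to chase the diagram and use the hypotheses to produce a well-defined section of the top row. First I would take the given section $s\colon \Omega^{\xi'}_n \to \SKK^{\xi'}_n$, which satisfies $p_{\xi'} \circ s = \id_{\Omega^{\xi'}_n}$, and aim to build a section $t\colon \Omega^{\xi}_n \to \SKK^{\xi}_n$ of the top short exact sequence. The naive attempt — composing $s$ with the vertical maps — does not land where we want, so instead I would argue as follows. Given a class $[M] \in \Omega^{\xi}_n$ represented by an $n$-dimensional $\xi$-manifold $M$, consider $[M] \in \SKK^{\xi}_n$. Its image $\varphi_*[M] \in \SKK^{\xi'}_n$ maps to $\varphi_*[M] \in \Omega^{\xi'}_n$ under $p_{\xi'}$, so $\varphi_*[M] - s(p_{\xi'}(\varphi_*[M]))$ lies in $\langle S^n \rangle_{\SKK^{\xi'}_n}$ by exactness of the bottom row. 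Since $\varphi_* \colon \langle S^n \rangle_{\SKKxi_n} \xrightarrow{\cong} \langle S^n \rangle_{\SKK^{\xi'}_n}$ is an isomorphism, there is a unique element $c(M) \in \langle S^n \rangle_{\SKKxi_n}$ with $\varphi_* c(M) = \varphi_*[M] - s(p_{\xi'}(\varphi_*[M]))$. I would then define $t([M]) \coloneqq [M] - c(M) \in \SKKxi_n$.

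The key steps after this are: (1) check that $t$ is well-defined, i.e. independent of the choice of representative $M$ of $[M]$; (2) check that $t$ is a homomorphism; and (3) check that $p_\xi \circ t = \id_{\Omega^\xi_n}$. For (3), note $p_\xi(t([M])) = p_\xi([M]) - p_\xi(c(M)) = [M]$ in $\Omega^\xi_n$ since $c(M) \in \langle S^n \rangle_{\SKKxi_n} = \ker p_\xi$ by exactness of the top row; so $t$ is automatically a section once it is a well-defined homomorphism. For (2), bilinearity of everything in sight (disjoint union is the monoid operation, $\varphi_*$, $p_{\xi'}$, $s$ are all additive, and $\varphi_*^{-1}$ on the kernel is additive) gives $c(M \sqcup N) = c(M) + c(N)$, hence $t$ is additive; one must only be slightly careful that $c$ is defined on representatives but the additivity passes to classes. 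For (1), suppose $M$ and $M'$ represent the same class in $\Omega^\xi_n$; then $[M] - [M'] \in \langle S^n \rangle_{\SKKxi_n}$ in $\SKKxi_n$ by exactness, and I must show $c(M) - c(M') = [M] - [M']$, equivalently $\varphi_*([M] - [M']) = \varphi_*(c(M) - c(M'))$. The right-hand side equals $(\varphi_*[M] - s p_{\xi'} \varphi_*[M]) - (\varphi_*[M'] - s p_{\xi'}\varphi_*[M'])$; since $\varphi_*[M]$ and $\varphi_*[M']$ are $\xi'$-bordant, $p_{\xi'}\varphi_*[M] = p_{\xi'}\varphi_*[M']$, so the $s$-terms cancel and the right side is $\varphi_*([M] - [M'])$, as desired. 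Thus $t$ is well-defined.

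The main obstacle I anticipate is purely bookkeeping: making sure that the construction $M \mapsto c(M)$, which a priori depends on the chosen representative, descends correctly and interacts well with the additive structure — in particular that $t$ is genuinely defined on $\Omega^\xi_n$ and not merely on $\mathcal{M}^\xi_n$ or on $\SKKxi_n$. Once the diagram-chase is organized as above, there is no hard geometry; everything follows from exactness of the two rows, commutativity of the ladder, and the isomorphism hypothesis on the kernels. A cleaner packaging, which I would probably adopt in the write-up, is to observe that $\varphi_*$ induces an isomorphism between the two kernel subgroups and that pulling back the extension class: the bottom row, being split, has trivial extension class in $\Ext^1(\Omega^{\xi'}_n, \langle S^n\rangle)$, and the top extension class maps to it under the map induced by $\varphi_*$ on kernels and the identity-like comparison on $\Omega$; but since $\varphi_*$ on kernels is an isomorphism, if one additionally knew $\varphi_* \colon \Omega^\xi_n \to \Omega^{\xi'}_n$ were injective one could conclude directly — however the statement does not assume that, so the explicit section construction above is the safer route and is what I would present.
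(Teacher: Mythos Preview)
Your argument is correct, but you have misread the direction of the splitting in the paper's diagram: the arrow $s$ points \emph{left}, from $\SKK^{\xi'}_n$ to $\langle S^n\rangle_{\SKK^{\xi'}_n}$, so $s$ is a \emph{retraction} of the inclusion of the kernel, not a section of the projection to $\Omega^{\xi'}_n$. With this reading, the ``induced section'' the lemma refers to is simply the composite
\[
\SKK^{\xi}_n \xrightarrow{\ \varphi_*\ } \SKK^{\xi'}_n \xrightarrow{\ s\ } \langle S^n\rangle_{\SKK^{\xi'}_n} \xrightarrow{\ \varphi_*^{-1}\ } \langle S^n\rangle_{\SKK^{\xi}_n},
\]
and the proof is one line: for $x\in\langle S^n\rangle_{\SKK^\xi_n}$ the left square gives $\varphi_*(x)\in\langle S^n\rangle_{\SKK^{\xi'}_n}$, so $s(\varphi_*(x))=\varphi_*(x)$ and hence $\varphi_*^{-1} s\,\varphi_*(x)=x$. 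This is why the paper states the lemma without proof, and it is also how the lemma is used later (e.g.\ the splitting $\kerv_{\Z/2}$ is literally the same invariant pulled back along $\varphi_*$).

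Your construction recovers exactly this via the standard dictionary between left and right splittings: if one unwinds your $c(M)$ using the right section $s$ associated to the paper's retraction $s'$ (namely $s(y)=\tilde y - s'(\tilde y)$ for any lift $\tilde y$), one finds $c(M)=\varphi_*^{-1}\bigl(s'(\varphi_*[M])\bigr)$, which is precisely the composite above evaluated at $[M]$. So your $t([M])=[M]-c(M)$ is the right section corresponding to the induced retraction. There is no gap, only extra bookkeeping; reading $s$ as a retraction collapses your three verification steps to a single tautology.
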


\subsection{Genauer's perspective on the SKK short exact sequence}\label{subsec:Genauer}

This section summarises the homotopy theoretic proof of \ref{SKKseq} in the literature \cite{GMTW,genauer, Steimle21,reutterschommerpries} for $\xi_{n+2}\colon B_{n+2} \rightarrow BO_{n+2}$ a twice stabilised tangential structure.

We define the \emph{topological bordism category of $\xi$-manifolds} $\Bord^{\xi}_{n-1,n}$ as either a topological category (e.g.~\cite{GMTW,Steimle21}) or an $(\infty,1)$-category (e.g.~\cite{calaquescheimbauer,schommerpriesinvertible}) with objects $(n-1)$-dimensional $\xi$-manifolds and morphisms $n$-dimensional $\xi$-bordisms between them.
This is a topological version of \cref{def:cob_cat}, and the two are related by taking the homotopy category or $\pi_0$ on morphism spaces, see \cref{sec:highercats}.

\begin{theorem}[\cite{GMTW}]
We have a weak homotopy equivalence
    \[\Omega \|\Bord^{\xi}_{n-1,n}\|\simeq \Omega^\infty MT\xi_{n},\]
where $MT\xi_{n}$ is the Madsen-Tillmann spectrum of $\xi_n$. 
\end{theorem}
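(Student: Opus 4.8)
The statement is the Galatius--Madsen--Tillmann--Weiss theorem \cite{GMTW} in its $\xi$-structured form, so the plan is to recall the structure of the GMTW proof rather than reprove it from scratch, adapting the by-now-standard account (see also \cite{Steimle21}) to the possibly-unstable tangential structure $\xi_n$. First I would set up the relevant cobordism category: fix a large Euclidean space $\R^N$ and model an object of $\Bord^\xi_{n-1,n}$ as a closed $(n-1)$-submanifold of $\R^{N-1}$ together with a $\xi_n$-structure on its (stabilised) tangent bundle, and a morphism as a compact $n$-dimensional $\xi_n$-cobordism embedded in a strip $\R^{N-1}\times[a,b]$ which is cylindrical near the ends. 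The key geometric input is a \emph{scanning} or Pontryagin--Thom construction: to an embedded $\xi$-cobordism $W\subset \R^{N-1}\times[a,b]$ one associates, by collapsing the complement of a tubular neighbourhood and using the Gauss map, a point of the Thom space of the complement bundle, and the $\xi_n$-structure refines this to land in the Thom spectrum $MT\xi_n = \mathrm{Th}(-\xi_n^* \gamma_n)$, i.e.\ the Madsen--Tillmann spectrum of $\xi_n$ (the Thom spectrum of the \emph{negative} of the tautological bundle pulled back along $\xi_n$).

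The heart of the argument is then to show this scanning map induces a weak equivalence $\| \Bord^\xi_{n-1,n}\| \simeq \Omega^{\infty-1} MT\xi_n$ (equivalently, after looping once, $\Omega\|\Bord^\xi_{n-1,n}\| \simeq \Omega^\infty MT\xi_n$). The standard route, which I would follow, proceeds in three steps. (1) Replace the classifying space of the cobordism category by an intermediate space of ``long'' $\xi$-manifolds --- manifolds properly embedded in $\R^{N-1}\times\R$ with no constraint of being cylindrical --- via a homotopy colimit / Segal-space argument showing that forgetting the cut points does not change the homotopy type; here one uses that the space of ways to slice a manifold into composable pieces is contractible. (2) Identify this space of long $\xi$-manifolds with a space of ``all'' $\xi$-manifolds in $\R^{N-1}\times\R$ with no properness/compactness constraint at infinity via a further scanning argument (Gromov's microflexibility / the parametrised surgery techniques of GMTW, or the sheaf-theoretic reformulation of \cite{Steimle21}). (3) Apply the Pontryagin--Thom / Thom transversality theorem, in its parametrised form, to identify that space of $\xi$-manifolds in $\R^{N-1}\times\R$ with the section space $\Gamma_c$ of a bundle whose fibre is the Thom space $\mathrm{Th}(\gamma_n^\perp|_{\mathrm{Gr}_n(\R^{N})})$ carrying a $\xi_n$-structure, and let $N\to\infty$ to obtain $\Omega^{N-1}$ of the appropriate Thom space, i.e.\ $\Omega^{\infty-1}MT\xi_n$ on the nose. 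Throughout, the only modification relative to the unstructured case is bookkeeping the lift along $\xi_n$, which commutes with all the homotopy-colimit and transversality manipulations because $\xi_n$-structures form a sheaf on the site of $n$-manifolds; the fact that $\xi_n$ is only $n$-dimensional (not stable) causes no trouble because $MT\xi_n$ is built from $\xi_n^*\gamma_n$, which is exactly the $n$-plane bundle we have.

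The step I expect to be the main obstacle is step (2) --- the passage from properly embedded cobordisms to arbitrary (non-proper) $\xi$-submanifolds of $\R^{N-1}\times\R$ --- since this is where the genuine analysis lives in GMTW: one must show that the inclusion of the subspace of manifolds that are ``non-degenerate at $\pm\infty$'' into the full space of embedded manifolds is a weak equivalence, which requires a careful parametrised surgery/compression argument (or, in Steimle's streamlining, verifying the microfibration and local-to-global properties of the relevant sheaf). Since all of this is carried out in full in \cite{GMTW} and \cite{Steimle21} and our only addition is the evident naturality in the tangential structure, I would not reproduce it but instead cite those sources, indicating precisely where the $\xi_n$-lift is inserted and noting that the arguments are natural in maps of tangential structures, which is what is needed for \cref{rem:maponSKK} and the functoriality used later.
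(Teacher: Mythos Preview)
The paper does not prove this theorem; it simply states it and cites \cite{GMTW}. Your proposal correctly recognises this and provides a faithful high-level outline of the GMTW argument (scanning, long manifolds, parametrised Pontryagin--Thom), which is entirely appropriate for a result quoted from the literature. In short: there is nothing to compare, since the paper offers no proof of its own, and your summary of the original proof is accurate.
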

One can fit bordism categories of subsequent dimensions into a homotopy fibre sequence of topological (or $(\infty,1)$-) categories known as the Genauer sequence \cite{genauer,Steimle21}:
\[\Bord^{\xi}_{n,n+1}\to \Bord^{\xi, \partial}_{n,n+1}\to \Bord^{\xi}_{n-1,n}\]
where $\Bord^{\xi, \partial}_{n,n+1}$ is the bordism category of $\xi$-manifolds in which both objects and morphisms are allowed to have a free boundary (thought of as sinking through a fixed hyperplane in $\R^\infty$), and the second map takes the boundary (the intersection with the hyperplane).

The homotopy type of the bordism category with boundary was established in \cite{genauer}, see also \cite[Section 3.4]{reutterschommerpries}
\begin{equation}
\label{eq:bordboundary}
\Omega \|\Bord^{\xi, \partial}_{n,n+1}\| \simeq \Omega^\infty \Sigma_+^\infty B_{n+1}.
\end{equation}
This sequence of categories gives rise to a homotopy fibre sequence of their nerves \cite[Theorem 4.8]{Steimle21} and hence spectra
\[
MT \xi_{n+1} \to \Sigma_+^\infty B_{n+1} \to MT \xi_{n},
\]
constructed in \cite[Section 5]{GMTW} and \cite{genauer}, which yields a long exact sequence of homotopy groups.

We have that \[\pi_{-1}MT\xi_{n} = \pi_0 \|\Bord^{\xi}_{n-1,n} \|= \Omega^\xi_n.\]
If moreover $\xi$ is twice stabilised with respect to $n$, then since both bordism categories $\Bord^{\xi}_{n,n+1}$ and $\Bord^{\xi}_{n-1,n}$ are reversible, it follows from \cref{ap:ProofPi1ofCategories}, \cref{th:SKKispi1ofcob}, that 
\[\pi_{0}MT\xi_{n} = \pi_1 \|\Bord^{\xi}_{n-1,n}\| = \SKK^\xi_n,\]
\[\pi_{0}MT\xi_{n+1} = \pi_1 \|\Bord^{\xi}_{n,n+1}\| = \SKK^\xi_{n+1}.\]

\noindent By an argument analogous to \cref{prop:turningBordismsAround} $\Bord^{\xi, \partial}_{n,n+1}$ is also reversible and 
\begin{equation}\label{eq:SKKRelativeBordCat}\pi_0 \Sigma^\infty_+ B_{n+1}\cong \pi_1(\|\Bord^{\xi, \partial}_{n,n+1}\|)
\cong\SKK(\Bord^{\xi, \partial}_{n,n+1}),\end{equation}
\noindent in other words, the monoid  $\mathcal{M}_{n+1}^{\xi, \partial}$ of $(n+1)$-dimensional $\xi$-manifolds with boundary, viewed as cobordisms with boundary $\varnothing\to \varnothing$ modulo the relative version of the \ref{chimaera}, is isomorphic to $\pi_1(\|\Bord^{\xi, \partial}_{n,n+1}\|)$. Note that since $B_{n+1}$ is connected, we have $\pi_{-1} \Sigma^\infty_+ B_{n+1} = 0$ and $\pi_0 \Sigma^\infty_+ B_{n+1} \cong \Z$.

 The long exact sequence in homotopy groups therefore comes down to:
\begin{center}
    \begin{tikzcd}
        \dots \ar[r] & \SKK^{\xi}_{n+1}\ar[r, "\chi"]&\Z\ar[r, "S_b^n"] & \SKK^{\xi}_n \ar[r] & \Omega^{\xi}_n \ar[r] & 0,
    \end{tikzcd}
\end{center}
where the maps are given by the Euler characteristic $\chi$ and sending the generator to the bounding sphere $S^n_b$ as we now show. 

For the non-orientable case see \cite{GMTW} and \cite{bokstedtcomputations}.

In particular they show that the isomorphism
\[
\SKK(\Bord^{O, \partial}_{n,n+1}) \cong \pi_1(\|\Bord^{O, \partial}_{n,n+1}\|) \cong \pi_0 \Sigma^\infty_+ BO_{n+1} \cong \Z,
\]
 is given by the Euler characteristic.

Note that the Genauer sequence is natural in the tangential structure $\xi$.
So using the comparison maps $MT\xi_{*}\to MTO_{*}$ for $* = n,n+1$ and $\Sigma_+^\infty B_{n+1}\to \Sigma_+^{\infty}BO_{n+1}$ we obtain the following commutative diagram comparing the tails of the two sequences:
\begin{equation}\label{eq:lengthenedSKK}
    \begin{tikzcd}        \SKK^{\xi}_{n+1}\ar[d]\ar[r]&\Z\ar[equal]{d} \ar[r, "\alpha"] & \SKK^{\xi}_n \ar[r]\ar[d] & \Omega^{\xi}_n\ar[d] \ar[r] & 0\ar[equal]{d} 
        \\
        \SKK^{O}_{n+1}\ar[r,"\chi"]&\Z \ar[r, "S^n"]& \SKK^{O}_n \ar[r] & \Omega^{O}_n \ar[r] & 0.
    \end{tikzcd}
\end{equation}
Commutativity of the left square implies that the top map is also $\chi$ and so the isomorphism
\[
\SKK(\Bord^{\xi, \partial}_{n,n+1}) \cong \pi_1(\|\Bord^{\xi, \partial}_{n,n+1}\|) \cong \pi_0 \Sigma^\infty_+ B_{n+1} \cong \Z,
\]

\noindent is also the Euler characteristic.  
 
 Note that the morphism $D^{n+1} \colon \varnothing\to \varnothing$ in $\Bord_{n,n+1}^{\xi, \partial}$ gets mapped to the morphism $S^n_b\colon \varnothing \to \varnothing$ in $\Bord_{n-1,n}^{\xi}$.
 Since $\chi(D^{n+1}) = 1$, we see that this morphism is a generator of $\pi_1(\|\Bord_{n,n+1}^{\xi, \partial}\|)$.
 Therefore we get $\alpha(1) = S^n_b$.

 \begin{remark}
    Recall that the $\xi$-structure on a boundary depends on the choice between an in- versus outgoing vector field normal to the boundary, see \cref{conv:xiStructureOnABoundary}. Choosing the other convention here will lead to the boundary $n$-sphere having the potentially different $\xi$-structure $\ol{S^n_b}$, which makes $D^{n+1}$ into a bordism $\ol{S^n_b} \to \varnothing$ instead of the desired $\varnothing \to \ol{S^n_b}$.

    Choosing the other convention for the normal vector will change some formulas, for example in \cref{lm:BordismEulerCharInSKK}.
\end{remark}

    We note that \cref{eq:SKKRelativeBordCat} follows from abstract homotopy-theoretic arguments.
    We now include a geometric proof in the case $n+1=2$ and $\xi$-structure $O$ or $SO$. We expect a similar proof to be possible for all $n$.

\begin{proposition}
    Let $\xi$ be either the identity or the stable orientation tangential structure $BSO\to BO$. Then any $\xi$-surface $\Sigma$, possibly with boundary is equivalent to $\chi(\Sigma)$ copies of $(D^2,S^1)$ under the \ref{chimaera}.

    In particular we have that the map 
\[\mathcal{M}^{\xi,\partial}_2/\{\text{chimaera relations}\}\to\Z\]
is given by the Euler characteristics and is an isomorphism
\end{proposition}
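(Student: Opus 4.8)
The plan is to prove this by an explicit geometric reduction using the classification of surfaces, together with the chimaera relation to peel off copies of $(D^2, S^1)$ one at a time. First I would set up the key move: given a $\xi$-surface $\Sigma$ (with or without boundary) that is not a disjoint union of discs, I want to produce, via a chimaera relation, a surface $\Sigma'$ with $\chi(\Sigma') = \chi(\Sigma) - 1$ plus one extra copy of $(D^2, S^1)$, where $\Sigma'$ has "simpler" topology in some well-ordered sense (e.g. lexicographically in (genus sum, number of boundary circles, number of crosscaps)). Concretely, if $\Sigma$ has a boundary circle, one can write $\Sigma = N_1 \cup_{S^1} N_1'$ where $N_1$ is a collar neighbourhood $S^1 \times [0,1]$ of that boundary circle (so $N_1 \cong S^1 \times I$, a bordism $S^1 \to S^1$, with $\chi = 0$) and $N_1'$ is the rest; then apply the chimaera relation with $N_2 = N_2' = D^2$ to trade $\Sigma = N_1 \cup N_1'$ for $(N_1 \cup D^2) + (D^2 \cup N_1') - (D^2 \cup D^2)$. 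Here $N_1 \cup D^2 \cong D^2$, $D^2 \cup D^2 \cong S^2$, and $D^2 \cup N_1'$ is $\Sigma$ with one boundary circle capped off. Since $S^2$ can also be reduced (it is $(D^2,S^1) + (D^2, S^1) - $ nothing, i.e.\ $S^2 = D^2 \cup_{S^1} D^2$ compared against the empty gluing is a degenerate case — more carefully, $\chi(S^2) = 2$ and one shows $[S^2] \sim 2[D^2]$ directly), this lowers the complexity while keeping track of $\chi$ additively.

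The order of steps would be: (1) check the base case — a disjoint union of $k$ discs is $k$ copies of $(D^2, S^1)$ by definition, and $\chi = k$; (2) handle the closed case by noting any closed $\xi$-surface, after removing an open disc, becomes a surface with one boundary circle and the same-minus-one Euler characteristic, reducing to the bounded case at the cost of one $(D^2,S^1)$ and one cancellation; (3) prove the reduction move above to cap off boundary circles and simultaneously decrease genus/crosscap count by choosing the separating circle $S^1 \subset \Sigma$ appropriately (a non-separating curve in a handle, or the core of a Möbius band, glued back against a disc); (4) induct on the complexity ordering; (5) observe that the resulting map $\mathcal{M}^{\xi,\partial}_2 / \{\text{chimaera}\} \to \Z$ is well-defined because $\chi$ respects the chimaera relation by inclusion-exclusion (already noted in the excerpt), is surjective since $[D^2] \mapsto 1$, and is injective because every class is a multiple of $[D^2]$ and $\chi([D^2]) = 1$ forces that multiple to be unique.

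The main obstacle is step (3): one must check that the ambient $\xi$-structure (orientation, in the $SO$ case, or none in the $O$ case) is actually compatible with all the cutting and capping operations — that the separating circle has trivial normal bundle so the cut is legal, that the $\xi$-structure on $\Sigma$ restricts to a $\xi$-structure on each piece and on the circle, and that the capping disc $D^2$ can be given a $\xi$-structure agreeing with the induced one on $\partial D^2 = S^1$. For $O$ this is automatic; for $SO$ one needs the boundary circle (with its induced orientation) to bound an oriented disc, which it does, but one must be mildly careful that the separating curves chosen in the genus-reduction step can be taken two-sided, which is automatic in the orientable case and requires choosing two-sided curves (cores of Klein-bottle-summands need care) in the unoriented case. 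A secondary subtlety is bookkeeping: making sure the "extra" copies of $(D^2,S^1)$ and the subtracted $S^2$'s all combine, via repeated use of $[S^2] \sim 2[D^2]$ and additivity of $\chi$, to exactly $\chi(\Sigma)$ copies of $(D^2,S^1)$ in the end — this is routine but should be stated as a clean inductive invariant ($[\Sigma] = \chi(\Sigma)\,[D^2]$ in $\mathcal{M}^{\xi,\partial}_2/\{\text{chimaera}\}$) carried through the induction.
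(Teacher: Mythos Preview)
Your overall strategy is sound and close in spirit to the paper's, but there is a genuine gap at exactly the step you dismiss as a degenerate case: the relation $[S^2]=2[D^2]$. Every chimaera move you actually propose --- capping a boundary collar against $N_2=N_2'=D^2$, or splitting off a handle or crosscap and capping --- yields a relation in which both sides have the same total number of free boundary circles. Hence the invariant $\Sigma\mapsto|\pi_0(\partial\Sigma)|$ survives all of your moves, yet it separates $[S^2]$ from $2[D^2]$. Your suggested ``empty gluing'' does not help either: if $N_2=N_2'=\varnothing$ then the common gluing locus must be empty, while you need it to be $S^1$. The upshot is that your reduction only shows every class is a $\Z$-combination of $[D^2]$ and $[S^2]$, not that the quotient is $\Z$.

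The paper supplies the missing move first, as its relation (a): take $N_1=N_2'$ to be two disjoint half-discs and $N_1'=N_2$ to be a rectangle, all with gluing boundary $I\sqcup I$. Then $N_1\cup N_1'=N_2\cup N_2'=D^2$, while the swap gives $D^2\sqcup D^2$ and $S^1\times I$ (or $M\ddot{o}b$ if one glues with a twist). This yields $[S^1\times I]=0=[M\ddot{o}b]$ directly; combined with $[S^2]+[S^1\times I]=2[D^2]$, which \emph{does} follow from your cap-off move, one obtains $[S^2]=2[D^2]$ and then your induction goes through. The paper proceeds from there essentially as you outline --- reducing boundary components, then genus, then the base cases $S^2$ and $\RP^2$. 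By contrast, the $\xi$-structure compatibility you flag as the main obstacle is harmless here: for $\xi=O$ or $SO$ in dimension $2$, every separating curve is two-sided and every cap admits the required structure.
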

\begin{proof}

    \begin{figure}
     \subfigure[]{\begin{minipage}{.20\textwidth}\includegraphics[scale=0.5]
{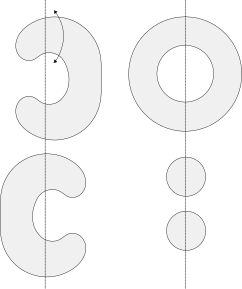}\end{minipage}}\label{fg:GenauerProof4}
        \hspace{1cm}
        \subfigure[]{\begin{minipage}{.35\textwidth}\includegraphics[scale=0.6]
{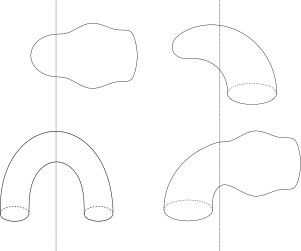}\end{minipage}}\label{fg:GenauerProof1}
        \hfill
        \subfigure[]{\begin{minipage}{.35\textwidth}\includegraphics[scale=0.55]{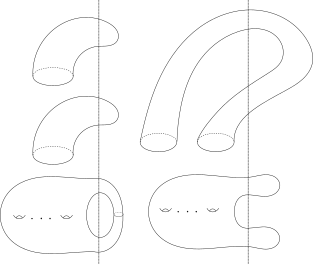}\end{minipage}}\label{fg:GenauerProof2}
\label{fg:GenauerProof3}
  \caption{Sequence of chimaera moves proving the following relations:\\
        (a) $2[D^2]=[S_b^1\times I]+2[D^2]$ \; or\; $2[D^2]=[M\ddot{o}b]+2[D^2]$\\
        (b) $[\Sigma]+[S_b^1\times I]=[\Sigma\setminus D^2]+[D^2]$ \; in particular \;  $[S^2]+[S_b^1\times I]=2[D^2]$ \; or \;$[\RP^2]+[S_b^1\times I]=[M\ddot{o}b]+[D^2]$.;\\ 
        (c) $[\Sigma_{g+1}]+2[D^2]=[\Sigma_g]+[S_b^1\times I]$}
        \label{fig:Genauer2dProof} 
    \end{figure}

      \cref{fig:Genauer2dProof} depicts three relations in $\SKK(\Bord^{\xi, \partial}_{1,2})$. Each should be interpreted as considering the disjoint union of manifolds on the left, cutting them according to the vertical line and swapping the components to obtain the disjoint union of manifolds on the right. Note that all boundaries in \cref{fig:Genauer2dProof} are free boundaries. The illustrations prove the equations in the caption of \cref{fig:Genauer2dProof} using the \ref{chimaera}, which is equivalent to the \ref{SKKrelations} (\cref{prop:SecondDefOfTheSKKRelation}). \cref{fig:Genauer2dProof}(a) represents two possible relations, in the second relation  $M\ddot{o}b$ is the M\"obius strip, and the relation is obtained by introducing a single twist on one of the left discs. In all pictures, the surfaces are allowed to be non-orientable.

    Assume we start with a class in $\SKK(\Bord^{\xi, \partial}_{1,2})$, represented by a possibly disconnected, possibly non-orientable surface $\Sigma$, possibly with boundary, where we have added formal inverses, i.e. allowing components to come with a minus sign. 
     Firstly note that $S^1\times I$ and $M\ddot{o}b$ are zero in $\SKK(\Bord^{\xi, \partial}_{1,2})$ (\cref{fig:Genauer2dProof}(a)). Secondly, relation (b) in \cref{fig:Genauer2dProof} allows us to reduce the number of components which have a boundary but are not discs. So we can assume our manifold consists of components all of which are either discs or closed. Let $\Sigma_0$ be a closed, possibly non-orientable surface of orientable genus $g>0$. Using relation (c) in \cref{fig:Genauer2dProof} we can reduce the orientable genus of $\Sigma_0$ by one, introducing an extra $-2[D^2]$. Finally \cref{fig:Genauer2dProof}(b) shows that $[S^2]$ is SKK-equivalent to $2[D^2]$ and that $[\RP^2]$ is SKK-equivalent to a M\"obius strip plus a $[D^2]$, where we note that the M\"obius strip was zero in $\SKK(\Bord^{\xi, \partial}_{1,2})$ by \cref{fig:Genauer2dProof}(a).  This finishes the proof.
\end{proof}

Now we prove the surgery lemma.

\begin{proof}[Proof of \cref{lm:BordismEulerCharInSKK}]
   We use the commutative square

   \[\begin{tikzcd}
       \Z\ar[r,"{[S^n_b]
    }"]&\SKK_n^{\xi}\\
       \pi_1(\|\Bord^{\xi, \partial}_{n,n+1}\|)\ar[u,"\chi"']\ar[u,"\cong"]\ar[r,"\partial"]&\pi_1 \|\Bord^{\xi}_{n-1,n}\|\ar[u,"\cong"].
   \end{tikzcd}\]

    Let $V$ be any $(n+1)$-dimensional $\xi$-manifold with boundary $X$. It can be viewed as an element $[V]\in \pi_1(\|\Bord^{\xi, \partial}_{n,n+1}\|)$ as a bordism from $\varnothing$ to $\varnothing$. Since $\partial(V)=[X]$, we get that $[X]=\chi(V)[S^n_b]$ in $\SKK_n^{\xi}$.
    
   Let $W$ be a $\xi$-cobordism from $M$ to $N$. It is also a $\xi$-nullbordism of $N \sqcup \ol{M}$ (see \cref{rem:ConventionMfldWithBoundary}  and \cref{def:xiBordism} for our conventions). We thus get $\chi(W)[S^n_b]=[\ol{M}]+[N]$. Applying this argument to $W = M\times I$, we find that $[M]+[\ol{M}]=\chi(M)[S^n_b]$. Putting it together we get
   \[[M]-[N]=(\chi(M)-\chi(W))[S^n_b].\qedhere\]
\end{proof}

We conclude the present section with the proof of the SKK short exact sequence.

\begin{proof}[Proof of \cref{thm:TheExactSequence}]
 The \ref{SKKseq} follows from \cref{eq:lengthenedSKK}. The claims \ref{it:ses1} and \ref{it:ses2} follows from \cref{eq:lengthenedSKK}.   
\end{proof}

\section{SKK groups in odd dimensions}\label{sec:SKKOdd}

\subsection{The if and only if criterion for splitting of the \ref{SKKseq} }
Let $n$ be odd and $\xi$ be a twice stabilised tangential structure with respect to $n$.

If there exists an $(n+1)$-dimensional closed $\xi$-manifold with odd Euler characteristic, by \cref{seq:theSesSKK} the \ref{SKKseq} simplifies to
\[
\SKKxi_n \cong \Omega^\xi_n.
\]
Otherwise, there is a short exact sequence
\[
\begin{tikzcd}
0\ar[r]&\Z/2\ar[r]&\SKK_n^{\xi}\ar[r]& \Omega^\xi_n \ar[r]& 0,
\end{tikzcd}\]
where the first map sends the generator to the bounding sphere $S^n_b$. The goal of this section is to provide an abstract criterion for when a candidate $\Z/2$-valued invariant of $\xi$-manifolds provides a section of the inclusion of the sphere and therefore a splitting $\SKK_n^{\xi} \cong \Omega^\xi_n \times \Z/2$ of the \ref{SKKseq}.
In other words, we are looking for an $\SKK$ invariant 
\[\begin{tikzcd}
\SKKxi_n \ar[r]&\Z/2,\end{tikzcd}
\]
that is non-trivial on the sphere.
When such a splitting exists, there can of course be many; splittings form a torsor over the group of homomorphisms $\Omega^\xi_n \to \Z/2$.
The main result we will use to obtain such a splitting is:

\begin{theorem}
\label{thm:iffForMfldInvariant}
Let $\xi_{n+2}\colon B_{n+2} \to BO_{n+2}$ be a tangential structure and $n$ an odd integer.
Let be $\kappa$ be a $\Z/2$-valued invariant of $n$-dimensional closed $\xi$-manifolds that is additive with respect to disjoint union, 
i.e. a homomorphism
$$\kappa\colon \mathcal{M}^\xi_n \to \Z/2$$
for $\mathcal{M}^\xi_n$ the monoid of $\xi$-manifolds.

Then $\kappa$ factors through $\SKK^{\xi}_n$ and is a splitting of the sequence 
\[
\begin{tikzcd}
0\ar[r]&\Z/2 \ar[r]&\SKK_n^{\xi}\ar[r]& \Omega^\xi_n \ar[r]& 0
\end{tikzcd}
\]
if and only if for all $(n+1)$-dimensional $\xi$-manifolds $W$ with boundary $Y$ we have $$\kappa(Y)=  \chi(W)\mod 2.$$
\end{theorem}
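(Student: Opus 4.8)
The plan is to prove both directions using the surgery lemma (\cref{lm:BordismEulerCharInSKK}) as the central tool, since it is precisely the statement that controls how the Euler characteristic of a bordism enters the $\SKK$ group. Recall that for $n$ odd and $\xi$ twice stabilised, \cref{lm:BordismEulerCharInSKK} gives $[M]-[N] = (\chi(M)-\chi(W))[S^n_b]$ whenever $W$ is a $\xi$-bordism from $M$ to $N$; in particular, taking $W$ to be any $\xi$-manifold with boundary $Y$ (a bordism $\varnothing \to Y$, equivalently $\ol{Y}\to\varnothing$), we get $[Y] = \chi(W)[S^n_b] \in \SKK^\xi_n$, and also $[M]+[\ol M] = \chi(M\times I)[S^n_b] = 0$ since $n$ is odd makes $\chi(M\times I)=0$ and hence $\chi(M)[S^n_b]=0$.

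\textbf{($\Leftarrow$) Suppose $\kappa(Y)=\chi(W)\bmod 2$ for every $(n+1)$-dimensional $\xi$-manifold $W$ with boundary $Y$.} First I would check that $\kappa$ factors through $\SKK^\xi_n$, i.e. respects the \ref{SKKrelations}. By the \ref{chimaera} (\cref{prop:SecondDefOfTheSKKRelation}) it suffices to verify $\kappa(N_1\cup N_1')+\kappa(N_2\cup N_2') = \kappa(N_1\cup N_2')+\kappa(N_2\cup N_1')$ for $\xi$-manifolds $N_i,N_i'$ with common boundary $N$. Since $\kappa$ is additive under disjoint union, this reduces to showing that the quantity $\kappa(N_i\cup N_j')$ depends only on the two pieces "up to the hypothesis" — concretely, I would glue the mapping cylinders appropriately so that the difference $\kappa(N_1\cup N_1')-\kappa(N_1\cup N_2')$ is computed as $\chi$ of a bordism between $N_1'$ and $N_2'$ rel boundary (an $(n+1)$-manifold with boundary $\ol{N_1'}\cup_N N_2'$... wait, this needs care), using the hypothesis $\kappa(Y)=\chi(W)\bmod 2$. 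The cleanest route: the hypothesis says $\kappa$ agrees on closed $\xi$-manifolds with the (a priori not well-defined on $\mathcal M$, but we are told it is) invariant "$\chi$ of any nullbordism mod $2$", and the latter manifestly satisfies inclusion-exclusion for $\chi$, hence the \ref{SKKrelations}; so $\kappa$ descends. Then, to see it splits the sequence, I only need $\kappa(S^n_b)=1$: the bounding sphere bounds $D^{n+1}$ with $\chi(D^{n+1})=1$, so $\kappa(S^n_b)=1\bmod 2$, giving a section of $\Z/2\hookrightarrow \SKK^\xi_n$.

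\textbf{($\Rightarrow$) Suppose $\kappa$ factors through $\SKK^\xi_n$ and splits the sequence.} Then $\kappa(S^n_b)=1$ (a splitting of the injection $\Z/2\hookrightarrow\SKK^\xi_n$ sending the generator to $[S^n_b]$ must be nontrivial on it). Now let $W$ be any $(n+1)$-dimensional $\xi$-manifold with boundary $Y$. By the surgery lemma applied as above, $[Y]=\chi(W)[S^n_b]$ in $\SKK^\xi_n$. Applying $\kappa$ and using that it is a homomorphism, $\kappa(Y)=\chi(W)\cdot\kappa(S^n_b)=\chi(W)\bmod 2$, as desired.

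\textbf{Main obstacle.} The forward direction is essentially immediate from the surgery lemma, so the real content is the $(\Leftarrow)$ direction, and within it the verification that $\kappa$ descends to $\SKK^\xi_n$. The subtlety is that $\kappa$ is only given as a homomorphism on the monoid $\mathcal M^\xi_n$ of \emph{closed} $\xi$-manifolds, so the hypothesis about manifolds with boundary has to be leveraged indirectly: given a presentation of an \ref{SKKrelations} instance, one must exhibit the relevant closed manifolds as boundaries of explicit $(n+1)$-dimensional $\xi$-bordisms and match up their Euler characteristics via inclusion–exclusion. I would handle this by invoking the \ref{chimaera} reformulation and building the nullbordisms out of mapping cylinders of the gluing diffeomorphisms (as in the proof sketch of \cref{prop:SecondDefOfTheSKKRelation}), reducing everything to the additivity of $\chi$ under gluing along a common boundary. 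One should also double-check the twice-stabilised hypothesis is used exactly where \cref{lm:BordismEulerCharInSKK} needs it (so that $[S^n_b]$ has order dividing $2$ and the sequence has the stated form), but no further stabilisation is required beyond what the cited lemmas already assume.
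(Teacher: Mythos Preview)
Your forward direction is correct and essentially identical to the paper's.

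For the backward direction, your ``cleanest route'' does not quite work as stated. The hypothesis only tells you that $\kappa(Y)=\chi(W)\bmod 2$ when $Y$ \emph{bounds} some $W$; it says nothing about $\kappa$ on non-null-bordant manifolds, and the individual terms $N_i\cup N_j'$ in an \ref{SKKrelations} need not bound. So you cannot simply identify $\kappa$ with ``$\chi$ of any nullbordism'' and invoke inclusion--exclusion for $\chi$ termwise. You correctly flag this as the main obstacle and gesture at the fix (build a nullbordism of the whole \ref{SKKrelations} combination from mapping cylinders), but you do not carry it out.

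The paper does exactly this. Working with the \ref{SKKrelations} in its original form (gluing maps $f,g\colon X\to X$), it takes the manifold $V_{1,2}$ of \cite[Lemma 1.9]{skbook}, obtained by gluing $M_1\times I$ to $M_2\times I$ along two thirds of their boundaries via $f$ and $g$ respectively; its boundary is $(M_1\cup_f M_2)\sqcup \overline{(M_1\cup_g M_2)}\sqcup T_{fg^{-1}}$. Forming the analogous $V_{3,4}$ and gluing along the common mapping torus gives $W_{1,2,3,4}$ whose boundary is precisely the four closed manifolds in the relation. Applying the hypothesis to $W_{1,2,3,4}$ (and using, as you note, that $\kappa$ is insensitive to orientation reversal since $\kappa(N)+\kappa(\ol N)=\chi(N\times I)=0$) reduces the \ref{SKKrelations} for $\kappa$ to showing $\chi(W_{1,2,3,4})$ is even. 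This follows from
\[
\chi(V_{1,2})=\chi(M_1\times I)+\chi(M_2\times I)-2\chi(\partial M_1\times I)\equiv \chi(M_1)+\chi(M_2)\equiv 0\pmod 2,
\]
since $M_1,M_2$ are odd-dimensional with boundary; likewise for $V_{3,4}$. This explicit construction and parity computation is the missing content in your sketch.
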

\begin{proof}
Suppose $\kappa$ is a splitting.
If $\partial W = Y$, then $[Y] \in \SKK^{\xi}_n$ is in the kernel of the map to $\Omega^{\xi}_n$.
Hence $[Y] = \chi(W) [S_b^n] \in \SKK^{\xi}_n$ by \cref{lm:BordismEulerCharInSKK}.
Since $\kappa$ is a well-defined $\SKK$ invariant, we have

\[
\kappa(Y) = \kappa(\underbrace{S_b^n \sqcup \dots \sqcup S_b^n}_{\chi(W)}) \equiv \chi(W) \kappa(S_b^n) \pmod{2} \; = \;  \chi(W) \pmod{2},
\]
where the last equality holds because $\kappa(S_b^n) = 1 \in \Z /2$ since $\kappa$ defines a splitting. When $\chi(W)$ is negative, we instead evaluate $\kappa(Y \sqcup S_b^n \sqcup \dots \sqcup S_b^n)$, where there are $- \chi(W)$ copies of the sphere.

Assume conversely that $\kappa$ satisfies $\kappa(Y) = \chi(W) \mod 2$ for all $(n+1)$-$\xi$-manifolds $W$ with boundary $Y$.
Note first that this condition implies that every closed $(n+1)$-dimensional $\xi$-manifold has even Euler characteristic, so the \ref{SKKseq} has kernel $\Z/2$.
It also implies in particular that $\kappa(S_b^n)=1$ since we can take $W$ to be the disc. 
So if we show that $\kappa$ is an $\SKK$ invariant, then we have a well-defined splitting.

Let $M_1,M_2,M_3,M_4$ be $n$-dimensional $\xi$-manifolds with the boundary identification $\partial M_1=\ol{\partial M_2}=\partial M_3=\ol{\partial M_4}=X $  and $f,g\colon X \to X$ be $\xi$-diffeomorphisms.
It suffices to show that
\[
\kappa(M_1 \cup_f M_2) - \kappa(M_1 \cup_g M_2) = \kappa(M_3 \cup_f M_4) - \kappa(M_3 \cup_g M_4).
\]

Let $V_{1,2}$ be the $\xi$-manifold with boundary considered in \cite[Lemma 1.9]{skbook}, defined by gluing $(M_1\times I)$ and $( M_2\times I)$ together by identifying parts of the boundary by $f\times \id\colon \partial M_1\times [0,\frac{1}{3}]\to \ol{\partial M_2\times [0,\frac{1}{3}]}$ and $g\times \id\colon \partial M_1\times [\frac{2}{3},1]\to \ol{\partial M_2\times [\frac{2}{3},1]}$, see \cref{fg:MatthiasFavouriteMfld}(a). Let $V_{3,4}$ be the analogous manifold for $M_3,M_4$.

\begin{figure}[h]
\centering
\subfigure[]{\begin{minipage}[b][6cm][b]{.35\textwidth}\begin{tikzpicture}[scale=0.75]
\draw (0,0)--(9,0) --(9,6)--(0,6)--(0,0);
\draw (0,3)--(3,3) node[midway,above] {$\partial M_1\times [0,\frac{1}{3}]$}node[midway,below] {$\ol{\partial M_2\times [0,\frac{1}{3}]}$}node[pos=-0.1]{$f$};
\draw (6,3)--(9,3)node[midway,above] {$\partial M_1\times [\frac{2}{3},1]$}node[midway,below] {$\ol{\partial M_2\times [\frac{2}{3},1]}$}node[pos=1.1]{$g$};
\draw (3,3)..controls (4.5,2) ..(6,3);
\draw (3,3)..controls (4.5,4) ..(6,3);
\node at (4.5,1.3) {$M_2\times I$};
\node at (4.5,4.7) {$M_1\times I$};
\end{tikzpicture}
\vspace{.1cm}
\end{minipage}}
\hspace{3cm}
\subfigure[] {\begin{minipage}[b][6cm][b]{.4\textwidth}\begin{tikzpicture}
        \node at (-6,0) {\includegraphics[width=.75\textwidth]{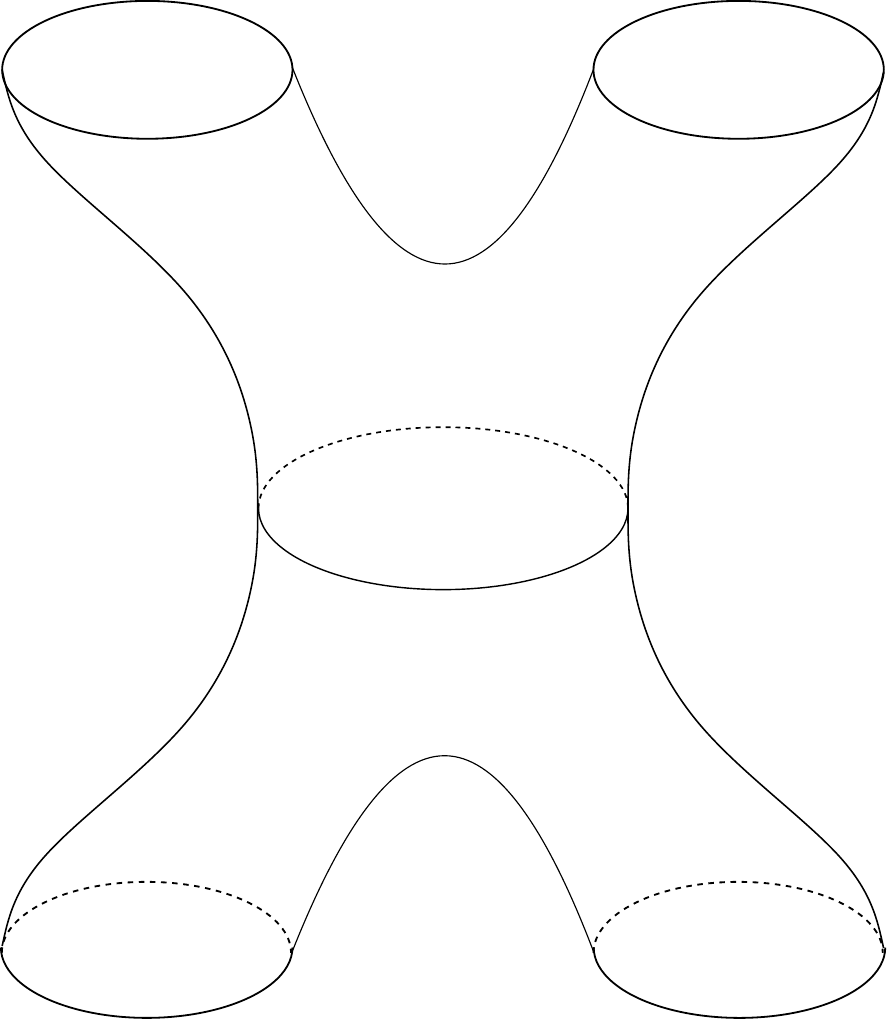}};
        \node at (-6,0.7) {$V_{1,2}$};
        \node at (-6,-0.9) {$V_{3,4}$};
        \node at (-4.5,0) {$T_{fg^{-1}}$};
        \node at (-7.5,2.7) {$M_1\cup_f M_2$};
        \node at (-4.5,2.7) {$\ol{M_1\cup_g M_2}$};
        \node at (-7.5,-2.7) {$M_3\cup_f M_4$};
        \node at (-4.5,-2.7) {$\ol{M_3\cup_g M_4}$};
    \end{tikzpicture}\end{minipage}}\label{fig:1b}
\caption{(a) The manifold $V_{1,2}$. \\\hspace{\textwidth} 
(b) A manifold with boundary $W_{1,2,3,4}=V_{1,2}\cup_{T_{fg^{-1}}}V_{3,4}$.} \label{fg:MatthiasFavouriteMfld}
\end{figure}

After smoothing the corners the manifolds $V_{1,2}$ and $V_{3,4}$ inherit a $\xi$-structure and have boundary 
\begin{align*}
    \partial V_{1,2}&\cong (M_1 \cup_f M_2) \sqcup \overline{(M_1 \cup_g M_2)} \sqcup  T_{fg^{-1}}\\
    \partial V_{3,4}&\cong (M_3 \cup_f M_4) \sqcup \ol{(M_3 \cup_g M_4)} \sqcup  T_{fg^{-1}}
\end{align*}
where $T_{fg^{-1}}$ denotes the mapping torus. Form $W_{1,2,3,4}=V_{1,2}\cup_{T_{fg^{-1}}} V_{3,4}$. 

Note that $\kappa$ is not sensitive to orientation reversal since for any $n$-dimensional $\xi$-manifold $N$, the cylinder is a nullbordism of $N\sqcup \ol{N}$ and so \[\kappa(N)+\kappa(\ol{N})=\chi(N\times I)=0\]. 

Omitting orientation reversals we have
\begin{align*}
    \chi(W_{1,2,3,4}) &\pmod 2 =  \kappa(\partial W_{1,2,3,4})=\\
    &\kappa(M_1 \cup_f M_2) +\kappa(M_1 \cup_g M_2)+\kappa(M_3 \cup_f M_4)+ \kappa(M_3 \cup_g M_4).
\end{align*}

To finish the proof, we show that $\chi(W_{1,2,3,4})$ is even. It suffices to show that $\chi(V_{1,2})$ and $\chi(V_{3,4})$ are even.

We compute
 \[
\chi(V_{1,2}) = \chi(M_1\times I)+\chi(M_2\times I)-2\chi(\partial M_1\times I)\equiv \chi(M_1)+\chi(M_2) \mod 2.
\]
But $\chi(M_1)=\chi(M_2)=0$, because the manifolds are odd dimensional. The computation is analogous for $V_{3,4}$.
\end{proof}

\subsection{Kervaire semi-characteristics}

In this section, we introduce the Kervaire semi-characteristic for a field $F$ and state our main technical result \cref{thm:OddSplittingTopWuclass}.

Previously the Kervaire semi-characteristic over $\Q$ was shown to give a splitting of the \ref{SKKseq} for oriented manifolds of dimension $1 \mod{4}$ 
\cite[Remark on page 47]{skbook}, \cite[page 11-12]{ebert}.
We find that the $\Z/2$-Kervaire semi-characteristic provides a splitting in a wider range of cases, and it is our main candidate for a splitting of the \ref{SKKseq} in odd dimensions.

\begin{definition}\label{def:kervaire}
Let $F$ be a field and $M$ a $(2k+1)$-dimensional manifold, and assume that $M$ is $HF$-orientable. The $F$-{\it Kervaire semi-characteristic} of $M$ is the following element of $\Z/2$:
\[\kerv_F(M)=\sum_{i=0}^k \dim_F H_i(M;F) \pmod 2.\]

\end{definition}
By Poincar\'e duality, since the manifold $M$ is $HF$-orientable, we can equivalently define $\kerv_F(M)$ as the sum of dimensions of all even-dimensional (co-)homology groups of $M$ modulo 2.

\begin{example}
    The odd-dimensional sphere $S^{2k+1}$ has Kervaire semi-characteristic 1 over any field. 
\end{example}

\begin{example}
    The Kervaire semi-characteristic over any field of a one-dimensional closed manifold is the number of components modulo two.
\end{example}

The Kervaire semi-characteristic over $F$ only depends on the characteristic of $F$.
Indeed, if $F \subseteq F'$ is a field extension, then $F'$ is a free $F$-module and so
\[
H^*(M; F) \otimes_F F' \cong H^*(M; F').
\]

\begin{remark}\label{rem:KervCharDifferentCharacteristics}
    The Kervaire semi-characteristic over $\Z/p$ for varying primes $p$ and over $\Q$ can in general all be different topological invariants as evidenced by the Lens spaces $L(p,q)$. Given $p'\neq p$ a different prime, we have
\[
    1=\kerv_{\Q}(L(p,q))= \kerv_{\Z/p'}(L(p,q))
    \neq \kerv_{\Z/p}(L(p,q))=0.
\]
\end{remark}

In the rest of the section, we will prove the following.
\begin{theorem}\label{thm:OddSplittingTopWuclass}
    Let $n$ be odd. Let $\xi_{n+2}\colon B_{n+2}\to BO_{n+2}$ be a tangential structure, where for every closed $(n+1)$-dimensional manifold $M$ the top Wu class $v_{\frac{n+1}{2}}(M)$ vanishes. Then we have $\langle S^n_b\rangle \cong\Z/2$ and there is a split short exact sequence \begin{equation*}\begin{tikzcd} 0 \ar[r] &\Z/2\arrow[r]&\SKK^{\xi}_n\arrow[r]\arrow[l,bend right,pos=0.4,,"\kerv_{\Z/2}"']&\Omega_n^{\xi} \ar[r] & 0\end{tikzcd}
        \end{equation*} where $\kerv_{\Z/2}$ is the Kervaire semi-characteristic over $\Z/2$.
\end{theorem}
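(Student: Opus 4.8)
The plan is to derive the splitting from the abstract criterion of \cref{thm:iffForMfldInvariant}. Every manifold is $\Z/2$-orientable, so $\kerv_{\Z/2}$ is defined on all closed $n$-manifolds, and it is additive under disjoint union since Betti numbers over a field add; thus $\kerv_{\Z/2}$ is a homomorphism $\mathcal{M}^\xi_n \to \Z/2$. By \cref{thm:iffForMfldInvariant} it therefore suffices to prove
\[\kerv_{\Z/2}(Y)\;\equiv\;\chi(W)\pmod 2\]
for every $(n+1)$-dimensional $\xi$-manifold $W$ with boundary $Y$. Note that taking $Y=\varnothing$ already forces all closed $(n+1)$-dimensional $\xi$-manifolds to have even Euler characteristic, so by part (ii) of \cref{thm:TheExactSequence} we get $\langle S^n_b\rangle\cong\Z/2$ automatically; once the congruence is established, \cref{thm:iffForMfldInvariant} then yields that $\kerv_{\Z/2}$ factors through $\SKK^\xi_n$ and splits the \ref{SKKseq}.

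The first ingredient is a purely topological identity, valid for any compact $2k$-manifold $W$ with boundary $Y$, which we apply with $2k=n+1$, so $k=\tfrac{n+1}{2}$:
\[\chi(W)\;\equiv\;\kerv_{\Z/2}(Y)\;+\;\dim_{\Z/2}\im\!\bigl(j_*\colon H_k(W;\Z/2)\to H_k(W,Y;\Z/2)\bigr)\pmod 2.\]
I would prove this by running the long exact sequence of the pair $(W,Y)$ with $\Z/2$-coefficients. Writing $r_i$ for the dimension of the image of $H_i(W)\to H_i(W,Y)$ and $s_i$ for the dimension of the image of $H_i(Y)\to H_i(W)$, rank-nullity gives $\dim_{\Z/2}H_i(W)=r_i+s_i$, hence $\chi(W)\equiv\sum_i (r_i+s_i)\pmod 2$. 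Poincar\'e--Lefschetz duality, viewed as an isomorphism of long exact sequences, together with transposition over $\Z/2$ gives $r_i=r_{2k-i}$, so $\sum_i r_i\equiv r_k\pmod 2$; and Poincar\'e duality for the odd-dimensional $Y$ together with exactness gives $s_i+s_{2k-1-i}=\dim_{\Z/2}H_i(Y)$, so, using $H_{2k}(Y)=0$, $\sum_i s_i=\sum_{i=0}^{k-1}\dim_{\Z/2}H_i(Y)=\kerv_{\Z/2}(Y)$. Combining the two collapses proves the identity. This bookkeeping is classical --- it is the homological computation underlying the Kervaire semi-characteristic --- but it is the most delicate step: one must keep track of the duality isomorphisms carefully so that they genuinely intertwine the boundary homomorphisms of the homology and cohomology sequences.

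It remains to show that the hypothesis forces $\dim_{\Z/2}\im(j_*)$ to be even. By \cref{cor:closedWuboundaryWu}, the vanishing of $v_{(n+1)/2}$ on all closed $(n+1)$-dimensional $\xi$-manifolds implies $v_k(W)=0$ for every $(n+1)$-dimensional $\xi$-manifold $W$ with boundary. Set $A=\im\!\bigl(j^*\colon H^k(W,Y;\Z/2)\to H^k(W;\Z/2)\bigr)$; dualising over the field $\Z/2$ shows $\dim_{\Z/2}A=\dim_{\Z/2}\im(j_*)$. On $A$ one has the standard intersection form $Q(a,b)=\langle\tilde a\smile\tilde b,[W,Y]\rangle$ for lifts $\tilde a,\tilde b\in H^k(W,Y;\Z/2)$; it is well defined and symmetric by compatibility and graded-commutativity of the relative cup product mod $2$, and non-degenerate because the Lefschetz pairing $H^k(W;\Z/2)\times H^k(W,Y;\Z/2)\to\Z/2$ is perfect. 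Its diagonal vanishes because
\[Q(a,a)=\langle\tilde a\smile\tilde a,[W,Y]\rangle=\langle\Sq^k\tilde a,[W,Y]\rangle=\langle\tilde a\smile v_k(W),[W,Y]\rangle=0\]
by \cref{def:Wuclassboundary}. A non-degenerate even $\Z/2$-valued bilinear form has even-dimensional underlying space (as recalled in \cref{subsec:EulerChar}), so $\dim_{\Z/2}\im(j_*)$ is even. Substituting into the identity of the previous paragraph gives $\chi(W)\equiv\kerv_{\Z/2}(Y)\pmod 2$, and \cref{thm:iffForMfldInvariant} completes the proof. The main obstacle is the homological bookkeeping in the second paragraph; everything else is a routine consequence of the machinery already set up (\cref{cor:closedWuboundaryWu}, \cref{thm:iffForMfldInvariant}, \cref{thm:TheExactSequence}, and the even-rank remark in \cref{subsec:EulerChar}).
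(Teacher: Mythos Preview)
Your proof is correct and follows essentially the same route as the paper: the paper packages your long-exact-sequence identity as \cref{lm:Bounding} and the reduction to evenness of $\rank(j_*)$ as \cref{prop:iffkervaire}, and then proves the theorem by invoking \cref{prop:iffkervaire} together with the Wu-class argument via \cref{cor:closedWuboundaryWu}. You have simply inlined those two intermediate results; the logic, including the even-form argument on $\im(j^*)$, matches the paper's.
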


First, we prove the following general condition for a Kervaire semi-characteristic to be a splitting of the \ref{SKKseq} in odd dimensions.

\begin{proposition}
\label{prop:iffkervaire}
     Let $F$ be a field, $n$ an odd integer and let $\xi_{n+2}\colon B_{n+2} \to BO_{n+2}$ be a tangential structure, such that every $(n+1)$-dimensional $\xi$-manifold has even Euler characteristic. Furthermore assume that every $n$- dimensional $\xi$-manifold is $HF$ oriented.
    Then $\kerv_F$ gives a splitting $\SKKxi_n \to \Z/2$ of the sequence
\begin{equation*}\begin{tikzcd} 
0 \ar[r] &\Z/2\arrow[r]&\SKK^{\xi}_n\arrow[r]&\Omega_n^{\xi} \ar[r] & 0
\end{tikzcd}
        \end{equation*}
    if and only if for every $(n+1)$-dimensional $\xi$-manifold $W$, possibly with boundary, the image of the map
    \[
    H_{\frac{n+1}{2}}(W;F) \xrightarrow{j_*} H_{\frac{n+1}{2}}(W,\partial W; F)
    \]
    has even dimension.
\end{proposition}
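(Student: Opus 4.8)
The plan is to reduce everything to \cref{thm:iffForMfldInvariant}. Since $H_i(M \sqcup M';F) \cong H_i(M;F)\oplus H_i(M';F)$, the semi-characteristic $M \mapsto \kerv_F(M)$ defines a monoid homomorphism $\mathcal{M}^\xi_n \to \Z/2$, and it is defined on all of $\mathcal{M}^\xi_n$ because every $n$-dimensional $\xi$-manifold is $HF$-oriented by hypothesis. By \cref{thm:iffForMfldInvariant}, $\kerv_F$ factors through $\SKK^\xi_n$ and splits the sequence $0 \to \Z/2 \to \SKK^\xi_n \to \Omega^\xi_n \to 0$ if and only if $\kerv_F(\partial W) \equiv \chi(W) \pmod 2$ for every $(n+1)$-dimensional $\xi$-manifold $W$. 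Writing $m = \frac{n+1}{2}$, I therefore plan to establish, for every such $W$, the congruence
\[
\chi(W) + \kerv_F(\partial W) \;\equiv\; \dim_F \im\!\big(H_m(W;F)\xrightarrow{j_*}H_m(W,\partial W;F)\big)\pmod 2 ,
\]
from which the proposition is immediate, since the right-hand side vanishes for all $W$ exactly when the left-hand side does. (For closed $W$ this is the degenerate case $\partial W = \varnothing$, where the congruence recovers $\chi(W)\equiv \dim_F H_m(W;F)\pmod 2$; the hypothesis that all closed $(n+1)$-dimensional $\xi$-manifolds have even $\chi$ is what guarantees the kernel above is actually $\Z/2$.)

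To prove the congruence I will feed the homology long exact sequence of the pair $(W,\partial W)$ with $F$-coefficients through Poincar\'e--Lefschetz duality. Abbreviate $b_i = \dim_F H_i(W;F)$, $b_i' = \dim_F H_i(W,\partial W;F)$, $c_i = \dim_F H_i(\partial W;F)$, and let $r_i = \rank\big(j_*\colon H_i(W)\to H_i(W,\partial W)\big)$, $k_i = \rank\big(H_i(\partial W)\to H_i(W)\big)$, so exactness gives $b_i = r_i + k_i$ and $b_i' = r_i + (c_{i-1}-k_{i-1})$. The $HF$-orientability hypotheses make Poincar\'e--Lefschetz duality for $W$ available, hence $b_i' = b_{2m-i}$, and working modulo $2$,
\[
\chi(W) \equiv \sum_{i=0}^{2m} b_i \equiv b_m + \sum_{i=0}^{m-1}(b_i + b_i') \equiv b_m + \sum_{i=0}^{m-1}\big(k_i + k_{i-1} + c_{i-1}\big) .
\]
The $k$-terms telescope (with $k_i = 0$ for $i<0$) to leave $k_{m-1}$, and since $\partial W$ has dimension $2m-1$ we have $\kerv_F(\partial W) = \sum_{i=0}^{m-1} c_i$, so $\sum_{i=0}^{m-1}c_{i-1} = \sum_{i=0}^{m-2}c_i = \kerv_F(\partial W)-c_{m-1}$. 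Substituting this and $b_m = r_m + k_m$ then yields
\[
\chi(W) + \kerv_F(\partial W) \;\equiv\; r_m + k_m + k_{m-1} + c_{m-1} \pmod 2 .
\]

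The remaining point — showing $k_m + k_{m-1} + c_{m-1}$ is even — is where duality does real work, and I expect it to be the main obstacle to a clean write-up. Poincar\'e--Lefschetz duality for $W$ together with Poincar\'e duality for the closed $HF$-oriented manifold $\partial W$ identify, up to sign, the restriction map $H^m(W;F)\to H^m(\partial W;F)$ with the connecting homomorphism $\partial\colon H_m(W,\partial W;F)\to H_{m-1}(\partial W;F)$ (the standard compatibility ladder between the cohomology and homology long exact sequences of $(W,\partial W)$ under the duality isomorphisms); and by the universal coefficient theorem that restriction map is the $F$-linear transpose of $i_*\colon H_m(\partial W;F)\to H_m(W;F)$. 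Hence $k_m = \rank(i_*) = \rank(\partial)$, while exactness gives $\rank(\partial) = \dim_F\ker\big(H_{m-1}(\partial W)\to H_{m-1}(W)\big) = c_{m-1}-k_{m-1}$. Thus $k_m + k_{m-1} + c_{m-1} = 2c_{m-1}\equiv 0\pmod 2$, which establishes the congruence and hence the proposition. The things to get right in the final write-up are exactly this duality identification with its signs and the bookkeeping of where each $HF$-orientability hypothesis enters — all of it automatic when $F=\Z/2$, the case actually used in \cref{thm:OddSplittingTopWuclass}.
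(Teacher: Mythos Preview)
Your reduction to \cref{thm:iffForMfldInvariant} and the target congruence
\[
\chi(W)+\kerv_F(\partial W)\equiv \dim_F\im\big(H_m(W;F)\xrightarrow{j_*}H_m(W,\partial W;F)\big)\pmod 2
\]
is exactly the paper's route; this congruence is their \cref{lm:Bounding}, and the proposition then follows in one line.

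Where you diverge is only in how you prove the congruence. Your rank bookkeeping is correct and your closing step --- using the Lefschetz duality ladder to identify the restriction $H^m(W)\to H^m(\partial W)$ with the connecting map, hence $k_m=c_{m-1}-k_{m-1}$ --- is valid. But the paper gets there faster: truncate the long exact sequence of $(W,\partial W)$ as
\[
0\to\ker j_*\to H_m(W)\xrightarrow{j_*}H_m(W,\partial W)\to H_{m-1}(\partial W)\to\cdots\to H_0(W,\partial W)\to 0,
\]
use that the sum of dimensions in an exact sequence vanishes mod $2$, and substitute $H_i(W,\partial W)\cong H_{2m-i}(W)$ from Lefschetz duality over a field. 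This immediately collapses to $\dim\im(j_*)+\chi(W)+\kerv_F(\partial W)\equiv 0$, with no need to track the ranks $r_i,k_i$ separately or to invoke the extra duality identification you use to kill $k_m+k_{m-1}+c_{m-1}$. Your argument is correct, but the truncation trick makes the whole thing a three-line computation.
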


Before we get to the proof, we review some facts about relative homology and Poincar\'e-Lefschetz duality.
Recall that Poincar\'e-Lefschetz duality for $HR$-oriented manifolds $M^n$ possibly with boundary, e.g. see \cite[Theorem 3.43]{hatcher}, says that the cap product defines isomorphisms 
\[
PD\colon  H^{n-k}(M,R)\xrightarrow{\cong} H_k(M,\partial M;R) \text{ and } PD\colon  H^{n-k}(M,\partial M;R)\xrightarrow{\cong} H_k(M;R),
\]
for any integer $k$.
This leads to the following definition.

\begin{definition}[The intersection form of even-dimensional manifolds]\label{defIntersectionForm}
Let $R$ be a ring and $M$ a compact manifold of dimension $2k$, possibly with boundary, which is orientable in homology with coefficients in $R$. Then there is an intersection form in cohomology
\[\lambda(a',b')= \left<j^*(a'),PD(b')\right>\] for $a', b' \in  H^k(M, \partial M;R)$.

The adjoint of this form can be defined as the following composition
\[
\adjustbox{scale=0.87}{
\begin{tikzcd}
{H^{n-k}(M,\partial M,R))} \arrow[r, "j^*"]   & {H^{n-k}(M,R)}  \arrow[r, "\cong","PD"']                           & {H_{k}(M,\partial M;R)} \arrow[r, "coev"]        & {\Hom(H^{k}(M,\partial M;R),R).}   
\end{tikzcd}}
\]
\end{definition}

\begin{remark}\label{rm:RankOfIntersectionFormIsTheRankOfThatInclusionMap}
    If $R = F$ is a field, then the coevaluation map
\[
coev\colon H_{k}(M,\partial M;R) \to \Hom(H^{k}(M,\partial M;R),R)
\]
is an isomorphism.
Since Poincar\'e duality is also an isomorphism, we obtain in this case that $\rank(\lambda)=\rank(j^*)$.
Similarly we get in homology that $\rank(\lambda)=\rank(j_*)$ for $j_*\colon H_k(M;F)\to H_k(M,\partial M;F).$
\end{remark}

The following lemma is also proven in \cite[pg. 991]{Stong76}.

\begin{lemma}\label{lm:Bounding}
Let $F$ be a field and let $W^{2k}$, $\partial W=Y$ be $HF$-oriented manifolds. Then \[\kerv_F(Y)=\dim(H_k(W;F) \xrightarrow{j_*} H_k(W,Y;F))+\chi(W)\pmod{2}.\]
\end{lemma}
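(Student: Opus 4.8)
The plan is to compute both sides of the claimed identity in terms of the long exact sequence of the pair $(W, Y)$ with $F$-coefficients together with Poincaré–Lefschetz duality, and to show they agree modulo $2$. First I would write down the long exact sequence
\[
\cdots \to H_{i+1}(W,Y;F) \xrightarrow{\partial} H_i(Y;F) \xrightarrow{\iota_*} H_i(W;F) \xrightarrow{j_*} H_i(W,Y;F) \to \cdots
\]
and use it to express $\dim H_i(Y;F)$ as $\dim \im(\partial_{i+1}) + \dim \ker(j_*)_i = \dim \coker(j_*)_{i+1} + \dim \ker(j_*)_i$. Summing this over $i = 0, \dots, k-1$ (with $n = 2k-1$, so $\dim W = 2k$ and $\dim Y = 2k-1$; the relevant Kervaire range is $i \le k-1$) telescopes nicely, but the boundary-dimensional term $i = k-1, k$ is where the middle-dimensional intersection map $j_* \colon H_k(W;F) \to H_k(W,Y;F)$ enters.

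The key computational step is to organize the sum $\kerv_F(Y) = \sum_{i=0}^{k-1} \dim H_i(Y;F) \pmod 2$ by pairing up terms using Poincaré–Lefschetz duality for $W$: the isomorphisms $H_i(W;F) \cong H^{2k-i}(W,Y;F)$ and $H_i(W,Y;F) \cong H^{2k-i}(W;F)$, combined with the universal coefficient theorem over a field, identify $\dim H_i(W;F) = \dim H_{2k-i}(W,Y;F)$ and identify the rank of $j_*$ in degree $i$ with the rank of $j_*$ in degree $2k-i$ (this is \cref{rm:RankOfIntersectionFormIsTheRankOfThatInclusionMap}, essentially). Then $\chi(W) = \sum_{i=0}^{2k} (-1)^i \dim H_i(W;F)$, and I would split this sum into the ranges $i < k$, $i = k$, and $i > k$, using duality to fold the $i>k$ part onto the $i<k$ part. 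What survives modulo $2$ after the folding is exactly the middle term $\dim H_k(W;F)$ together with correction terms that, via the long exact sequence, reassemble into $\kerv_F(Y) + \dim \im(j_*)_k$.

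Concretely, I expect the bookkeeping to run as follows. Let $r = \dim \im\big(H_k(W;F) \xrightarrow{j_*} H_k(W,Y;F)\big)$. From the long exact sequence, $\sum_{i=0}^{k-1}\dim H_i(Y;F) \equiv \sum_{i=0}^{k-1}\big(\dim H_i(W;F) - \dim\im(j_*)_i\big) + \sum_{i=0}^{k-1}\dim\coker(j_*)_{i+1} \pmod 2$, and the $\coker$ sum rewrites via duality as a sum of $\ker(j_*)$ or $\coker(j_*)$ terms in degrees $> k$. Meanwhile $\chi(W) \equiv \sum_{i} \dim H_i(W;F) \pmod 2$, and pairing degree $i$ with degree $2k-i$ using $\dim H_i(W;F) = \dim H_{2k-i}(W,Y;F) \equiv \dim H_{2k-i}(W;F) + [\text{something involving }\partial] \pmod 2$ is not quite an equality of Betti numbers — so I'd rather use the cleaner route: directly $\chi(W) = \chi(W,Y) + \chi(Y)$ is false, but $\chi(W) = \tfrac12(\chi(W) + \chi(W, Y)) + \tfrac12(\chi(W) - \chi(W,Y))$ and Lefschetz duality gives $\chi(W,Y) = \chi(W)$ when $2k$ is... no: $\chi(W,Y) = (-1)^{2k}\chi(W) = \chi(W)$ always here, so that's vacuous. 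The honest approach is the telescoping one: carefully track $\dim\ker(j_*)_i$ and $\dim\coker(j_*)_i$ across all degrees, use $\dim\ker(j_*)_i + \dim\im(j_*)_i = \dim H_i(W;F)$ and $\dim\coker(j_*)_i + \dim\im(j_*)_i = \dim H_i(W,Y;F)$, and use duality $\dim\im(j_*)_i = \dim\im(j_*)_{2k-i}$ (from \cref{rm:RankOfIntersectionFormIsTheRankOfThatInclusionMap} applied in each degree, not just the middle) together with $\dim H_i(W;F) = \dim H_{2k-i}(W,Y;F)$.

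\textbf{The main obstacle} will be exactly this middle-degree bookkeeping: the long exact sequence does not split symmetrically around degree $k$, so one must carefully account for the single ``extra'' appearance of $H_k$ and its image under $j_*$, and check that the half-integer weighting implicit in going from $\sum_{i=0}^{k-1}$ to $\chi(W) = \sum_{i=0}^{2k}$ works out to give precisely $\chi(W) + r$ rather than $\chi(W)$ or $\chi(W) + r + (\text{error})$. I would verify the final formula on a couple of test cases — $W = D^{2k}$ with $Y = S^{2k-1}$ (where $r = 0$, $\chi(W) = 1$, $\kerv_F(S^{2k-1}) = 1$, consistent), and $W$ a surface with boundary circles — before trusting the general count. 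Once the algebra is pinned down, the statement follows immediately, and this is the lemma that, fed into \cref{thm:iffForMfldInvariant} via the hypothesis that $r$ is always even, will yield \cref{prop:iffkervaire} and hence \cref{thm:OddSplittingTopWuclass}.
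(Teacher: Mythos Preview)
Your ingredients are right --- the long exact sequence of the pair and Poincar\'e--Lefschetz duality --- and with enough patience your telescoping-by-hand would eventually reach the formula. But you are working far too hard, and the paper's proof shows why: there is one organising trick that eliminates all of the kernel/cokernel bookkeeping.

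The trick is this. Truncate the long exact sequence of $(W,Y)$ on the left at degree $k$ by inserting $\ker(j_*)$:
\[
0 \to \ker(j_*) \to H_k(W) \xrightarrow{j_*} H_k(W,Y) \to H_{k-1}(Y) \to \cdots \to H_0(W,Y) \to 0.
\]
Now use the single fact that in any exact sequence of finite-dimensional $F$-vector spaces, the alternating sum of dimensions vanishes --- hence, modulo $2$, the \emph{plain} sum of all dimensions vanishes. This one line gives
\[
\dim\ker(j_*) + \dim H_k(W) + \sum_{i=0}^{k}\dim H_i(W,Y) + \sum_{i=0}^{k-1}\dim H_i(W) + \kerv_F(Y) \equiv 0 \pmod 2.
\]
Since $\dim\ker(j_*) + \dim H_k(W) \equiv \dim\im(j_*) \pmod 2$, and Lefschetz duality converts $\sum_{i=0}^{k}\dim H_i(W,Y)$ into $\sum_{i=k}^{2k}\dim H_i(W)$, the two $W$-sums combine to $\sum_{i=0}^{2k}\dim H_i(W) \equiv \chi(W) \pmod 2$, and the lemma drops out. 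No per-degree rank symmetry, no cokernels, no folding.

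Your detours --- expressing $\dim H_i(Y)$ as $\dim\coker(j_*)_{i+1} + \dim\ker(j_*)_i$, invoking the rank symmetry $\dim\im(j_*)_i = \dim\im(j_*)_{2k-i}$ in every degree, the aborted $\chi(W,Y)$ computation --- are all reinventing, in pieces, the single statement ``sum of dimensions in an exact sequence is even.'' That is the idea you are missing, and once you see it the ``main obstacle'' you anticipate simply does not arise.
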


\begin{proof}
Consider the long exact sequence in homology of a pair $(W,Y)$ suppressing the coefficients $F$. We can truncate it on the left as follows:
\begin{align*}
    0\to\ker q\to H_k(W)&\xrightarrow{q} H_k(W,Y)\to H_{k-1}(Y)\to\cdots\\
    \cdots &\to H_1(W,Y)\rightarrow H_0(Y)\rightarrow H_0(W)\rightarrow H_0(W,Y)\to 0.
\end{align*}
The sum of the dimensions of all terms in an exact sequence is zero modulo $2$.
\[
0\equiv\dim(\ker(q))+ \dim H_k(W)+\sum_{i=0}^k H_i(W,Y)+\sum_{i=0}^{k-1}H_i(W)+\sum_{i=0}^{k-1} H_i(Y) \pmod 2
\]
\[0\equiv\dim(H_k(W)\xrightarrow {j_*}H_k(W,Y))+\sum_{i=k}^{2k} H_i(W)+\sum_{i=0}^{k-1}H_i(W)+\kerv_F(Y) \pmod 2\]
In the last step we used that $H_i(W,Y)\cong H^{2k-i}(W)\cong H_{2k-i}(W)$ using that our coefficients are in a field. We conclude
\[0\equiv \dim(H_k(W;F)\rightarrow H_k(W,Y;F))+\chi(W)+\kerv_F(Y) \pmod 2 .\qedhere\]
\end{proof}
We now prove \cref{prop:iffkervaire}:
\begin{proof}[Proof of \cref{prop:iffkervaire}]
It is clear that $\kerv_{F}$ is a $\Z/2$-valued invariant of $\xi$-manifolds that is additive with respect to disjoint union. 
By 
\cref{thm:iffForMfldInvariant}, it suffices to show that for every $\xi$-manifold $W$ with boundary, we have that
\[
\kerv_F(\partial W) \equiv \chi(W) \pmod 2.
\]
By \cref{lm:Bounding} however, 
\[
\kerv_F(\partial W) \equiv \chi(W) + \dim(H_k(W;F) \xrightarrow{j_*} H_k(W,Y;F)) \pmod 2,
\]
so that $\kerv_F$ is a splitting if and only if $\dim(j_*)$ is even for all $W$. 
\end{proof}

Next, we want to study for which $\xi$ the Kervaire semi-characteristic gives a splitting, using the conditions of \cref{prop:iffkervaire}. 
Therefore, we turn to the question of when the obstruction term given by 
\[
\dim(H_k(W;F) \xrightarrow{j_*} H_k(W,Y;F)) \pmod 2
\]
vanishes for specific $\xi$-structures.

\begin{proof}[Proof of \cref{thm:OddSplittingTopWuclass}]
        We aim to show that the Kervaire semi-characteristic $\kerv_{\Z/2}$ gives a splitting of the \ref{SKKseq} if the top Wu class vanishes for every closed $\xi$-manifold of dimension $(n+1)$. 

        By \cref{prop:iffkervaire}, it suffices to show that $\dim (H_{k}(W;\Z/2)\xrightarrow{j_*}H_{k}(W,Y;\Z/2))$ is even. This dimension is, by \cref{rm:RankOfIntersectionFormIsTheRankOfThatInclusionMap}, equal to the dimension of the non-degenerate part of the intersection form on $W$. 

Take $x \in H^{k}(W, Y;\Z/2)$. Then the Wu class $v_k \in H^{k}(W;\Z/2)$ (see \cref{sec:RelativeCharClasses}) has the property that 
$x^2 = \Sq^k x = v_k x$. We also have that $v_k=0$ for closed $2k$-dimensional $\xi$-manifolds by assumption. By \cref{cor:closedWuboundaryWu}, this then holds for manifolds with boundary as well.

We have \[\lambda(x,x)= \left<j^*(x), PD(x)\right>  = \left<j^*(x)x,[W,Y]\right>
= \left<x^2,[W,Y]\right>,\] where the last equality follows from naturality of the cup product with respect to $(W,\varnothing)\times (W,Y)\to (W,Y)\times (W,Y)$. 
Hence $\lambda(x,x)=0$, i.e. $\lambda$ is an even form. By the classification of $\Z/2$-valued even forms we get that the non-degenerate part of $\lambda$ has even dimension, which completes the proof.
\end{proof}

Next, we remark on the splittings given by Kervaire semi-characteristic over fields other than $\mathbb{Z}/2$, as well as non-uniqueness of the splitting of the SKK sequence in general.

\begin{remark}[Kervaire semi-characteristics over different fields]\label{rm:differentKervaire}
    Suppose $\xi$ is a twice stabilised tangential structure and $n$ an odd integer such that $\langle S^n_b\rangle_{\SKK^{\xi}}\cong\Z/2$.
    Assume that $\kerv_{\Z/2}$ gives a splitting of the \ref{SKKseq}. Then for a field $F$, the semi-characteristic $\kerv_F$ splits the same sequence if and only if the difference $\kerv_F-\kerv_{\Z/2}$ is a bordism invariant. 

 We study the case $F = \Q$. 
 For $(4k+1)$-dimensional orientable manifolds $M$
\[\kerv_{\Q}(M)-\kerv_{\Z/2}(M)= \langle w_2w_{4k-1},[M]\rangle,\] 
where the right hand side is the \emph{de Rham invariant} \cite{Lusztig69}. This is a bordism invariant which detects the isomorphism $\Omega^{SO}_5 \cong \Z/2$ and is in particular non-trivial on $5$-dimensional manifolds.
It also detects the symmetric signature $\Omega^{SO}_n \to  L_n(\Z) \cong \Z/2$, where $L_n(\Z)$ is the symmetric $L$-theory group for $n = 1 \pmod 4$. It follows that for  $n=4k+1$, $\kerv_\Q$ also gives a splitting of the SKK sequence provided $\kerv_{\Z/2}$ does. This splitting is different at least in dimension $4k+1=5$. This recovers the classical result of \cite{skbook}. 

It can also happen that $\kerv_\Q$ does not give a splitting while $\kerv_{\Z/2}$ does. This happens for example for $\Spin$ manifolds in dimension 3: $\RP^3$ has $\kerv_{\Z/2}(X)=0$ while $\kerv_{\Q}(X)=1$, but it is zero in bordism as $\Omega_3^{\Spin}=0$.
\end{remark}

\subsection{SKK groups of \texorpdfstring{$k$}{k}-orientable manifolds}

One of our main applications of 
\cref{thm:OddSplittingTopWuclass} is for $k$-orientable tangential structures. From this we will additionally deduce some splitting results for the connective covers of $BO$ (e.g. $BSO$, $B\Spin$, $B\String$).

Let  $\xi\colon \BOrk \to BO$ be a $k$-orientable structure as defined in \cref{def:k-orientable}. The question of deciding which $(n+1)$-dimensional $\BOr_k$-manifolds necessarily have an even Euler characteristic, was discussed in \cref{section:k-orientability}.
The general answer is not known, see \cref{OpQues:DoesSuchOddEulerCharExist}.

Let us denote the subgroup generated by spheres $\left<S^n_b\right>\leq \SKK_n^{\Or_k}$ by $I^k_n$. 
Then \cref{thm:TheExactSequence} gives us the exact sequence
\begin{equation}\label{eq:kOrientSequence}
\begin{tikzcd} 0 \ar[r] &I^k_n\arrow[r]&\SKK^{\Or_k}_n\arrow[r]&\Omega_n^{\Or_k} \ar[r] & 0.\end{tikzcd}
\end{equation}

Now we prove one of our main Theorems:

\begin{theorem}\label{thm:mainthmforOddDimensionKOrient}
    For any $k\geq 0$ , and any $n$ odd we have
    \begin{enumerate}[label=(\roman*)]
        \item\label{it:TheCaseWhereWeFoundTheSection} if $2^{k+1}\nmid n+1$ then $I^k_n\cong\Z/2$ and there is a split short exact sequence \begin{equation*}\begin{tikzcd} 0 \ar[r] &\Z/2\arrow[r]&\SKK^{\Or_k}_n\arrow[r]\arrow[l,bend right,pos=0.4,,"\kerv_{\Z/2}"']&\Omega_n^{\Or_k} \ar[r] & 0\end{tikzcd}
        \end{equation*} where $\kerv_{\Z/2}$ is the Kervaire semi-characteristic over $\Z/2$.
        \item\label{it:oddDimMflExists} if $2^{k+1}\mid n+1$ and there exists an $(n+1)$-dimensional $k$-orientable manifold with odd Euler characteristic then $I^k_n=0$ and the obvious map is an isomorphism
        \[\SKK_{n}^{\Or_k}\cong \Omega^{\Or_k}_n.\]
        \item\label{it:oddDimMflDoesntExists} if $2^{k+1}\mid n+1$ and such manifold from  \ref{it:oddDimMflExists} does not exist then $I^k_n\cong\Z/2$ and there is a short exact sequence
        \begin{equation*}\begin{tikzcd}0 \ar[r] &\Z/2\arrow[r]&\SKK^{\Or_k}_n\arrow[r]&\Omega_n^{\Or_k} \ar[r] & 0.\end{tikzcd}
        \end{equation*}
    \end{enumerate}
\end{theorem}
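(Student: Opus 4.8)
The plan is to deduce all three parts from results already established: \cref{thm:TheExactSequence} (the \ref{SKKseq} together with its $0$-or-$\Z/2$ dichotomy for the kernel in odd dimensions), \cref{thm:OddSplittingTopWuclass} (the top-Wu-class criterion for $\kerv_{\Z/2}$ to give a section), \cref{morewus} (vanishing of Wu classes on $k$-orientable manifolds), and \cref{thm:k-orientEulerChar} (the resulting parity constraint on Euler characteristics). Since $\xi\colon\BOr_k\to BO$ is a \emph{stable} tangential structure, it is automatically twice stabilised in every relevant dimension, so \cref{thm:TheExactSequence} applies and we already have the exact sequence \cref{eq:kOrientSequence}, with $I^k_n=\langle S^n_b\rangle$ equal to $0$ or $\Z/2$ according to whether or not there exists an $(n+1)$-dimensional $k$-orientable manifold of odd Euler characteristic.

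For parts \ref{it:oddDimMflExists} and \ref{it:oddDimMflDoesntExists} I would invoke this dichotomy directly. When $2^{k+1}\mid n+1$ and an $(n+1)$-dimensional $k$-orientable manifold of odd Euler characteristic exists, \cref{thm:TheExactSequence} (item \ref{it:ses1}) gives $[S^n_b]=0$, hence $I^k_n=0$ and the canonical map $\SKK^{\Or_k}_n\to\Omega^{\Or_k}_n$ is an isomorphism; when no such manifold exists, item \ref{it:ses2} gives $I^k_n\cong\Z/2$ and the stated short exact sequence. No additional argument is needed here.

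For part \ref{it:TheCaseWhereWeFoundTheSection} I would proceed in three steps. First, the arithmetic observation: writing $n+1=2^{a}b$ with $b$ odd and $a\geq 1$ (possible since $n$ is odd), the hypothesis $2^{k+1}\nmid n+1$ reads $a\leq k$, which is exactly the condition $2^{k}\nmid \tfrac{n+1}{2}=2^{a-1}b$. Hence by \cref{morewus} the top Wu class $v_{\frac{n+1}{2}}$ vanishes on every closed $(n+1)$-dimensional $k$-orientable manifold. Second, a vanishing top Wu class forces even Euler characteristic (equivalently, $2^{k+1}\nmid n+1$ and \cref{thm:k-orientEulerChar} give the same conclusion), so \cref{thm:TheExactSequence} (item \ref{it:ses2}) yields $I^k_n\cong\Z/2$. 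Third, the hypothesis of \cref{thm:OddSplittingTopWuclass} is now verified, its remaining side conditions (the structure being defined at least in dimension $n+2$, and $H\Z/2$-orientability of all $n$-manifolds) being automatic, so it produces the desired split short exact sequence with section $\kerv_{\Z/2}$.

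I do not expect a genuine obstacle: the content of this theorem is the organisation and specialisation of the general machinery assembled earlier. The one point that needs a little care is the $2$-adic bookkeeping in part \ref{it:TheCaseWhereWeFoundTheSection} identifying the divisibility hypothesis $2^{k+1}\nmid n+1$ with the condition $2^{k}\nmid\tfrac{n+1}{2}$ appearing in \cref{morewus}, and the observation that this is precisely the input required to apply \cref{thm:OddSplittingTopWuclass}; everything else is immediate.
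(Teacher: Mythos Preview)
Your proposal is correct and follows essentially the same approach as the paper: parts \ref{it:oddDimMflExists} and \ref{it:oddDimMflDoesntExists} are immediate from \cref{thm:TheExactSequence}, and part \ref{it:TheCaseWhereWeFoundTheSection} is obtained by combining the Wu-class vanishing for $k$-orientable manifolds with \cref{thm:OddSplittingTopWuclass}. The only cosmetic difference is that the paper cites the relative Wu-class vanishing \cref{lm:ReneeKOrientabilityAndWuClassInRelativeCase} rather than the closed version \cref{morewus}; since the hypothesis of \cref{thm:OddSplittingTopWuclass} only concerns closed manifolds, your choice is equally valid.
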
 
\begin{proof}\label{proof:ourMainOddDimensionalTheoremKOrient}

Parts \ref{it:oddDimMflExists} and \ref{it:oddDimMflDoesntExists} are immediate consequences of \cref{thm:TheExactSequence}.
For \ref{it:TheCaseWhereWeFoundTheSection}, assume we have $k,n$ such that $2^{k+1}\nmid n+1$. Then by \cref{thm:k-orientEulerChar} and \cref{thm:TheExactSequence}, we have a short exact sequence exact sequence
\begin{equation*}\begin{tikzcd}0 \ar[r] &\Z/2\arrow[r]&\SKK^{\Or_k}_n\arrow[r]&\Omega_n^{\Or_k} \ar[r] & 0.\end{tikzcd}\end{equation*}

By \cref{lm:ReneeKOrientabilityAndWuClassInRelativeCase}, an $(n+1)$-dimensional $k$-orientable manifold, possibly with boundary, has $v_{\frac{n+1}{2}}=0$. The result follows by \cref{thm:OddSplittingTopWuclass}.
 \end{proof}

 We will use the results about splitting of the \ref{SKKseq} for $k$-orientable manifolds to deduce the splittings in dimensions specified below of the same sequence for various Whitehead truncations of $BO$, e.g. $BSO,B\Spin,B\String,B\Fivebrane,\cdots$. 
 
 Recall \cref{cor:CoversOfBOvskOrientability} where, for a given integer $b$ we determined the maximum $k$, such that there is a map $(BO)_{\geq b}\to \BOrk$ over $BO$.

\begin{example}\label{ex:BstrigSplitting}
    Recall from \cref{ex:Bstring3Orientable} that there is a map $B\String\to \BOr_3$ over $BO$. Take an odd integer $n$ such that $16$ does \emph{not} divide $n+1$. This gives us:
    \[
\begin{tikzcd}
    0 \arrow[r] & \langle S^n \rangle_{\String} \ar[d, "\cong"] \ar[r] & \SKK_n^{\String} \ar[d] \ar[r] & \Omega^{\String}_n \ar[d] \ar[r] & 0
    \\
    0 \arrow[r] & \langle S^n \rangle_{\Or_3} \ar[r] & \SKK^{\Or_3}_d \ar[r]\ar[l,bend left,pos=0.5,"\kerv_{\Z/2}"] & \Omega^{\Or_3}_n \ar[r] & 0.
\end{tikzcd}
\]
The leftmost vertical map is an isomorphism of the groups, both $\Z/2$, so the top row also splits by the $\Z/2$-valued Kervaire semi-characteristic.
\end{example}

This of course generalises.
\begin{corollary}[of \cref{thm:mainthmforOddDimensionKOrient}, see also \cref{cor:CoversOfBOvskOrientability}]\label{cor:SKKOfBOb}
Let $b,k$ be integers as in \cref{cor:CoversOfBOvskOrientability}. Let $n$ be an odd integer, such that $2^{k+1}\nmid n+1$. Then the following \ref{SKKseq} for $(BO)_{> b}$
\begin{center}    \begin{tikzcd}0\ar[r]&\Z/2\arrow[r]&\SKK^{(BO)_{> b}}_n\arrow[r]\arrow[l,bend right,pos=0.4,,"\kerv_{\Z/2}"']&\Omega_d^{(BO)_{> b}}\ar[r]&0\end{tikzcd}
\end{center}
splits by the Kervaire semi-characteristic over $\Z/2$.
\end{corollary}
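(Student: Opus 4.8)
The plan is to deduce this from \cref{thm:mainthmforOddDimensionKOrient} by the inheritance mechanism of \cref{lm:Inheritance}, in the same spirit as \cref{ex:BstrigSplitting}. Since $k\le\phi(b)$, \cref{cor:CoversOfBOvskOrientability} supplies a map $\varphi\colon (BO)_{> b}\to\BOr_k$ over $BO$, which by \cref{rem:maponSKK} induces a morphism between the two corresponding \ref{SKKseq}s, i.e.\ a commuting ladder
\[
\begin{tikzcd}
    0 \arrow[r] & \langle S^n_b \rangle_{\SKK^{(BO)_{> b}}_n} \arrow[d, "\varphi_*"] \arrow[r] & \SKK^{(BO)_{> b}}_n \arrow[d,"\varphi_*"] \arrow[r] & \Omega^{(BO)_{> b}}_n \arrow[d,"\varphi_*"] \arrow[r] & 0
    \\
    0 \arrow[r] & \langle S^n_b \rangle_{\SKK^{\Or_k}_n} \arrow[r] & \SKK^{\Or_k}_n \arrow[r]\arrow[l,bend left,pos=0.4,"\kerv_{\Z/2}"'] & \Omega^{\Or_k}_n \arrow[r] & 0 .
\end{tikzcd}
\]
By \cref{thm:mainthmforOddDimensionKOrient}\ref{it:TheCaseWhereWeFoundTheSection}, the hypothesis $2^{k+1}\nmid n+1$ guarantees both that $\langle S^n_b\rangle_{\SKK^{\Or_k}_n}\cong\Z/2$ and that the bottom row is split by $\kerv_{\Z/2}$, as drawn.

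The only hypothesis of \cref{lm:Inheritance} left to check is that the left-hand vertical map is an isomorphism. Every $(BO)_{> b}$-manifold is $k$-orientable via $\varphi$ (cf.\ \cref{cor:connectedCoverAndkOrientable}), so by \cref{thm:k-orientEulerChar} every closed $(n+1)$-dimensional $(BO)_{> b}$-manifold has even Euler characteristic, whence \cref{thm:TheExactSequence}\ref{it:ses2} gives $\langle S^n_b\rangle_{\SKK^{(BO)_{> b}}_n}\cong\Z/2$ as well. Since bounding spheres are natural in the tangential structure, $\varphi_*$ carries the generator of this $\Z/2$ to the generator of the target $\Z/2$, hence is an isomorphism. \cref{lm:Inheritance} then yields a splitting of the top row by the induced section $\kerv_{\Z/2}\circ\varphi_*\colon\SKK^{(BO)_{> b}}_n\to\Z/2$; as $\varphi_*$ is induced by forgetting the $(BO)_{> b}$-structure and $\kerv_{\Z/2}$ is an invariant of the underlying manifold, this composite is simply $\kerv_{\Z/2}$, which finishes the argument.

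I do not anticipate a genuine obstacle; the single point deserving a word of justification is that the section produced abstractly by \cref{lm:Inheritance} is literally $\kerv_{\Z/2}$ and not merely some splitting, which is immediate from the naturality just invoked. As an alternative route that bypasses \cref{lm:Inheritance} entirely, one may apply \cref{thm:OddSplittingTopWuclass} directly: writing $n+1=2m$, the condition $2^{k+1}\nmid n+1$ is equivalent to $2^k\nmid m$, so by \cref{morewus} the top Wu class $v_m$ vanishes on every closed $(n+1)$-dimensional $k$-orientable manifold, in particular on every closed $(n+1)$-dimensional $(BO)_{> b}$-manifold, and \cref{thm:OddSplittingTopWuclass} then applies verbatim.
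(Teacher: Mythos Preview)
Your argument is correct and follows essentially the same route as the paper: use \cref{cor:CoversOfBOvskOrientability} to obtain the map to $\BOr_k$, invoke \cref{thm:TheExactSequence} to identify the sphere subgroup as $\Z/2$, and then apply the inheritance of splittings (\cref{lm:Inheritance}) from the $k$-orientable case in \cref{thm:mainthmforOddDimensionKOrient}. Your additional remarks justifying that the inherited section is literally $\kerv_{\Z/2}$, and the alternative direct appeal to \cref{thm:OddSplittingTopWuclass}, are welcome elaborations beyond what the paper spells out.
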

\begin{proof}
Recall by \cref{cor:CoversOfBOvskOrientability} we have a map $(BO)_{> b}\to \BOrk$ over $BO$, in particular every $(BO)_{> b}$-manifold $M$ is $k$-orientable. By \cref{thm:TheExactSequence} we get $\langle S^n_b\rangle \cong \Z/2$ in $\SKK^{(BO)_{> b}}_n$. Also by \cref{thm:mainthmforOddDimensionKOrient} the SKK sequence for $n$ splits by the Kervaire $\Z/2$ semi-characteristic and so the inheritance of splittings (\cref{lm:Inheritance}) establishes the result.  
\end{proof}

\cref{cor:SKKOfBOb} reflects what happens in (certain) odd dimensions such that there are no odd $\chi$ manifolds in dimension $n+1$.
On the other hand, the existence of the odd $\chi$ manifolds listed in \cref{Table:k-orientable} guarantees the following.

\begin{proposition}\label{prop:SKKForSmallConnectiveCovers}
We have the following isomorphisms:
    \begin{enumerate}[label=(\roman*)]
        \item \cite{skbook} For $(BO)_{>0}\simeq BO$ and for $2\mid (n+1)$ we have $$\SKK^{O}_n\cong \Omega_n^{O}.$$ 
        \item\cite{skbook} For $(BO)_{> 1}\simeq BSO$ over $BO$ and for $4\mid (n+1)$ we have $$\SKK^{SO}_n\cong \Omega_n^{SO}.$$
        \item For $(BO)_{> 2}\simeq B\Spin$ over $BO$ and for $8\mid (n+1)$ we have $$\SKK^{\Spin}_n\cong \Omega_n^{\Spin}.$$
        \item For $(BO)_{> 4}\simeq B\String$ over $BO$ and for $16\mid (n+1)$ we have $$\SKK^{\String}_n\cong \Omega_n^{\String}.$$
    \end{enumerate}
\end{proposition}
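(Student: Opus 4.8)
The plan is to reduce each of the four isomorphisms to \cref{thm:TheExactSequence}: for a twice stabilised structure $\xi$ and $n$ odd, part \ref{it:ses1} of that theorem asserts that $[S^n_b]=0\in\SKKxi_n$ — equivalently $\langle S^n_b\rangle_{\SKKxi_n}=0$, so that the canonical map $\SKKxi_n\to\Omega^\xi_n$ is an isomorphism — as soon as there exists a closed $(n+1)$-dimensional $\xi$-manifold of odd Euler characteristic. The structures $BO$, $BSO$, $B\Spin$, $B\String$ are all stable, hence twice stabilised with respect to every $n$; and each divisibility hypothesis ($2\mid n+1$, $4\mid n+1$, $8\mid n+1$, $16\mid n+1$) forces $n$ odd, so the theorem applies. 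It therefore suffices, in each case, to exhibit a closed $\xi$-manifold of dimension $n+1$ with odd Euler characteristic.

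First, for $(BO)_{>0}\simeq BO$ with $n+1$ even, take $\RP^{n+1}$, which carries its canonical tangential structure and has $\chi=1$. For $BSO$ with $4\mid n+1$, take $\CP^{(n+1)/2}$: its complex dimension $(n+1)/2$ is even, it is orientable so admits an $SO$-structure, and $\chi(\CP^{(n+1)/2})=(n+1)/2+1$ is odd. For $B\Spin$ with $8\mid n+1$, take $\HP^{(n+1)/4}$: it is $3$-connected (cells in degrees $0,4,8,\dots$), so $w_1=w_2=0$ and it is spinnable, while $\chi(\HP^{(n+1)/4})=(n+1)/4+1$ is odd. For $B\String$ with $16\mid n+1$, set $m=(n+1)/16$ and take $(\mathbb{OP}^2)^{m}$, of dimension $16m=n+1$: the octonionic projective plane $\mathbb{OP}^2$ is $7$-connected (cells in degrees $0,8,16$), so $w_1=\dots=w_7=0$ and $\tfrac12 p_1\in H^4(\mathbb{OP}^2;\Z)=0$, whence $\mathbb{OP}^2$ — and hence any product of copies of it, the relevant characteristic classes of a product being determined by those of the factors — admits a $\String$-structure; and $\chi\big((\mathbb{OP}^2)^m\big)=3^m$ is odd. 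Feeding these witnesses into \cref{thm:TheExactSequence} yields $\langle S^n_b\rangle=0$ and the four isomorphisms. Statements (i) and (ii) are already in \cite{skbook}; alternatively, (i)--(iii) are the cases $k=0,1,2$ of \cref{thm:mainthmforOddDimensionKOrient}, part \ref{it:oddDimMflExists}, via $BO=\BOr_0$, $BSO=\BOr_1$, $B\Spin=\BOr_2$, with the manifolds above serving as the required odd-Euler-characteristic $k$-orientable manifolds; case (iv) is not literally of this form since $B\String\neq\BOr_3$, but the argument via \cref{thm:TheExactSequence} applies directly.

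There is no serious obstacle. The only points requiring care are that the listed projective spaces (and the product in (iv)) genuinely carry the stated tangential structure — immediate from their connectivity via the obstruction-theoretic criteria recalled in \cref{section:tangential} — and that their Euler characteristics are odd, which is the standard cell count. The content of the proposition is simply the matching of the dimensions $n+1$ to the classical odd-Euler-characteristic manifolds $\RP^{n+1}$, $\CP^{(n+1)/2}$, $\HP^{(n+1)/4}$, $(\mathbb{OP}^2)^{(n+1)/16}$ recorded in \cref{Table:k-orientable}.
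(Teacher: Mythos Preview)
Your proof is correct and matches the paper's approach exactly: the paper does not give a separate proof but simply prefaces the proposition with ``the existence of the odd $\chi$ manifolds listed in \cref{Table:k-orientable} guarantees the following,'' which is precisely the argument you spell out via \cref{thm:TheExactSequence}\ref{it:ses1}. Your additional remarks on why each manifold carries the required structure and why (iv) must go through \cref{thm:TheExactSequence} directly rather than \cref{thm:mainthmforOddDimensionKOrient} are accurate elaborations of what the paper leaves implicit.
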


\subsection{SKK groups for other tangential structures}

\subsubsection{Unstable and stable framings}

Fix $n$ an odd integer. Consider various framing tangential structures of \cref{subsec:FramingStr}. For a framing structure to be twice stabilised we need to either consider the stable framing structure $s\colon *\to BO$ or a framing on at least twice stabilised vector bundles with respect to $n$, $s_{n+k}\colon *\to BO_{n+k}$, $k\geq 2$. 
For any of these structures, closed manifolds in any dimension have even Euler characteristic.

\begin{theorem}\label{SKKStablyFramed}
    For $2\leq k\leq \infty$, the \ref{SKKseq} 
\[\begin{tikzcd}
0\ar[r]&\Z/2\ar[r]&\SKK_n^{s_{n+k}}\ar[r]& \Omega^{s_{n+k}}_n \ar[r]& 0
\end{tikzcd}\]
    splits by the $\Z/2$-Kervaire semi-characteristic.
\end{theorem}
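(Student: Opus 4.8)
The plan is to deduce this from \cref{thm:OddSplittingTopWuclass}, so the only real work is to verify that every closed $(n+1)$-dimensional stably framed (more precisely $s_{n+k}$-structured) manifold has vanishing top Wu class $v_{\frac{n+1}{2}}$. In fact something much stronger is true: a manifold admitting an $s_{n+k}$-structure for $k\geq 1$ has a trivialised stable tangent bundle (after adding a trivial bundle of rank $k$), so \emph{all} its Wu classes and Stiefel--Whitney classes vanish. Indeed, by \cref{lm:stableEquiv} an $s_{n+k}$-structure on an $m$-dimensional vector bundle $E$ (for $m\le n+k$) is a trivialisation of $E\oplus\underline{\R}^{\,n+k-m}$, hence the classifying map $E\colon X\to BO$ of the stable bundle is nullhomotopic; since $w=TM^*(w_{\mathrm{univ}})$ factors through this classifying map, we get $w(M)=1$, and then $v(M)=1$ as well because $\Sq(v(M))=w(M)$ and $\Sq$ is an isomorphism on $H^0$. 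In particular $v_{\frac{n+1}{2}}(M)=0$ for every closed $(n+1)$-dimensional $s_{n+k}$-manifold.

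With that observation in hand, the proof is essentially two lines. First, since $k\ge 2$, the tangential structure $s_{n+k}$ is twice stabilised with respect to $n$ (it is literally a stabilisation living in dimension $n+k\ge n+2$), so \cref{thm:TheExactSequence} applies: every closed $(n+1)$-dimensional framed manifold has even Euler characteristic (it has vanishing top Stiefel--Whitney class, as just noted, or simply because framed implies stably parallelisable), hence $\langle S^n_b\rangle_{\SKK^{s_{n+k}}_n}\cong\Z/2$ and we get the short exact sequence in the statement. Second, the hypothesis of \cref{thm:OddSplittingTopWuclass} is met, so $\kerv_{\Z/2}$ provides a splitting. The case $k=\infty$, i.e. the stable framing $s\colon *\to BO$, is identical: it is a stable structure hence twice stabilised, and a stably framed manifold has $w=v=1$.

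\begin{proof}
Let $k$ be an integer with $2\le k\le\infty$, or $k=\infty$ meaning the stable framing $s\colon *\to BO$. In all cases $s_{n+k}$ is twice stabilised with respect to $n$, since a stabilisation to dimension $n+2$ (or all the way to $BO$) is provided by the structure itself.

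We claim every closed $(n+1)$-dimensional $s_{n+k}$-manifold $M$ has $v_{\frac{n+1}{2}}(M)=0$. By \cref{lm:stableEquiv}, an $s_{n+k}$-structure on $TM$ is a trivialisation of $TM\oplus\underline{\R}^{\,k-1}$, so the classifying map $M\xrightarrow{TM}BO$ of the stable tangent bundle is nullhomotopic. Since $w(M)=TM^*(w)$ and the total Wu class satisfies $\Sq(v(M))=w(M)$ with $\Sq$ invertible, we conclude $w(M)=1$ and $v(M)=1$; in particular $v_{\frac{n+1}{2}}(M)=0$.

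Since all closed $(n+1)$-dimensional $s_{n+k}$-manifolds have $w_{n+1}=0$, they have even Euler characteristic, so by \cref{thm:TheExactSequence} we have $\langle S^n_b\rangle\cong\Z/2$ and the short exact sequence
\[
\begin{tikzcd}
0\ar[r]&\Z/2\ar[r]&\SKK_n^{s_{n+k}}\ar[r]&\Omega^{s_{n+k}}_n\ar[r]&0.
\end{tikzcd}
\]
The vanishing of the top Wu class on all closed $(n+1)$-dimensional $s_{n+k}$-manifolds is exactly the hypothesis of \cref{thm:OddSplittingTopWuclass}, which therefore gives that the Kervaire semi-characteristic $\kerv_{\Z/2}$ splits this sequence.
\end{proof}

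The main (and only) obstacle worth flagging is the bookkeeping around what ``$s_{n+k}$-structure on an $(n+1)$-manifold'' means under the stabilisation conventions of \cref{subsec:TangStruc}; once one unwinds \cref{lm:stableEquiv} correctly the argument reduces cleanly to \cref{thm:OddSplittingTopWuclass}, with no spectral sequence or explicit bordism computation needed.
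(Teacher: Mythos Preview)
Your proof is correct and takes essentially the same approach as the paper: both deduce the result directly from \cref{thm:OddSplittingTopWuclass}, using that $s_{n+k}$-manifolds have stably trivial tangent bundle and hence vanishing Wu classes. Your write-up simply spells out in more detail the verification that the structure is twice stabilised and that $v(M)=1$.
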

\begin{proof}
    The statement follows from \cref{thm:OddSplittingTopWuclass}. 
    The appropriate Wu classes vanish because the tangent bundles of $s_{n+k}$-manifolds are stably trivial.
\end{proof}

\subsubsection{\texorpdfstring{$\Pin^\pm$}{Pin} -manifolds}
\label{subsec:SKKPin}
\begin{proposition}\label{prop:PinpmSKK}
    Let $n$ be an odd integer. Then we have 

    \[\SKK_n^{\Pin^+}=\begin{cases}
        \Omega_n^{\Pin^+}& \text{for } n\equiv 3,7\pmod{8}\\
        \Z/2\times  \Omega_n^{\Pin^+}& \text{for } n\equiv 5\pmod{8}\\
        ?&n\equiv 1\pmod{8}
    \end{cases}\]

    \[\SKK_n^{\Pin^-}=\begin{cases}
        \Omega_n^{\Pin^-}& \text{for } n\equiv 1,5,7\pmod{8}\\
        \Z/2\times  \Omega_n^{\Pin^-}& \text{for } n\equiv 3\pmod{8}.
    \end{cases}\]
    
Here the maps $\SKK_n^{\Pin^+}\to \Z/2$ resp. $\SKK_n^{\Pin^-}\to \Z/2$ are given by $\kerv_{\Z/2}$. 
\end{proposition}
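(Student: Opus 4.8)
\emph{Setup.} Both $\Pin^+$ and $\Pin^-$ arise from stable tangential structures $B\Pin^\pm\to BO$, hence are twice stabilised with respect to every $n$, so \cref{thm:TheExactSequence} applies. For $n$ odd it tells us that the kernel $\langle S^n_b\rangle$ of $p_{\Pin^\pm}\colon \SKK_n^{\Pin^\pm}\to\Omega_n^{\Pin^\pm}$ is $0$ when there exists an $(n+1)$-dimensional $\Pin^\pm$-manifold with odd Euler characteristic, and $\Z/2$ when every such manifold has even Euler characteristic. The plan is a case analysis over $n\bmod 8$: first read off $\langle S^n_b\rangle$ from the relevant column of \cref{Table:PinEulerChar} in dimension $n+1$, and then, in the cases where this kernel is $\Z/2$, produce the splitting from \cref{thm:OddSplittingTopWuclass} together with \cref{thm:pinWuClasses}.

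\emph{The isomorphism cases.} First I would dispose of the residues where \cref{Table:PinEulerChar} exhibits an odd-Euler-characteristic manifold in dimension $n+1$. For $\Pin^+$ this is $n+1 = 8k+4$ (when $n\equiv 3\pmod 8$) and $n+1$ a positive multiple of $8$ (when $n\equiv 7\pmod 8$), with representatives $\RP^{8k+4}$ and $\RP^{8m}$; for $\Pin^-$ it is $n+1\in\{8k+2,\ 8k+6,\ 8m\}$ (when $n\equiv 1,5,7\pmod 8$), with representatives $\RP^{8k+2}$, $\RP^{8k+6}$ and $\HP^{2m}$. In all of these, \cref{thm:TheExactSequence} gives $[S^n_b]=0$, so $p_{\Pin^\pm}$ is an isomorphism.

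\emph{The split cases.} The kernel $\langle S^n_b\rangle$ is $\Z/2$ exactly when $n\equiv 5\pmod 8$ for $\Pin^+$ (so $n+1 = 8k+6$) and when $n\equiv 3\pmod 8$ for $\Pin^-$ (so $n+1 = 8k+4$), since \cref{Table:PinEulerChar} records that all closed $(n+1)$-dimensional manifolds of the relevant type then have even Euler characteristic; this yields a short exact sequence $0\to\Z/2\to\SKK_n^{\Pin^\pm}\to\Omega_n^{\Pin^\pm}\to 0$. To split it I would invoke \cref{thm:OddSplittingTopWuclass}, whose hypothesis is that the top Wu class $v_{(n+1)/2}$ vanishes on every closed $(n+1)$-dimensional $\Pin^\pm$-manifold. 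That is exactly the content of \cref{thm:pinWuClasses}: $v_{4k+3}=0$ on $(8k+6)$-dimensional $\Pin^+$-manifolds and $v_{4k+2}=0$ on $(8k+4)$-dimensional $\Pin^-$-manifolds, with $4k+3 = (8k+6)/2$ and $4k+2 = (8k+4)/2$ the respective top degrees. Hence $\kerv_{\Z/2}$ is a section and $\SKK_n^{\Pin^\pm}\cong\Z/2\times\Omega_n^{\Pin^\pm}$.

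\emph{The obstacle.} The only residue left undetermined is $n\equiv 1\pmod 8$ with $\Pin^+$, where $n+1 = 8k+2$; this is precisely the dimension whose entry in \cref{Table:PinEulerChar} is unknown (at least for $k\geq 2$), so we cannot decide whether $\langle S^n_b\rangle$ is $0$ or $\Z/2$, let alone whether the sequence splits, and $\SKK_n^{\Pin^+}$ remains open. This is the genuine obstacle, and it is inherited wholesale from the unsolved problem on Euler characteristics of $\Pin^+$-manifolds rather than from anything in the SKK machinery: once that problem is settled, the two preceding paragraphs compute $\SKK_n^{\Pin^+}$ at once.
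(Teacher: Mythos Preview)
Your proof is correct and follows essentially the same approach as the paper: the paper's proof is a two-line citation of \cref{thm:OddSplittingTopWuclass} and \cref{thm:pinWuClasses}, and you have simply unpacked this into the explicit case analysis, additionally making clear (as the paper's terse proof leaves implicit) that the isomorphism cases rely on the example manifolds in \cref{Table:PinEulerChar} together with \cref{thm:TheExactSequence}.
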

\begin{proof}
    The statement follows from \cref{thm:OddSplittingTopWuclass}  and  \cref{thm:pinWuClasses}.
\end{proof}
For the calculation of $\Pin^+$ and $\Pin^-$ bordism  groups we refer the reader to \cite{kirbytaylor, kirbyTaylorPinPlus}.
To our knowledge $\SKK_n^{\Pin^+}$ is unknown in general for $n\equiv 1\pmod{8}$, both because it remains unresolved whether $8k+2$-dimensional $\Pin^+$ manifolds have even Euler characteristic for $k\geq 2$, and because, if they do, it remains unclear whether the sequence is split for $k \geq 1$.

The following example shows that $\SKK^{\Pin^+}_1\cong \Z/2\times  \Omega^{\Pin^+}_1$, but the splitting is \emph{not} given by the Kervaire semi-characteristic over any field.  
In fact, a splitting necessarily depends on the $\xi$-structure, and not just on the underlying manifold.

\begin{example}(On the group $\SKK^{\Pin^+}_1$)\label{Ex:pindim1} 
Consider the structure $B\Pin^+ \rightarrow BO$. To calculate $\SKK^{\Pin^+}_1$, we first need to understand the Euler characteristic of $\Pin^+$-surfaces. For a surface $\Sigma$, the parity of the Euler characteristic is measured by $w_2$, which is also the obstruction for the existence of a $\Pin^+$ structure. 
Therefore every $\Pin^+$-surface has even Euler characteristic. There are two connected 1-dimensional $\Pin^+$-manifolds, the periodic circle $S^1_{per}$ and the anti-periodic circle, which in our context we call bounding $S^1_b$ (see \cref{rm:boundingCircle}).

We conclude that the bounding circle $S^1_b$ generates a $\Z/2$ inside $\SKK^{\Pin^+}_1$. 

We have $\Omega^{\Pin^+}_1 = 0$ and so $\SKK^{\Pin^+}_1 \cong \Z/2$.
However, the latter isomorphism is not given by the Kervaire semi-characteristic over any field, which here is simply the number of connected components modulo two.
Indeed, the periodic circle $S^1_{per}$ bounds a M\"obius strip and therefore is trivial in $\SKK_1^{\Pin^+}$ by \cref{lm:BordismEulerCharInSKK}, hence no map that does not take into account the $\Pin^+$ structure can give a splitting.
\end{example}

\begin{proposition}\label{prop:PinPlus8kplus1}
    If $k \geq 0$ is so that all $8k+2$-dimensional $\Pin^+$-manifolds have even Euler characteristic, then the \ref{SKKseq}
\[\begin{tikzcd}
0\ar[r]&\Z/2 \ar[r]&\SKK^{\Pin^+}_{8k+1}\ar[r]& \Omega_{8k+1}^{\Pin^+}\ar[r]& 0
\end{tikzcd}\]
     can never be split by an invariant that only depends on the underlying manifold (in particular, it cannot be split by a Kervaire semi-characteristic).
    \end{proposition}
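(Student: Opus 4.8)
The plan is to feed a single well-chosen $\xi$-manifold into the if-and-only-if criterion of \cref{thm:iffForMfldInvariant}, using the fact (recorded in \cref{rm:boundingCircle} and \cref{ex:twoDiffStabilisationsOfStructures}) that the two $\Pin^+$-structures on the circle bound $\Pin^+$-manifolds whose Euler characteristics have opposite parity. So suppose for contradiction that $\kappa\colon\mathcal{M}^{\Pin^+}_{8k+1}\to\Z/2$ is a homomorphism which splits the \ref{SKKseq} and which factors through the forgetful map to unstructured manifolds, i.e.\ $\kappa(Y)$ depends only on the diffeomorphism type of the underlying manifold of $Y$ (every Kervaire semi-characteristic is of this type). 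Under the hypothesis of the proposition, the \ref{SKKseq} does take the stated form by \cref{thm:TheExactSequence}, and then \cref{thm:iffForMfldInvariant} forces $\kappa(Y)=\chi(W)\bmod 2$ for every $(8k+2)$-dimensional $\Pin^+$-manifold $W$ with $\partial W=Y$.

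First I would fix the test manifold $X\coloneqq S^1\times\HP^{2k}$, which is closed of dimension $8k+1$. Since $\HP^{2k}$ is simply connected and spin with $w_2(X)=0$, the manifold $X$ admits $\Pin^+$-structures, which form a torsor over $H^1(X;\Z/2)\cong H^1(S^1;\Z/2)\cong\Z/2$; as $\HP^{2k}$ carries a unique spin structure, the two are realised by the two product structures $X_b\coloneqq S^1_b\times\HP^{2k}$ and $X_{\mathrm{per}}\coloneqq S^1_{\mathrm{per}}\times\HP^{2k}$, where $S^1_b$ is the anti-periodic (bounding) circle and $S^1_{\mathrm{per}}$ the periodic one. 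Crucially, $X_b$ and $X_{\mathrm{per}}$ have the same underlying manifold $S^1\times\HP^{2k}$. (For $k=0$ this recovers the situation of \cref{Ex:pindim1}.)

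Next I would produce the two nullbordisms and compute. The manifold $D^2\times\HP^{2k}$ is an $(8k+2)$-dimensional $\Pin^+$-manifold whose boundary, under \cref{conv:xiStructureOnABoundary}, is $X_b$, with $\chi(D^2\times\HP^{2k})=\chi(D^2)\chi(\HP^{2k})=1\cdot(2k+1)=2k+1$ odd. By \cref{rm:boundingCircle} the periodic circle bounds the M\"obius band $M\ddot{o}b$ as a $\Pin^+$-manifold, so $M\ddot{o}b\times\HP^{2k}$ is an $(8k+2)$-dimensional $\Pin^+$-manifold with boundary $X_{\mathrm{per}}$ and $\chi(M\ddot{o}b\times\HP^{2k})=\chi(M\ddot{o}b)\chi(\HP^{2k})=0\cdot(2k+1)=0$ even. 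Applying \cref{thm:iffForMfldInvariant} to these two nullbordisms gives $\kappa(X_b)=1$ and $\kappa(X_{\mathrm{per}})=0$ in $\Z/2$; but $\kappa$ was assumed to depend only on the underlying manifold, so $\kappa(X_b)=\kappa(X_{\mathrm{per}})$, a contradiction. This would complete the proof, and in particular rules out every Kervaire semi-characteristic.

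The one step requiring genuine care — the main obstacle in turning this sketch into a rigorous argument — is the $\Pin^+$-structure bookkeeping: verifying that $X_b$ and $X_{\mathrm{per}}$ really are the two distinct product structures (equivalently, that they differ by the pullback to $X$ of the generator of $H^1(S^1;\Z/2)$, exactly as the two spin structures on $S^1$ do), and that the canonical $\Pin^+$-structures on $D^2\times\HP^{2k}$ and $M\ddot{o}b\times\HP^{2k}$ restrict to $X_b$ and $X_{\mathrm{per}}$ respectively. Reversing the in-/out-pointing normal convention swaps $X_b\leftrightarrow X_{\mathrm{per}}$ simultaneously for both nullbordisms, so it leaves the parity clash intact; all the one-dimensional input is already established in \cref{ex:twoDiffStabilisationsOfStructures} and \cref{rm:boundingCircle}, and everything else is just multiplicativity of the Euler characteristic.
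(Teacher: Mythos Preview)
Your proposal is correct and follows essentially the same approach as the paper: both use the product manifolds $S^1_b\times\HP^{2k}$ and $S^1_{\mathrm{per}}\times\HP^{2k}$ together with their nullbordisms $D^2\times\HP^{2k}$ and $M\ddot{o}b\times\HP^{2k}$ to exhibit two $\Pin^+$-structures on the same underlying manifold forced to receive different values. The only cosmetic difference is that you invoke the packaged criterion of \cref{thm:iffForMfldInvariant}, whereas the paper applies the surgery lemma (\cref{lm:BordismEulerCharInSKK}) directly to compute $[X_1]=[S^{8k+1}_b]$ and $[X_2]=0$ in $\SKK^{\Pin^+}_{8k+1}$.
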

    \begin{proof}
    Pick any $\Spin$ structure on the quaternionic projective space $\HP^{2k}$ and form the spin manifolds $X_1=S^1_{b}\times \HP^{2k}$ and $X_2=S^1_{per}\times \HP^{2k}$.
Considering these as $\Pin^{+}$-manifolds, $D^2\times \HP^{2k}$ and $M\ddot{o}b\times \HP^{2k}$ are $\Pin^{+}$-nullbordisms of $X_1$ and $X_2$ respectively.
        Applying \cref{lm:BordismEulerCharInSKK} to the nullbordisms we find $[X_1]=[S^{8k+1}_b]\in \SKK^{\Pin^+}_{8k+1}$ and $[X_2]=0\in \SKK^{\Pin^+}_{8k+1}$ which finishes the proof. 
    \end{proof}

\subsubsection{SKK groups with tangential structures relevant for physics}

\begin{example}
\label{ex:spinc}
Consider the groups $\Spin^c_n \coloneqq \frac{\Spin_n \times U(1)}{\Z/2}$ with their corresponding stable tangential structure $B = B\Spin^c$. 
Since every $\Spin^c$ manifold is orientable, there are no $\Spin^c$-manifolds with odd Euler characteristic of dimension $4k+2$.
We claim $\C \mathbb{P}^{2k}$ is a $\Spin^c$ manifold with odd Euler characteristic of dimension $4k$.
The only obstruction for an orientable manifold $M$ to admit a $\Spin^c$ structure is the third integral Stiefel-Whitney class $W_3 \coloneqq \beta w_2 \in H^3(M;\Z)$, where $\beta$ is the Bockstein homomorphism.
Since $H^3(\C \mathbb{P}^n;\Z) = 0$, we see that $\C \mathbb{P}^n$ is $\Spin^c$ for any $n$.
Therefore by the \cref{thm:mainthmforOddDimensionKOrient} we inherit the splitting of the \ref{SKKseq} using the forgetful map $\Spin^c\to O$ (\cref{lm:Inheritance}). Hence
\[
\SKK^{\Spin^c}_{n} \cong
\begin{cases}
    \Omega^{\Spin^c}_{n} & n\equiv 3\pmod{4},
    \\
    \Omega^{\Spin^c}_{n} \times  \Z/2 & n\equiv 1\pmod{4}.
\end{cases}
\]
where the map to $\Z/2$ is given by $\kerv_{\Z/2}$.
\end{example}

\begin{example}\label{ex:spinh}
    Define $\Spin^h_n = \frac{\Spin_n \times SU_2}{\Z/2}$ where the quotient is by the diagonal $\Z/2$-subgroup \cite{spinh}.
    It can be shown that every $\Spin^c$ manifold is $\Spin^h$ and every $\Spin^h$ manifold is orientable.  Applying the last example gives
    \[
\SKK^{\Spin^h}_{n} \cong
\begin{cases}
    \Omega^{\Spin^h}_{n} & n\equiv 3\pmod{4},
    \\
    \Omega^{\Spin^h}_{n} \times  \Z/2 & n\equiv 1\pmod{4}.
\end{cases}
\]
\end{example}

\begin{proposition}\label{prop:PinCPlus}
    Recall the structure $\Pin^{\tilde{c}+}$ defined in \cref{def:PinCPlus}. Then for every odd $n$ we have $\SKK^{\Pin^{\tilde{c}+}}_n\cong \Omega^{\Pin^{\tilde{c}+}}_n$.
\end{proposition}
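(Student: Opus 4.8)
The plan is to apply the \ref{SKKseq} of \cref{thm:TheExactSequence} together with the existence of odd--Euler--characteristic $\Pin^{\tilde{c}+}$-manifolds established in \cref{pinc+structure}.

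First I would record that $\Pin^{\tilde{c}+}$ is a stable tangential structure in the sense of the paper: the left vertical map $B\Pin^{\tilde{c}+}\to BO$ in the homotopy pullback square of \cref{def:PinCPlus} exhibits $B\Pin^{\tilde{c}+}$ as a pointed space over $BO$, and its homotopy pullbacks along $BO_k\to BO$ (\cref{def:homotopyPullbackB_k}) furnish the $k$-dimensional structures $\xi_k$; a $\Pin^{\tilde{c}+}$-structure on an $n$-manifold $M$ is then exactly the data of a rank-two bundle $V$ with $w_1(TM)=w_1(V)$, $w_2(TM)=w_2(V)$, matching the characterisation used in \cref{pinc+structure}. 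In particular $\Pin^{\tilde{c}+}$ is (at least) twice stabilised in every dimension, so \cref{thm:TheExactSequence} applies for every odd $n$ and yields the exact sequence
\[
\begin{tikzcd}
0\ar[r]&\langle S^n_b\rangle_{\SKK^{\Pin^{\tilde{c}+}}_n}\ar[r]&\SKK^{\Pin^{\tilde{c}+}}_n\ar[r]&\Omega^{\Pin^{\tilde{c}+}}_n\ar[r]&0
\end{tikzcd}
\]
with kernel either $0$ or $\Z/2$.

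Next, since $n$ is odd the dimension $n+1$ is even, so \cref{pinc+structure} provides a closed $(n+1)$-dimensional $\Pin^{\tilde{c}+}$-manifold with odd Euler characteristic. By part \ref{it:ses1} of \cref{thm:TheExactSequence} this forces $[S^n_b]=0$ in $\SKK^{\Pin^{\tilde{c}+}}_n$, hence $\langle S^n_b\rangle_{\SKK^{\Pin^{\tilde{c}+}}_n}=0$ and the canonical map $\SKK^{\Pin^{\tilde{c}+}}_n\to\Omega^{\Pin^{\tilde{c}+}}_n$ is an isomorphism, as claimed.

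I do not expect a genuine obstacle: the substantive inputs --- the construction of odd-$\chi$ $\Pin^{\tilde{c}+}$-manifolds in each even dimension (\cref{pinc+structure}) and the \ref{SKKseq} (\cref{thm:TheExactSequence}) --- are already available. The only things to double-check are bookkeeping points: that $\Pin^{\tilde{c}+}$ legitimately counts as a twice stabilised structure (handled above via its map to $BO$), and that \cref{pinc+structure} indeed covers every even value of $n+1$ (it does, treating the cases $n+1\equiv 0$ and $n+1\equiv 2\pmod 4$ separately).
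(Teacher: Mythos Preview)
Your proof is correct and follows essentially the same approach as the paper: invoke \cref{pinc+structure} to produce an odd-Euler-characteristic $\Pin^{\tilde{c}+}$-manifold in dimension $n+1$, then apply part \ref{it:ses1} of \cref{thm:TheExactSequence}. The paper's proof is a single sentence citing \cref{pinc+structure}; your version just makes explicit the routine verification that $\Pin^{\tilde{c}+}$ is stable (hence twice stabilised), which the paper leaves implicit.
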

\begin{proof}
There is an odd Euler characteristic manifold with $\Pin^{\tilde{c}+}$ structure in every even dimension (\cref{pinc+structure}).
\end{proof}

\section{SKK groups in even dimensions}\label{sec:SkkEven}

The goal of this section is to express $\SKKxi_n$ in terms of $\Omega^\xi_n$ in the case that $n$ is even and the tangential structure $\xi$ is at least once stabilised. 
Recall that in even dimensions the \ref{SKKseq} takes on the form
\begin{equation}
\label{eq:evenexactseq}
\begin{tikzcd}
0\ar[r]&\Z\ar[r,"S^n_b"]&\SKK_n^{\xi}\ar[r]& \Omega^\xi_n \ar[r]& 0.
\end{tikzcd}
\end{equation}
Unlike in the odd-dimensional case, we do not need to assume that $\xi$ is twice stabilised, but we do need $\xi$ to be once stabilised so that the bordism group $\Omega_n^{\xi}$ is well defined.

Recall that the Euler characteristic and the signature, if applicable, are $\SKK$ invariants $\SKKxi_n \to \Z$. The following is a classical result. 
\begin{theorem}[{\cite[page 11-12]{ebert}}]
\label{thm:SKbookEvenDim}
    Let $n$ be even. Then the sequence
    \[\begin{tikzcd}
    0\ar[r]&\Z\ar[r]&\SKK^{SO}_n\ar[r]&\Omega^{SO}_n\ar[r]&0
    \end{tikzcd}\]
    splits by $\frac{\chi-\sigma}{2}$ if $n \equiv 0 \pmod 4$ and $\frac{\chi}{2}$ if $n \equiv 2 \pmod 4$.
\end{theorem}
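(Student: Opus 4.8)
The plan is to exhibit each of the two claimed maps as a retraction of the inclusion $\Z \xrightarrow{S^n_b} \SKK^{SO}_n$ in the \ref{SKKseq} \cref{eq:evenexactseq}; producing such a retraction is equivalent to splitting the sequence. I would use three inputs already recorded in the excerpt: $\chi$ is a homomorphism $\SKK^\xi_n \to \Z$ for every $\xi$; $\sigma$ is a homomorphism $\SKK^\xi_n \to \Z$ when $n \equiv 0 \pmod 4$ and $\xi$ is orientable (Novikov additivity); and $\chi(S^n_b) = 2$ since $S^n_b$ bounds the disc $D^{n+1}$. Also $\sigma(S^{4k}_b) = 0$ because $S^{4k}$ has vanishing middle cohomology.

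First I would treat $n \equiv 2 \pmod 4$. Here every closed oriented $n$-manifold $M$ has even Euler characteristic: by Poincar\'e duality the intersection pairing on $H^{n/2}(M;\Q)$ is nondegenerate and skew-symmetric, so $b_{n/2}(M)$ is even, and $\chi(M) \equiv b_{n/2}(M) \equiv 0 \pmod 2$ since the remaining Betti numbers cancel in pairs. Thus $\tfrac{\chi}{2}$ is integer-valued on every class $[M]$; since $\SKK^{SO}_n$ is the Grothendieck group of a monoid of manifold classes and $\chi$ is a homomorphism on it, $\tfrac{\chi}{2}$ descends to a homomorphism $\SKK^{SO}_n \to \Z$. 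It sends $[S^n_b]$ to $1$, hence is the desired retraction; equivalently $[M] \mapsto (\chi(M)/2,[M])$ is an isomorphism $\SKK^{SO}_n \xrightarrow{\cong} \Z \times \Omega^{SO}_n$.

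Next I would treat $n = 4k$. Now $\tfrac{\chi}{2}$ fails to be integral (e.g. $\chi(\CP^{2k}) = 2k+1$), so one corrects by $\sigma$. For a closed oriented $4k$-manifold $M$ one has $\chi(M) \equiv \sigma(M) \pmod 2$: writing $b_+, b_-$ for the dimensions of a maximal positive- resp. negative-definite subspace of the intersection form on $H^{2k}(M;\R)$, we have $b_{2k}(M) = b_+ + b_-$ and $\sigma(M) = b_+ - b_-$, so $b_{2k}(M) \equiv \sigma(M) \pmod 2$, while $\chi(M) \equiv b_{2k}(M) \pmod 2$ by Poincar\'e duality as before. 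Hence $\tfrac{\chi - \sigma}{2}$ is integer-valued on every manifold class, so (again because $\SKK^{SO}_n$ is generated by such classes and $\chi, \sigma$ are homomorphisms on it) it defines a homomorphism $\SKK^{SO}_n \to \Z$, and it sends $[S^n_b]$ to $\tfrac{2-0}{2} = 1$. This retraction splits the sequence.

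I do not expect a serious obstacle: the argument is essentially self-contained given the recalled inputs. The only points needing care are the two parity statements for closed oriented manifolds (that $\chi$ is even in dimension $\equiv 2 \pmod 4$ and that $\chi \equiv \sigma$ in dimension $\equiv 0 \pmod 4$), which are standard consequences of Poincar\'e duality, together with the elementary observation that a $\Q$-linear combination of the homomorphisms $\chi, \sigma \colon \SKK^{SO}_n \to \Z$ which is integer-valued on the generating manifold classes automatically takes values in $\Z$ on all of $\SKK^{SO}_n$.
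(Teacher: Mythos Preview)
Your proof is correct and is the standard direct argument. The paper does not supply its own proof of this theorem---it is cited from \cite{ebert}---so there is nothing to compare against directly. That said, your case $n \equiv 2 \pmod 4$ is exactly the content of the paper's \cref{thm:slittingSKKevenEvenChi} specialised to $\xi = SO$, and for $n \equiv 0 \pmod 4$ the paper later reproves the \emph{existence} of a splitting (though not the explicit formula $(\chi-\sigma)/2$) via \cref{thm:torsionWithOddEuler}, using precisely the parity fact $\chi \equiv \sigma \pmod 2$ that you establish.
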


For orientable $\xi$-structures we obtain the following corollaries.
\begin{corollary}\label{cor:orientableeven}
    Let $n$ be even and let $\xi\colon B_{n+1}\to BSO_{n+1}$ be an orientable tangential structure, i.e. $\xi$ factores through $BSO_{n+1}$. Then the SKK sequence 
        \[\begin{tikzcd}
        0\ar[r]&\Z\ar[r]& \SKK^{\xi}_n\ar[r]& \Omega^{\xi}_n\ar[r]& 0\end{tikzcd}\]      
         splits by $\frac{\chi-\sigma}{2}$ if $n \equiv 0 \pmod 4$ and $\frac{\chi}{2}$ if $n \equiv 2 \pmod 4$.
\end{corollary}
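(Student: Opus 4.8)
The plan is to deduce the statement from the oriented case \cref{thm:SKbookEvenDim} via the inheritance principle \cref{lm:Inheritance}, applied to the forgetful map of tangential structures. Since $\xi$ factors through $BSO_{n+1}$, there is a map $\varphi\colon B_{n+1}\to BSO_{n+1}$ over $BO_{n+1}$, which by \cref{rem:maponSKK} induces a morphism from the \ref{SKKseq} of $\xi$ to the \ref{SKKseq} of the orientation structure:
\[
\begin{tikzcd}
0 \arrow[r] & \langle S^n_b \rangle_{\SKKxi_n} \arrow[d,"\varphi_*"] \arrow[r] & \SKKxi_n \arrow[d,"\varphi_*"] \arrow[r] & \Omega^\xi_n \arrow[d,"\varphi_*"] \arrow[r] & 0 \\
0 \arrow[r] & \langle S^n_b \rangle_{\SKK^{SO}_n} \arrow[r] & \SKK^{SO}_n \arrow[r] & \Omega^{SO}_n \arrow[r] & 0.
\end{tikzcd}
\]

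First I would check the hypothesis of \cref{lm:Inheritance}, namely that $\varphi_*$ is an isomorphism on the subgroups generated by the bounding spheres. Since $n$ is even, \cref{thm:TheExactSequence} identifies both $\langle S^n_b\rangle_{\SKKxi_n}$ and $\langle S^n_b\rangle_{\SKK^{SO}_n}$ with $\Z$, freely generated by the class of the respective bounding sphere (which is non-torsion, having Euler characteristic $2$). As $\varphi$ sends the canonical $\xi$-structure on $D^{n+1}$ to the canonical orientation on $D^{n+1}$, it sends $S^n_b$ as a $\xi$-manifold to $S^n_b$ as an oriented manifold, so $\varphi_*$ carries a generator to a generator and is an isomorphism $\Z\xrightarrow{\cong}\Z$.

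Next, \cref{thm:SKbookEvenDim} supplies a section $r'\colon \SKK^{SO}_n \to \langle S^n_b\rangle_{\SKK^{SO}_n}$ of the bottom row, so \cref{lm:Inheritance} yields a splitting of the top row by the induced retraction $r=(\varphi_*|_{\langle S^n_b\rangle})^{-1}\circ r'\circ\varphi_*$. It then remains to identify $r$ explicitly. Under the identification $\langle S^n_b\rangle_{\SKK^{SO}_n}\cong\Z$ with $S^n_b\mapsto 1$, the retraction $r'$ is $\tfrac{\chi-\sigma}{2}$ when $n\equiv 0\pmod 4$ and $\tfrac{\chi}{2}$ when $n\equiv 2\pmod 4$. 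Since $\varphi_*$ preserves the Euler characteristic and — because a $\xi$-manifold is canonically oriented via the factoring through $BSO$ — the signature, and since it matches the two chosen generators $S^n_b$, the induced retraction $r$ is given by the same formula on $\SKKxi_n$, which is the assertion.

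I expect the only non-formal point to be this last identification, i.e. checking that $\chi$ and $\sigma$ of a $\xi$-manifold agree with those of its underlying oriented manifold; this is immediate, so once \cref{thm:SKbookEvenDim} and \cref{lm:Inheritance} are in hand the argument is essentially bookkeeping. For $n\equiv 2\pmod 4$ the signature plays no role, and one could alternatively observe directly that $[M]\mapsto(\chi(M)/2,[M])$ is an inverse to the obvious map, using that every closed orientable manifold of dimension $\equiv 2\pmod 4$ has even Euler characteristic.
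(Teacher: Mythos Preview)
Your proposal is correct and follows exactly the approach of the paper, which simply invokes the inheritance of splittings (\cref{lm:Inheritance}) for the map from the $\xi$-sequence to the $BSO$-sequence. You have filled in the details the paper leaves implicit—verifying that $\varphi_*$ is an isomorphism on $\langle S^n_b\rangle$ and that the induced retraction agrees with $\tfrac{\chi-\sigma}{2}$ resp.\ $\tfrac{\chi}{2}$—but the underlying argument is the same.
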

\begin{proof}
    This follows from the inheritance of splittings for the SKK sequences for $\xi$ and $BSO$, see \cref{lm:Inheritance}.
\end{proof}

Note that in particular \cref{cor:orientableeven} applies to the $k$-orientable structures $\BOr_k$ for $k>0$ as these factor through $BSO$.

Using that even-dimensional spheres have even Euler characteristic, we immediately obtain the following result.
\begin{theorem}\label{thm:slittingSKKevenEvenChi}
    Let $\xi\colon B_{n+1} \to BO_{n+1}$ be a tangential structure with the property that every $n$-dimensional closed $\xi$-manifold has even Euler characteristic. 
    Then for even $n$, half the Euler characteristic is a splitting of the \ref{SKKseq} and thus
    \[
    \SKKxi_n \to \Z \times \Omega^\xi_n \quad [M] \mapsto (\chi(M)/2, [M])
    \]
    is an isomorphism.
\end{theorem}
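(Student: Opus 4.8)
The plan is to deduce the statement directly from the short exact sequence of \cref{thm:TheExactSequence}, which for $n$ even reads
\[
\begin{tikzcd}
0\ar[r]&\Z\ar[r,"S^n_b"]&\SKK_n^{\xi}\ar[r,"p_\xi"]& \Omega^\xi_n \ar[r]& 0,
\end{tikzcd}
\]
together with the fact, established earlier, that the Euler characteristic descends to a homomorphism $\chi\colon \SKK^\xi_n \to \Z$. The key observation will be that, under the hypothesis, $\chi$ takes only even values on all of $\SKK^\xi_n$, so that $\tfrac12\chi$ is a well-defined homomorphism $\SKK^\xi_n\to\Z$; this will serve as a retraction of the left-hand map, and the splitting lemma then does the rest.

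First I would check that $\chi$ is even on all of $\SKK^\xi_n$. Since $\SKK^\xi_n$ is the Grothendieck completion of the monoid $\mathcal M^\xi_n$ modulo the \ref{SKKrelations}, every element is of the form $[M_1]-[M_2]$ for closed $n$-dimensional $\xi$-manifolds $M_1,M_2$, and $\chi([M_1]-[M_2])=\chi(M_1)-\chi(M_2)$ by additivity of $\chi$ under disjoint union. By hypothesis $\chi(M_1)$ and $\chi(M_2)$ are both even, hence so is their difference. Therefore $\kappa\coloneqq\tfrac12\chi\colon\SKK^\xi_n\to\Z$ is a well-defined homomorphism. Next I would evaluate $\kappa$ on the image of the left-hand map: the bounding sphere $S^n_b$ is a closed even-dimensional manifold with the $\Z/2$-homology of a sphere, so $\chi(S^n_b)=2$ and hence $\kappa(S^n_b)=1$. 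Thus the composite $\kappa\circ S^n_b\colon\Z\to\Z$ is the identity, i.e. $\kappa$ is a retraction of the inclusion $\Z\hookrightarrow\SKK^\xi_n$.

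By the splitting lemma for short exact sequences of abelian groups, the existence of such a retraction yields an isomorphism $\SKK^\xi_n\xrightarrow{\ \cong\ }\Z\times\Omega^\xi_n$ via $x\mapsto(\kappa(x),p_\xi(x))$; unwinding this on the class of a closed $\xi$-manifold $M$ gives exactly $[M]\mapsto(\chi(M)/2,[M])$ as claimed. I do not expect a serious obstacle here: the only point requiring care is the well-definedness of $\tfrac12\chi$, which is immediate once one knows that $\chi$ is $\Z$-valued on $\SKK^\xi_n$ (not merely $\tfrac12\Z$-valued) and that $\SKK^\xi_n$ is generated by closed $\xi$-manifolds; everything else is the standard splitting lemma. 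One should only keep in mind that $\xi$ being once stabilised — part of the hypothesis via \cref{thm:TheExactSequence} — is what makes the sequence, and in particular the bounding sphere $S^n_b$ and the bordism group $\Omega^\xi_n$, available in the first place.
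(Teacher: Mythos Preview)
Your proof is correct and is exactly the argument the paper has in mind: the paper does not even give a separate proof of this theorem, introducing it with ``Using that even-dimensional spheres have even Euler characteristic, we immediately obtain the following result,'' which is precisely the observation that $\chi(S^n_b)=2$ makes $\tfrac12\chi$ a retraction of the inclusion $\Z\hookrightarrow\SKK^\xi_n$ in the \ref{SKKseq}.
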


The following proposition extends our knowledge of SKK in even dimensions to the case where not every $\xi$-manifold has even Euler characteristic.
\begin{proposition}\label{prop:attempt}
Let $n$ be even and $\xi$ a once stabilised structure. Then there is an isomorphism
\[
\SKK^\xi_n \xrightarrow{\varphi} \Omega^\xi_n \times_{\Z/2} \Z, \quad [X] \mapsto ([X],\chi(X)),
\]
where $\Omega^\xi_n \times_{\Z/2} \Z$ denotes the pullback of groups along the maps $\Omega^\xi_n \to \Z/2$ and $\Z \to \Z/2$ given by the Euler characteristic modulo two and the mod 2 map respectively. 
The inverse is given by $([X],r)\mapsto [X]+\frac{r-\chi(X)}{2}[S_b^n]$. 
\end{proposition}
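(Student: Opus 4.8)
The plan is to verify directly that $\varphi$ is a well-defined homomorphism into the fibre product and that the displayed formula defines a two-sided inverse. Almost all of the content is already carried by the \ref{SKKseq} \cref{eq:evenexactseq} together with the fact that the Euler characteristic is an $\SKK$ invariant, so the proposition is essentially formal.

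First I would check that the target $\Omega^\xi_n \times_{\Z/2} \Z$ is a genuine group, i.e.\ that the reduction of the Euler characteristic mod $2$ descends to a homomorphism $\Omega^\xi_n \to \Z/2$; this holds because for a closed $n$-manifold $M$ with $n$ even one has $\chi(M) \equiv \langle w_n(M),[M]\rangle \pmod 2$ (\cref{subsec:EulerChar}), a Stiefel--Whitney number and hence a bordism invariant, and every $\xi$-bordism has an underlying unoriented bordism. Granting this, $\varphi$ is the product of the canonical homomorphism $\SKK^\xi_n \to \Omega^\xi_n$ of \cref{eq:evenexactseq} with the Euler-characteristic homomorphism $\SKK^\xi_n \to \Z$, and the two resulting composites $\SKK^\xi_n \to \Z/2$ are both ``$\chi \bmod 2$'', hence agree; so $\varphi$ lands in the pullback and is a homomorphism. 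I would also record here that $[S^n_b]$ maps to $0$ in $\Omega^\xi_n$ (it bounds $D^{n+1}$ by definition of the bounding sphere) and that $\chi(S^n_b) = \chi(S^n) = 2$.

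Next I would introduce the candidate inverse $\psi([X],r) = [X] + \frac{r-\chi(X)}{2}[S^n_b] \in \SKK^\xi_n$, where $[X]$ on the right-hand side denotes the $\SKK$-class of a chosen closed $\xi$-manifold $X$ representing the given bordism class and $\frac{r-\chi(X)}{2} \in \Z$ by the defining relation of the fibre product. The one step requiring care is independence of the chosen representative: if $X'$ also represents the class, then $[X]-[X']$ maps to $0$ in $\Omega^\xi_n$, hence equals $m[S^n_b]$ for some $m\in\Z$ by exactness of \cref{eq:evenexactseq}; applying $\chi$ gives $2m = \chi(X)-\chi(X')$, and substituting shows the two expressions for $\psi([X],r)$ agree. (One could instead invoke the surgery lemma \cref{lm:BordismEulerCharInSKK}, which holds for once-stabilised $\xi$ in even dimensions by the remark following it.) Additivity of $\psi$ is immediate from the formula, taking $X\sqcup X'$ as a representative of a sum. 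Finally I would compute $\varphi\psi([X],r) = ([X],\chi(X)) + \frac{r-\chi(X)}{2}(0,2) = ([X],r)$, and, since $\SKK^\xi_n$ is generated by classes of closed $\xi$-manifolds, evaluate $\psi\varphi$ on such a class: $\psi\varphi([Z]) = \psi([Z],\chi(Z)) = [Z] + 0 = [Z]$, taking $Z$ itself as representative; both composites extend by additivity, so $\varphi$ is an isomorphism with inverse $\psi$.

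I expect no serious obstacle: the real work lies in \cref{eq:evenexactseq} and the $\SKK$-invariance of $\chi$, which are already available. The only mildly delicate points are confirming that $\chi \bmod 2$ descends to $\Omega^\xi_n$ --- needed merely so that the target $\Omega^\xi_n \times_{\Z/2}\Z$ makes sense --- and the representative-independence of the formula for $\psi$, which is exactly where exactness of the \ref{SKKseq} enters.
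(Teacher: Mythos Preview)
Your proposal is correct and follows essentially the same route as the paper: both use that $\chi \bmod 2$ is a Stiefel--Whitney number (hence a $\xi$-bordism invariant), that $\chi$ is an $\SKK$ invariant, and the exactness of the \ref{SKKseq}. The only stylistic difference is that the paper packages your verification of well-definedness and bijectivity of $\psi$ into the single observation that the square
\[
\begin{tikzcd}
\SKK_n^\xi \ar[r] \ar[d,"\chi"] & \Omega^\xi_n \ar[d,"\langle w_n{,}-\rangle"]
\\
\Z \ar[r,"\bmod 2"] & \Z/2
\end{tikzcd}
\]
is a pullback of groups because the induced maps on kernels and cokernels of the horizontal maps are isomorphisms; your explicit construction of $\psi$ and check of representative-independence is exactly what this abstract argument unfolds to.
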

\begin{proof}
Recall that the Stiefel-Whitney number
\[
\left<w_n(M),[M]\right> \equiv \chi(M) \pmod 2
\]
is an unoriented bordism invariant and hence a $\xi$-bordism invariant.
Consider the commutative diagram
\begin{center}
\begin{tikzcd}
0 \arrow[r] & \Z \arrow[r,"\cdot 2"] & \Z \arrow[r,"\mod 2"] & \Z/2 \arrow[r] & 0
\\
0 \arrow[r] & \langle S^n_b \rangle \arrow[r] \arrow[u, "\cong"] & \SKK_n^\xi \arrow[r] \arrow[u, "\chi"] & \Omega^{\xi}_n \arrow[u, "\left<w_n {,-} \right>"] \arrow[r] & 0.
\end{tikzcd}
\end{center}
The right square is a pullback square of groups, since it induces an isomorphism on the kernels and the cokernels of its horizontal maps.
It follows that $\SKK^\xi_n $ is isomorphic to $ \Omega^\xi_n \times_{\Z/2} \Z, $ given by the map $\varphi([X] ) =([X],\chi(X))$.
The inverse is as described because \begin{align*}
\varphi\left([X]+\frac{r-\chi(X)}{2}[S_b^n]\right)&=\left([X]+\frac{r-\chi(X)}{2}[S_b^n],\chi([X]+\frac{r-\chi(X)}{2}[S_b^n])\right)
\\
&=([X],r).\qedhere
\end{align*}
\end{proof}

The above proposition is a useful tool to compute $\SKK$ groups in even dimensions concretely.
It moreover helps us to understand that even when there are closed $n$-dimensional $\xi$-manifolds with an odd Euler characteristic, it is still possible for the \ref{SKKseq} to split, as we show below. 
This corrects a mistake in \cite[Theorem 2.12.2(b)]{lorant}.

\begin{theorem}\label{thm:torsionWithOddEuler}
Let $n$ be even and $\xi$ a once stabilised structure.
    If there is a torsion class $[M] \in \Omega^{\xi}_{n}$ with $\chi(M)$ odd, then the \ref{SKKseq}  
    \[\begin{tikzcd}
0\ar[r]&\Z\ar[r]&\SKK_n^{\xi}\ar[r]& \Omega^\xi_n \ar[r]& 0
\end{tikzcd} \]    
    does not split. 
    Moreover, if $B_{n+1}$ has finitely generated homology in all degrees, then the converse holds: if all manifolds $M^n$ with odd Euler characteristic have infinite order in $\Omega^\xi_n$, then the same sequence splits non-canonically. 

    Furthermore if $\Omega^\xi_n$ is torsion free then the \ref{SKKseq} splits via the map $\frac{\chi}{2}$.
\end{theorem}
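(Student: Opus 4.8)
The plan is to deduce all three assertions from the pullback description of \cref{prop:attempt}. Recall that it gives an isomorphism $\SKKxi_n\cong\Omega^\xi_n\times_{\Z/2}\Z$, taken along the Euler-characteristic-mod-two map $\langle w_n,-\rangle\colon\Omega^\xi_n\to\Z/2$ (a $\xi$-bordism invariant by the Stiefel--Whitney number formula recalled in \cref{subsec:EulerChar}) and the reduction $\Z\to\Z/2$, under which $\langle S^n_b\rangle$ corresponds to $\{0\}\times 2\Z$, the projection $p\colon\SKKxi_n\to\Omega^\xi_n$ to the first projection, and $\chi$ to the second projection. The first step I would record is the purely formal observation that a section of $p$ is the same thing as a homomorphism $\psi\colon\Omega^\xi_n\to\Z$ with $\psi\equiv\langle w_n,-\rangle\pmod 2$: such a $\psi$ gives the section $a\mapsto(a,\psi(a))$, and conversely any section has this form because $(a,r)$ lies in the pullback precisely when $r\equiv\langle w_n,a\rangle\pmod 2$. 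With this normalisation the complementary retraction $\SKKxi_n\to\Z$ is $[X]\mapsto\tfrac12\bigl(\chi(X)-\psi(p[X])\bigr)$.

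For the non-splitting claim, suppose $[M]\in\Omega^\xi_n$ has finite order $k\geq 1$ and $\chi(M)$ odd, so $\langle w_n,[M]\rangle=1$. If a section $s$ existed, then $s([M])=([M],r)$ with $r$ odd (to lie in the pullback), hence $k\cdot s([M])=(k[M],kr)=(0,kr)$, which is nonzero since $k\geq 1$ and $r$ is odd; this contradicts $k\cdot s([M])=s(k[M])=s(0)=0$. No finiteness hypothesis on $B_{n+1}$ is used here.

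For the converse I would first note that the hypothesis on $B_{n+1}$ forces $\Omega^\xi_n$ to be finitely generated: by \cref{def:homotopyPullbackB_k}, $B_n\to B_{n+1}$ is the pullback of $BO_n\to BO_{n+1}$ and so is a fibration with fibre $S^n$, whence a Serre spectral sequence argument gives that $B_n$ has finitely generated homology in every degree, and then a standard finiteness argument for the bounded-below spectrum $MT\xi_n$ (finitely generated homology in each degree implies finitely generated homotopy groups) shows $\Omega^\xi_n=\pi_{-1}MT\xi_n$ is finitely generated. Write $\Omega^\xi_n=F\oplus T$ with $F$ free and $T$ finite. The assumption that every odd-Euler-characteristic $n$-manifold has infinite order says precisely that $\langle w_n,-\rangle$ vanishes on $T$, hence factors through $F$; choosing a basis of $F$ and lifting the resulting map on basis elements along $\Z\to\Z/2$ produces a homomorphism $\psi\colon\Omega^\xi_n\to\Z$ lifting $\langle w_n,-\rangle$, hence a section by the first step. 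The indeterminacy of $\psi$ (a torsor over $2\Hom(\Omega^\xi_n,\Z)$) is exactly the announced non-canonicity, a choice of $\Z$-valued refinement of the mod-two Euler characteristic, equivalently a choice of generating manifolds for $\Omega^\xi_n$ modulo torsion. (This also re-proves the first part: if $\langle w_n,-\rangle$ does not kill $T$, any section would restrict to a lift $T\to\Z$ of a nonzero map $T\to\Z/2$, impossible since $\Hom(T,\Z)=0$.)

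Finally, if $\Omega^\xi_n$ is torsion free then $T=0$ and the converse hypothesis holds vacuously, so the sequence splits. In the sub-case where $\langle w_n,-\rangle\equiv 0$, i.e.\ every closed $n$-dimensional $\xi$-manifold has even Euler characteristic --- which is exactly the range in which $\chi/2$ is integer valued on $\SKKxi_n$ --- one takes $\psi=0$ and the retraction is literally $\chi/2$, recovering \cref{thm:slittingSKKevenEvenChi}; in general the retraction has the form $\tfrac12(\chi-\psi\circ p)$, for instance $\psi=\sigma$ works when $\xi=SO$ and $n\equiv 0\pmod 4$ and recovers the classical $(\chi-\sigma)/2$ of \cref{thm:SKbookEvenDim}. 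The only step I expect to require real care is the finite generation of $\Omega^\xi_n$ from the finiteness hypothesis on $B_{n+1}$; the rest is elementary homological algebra layered on \cref{prop:attempt}, with the single proviso that the phrase ``splits via $\chi/2$'' must be read in the precise sense just described.
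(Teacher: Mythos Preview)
Your proof is correct and follows essentially the same approach as the paper, deducing everything from the pullback description of \cref{prop:attempt}; the only cosmetic difference is that you phrase the splitting via sections $\Omega^\xi_n\to\SKKxi_n$ (equivalently, lifts $\psi\colon\Omega^\xi_n\to\Z$ of $\langle w_n,-\rangle$), while the paper phrases it via retractions $\SKKxi_n\to\Z$. Your extra care about finite generation and your caveat on when ``splits via $\chi/2$'' is literal are well taken.
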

\begin{proof}
Assume the sequence splits.
    Considering \cref{prop:attempt}, a splitting is equivalent to a group homomorphism $\psi\colon \ker(\Omega^\xi_n \times \Z \to \Z/2) \to \Z$ with the property that $\psi(0, 2r) = r$.
    Now suppose $M^n$ is a closed $\xi$-manifold such that $k[M] = 0 \in \Omega^\xi_n$ for some non-zero $k \in \Z$.
    Without loss of generality, we can assume that $k$ is even.
    Then 
    \begin{align*}
    k\psi([M], \chi(M)) &= \psi(k[M], k \chi(M)) = \psi(0,k \chi(M)) = \frac{k \chi(M)}{2} 
    \\
    &\implies \psi([M], \chi(M)) = \frac{\chi(M)}{2} \in \Z. 
    \end{align*}
    We see that $\chi(M)$ has to be even.
    
    Conversely, assume that every manifold generating a torsion element of $\Omega^\xi_n$ has even Euler characteristic.
    Let $T$ denote the torsion subgroup of $\Omega^\xi_n$. 
    By a spectral sequence argument, the fact that $H_n(B;\Z)$ is finitely generated implies that $\Omega^\xi_n$ is finitely generated.
    Fix an isomorphism $\Omega^\xi_n \cong T \times \Z^m$. 
    Let $x_1,\cdots,x_m$ generators of $\Z^m$ and fix $\xi$-manifolds $M_1,\cdots, M_m$ such that $M_i$ represents $x_i$.  We can identify 
    \begin{align*}
        \Omega^\xi_n\times_{\Z/2}\Z &\cong (T\times  \Z^m)\times_{\Z/2}\Z
        \cong T \times (\Z^m \times_{\Z/2} \Z)
        \\
        & =\{t\in T,\sum_i{\alpha_i x_i}\in \Z^m,r\in \Z\mid \sum_i\alpha_i \chi(M_i)\equiv r \pmod 2\}.
    \end{align*} Here the second equality uses that every torsion element has even Euler characteristic.
    Define the map
    \begin{align*}
        &\psi\colon (T\times  \Z^m)\times_{\Z/2}\Z\to\Z\\
        &\left(t,\sum_i\alpha_ix_i, r\right)\mapsto \frac{r-\sum_i \alpha_i\chi(M_i)}{2}.
    \end{align*}
    The map $\psi$ is obviously well-defined and a homomorphism. 
    It is a splitting because the bounding sphere includes in $\Omega^\xi_n\times_{\Z/2}\Z$ as $(0,2)$ and $\psi(0,2)=1$.
\end{proof}

\begin{corollary}
    Let $n$ be even and let $\xi\colon B_{n+1} \to BO_{n+1}$ be a tangential structure.
    Suppose $\Omega^{\xi}_n$ is torsion.
    Then the \ref{SKKseq} splits if and only if every $n$-dimensional $\xi$-manifold has even Euler characteristic.
    In that case, a splitting is given by $\frac{\chi}{2}\colon \SKKxi_n \to \Z$.
\end{corollary}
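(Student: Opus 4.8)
The plan is to obtain this as an immediate consequence of \cref{thm:torsionWithOddEuler} together with \cref{thm:slittingSKKevenEvenChi}, using only the unconditional halves of those statements, so that no finite-generation hypothesis on $B_{n+1}$ is needed.

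For the implication ``every closed $n$-dimensional $\xi$-manifold has even Euler characteristic $\Rightarrow$ the \ref{SKKseq} splits'', I would simply invoke \cref{thm:slittingSKKevenEvenChi}: under this hypothesis $\chi/2$ is a well-defined homomorphism $\SKKxi_n \to \Z$ sending $[S^n_b]$ to $1$, and in fact $[M] \mapsto (\chi(M)/2,[M])$ is an isomorphism $\SKKxi_n \xrightarrow{\cong} \Z \times \Omega^\xi_n$; in particular $\chi/2$ provides the asserted splitting.

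For the converse I would argue contrapositively. Suppose some closed $n$-dimensional $\xi$-manifold $M$ has $\chi(M)$ odd. Since $\Omega^\xi_n$ is a torsion group by assumption, $[M] \in \Omega^\xi_n$ has finite order, hence is a torsion class of odd Euler characteristic, and the first assertion of \cref{thm:torsionWithOddEuler} shows that the \ref{SKKseq} does not split. Therefore, if the sequence splits, every closed $n$-dimensional $\xi$-manifold must have even Euler characteristic, and the previous paragraph then identifies $\chi/2$ as the splitting.

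I do not anticipate a genuine obstacle: the argument is bookkeeping on top of the two cited results. The only subtlety worth flagging is that \cref{thm:torsionWithOddEuler} invokes the finite-generation hypothesis on $H_*(B_{n+1};\Z)$ only for the task of \emph{producing} a splitting when all odd-$\chi$ manifolds have infinite order; the obstruction direction used here — an odd-$\chi$ torsion class prevents a splitting — requires no finiteness assumption, which is why the corollary needs none either.
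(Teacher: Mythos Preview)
Your proposal is correct and matches the paper's intended approach: the corollary is stated without proof immediately after \cref{thm:torsionWithOddEuler}, and it follows exactly as you describe by combining the obstruction direction of that theorem with \cref{thm:slittingSKKevenEvenChi}. Your remark that the finite-generation hypothesis is unnecessary here is also spot on.
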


\begin{corollary}\label{cor:unoriented_even_notsplit}
    For unoriented $\SKK$ groups, the \ref{SKKseq}
    \[
    \begin{tikzcd}
    0 \ar[r]& \Z \ar[r]& \SKK^O_n \ar[r]& \Omega^O_n \ar[r]& 0      
    \end{tikzcd}
    \]
    does not split.
\end{corollary}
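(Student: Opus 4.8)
The plan is to reduce the statement immediately to \cref{thm:torsionWithOddEuler}. Its first half asserts that, for even $n$ and a once stabilised structure, the \ref{SKKseq} fails to split as soon as there is a \emph{torsion} class $[M]\in\Omega^{\xi}_n$ with $\chi(M)$ odd. For $\xi = O$ the structure $BO_{n+1}$ is certainly once stabilised, so the only thing left to produce is such a manifold.

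First I would recall Thom's computation that $\Omega^O_*$ is a polynomial $\Z/2$-algebra; in particular every element of $\Omega^O_n$ has order dividing $2$, so \emph{every} class in $\Omega^O_n$ is torsion. Hence it suffices to exhibit any closed $n$-manifold with odd Euler characteristic, and since $n$ is even one may take $M=\RP^n$, which has $\chi(\RP^n)=1$. Feeding $M=\RP^n$ into \cref{thm:torsionWithOddEuler} then shows that
\[
\begin{tikzcd}
0 \ar[r]& \Z \ar[r]& \SKK^O_n \ar[r]& \Omega^O_n \ar[r]& 0
\end{tikzcd}
\]
does not split.

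I would also note that the non-splitting direction of \cref{thm:torsionWithOddEuler} does not require the finite-generation hypothesis on $B_{n+1}$ (that is only needed for the converse), so no extra verification about $BO_{n+1}$ is necessary. There is essentially no obstacle here; the one point worth flagging is that the argument never needs to know whether $[\RP^n]$ is zero or nonzero in $\Omega^O_n$, since being torsion is automatic over $\Z/2$ — so the proof is completely insensitive to that question.
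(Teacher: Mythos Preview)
Your proof is correct and follows essentially the same approach as the paper: both observe that $\Omega^O_n$ is purely torsion (being a $\Z/2$-vector space) and that there is an $n$-manifold with odd Euler characteristic, then invoke \cref{thm:torsionWithOddEuler}. The paper's version is terser (it does not name $\RP^n$ explicitly in the proof), but the content is identical.
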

\begin{proof}
    In every even dimension $n$, there is a manifold with odd Euler characteristic, giving us a surjective map $\Omega^O_n \to \Z/2$.
    Since $\Omega^O_n$ is purely torsion, the exact sequence
    \[
    \begin{tikzcd}
    0 \ar[r]& \Z \ar[r]& \Z \times_{\Z/2} \Omega^O_n \ar[r]& \Omega^O_n \ar[r]& 0      
    \end{tikzcd}
    \]
    never splits by \cref{thm:torsionWithOddEuler}.
\end{proof}

\begin{example}
    When $n = 2$ we have $\Omega^O_2 \cong \Z/2$, $\SKK^O_2\cong\Z$ and so the sequence becomes
    \[\begin{tikzcd}[column sep = 45]
            0 \ar[r]& \Z \ar[r,"\cdot 2"]& \Z \ar[r,"\mod 2"]& \Z/2 \ar[r]& 0.
    \end{tikzcd}
    \]
\end{example}

\begin{remark}
    We now show that our \Cref{thm:torsionWithOddEuler} shows that the \ref{SKKseq} splits for $\xi\colon BSO\to BO$, reproving part of the previously known \cref{thm:SKbookEvenDim}. For a fixed even $n$, we need to show that every orientable $n$-dimensional manifold which is torsion in $\Omega^{SO}_n$ has even Euler characteristic. Let $n\equiv 0\pmod{4}$ and let $M$ be an oriented manifold which is torsion the oriented bordism group. Then there is an orientable manifold $W$ bounding $\sqcup_kM$ for some non-zero integer $k$. But then the signature $\sigma(kM)=0$ and hence $\sigma(M)=0$. As $\chi(M)\equiv \sigma(n)\pmod{2}$, this proves the claim. For $n\equiv 2\pmod{4}$ we have shown previously that every $n$-dimensional orientable manifold has even Euler characteristic. 

    Note that our result \Cref{thm:torsionWithOddEuler}
    is weaker in the sense that it does not give a formula for a splitting of the SKK sequence in dimensions $n\equiv 0\pmod{4}$.
    
    \end{remark}

The $\Pin^{\pm}$, bordism groups are torsion \cite{anderson1969pin,giambalvo1973pin}. Therefore the splitting or otherwise of the SKK sequence for even $n$ depends only on the existence of a $\Pin^{\pm}$ manifold with odd Euler characteristic. The following corollary then follows from our discussion in \cref{subsec:Pin}.

\begin{corollary}\label{cor:pin_even_dim}
    The short exact sequence
    \[\begin{tikzcd}
0\ar[r]&\Z \ar[r]&\SKK^{\Pin^{\pm}}_n\ar[r]& \Omega^{\Pin^{\pm}}_n\ar[r]& 0
\end{tikzcd}\]
    splits for $\Pin^-$ if $n\equiv 4\pmod{8}$ and does not split for $n\equiv 0,2,6\pmod{8}$. 
    
    Furthermore, the sequence splits for $\Pin^+$ for $n\equiv 6\pmod{8}$ as well as $n=2, 10$, but it does not split for $n\equiv 0,4\pmod{8}$.
\end{corollary}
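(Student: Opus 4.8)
The plan is to deduce \cref{cor:pin_even_dim} entirely from results already established, namely the torsion case of \cref{thm:torsionWithOddEuler} together with the parity-of-Euler-characteristic computations collected in \cref{Table:PinEulerChar}. No new geometric input is needed.

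First I would record that $\Omega^{\Pin^{\pm}}_n$ is finite for every $n$: for $\Pin^-$ this is \cite{anderson1969pin} and for $\Pin^+$ it is \cite{giambalvo1973pin}. In particular $\Omega^{\Pin^{\pm}}_n$ is torsion and finitely generated, and $B\Pin^{\pm}$ has finitely generated homology in every degree, so the hypotheses of \cref{thm:torsionWithOddEuler} and of the torsion corollary that follows it are satisfied. That corollary reduces the problem to a parity check: for even $n$ the \ref{SKKseq} for $\Pin^{\pm}$ splits if and only if every closed $n$-dimensional $\Pin^{\pm}$-manifold has even Euler characteristic, and when it does, $\tfrac{\chi}{2}\colon \SKK^{\Pin^{\pm}}_n \to \Z$ is a splitting.

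Next I would run through the congruence classes using \cref{Table:PinEulerChar}. For $\Pin^-$: the manifolds $\HP^{2k}$, $\RP^{8k+2}$, $\RP^{8k+6}$ admit $\Pin^-$-structures and have odd Euler characteristic in dimensions $8k$, $8k+2$, $8k+6$, so the sequence does not split for $n \equiv 0,2,6 \pmod 8$; in dimension $8k+4$, \cref{thm:pinWuClasses} gives $v_{4k+2} = 0$ for every $\Pin^-$-manifold, so by the vanishing-top-Wu-class criterion of \cref{subsec:EulerChar} all such manifolds have even Euler characteristic and the sequence splits. For $\Pin^+$: the spaces $\RP^{8k}$ and $\RP^{8k+4}$ are $\Pin^+$ with odd Euler characteristic, ruling out a splitting for $n \equiv 0,4 \pmod 8$; in dimension $8k+6$, \cref{thm:pinWuClasses} gives $v_{4k+3} = 0$, forcing even Euler characteristic and hence a splitting; and for $n = 2, 10$ the explicit computation of \cite{PinPaper} shows every $\Pin^+$-manifold of that dimension has even Euler characteristic, so the sequence splits there as well. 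The class $n \equiv 2 \pmod 8$ with $n \geq 18$ is deliberately omitted from the statement because the parity of $\chi$ for $(8k+2)$-dimensional $\Pin^+$-manifolds with $k \geq 2$ is unknown.

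The only genuinely non-formal content is the Euler characteristic bookkeeping: the ``does not split'' cases need just the explicit odd-$\chi$ projective spaces, while the ``splits'' cases in the boundary dimensions ($8k+4$ for $\Pin^-$, and $8k+6$, $2$, $10$ for $\Pin^+$) rely on the Wu class vanishing statements of \cref{thm:pinWuClasses} and on the low-dimensional computations of \cite{PinPaper}; given those inputs, this corollary is immediate from the torsion corollary of \cref{thm:torsionWithOddEuler}.
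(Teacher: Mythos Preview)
Your proposal is correct and follows essentially the same approach as the paper: reduce to the torsion criterion via \cref{thm:torsionWithOddEuler} (using that $\Omega^{\Pin^{\pm}}_n$ is finite by \cite{anderson1969pin,giambalvo1973pin}), then read off the parity of $\chi$ from \cref{Table:PinEulerChar} and the Wu-class results in \cref{subsec:Pin}. The paper's own argument is just the one-sentence version of this, deferring the case analysis to \cref{subsec:Pin}; your write-up simply makes the bookkeeping explicit.
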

Note that we currently cannot resolve the status of the splitting of the sequence for $\Pin^+_n$ if $n\equiv 2\pmod{8}$ for $n\geq 18$, see \cref{subsec:Pin} and also \cite{PinPaper}.

\begin{example}\cite[section 5]{oscarpin} \label{ex:pin-dim2}
In dimension 2, every manifold admits a $\Pin^-$ structure as $w_1^2 + w_2 = 0$ by a Wu formula.
Since we have $\chi(\R \mathbb{P}^2) =1$, there exists a $\Pin^-$ manifold with odd Euler characteristic, and the SKK sequence does not split.
The bordism group is $\Omega^{\Pin^-}_2 \cong \Z/8$ \cite{kirbytaylor}.
We obtain an isomorphism
\[
\SKK_2^{\Pin^-} \cong \Z \times_{\Z/2} \Z/8 \cong \Z \times \Z/4, \quad (a,b) \mapsto \left(\frac{a-b}{2} \pmod 4, b \right)
\]
fitting in the non-split short exact sequence
\[\begin{tikzcd}
    0\ar[r]&\Z\ar[r]&\Z\times\Z/4\ar[r]&\Z/8\ar[r]&0.
\end{tikzcd}\]

\end{example}

\begin{remark}\label{rem:spinr}
    It would be interesting to study even-dimensional $\SKK$ groups for tangential structures that are not once stabilised. Such structures will be studied in \cite{KST}. For example, \cite{lorant} computes 
    that 
    \[
    \SKK_2^{\Spin^r_2} = 
    \begin{cases}
        \Z \times \Z/2 & r \text{ even,}
        \\
        \Z & r \text{ odd,}
    \end{cases}
    \]
    where $\Spin^r_2 \to SO_2$ is the $r$-fold cover. 
    It is known that these structures do not admit a stabilisation if $r >2$. 
\end{remark}

\section{Invertible TQFTs and \texorpdfstring{$\SKK$}{SKK}}
\label{sec:physics}

Topological quantum field theories (TQFTs) are an important object of study bridging the fields of geometry, algebraic topology and mathematical physics.
In the setting most closely related to this paper, a TQFT is defined as a symmetric monoidal functor $Z$ from the symmetric monoidal category $\Cob_{n-1,n}^\xi$ (\cref{def:cob}) to some target symmetric monoidal category $\mathcal{C}$, such as the category of vector spaces over the complex numbers with tensor product \cite{atiyahtft}. 
One particularly easy class of TQFTs are those that are invertible.

\begin{definition}
\label{def:invertible}
A TQFT $Z$ is called \emph{invertible} if for all objects $Y$ in the bordism category the object $Z(Y)$ is invertible under the tensor product in $\mathcal{C}$ and for all morphisms $X\colon Y_1 \to Y_2$, $Z(X)$ is an invertible morphism in $\mathcal{C}$.
\end{definition}

Invertible TQFTs in $n$ dimensions play an important role in physics, because they classify anomalies of $n-1$-dimensional quantum field theories \cite{freed2014anomalies,monnieranomalies} and are conjectured to classify $n$-dimensional symmetry-protected topological phases of matter \cite{kapustinturzillo, freedhopkins}.

Invertible TQFTs are closely related to $\SKK$ invariants via the restriction of the functor to the monoid of closed manifolds (called the partition function of the TQFT)\footnote{The authors learned this observation and many other considerations in this section from Kreck, Stolz and Teichner \cite{KST}.}.
For example, for $\mathcal{C}$ the category of complex vector spaces, an invertible TQFT assigns $\mathbb{C}$ to all objects and multiplication by a number to all bordisms.
We then observe that the partition function is an $\SKK$ invariant by a straightforward computation: \begin{equation*}
    \frac{|Z(M_1\cup_{\varphi} \ol{M_2})|}{|Z(M_1\cup_{\psi} \ol{M_2})|}=\frac{|Z(M_1)Z(C_{\varphi})Z(\ol{M_2})|}{|Z(M_1)Z(C_{\psi})Z(\ol{M_2})|}=\frac{|Z(C_{\varphi})|}{|Z(C_{\psi})|},
\end{equation*}
where $C$ denotes the mapping cylinders.

We now give a more abstract perspective on the appearance of $\SKK$ groups.
Note that a TQFT $Z$ is invertible if and only if lands in the maximal Picard groupoid $\mathcal{C}^\times$ contained in $\mathcal{C}$.
By the universal property of the groupoidification, an invertible TQFT factors uniquely through the groupoidification $\widehat{ \Cob}_{n-1,n}^\xi $ of $\Cob_{n-1,n}^\xi$.
We can therefore understand invertible TQFTs as maps between Picard groupoids\footnote{The groupoidification of a symmetric monoidal category with duals is automatically a Picard groupoid, where the inverse is given by the dual object.} $\widehat{\Cob}_{n-1,n}^\xi \to \mathcal{C}^\times$, which are 
well-understood by a theorem of Ho\`ang \cite{sinhthesis, johnsonosorno}.
When the target is the category of supervector spaces $\sVect_\C$\footnote{Supervector spaces are more desirable than ungraded vector spaces from the perspective of physics because they allow for the definition of the fermion parity operator $(-1)^F$. The interesting braiding of $\sVect$ corresponds to the dichotomy of Bose- versus Fermi statistics.}, equivalence classes of invertible field theories are in one-to-one correspondence with homomorphisms
\[
\pi_1 \widehat{ \Cob}_{n-1,n}^\xi \to \C^\times.
\]
This is because the Picard groupoid $\sline_\C \subseteq \sVect_\C$ of superlines has the property that it is a truncation of a particular spectrum (called the Brown-Comenetz dual of the sphere, see \cite[Section 5.3.]{freedhopkins}) that has the universal property that maps into it are in one-to-one correspondence with maps on $\pi_1$, and $\pi_1 \sVect^\times_\C \cong \C^\times$, see also \cite{schommerpriesinvertible}.

Applying the considerations in \cref{ap:ProofPi1ofCategories}, this explains the relevance of $\SKK$ groups for the study of invertible TQFTs:

\begin{theorem} \cite{KST}
\label{th:IFTsareSKK}
    Let $\xi\colon B_{n+1} \to BO_{n+1}$ be a tangential structure.
    Then equivalence classes of $n$-dimensional invertible TQFTs with target $\sVect_\C$ are in one-to-one correspondence with homomorphisms to $\C^\times$:
    \[
    ITQFT_n^\xi \cong \Hom(\SKKxi_n, \C^\times).
    \]
\end{theorem}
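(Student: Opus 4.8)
The plan is to identify $n$-dimensional invertible TQFTs valued in $\sVect_\C$ with maps of Picard groupoids out of the groupoidification $\widehat{\Cob}^\xi_{n-1,n}$, and then convert this into homomorphisms out of $\SKK^\xi_n$. First I would recall, as discussed in the paragraph preceding the statement, that an invertible TQFT $Z\colon \Cob^\xi_{n-1,n}\to \sVect_\C$ lands in the maximal Picard groupoid, hence (by the universal property of groupoidification) factors uniquely through $\widehat{\Cob}^\xi_{n-1,n}$, and moreover its image lies in the Picard groupoid of superlines $\sline_\C\subseteq \sVect_\C$ since every object of the bordism category is dualizable and hence sent to an invertible object, which in $\sVect_\C$ is a superline. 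So equivalence classes of invertible TQFTs are in bijection with morphisms of Picard groupoids $\widehat{\Cob}^\xi_{n-1,n}\to \sline_\C$.

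Next I would invoke the key input: $\sline_\C$, regarded as a (connective, $1$-truncated) spectrum, is the $1$-truncation of the Brown–Comenetz dual of the sphere, and this spectrum has $\pi_0\cong \Z/2$, $\pi_1\cong \C^\times$, with the property that maps of spectra (equivalently of Picard groupoids, via the stable/unstable comparison in this truncated range) into it from a connective spectrum $X$ are detected on $\pi_1$: $[\,X,\sline_\C\,]\cong \Hom(\pi_1 X,\C^\times)$. This is the theorem of Ho\`ang (cited as \cite{sinhthesis, johnsonosorno}) together with the Brown–Comenetz universal property recalled from \cite[Section 5.3]{freedhopkins}. Applying it with $X$ the spectrum associated to the Picard groupoid $\widehat{\Cob}^\xi_{n-1,n}$ gives
\[
ITQFT^\xi_n \;\cong\; \Hom\!\left(\pi_1\widehat{\Cob}^\xi_{n-1,n},\,\C^\times\right).
\]

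Finally I would identify $\pi_1\widehat{\Cob}^\xi_{n-1,n}$ with $\SKK^\xi_n$. This is exactly the content promised by ``Applying the considerations in \cref{ap:ProofPi1ofCategories}'': the fundamental group of the groupoidification of the (ordinary, non-topological) bordism category $\Cob^\xi_{n-1,n}$ is the $\SKK$ group, which is the analogue for $\Cob$ of the identification $\pi_1\|\Bord^\xi_{n-1,n}\|\cong \SKK^\xi_n$ used throughout \cref{subsec:Genauer} (via \cref{th:SKKispi1ofcob}); concretely, loops at $\varnothing$ in the groupoidification are closed $n$-manifolds, composition is disjoint union, and the relations imposed by inverting morphisms are precisely the \ref{chimaera} of \cref{prop:SecondDefOfTheSKKRelation}. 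Combining the two bijections yields $ITQFT^\xi_n\cong \Hom(\SKK^\xi_n,\C^\times)$.

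The main obstacle is the middle step: making precise that maps into $\sline_\C$ are controlled by $\pi_1$ alone. One must be careful that $\sline_\C$ is not a general target but specifically the bottom piece of the Brown–Comenetz dual of the sphere spectrum, so that the potential obstruction/extension data living in $\pi_0=\Z/2$ (detecting the mod-$2$ Euler characteristic, i.e. Bose/Fermi statistics) does not contribute extra invariants beyond a homomorphism to $\C^\times$ — equivalently, that the relevant $k$-invariant is handled by the universal property. This is where I would lean hardest on \cite{freedhopkins, schommerpriesinvertible, johnsonosorno} rather than reprove anything; everything else is formal manipulation of groupoidification and the already-established $\SKK$-vs-$\pi_1$ dictionary.
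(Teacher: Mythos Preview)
Your proposal is correct and follows essentially the same route as the paper: the discussion preceding the theorem statement lays out exactly the three steps you give (factor through the groupoidification into $\sline_\C$, invoke the Brown--Comenetz/Ho\`ang property that maps into $\sline_\C$ are detected on $\pi_1$, then identify $\pi_1\widehat{\Cob}^\xi_{n-1,n}\cong\SKK^\xi_n$ via \cref{ap:ProofPi1ofCategories}), and the theorem itself is attributed to \cite{KST} rather than reproved in detail. Your caveat about the middle step being the genuine input, deferred to \cite{freedhopkins, schommerpriesinvertible, johnsonosorno}, matches the paper's treatment exactly.
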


\begin{corollary}
\label{cor:partitionfunctiondeterminestheory}
An invertible TQFT is uniquely determined by its partition function. 
\end{corollary}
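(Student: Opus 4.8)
The plan is to deduce the statement directly from \cref{th:IFTsareSKK} together with the observation, recorded just above it, that the partition function of an invertible TQFT is an $\SKK$ invariant. First I would make precise what ``partition function'' means: for an invertible $Z\colon \Cob^{\xi}_{n-1,n}\to \sVect_\C$ we have $Z(\varnothing)\cong \C$ canonically, so a closed $n$-dimensional $\xi$-manifold $M$, viewed as an endomorphism $\varnothing\to\varnothing$, is sent to an element $Z(M)\in\Aut_{\sVect_\C}(\C)\cong\C^\times$; this assignment is multiplicative under disjoint union, hence defines a monoid homomorphism $\mathcal{M}^\xi_n\to\C^\times$. The computation displayed above (comparing $Z(M_1\cup_\varphi \ol{M_2})$ with $Z(M_1\cup_\psi \ol{M_2})$ via mapping cylinders) shows that this homomorphism descends along the quotient $\mathcal{M}^\xi_n\twoheadrightarrow\SKKxi_n$, giving a homomorphism $z_Z\colon \SKKxi_n\to\C^\times$.

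Second I would check that the assignment $Z\mapsto z_Z$ is precisely the bijection of \cref{th:IFTsareSKK}. By the universal property of groupoidification, $Z$ factors through $\widehat{\Cob}^{\xi}_{n-1,n}$, and by Ho\`ang's theorem, since the target groupoid is $\sline_\C$, the resulting map of Picard groupoids is determined by its effect on $\pi_1$. Under the identification $\pi_1 \widehat{\Cob}^{\xi}_{n-1,n}\cong\SKKxi_n$ established in \cref{ap:ProofPi1ofCategories}, the group $\pi_1$ at the monoidal unit is the automorphism group of $\varnothing$ in the groupoidification, which is exactly $\SKKxi_n$ presented by closed $n$-manifolds; and $Z$ sends such an automorphism to the corresponding scalar, i.e.\ to $z_Z$. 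Hence the classifying homomorphism attached to $Z$ by \cref{th:IFTsareSKK} is $z_Z$ itself, so two invertible TQFTs with the same partition function induce the same homomorphism $\SKKxi_n\to\C^\times$, and therefore by the injectivity half of \cref{th:IFTsareSKK} are equivalent.

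The only genuine subtlety — and hence the step on which I would spend the most care — is the bookkeeping identifying $\pi_1\widehat{\Cob}^{\xi}_{n-1,n}\cong\SKKxi_n$ at the level of the unit object and confirming that, under this identification, the ``evaluation on $\pi_1$'' appearing in Ho\`ang's classification literally coincides with evaluation of the partition function on closed manifolds; once that is pinned down, the rest of the argument is formal.
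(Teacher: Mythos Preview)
Your proposal is correct and follows exactly the approach the paper intends: the corollary is stated immediately after \cref{th:IFTsareSKK} with no separate proof, so the paper treats it as an immediate consequence of that theorem together with the preceding discussion identifying the classifying homomorphism with the partition function. You have simply spelled out that identification in more detail than the paper does.
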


\begin{remark}
When the target category is not the category of super vector spaces, the result is slightly more complicated. 
However, there is an algebraic classification of morphisms between Picard groupoids \cite{KST}, which allows for a classification of invertible TQFTs with more general target categories, compare \cite{Carmenitft}. 
\end{remark}

From now on, let $\xi\colon B \to BO$ be a stable tangential structure.

\begin{definition} \cite{atiyahtft} \cite[Appendix G]{turaev}
\label{def:unitaryTQFT}
    A \emph{unitary TQFT}\footnote{The property that Euclidean QFTs obtain after Wick-rotating a Lorentzian 
    unitary quantum field theory is typically called reflection-positivity \cite{glimmjaffe,freedhopkins}. We will call such QFTs unitary independent of whether they are in Lorentzian or Euclidean signature and hope this will not lead to confusion.} is a symmetric monoidal dagger functor $\Cob^\xi_{n-1,n} \to \operatorname{sHilb}$ into the dagger category of super Hilbert spaces.
    Let $uITQFT_n^\xi$ be the group of unitary invertible TQFTs.
\end{definition}

We will not go into detail about dagger categories here.
In particular, we will not specify the dagger structure on the bordism category, referring to \cite{higherdagger} for a construction only requiring stability of $\xi$.
    However, \cref{def:unitaryTQFT} is equivalent to \cite[Definition 4.18]{freedhopkins} of a reflection-positive structure.
    We refer to \cite{luukthesis} for details.

\label{rem:unitaryITFTbordism}
Often in physics applications, invertible TQFTs are related to bordism groups instead of $\SKK$ groups.
The justification for this is that most important QFTs are unitary.
It has been shown that unitary invertible TQFTs correspond roughly to those homomorphisms $Z\colon \SKKxi_n \to \C^\times$ for which there exists a homomorphism $\Omega^\xi_n \to \C^\times$ such that the diagram
\[
\begin{tikzcd}
    \SKKxi_n \ar[r] \ar[d,"Z"] & \Omega^\xi_n \ar[dl, dashed]
    \\
    \C^\times &
\end{tikzcd}
\]
commutes.
More precisely, we have
\[
uITQFT^\xi_n \cong 
\begin{cases}
\Hom(\Omega^\xi_n, U(1)) & n \text{ odd,}
\\
\Hom(\Omega^\xi_n, U(1)) \times \R_{>0}  & n \text{ even}
\end{cases}
\]  
where the element of $\R_{>0}$ is the value assigned to the bounding sphere. 
Note that by the \ref{SKKseq}, the dashed line exists if and only if $Z(S^n_b) = 1$ and is unique in that case.
We will not get into these theorems here, see \cite[Theorem 8.29]{freedhopkins} for the theorem in the extended setting and \cite{yonekura} a $1$-categorical formulation without dagger categories.

However, non-unitary invertible TQFTs are also of physical interest.
For example, we expect them to offer a natural framework for describing non-Hermitian topological phases, which exhibit novel symmetry and topological structures beyond the conventional Hermitian paradigm~\cite{kawabata2019symmetry, nonhermitian}. Additionally, non-unitary operators arise intrinsically in the study of non-invertible symmetries, including generalized duality transformations such as those extending Kramers-Wannier duality~\cite{shao2023s, Li_2023}.
Non-unitary invertible TQFTs also play a role in the study of global anomalies of non-unitary quantum field theories~\cite{chang2021exotic,TachikawaYonekura}. Therefore it is interesting to study the whole group $ITQFT_n^\xi$ of invertible TQFTs and how it relates to $uITQFT^\xi_n$, which is what we do in the current work.

\hfill 

We now remark on the physical interpretation of the stable tangential structure $\xi\colon B \to BO$. 
Consider a quantum system with a certain internal symmetry group $G$ possibly containing time-reversal symmetry, such as one of the classes in the tenfold way \cite{altlandzirnbauer,kitaev2009periodic}, see \cite[Section 2.1]{stehouwermorita} for a mathematical approach to such symmetry groups.
Then, there is an associated construction of a structure group $H_n(G) \to O_n$ such that spacetimes in the QFT come equipped with a tangential $H_n(G)$-structure, see \cite[Table 9.2.1, Remark 9.36]{freedhopkins} and \cite[Section 3.3]{luukasreflection}.
This gives in the colimit our desired stable tangential structure $\xi\colon BH(G) \to BO$.
In physics language, TQFTs with this tangential structure $\xi$ should be thought of as TQFTs with internal symmetry $G$ by coupling to background $G$-gauge fields.
The computation of $\SKKxi_n$ for this $\xi$ is therefore related to the classification of (possibly non-unitary) topological phases protected by $G$ in spacetime dimension $n$.

\subsection{Odd-dimensional non-unitary invertible TQFTs and Kervaire TQFTs}

We will now explain the consequences of our work to odd-dimensional non-unitary invertible TQFTs.
Our primary example of a non-unitary invertible TQFT will be the Kervaire TQFT (\cref{def:kervaireTFT}).
We start with a motivating example:

\begin{example}
\label{ex:tachikawa1d}
    As explained in \cite[Appendix E.1]{TachikawaYonekura}, there exists a QFT in one spacetime dimension of which the low-energy effective field theory is the following invertible TQFT.
    Consider the unique symmetric monoidal functor $Z\colon \Cob_{0,1}^{SO} \to \sVect_\C$ which assigns the odd line to the point independent of the orientation. 
    Note that this theory does not use a spin structure on spacetime, so in this sense it is an `integer spin theory'.
    However, it does not factor through $\Vect_\C$ and so the theory is not bosonic; $(-1)^F = -1$ on the state space.
    In particular, the theory violates spin-statistics and therefore is not unitary \cite[Section 11]{freedhopkins}.
    Explicitly, one can compute the partition function to be $Z(S^1) = -1$. We note that this invertible field theory corresponds to the non-trivial element of $\Hom(\SKK^{SO}_1,\C^\times) \cong \Z/2$.
    The invariant is given by the Kervaire semi-characteristic over any field\footnote{In this dimension, Kervaire semi-characteristics over different fields agree.} $F$, resulting 
    in the partition function
    \[
    Z_{\kerv_F}(X) = (-1)^{\kerv_{F}(X)} = (-1)^{\dim H_0(X;F)} = (-1)^{|\pi_0(X)|}
    \]
    on a one-dimensional closed oriented manifold $X$.
\end{example}

A generalisation of the above example to an invertible TQFT violating spin-statistics in any spacetime dimensions equal to $1$ modulo $4$ has been considered before for the case of $B = BSO$ and $F = \Q$ \cite[Example 6.15]{freed2019lectures}.
However, one of the main observations of our work is that the Kervaire semi-charactistic partition function generalises best for the field $F = \Z/2$. Indeed, it generalises to spacetime dimensions equal to $3$ modulo $8$ for spin theories:
\begin{example}
\label{ex:3dspin}
Consider neutral fermions with no further symmetries in dimension $2 + 1$, corresponding to a class D topological superconductor on the condensed matter side.
On the TQFT side this corresponds to the tangential structure $\xi\colon B\Spin \to BO$ and so to classify non-unitary phases of matter we have to compute $\SKK^{\Spin}_3$.
For this, recall that $\Omega^{\Spin}_3 = 0$ and every four-dimensional spin manifold has even Euler characteristic so that $\SKK^{\Spin}_3 \cong \Z/2$.
By \cref{thm:mainthmforOddDimensionKOrient}, in this dimension $B\Spin$ satisfies the assumptions in \cref{def:kervaireTFT}, so the Kervaire TQFT exists.
We conclude that there is a single non-trivial invertible field theory with partition function
\[
Z_{\kerv}(X) = (-1)^{\dim H^0(X;\Z/2) + \dim H^2(X;\Z/2)}.
\]
    More generally, if $n \equiv 3 \pmod 8$, there exists a non-unitary invertible spin TQFT $Z_{\kerv}$ with partition function
    \[
    Z_{\kerv}(X) = (-1)^{\kerv_{\Z/2}(X)}.
    \]

     This example only works for the field $\Z/2$, because for any other characteristic the Kervaire semi-characteristic is not an SKK invariant of $\Spin$ manifolds in dimension $3$, see \cref{rem:KervCharDifferentCharacteristics} and \cref{rm:differentKervaire}.
    In particular, there is no three-dimensional invertible spin TQFT $Z_{\kerv_\Q}$ with partition function $(-1)^{\kerv_\Q(M)}$.
\end{example}

We are therefore led to define the Kervaire TQFT as a theory of which the partition function arises from the Kervaire semi-characteristic over $\Z/2$.
Given a tangential structure, this TQFT exists in a certain range of spacetime dimensions:

\begin{definition}
\label{def:kervaireTFT}
    Let $n = 2k+1$ be an odd spacetime dimension, and $\xi\colon B \to BO$ a stable tangential structure such that 
 for every $\xi$-manifold $W$ with boundary 
 \[
 \rank_{\Z/2}\left(H_k(W;\Z/2) \xrightarrow{j_*} H_k(W,\partial W; \Z/2)\right)
 \]
 is even. 
 The \emph{$n$-dimensional $\xi$-Kervaire TQFT} is the unique invertible TQFT with domain $\Cob_{n-1,n}^\xi$ and target $\sVect_\C$ which has as its partition function
    \[
    Z_{\kerv}(X^n) = (-1)^{\kerv_{\Z/2}(X)},
    \]
    where $\kerv_{\Z/2}(X)$ is the Kervaire semi-characteristic from Definition \ref{def:kervaire}.
\end{definition}

\begin{remark} 
    By \cref{prop:iffkervaire}, $\kerv_{\Z/2}$ is an $\SKKxi$ invariant under the stated assumptions on $\xi$.
    By Corollary \ref{cor:partitionfunctiondeterminestheory}, the partition function in \cref{def:kervaireTFT} uniquely defines the Kervaire TQFT.
\end{remark}

\begin{remark}
The Kervaire TQFT is not unitary because $Z_{\kerv}(S^n) = -1$.
\end{remark}

\begin{remark}
    For certain odd spacetime dimensions $n > 1$ and stable tangential structures $\xi$, it happens that Kervaire semi-characteristics over different fields yield well-defined but non-isomorphic invertible TQFTs.
In dimension $5$ and $B = BSO$ for example, we can define an invertible TQFT $Z_{\kerv_\Q}$ with partition function $(-1)^{\kerv_\Q(X)}$, which is not isomorphic to the oriented TQFT $Z_{\kerv}$, see \cref{rm:differentKervaire}.
\end{remark}

\begin{remark}
Given an $(n-1)$-dimensional closed manifold $Y$, the Kervaire TQFT assigns to $Y$ the even line if $Y$ has even Euler characteristic and the odd line if $Y$ has odd Euler characteristic.
    Indeed, the super dimension of $Z_{\kerv}(Y)$ is the trace of the identity computed as
    \[
    Z_{\kerv} (Y \times S^1) = (-1)^{\kerv_{\Z/2}(Y \times S^1)} = (-1)^{\chi(Y)},
    \]
    since 
    \begin{align*}
        \kerv_{\Z/2}(Y\times S^1)& = \sum_{i=0}^{n/2}\dim(H_{2i} (Y\times S^1,\Z/2))
        \\
        & = \sum_{i=0}^{n/2}\dim(H_{2i+1}(Y;\Z/2))+\dim(H_{2i}(Y;\Z/2))
        \\
        &\equiv \chi(Y) \pmod 2.
    \end{align*}
\end{remark}

\begin{remark}
    It would be interesting to compare our description of Kervaire TQFTs with the index-theoretic construction of  non-unitary TQFT in certain odd dimensions given in \cite[Appendix E.3]{TachikawaYonekura}.
\end{remark}

\begin{example}
Consider charged fermions (class A), which corresponds to the tangential structure $\xi\colon B\Spin^c \to BO$.
Since $\C \mathbb{P}^2$ is a $\Spin^c$ manifold with odd Euler characteristic and $\Omega^{\Spin^c}_3 =0$, we have $\SKK^{\Spin^c}_3 = 0$.
Therefore there are no non-trivial invertible $\Spin^c$ TQFTs in spacetime dimension $3$. 
In particular, there are no invertible $\Spin^c$ TQFTs of which the partition function is a Kervaire semi-characteristic.
\end{example}

\cref{conjecture} translates in the language of the current chapter to the following.

\begin{expectation}
\label{expectation}
Let $G$ be an internal symmetry group and $n$ an odd spacetime dimension.
The group of invertible TQFTs with structure group $H(G)$ is a direct sum of unitary invertible TQFTs plus potentially one non-unitary $\Z/2$-summand. 
 This extra $\Z/2$ appears if and only if every $(n+1)$-dimensional $H(G)$-manifold has even Euler characteristic.
\end{expectation}

\begin{remark}
A $\Z/2$-subgroup splitting off the non-unitary summand in \cref{expectation} is not always given by the Kervaire TQFT.
This is for example the case for $H(G) = \Pin^+$ in spacetime dimension one, see \cref{Ex:pindim1}. 
In that case, there is a single non-trivial invertible TQFT, which happens to be non-unitary.
Its partition function is $1$ on the periodic circle and $-1$ on the anti-periodic circle
One useful fact to determine the analogue $Z$ of the Kervaire TQFT for general $\xi$ and dimension $n$, is the following anomaly-inflow principle: the partition function on an $n$-dimensional $\xi$-manifold $Y$ that bounds a $(n+1)$-dimensional $\xi$-manifold $X$ should be given by $Z(Y) = (-1)^{\chi(X)}$, see \cref{thm:iffForMfldInvariant}.
\end{remark}

\subsection{Even-dimensional non-unitary invertible TQFTs}

Thus far, we have considered non-unitary invertible TQFTs in odd spacetime dimensions.
    In even spacetime dimensions $n$, our results imply roughly that the only non-unitary invertible TQFTs are `Euler TQFTs'. 
    
    \begin{definition}[\cite{quinn1995lectures},\cite{freed2006setting}]
        Given any stable tangential structure $\xi$, the \emph{Euler TQFT} $Z_\lambda$ corresponding to the non-zero complex number $\lambda \in \C^\times$ is the invertible TQFT with partition function 
    \[
    Z_\lambda (X^n) = \lambda^{\chi(X)}.
    \]
    \end{definition}
    
        Applying $\Hom(-, \C^\times)$ to the \ref{SKKseq} gives a short exact sequence
    \begin{equation}
    \label{nonunitaryses}
    \begin{tikzcd}
          0 \ar[r]& \Hom(\Omega^\xi_n, \C^\times) \ar[r]& 
           \Hom(\SKKxi_n, \C^\times) 
          \ar[r] & \C^\times \ar[r]& 0,
    \end{tikzcd}
    \end{equation}
    where the last map is given by evaluating on the bounding sphere. 
    This follows from the fact that $\C^\times$ is an injective abelian group, so that $\Hom(-, \C^\times)$ is an exact functor.
     This sequence is convenient to relate unitary and non-unitary invertible TQFTs, see \cref{rem:unitaryITFTbordism}.
    Its potential non-splitness is caused by the fact that $Z_\lambda$ for $\lambda = -1$ is a bordism invariant. 
    More precisely, since exact functors preserve finite limits, it follows by \cref{prop:attempt} that
    \[
    \begin{tikzcd}
    \C^\times \ar[d,"Z_\lambda"] & \Z/2 \ar[l,hookrightarrow] \ar[d,"Z_{-1}"]
    \\
        \Hom(\SKKxi_n,\C^\times) & \Hom(\Omega^\xi_n,\C^\times) \ar[l]
    \end{tikzcd}
    \]
    is a pushout square.
    However, note that $Z_{-1}$ is the trivial TQFT if and only if $\xi$-manifolds have even $\chi$. 
    It also follows by \cref{thm:slittingSKKevenEvenChi} that in that case $\lambda \mapsto (X \mapsto \lambda^{\chi(X)/2})$ splits the sequence \cref{nonunitaryses} on the right.

\begin{example}
    Consider a $2$-dimensional system of neutral fermions with a time-reversal symmetry that squares to one.
    In that case, the structure group is known to be $\xi\colon B\Pin^- \to BO$. 
    It follows from \cref{ex:pin-dim2} that two-dimensional $\Pin^-$ invertible field theories fit into the non-split short exact sequence
    \[
    \begin{tikzcd}
        0 \ar[r] & \Hom(\Omega^\xi_n, \C^\times) \ar[r] \ar[d,equals] & \Hom(\SKKxi_n, \C^\times) \ar[r]  \ar[d,equals] & \C^\times \ar[r] & 0.
        \\
        & \Z/8 & \C^\times \times \Z/4 &  & 
    \end{tikzcd}
    \]
    In particular, the unitary invertible TQFTs do not form a direct summand inside the group of all invertible TQFTs in this example.
\end{example}

\subsection{Classification of not necessarily unitary invertible TQFTs}
\label{subsection:classification_physics}

 In this subsection, we compute the group of invertible TQFTs in spacetime dimensions 1-5 for many tangential structures of physical interest, see \cref{tab:physics}.
In particular, we compute the groups for all the tangential structures corresponding to the tenfold way, enriching the computations of \cite{freedhopkins} to the non-unitary setting.
This section consists of three parts
\begin{enumerate}[label=(\roman*)]
    \item Firstly, we briefly explain the tenfold way in the setting of this paper, as the authors learned from Peter Teichner;
    \item We then apply and amend our computations in the previous sections to compute the relevant $\SKK$-groups;
    \item We finally present our results in \cref{tab:physics}.
\end{enumerate}

The tenfold way is an organising principle on topological phases of matter, categorizing symmetries into ten important classes \cite{altlandzirnbauer, kitaev2009periodic}.
Mathematically, these ten classes are related to the classification of super division algebras \cite{mooretenfold, baez2020tenfold}:
recall that a superalgebra is a $\Z/2$-graded algebra $A = A_0 \oplus A_1$ such that the grading is respected by the multiplication.

\begin{definition}
    A \emph{super division algebra} $D$ is a superalgebra such that every homogeneous element is invertible.
\end{definition}

\begin{theorem}[\cite{wallgradedbrauer}]
    There are ten isomorphism classes of real super division algebras.
\end{theorem}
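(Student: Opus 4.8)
The plan is to classify super division algebras $D = D_0 \oplus D_1$ over $\R$ by first analyzing the even part $D_0$, then the odd part $D_1$ as a $D_0$-bimodule, and finally the constraint imposed by the multiplication $D_1 \otimes D_1 \to D_0$. First I would observe that $D_0$ is itself an ordinary (ungraded) division algebra over $\R$: every nonzero homogeneous element of degree $0$ is invertible with inverse again of degree $0$, since the inverse of a homogeneous element is homogeneous of the same degree (the degree-$0$ and degree-$1$ parts of $x^{-1}$ must satisfy $x x^{-1} = 1$, forcing $x^{-1} \in D_0$ when $x \in D_0$). By the Frobenius theorem, $D_0 \in \{\R, \C, \mathbb{H}\}$. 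This splits the classification into three cases.

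The key step is then to understand the odd part. If $D_1 = 0$, we get exactly the three ungraded division algebras $\R, \C, \mathbb{H}$ viewed as purely even superalgebras — that is $3$ of the ten. If $D_1 \neq 0$, pick any nonzero $e \in D_1$; since $e$ is homogeneous it is invertible, so left multiplication by $e$ gives a bijection $D_0 \to D_1$, hence $D_1$ is a free rank-one left $D_0$-module, $D_1 = D_0 e$. The full structure is then encoded by (i) the right $D_0$-action on $e$, i.e. the automorphism or anti-automorphism $\alpha$ of $D_0$ defined by $e a = \alpha(a) e$ for $a \in D_0$, which must be an algebra automorphism of $D_0$ since conjugation by the invertible element $e$ is an automorphism of all of $D$ and preserves $D_0$; and (ii) the scalar $e^2 \in D_0$, which must be central in $D_0$ and fixed by $\alpha$ (since $e^2$ commutes with $e$). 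I would then enumerate, for each $D_0 \in \{\R, \C, \mathbb{H}\}$, the possible pairs $(\alpha, e^2)$ up to the natural notion of isomorphism of graded algebras — noting that rescaling $e \mapsto \lambda e$ for $\lambda \in D_0^\times$ changes $e^2$ by $\lambda \alpha(\lambda) e^2$ (so $e^2$ only matters up to the subgroup of norms of this twisted form) and conjugates $\alpha$. Working through the three cases: $D_0 = \R$ gives $\alpha = \mathrm{id}$ and $e^2 = \pm 1$, yielding $\C$ with even $\R$ and odd $\R$ (the two sign choices give $\C^{\mathrm{even}\oplus\mathrm{odd}}$ with $e^2 = -1$, and the split one $\R[e]/(e^2-1)$ graded — these are the Clifford-algebra-like examples $Cl_1, Cl_{-1}$); $D_0 = \C$ gives either $\alpha = \mathrm{id}$ or $\alpha = $ complex conjugation, with the corresponding allowed $e^2$; $D_0 = \mathbb{H}$ gives $\alpha$ an inner automorphism (all automorphisms of $\mathbb{H}$ are inner) and again a sign/scaling analysis of $e^2$. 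Counting carefully, the nonzero-$D_1$ cases contribute $7$ further classes, for a total of $10$.

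The main obstacle I expect is the careful bookkeeping in the case analysis: determining precisely which pairs $(\alpha, e^2)$ are genuinely non-isomorphic as super division algebras, since the rescaling freedom $e \mapsto \lambda e$ interacts nontrivially with both $\alpha$ and $e^2$, and one must be careful to verify that each candidate is actually a division algebra (every homogeneous element invertible) and that distinct candidates are not secretly isomorphic via a more exotic graded isomorphism. A clean way to organize this is to invoke the known structure theory — these ten algebras are exactly the real Clifford algebras $Cl_{p,q}$ modulo Morita equivalence in the super/graded sense, equivalently the Brauer-Wall group $BW(\R) \cong \Z/8$ of $\R$ (which has $8$ elements) together with the two `purely even' extra classes $\C$ and $\mathbb{H}$ that are division algebras but not central simple over $\R$ — so that the count $8 + 2 = 10$ matches. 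I would present the direct enumeration as the proof, cross-checking against this $\Z/8$ picture (Wall's computation of the graded Brauer group) to make sure no class is missed or double-counted.
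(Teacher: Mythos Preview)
The paper does not give its own proof of this theorem; it is stated with a citation to Wall and immediately used. There is therefore nothing in the paper to compare your argument against.

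Your outline is essentially the standard direct classification and is sound in structure: reduce to $D_0 \in \{\R,\C,\mathbb{H}\}$ by Frobenius, then classify the odd part via a generator $e$, the automorphism $\alpha$ of $D_0$ determined by $ea = \alpha(a)e$, and the element $e^2 \in D_0$ modulo the rescaling action $e \mapsto \lambda e$. The count $3 + 7 = 10$ comes out correctly once one works through the cases: two classes with $D_0 = \R$ ($e^2 = \pm 1$), three with $D_0 = \C$ (one with $\alpha = \mathrm{id}$, two with $\alpha$ equal to complex conjugation and $e^2 = \pm 1$), and two with $D_0 = \mathbb{H}$ (after normalising $\alpha$ to the identity, $e^2 = \pm 1$), together with the three purely even algebras.

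One small correction to your Brauer--Wall cross-check: the two super division algebras that fail to be central simple over $\R$, and hence do not represent classes in $BW(\R) \cong \Z/8$, are $\C$ concentrated in even degree and the algebra $\C[e]/(e^2-1)$ with $D_0 = \C$ and $\alpha = \mathrm{id}$ --- not $\C$ and $\mathbb{H}$ as you wrote. The purely even $\mathbb{H}$ has supercentre $\R$ and is central simple; it represents the element of order two in $BW(\R)$. This does not affect your main argument, only the sanity check.
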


    Let $D$ be a real super division algebra.
    Let $G(D)$ be the quotient of the group $D_{hom}$ of homogeneous elements of $D$ by the subgroup $\R^\times$ of nonzero scalars.
    Note that $G(D)$ admits an extension 
    \[
    \begin{tikzcd}
    1 \ar[r] & \Z/2 \ar[r] & D_{hom}/{\R_{>0}} \ar[r] & G(D) \ar[r] & 1,
    \end{tikzcd}
    \]
    which defines a map $BG(D) \to B^2 \Z/2$.
The supergrading on $D$ induces a homomorphism $G(D) \to \Z/2$ and together these define a map 
\[
BG(D) \to B\Z/2 \times B^2 \Z/2 = \pi_{\leq 2} BO.
\]

\begin{definition}(compare \cite[(10.12)]{freedhopkins})
\label{tenfoldtangential}
    The \emph{tenfold way tangential structure} for the super division algebra $D$ is the stable tangential structure given by the homotopy pullback
    \begin{equation}
        \begin{tikzcd}
            BH(D) \ar[r] \ar[d] & BG(D) \ar[d]
            \\
            BO \ar[r] & \pi_{\leq 2} BO.
        \end{tikzcd}
    \end{equation}
\end{definition}

\newgeometry{bottom=1in}
\begin{landscape}
\begin{table}[h!]
\begin{adjustbox}{scale=0.85}
{\renewcommand{\arraystretch}{1.2}
\begin{tabular}{|l|l||l|l|l||l|l|l|l|l|}\hline
 \textbf{Particle content} & $\xi$-structure & $\langle S_b^1 \rangle$ & $\langle S_b^3 \rangle$ & $\langle S_b^5 \rangle$ 
 & $ITQFT_1$ & $ITQFT_2$ & $ITQFT_3$ & $ITQFT_4$ & $ITQFT_5$\\ \hline\hline
bosons & $BSO$ & $\Z/2$ & $0$ & $\Z/2$ 
& $\Z/2$ & $\C^\times$ & $0$ & $(\C^\times)^2 $ & $\Z/2 \times \Z/2$ \\ \hline
bosons with TRS & $BO$ & $0$ & $0$ & $0$  
& $0$ & $\textcolor{blue}{\C^\times}$ & $0$  & $\textcolor{blue}{\C^\times \times \Z/2}$ & $\Z/2$\\ \hline
charged fermions (class A) & $B\Spin^c$ & $\Z/2$ & $0$ & $\Z/2$  
& $\Z/2$ & $(\C^\times)^2$ & $0$ & $(\C^\times)^3$ & $\Z/2$ \\ \hline
charged fermions with & $B\Pin^c$ & $0$ & $0$ & $0$ 
& $0$ & $\textcolor{blue}{\C^\times \times \Z/2}$ & $0$ & $\textcolor{blue}{\C^\times \times \Z/8}$ & $0$ \\
sublattice symmetry (class AIII)&&&&&&&&&\\ \hline
neutral fermions (class D) & $B\Spin$ & $\Z/2$ & $\Z/2$ & $\Z/2$ 
& $\Z/2 \times \Z/2$ & $\C^\times \times \Z/2$ & $\Z/2$ & $(\C^\times)^2$ & $\Z/2$ \\ \hline
neutral fermions with  & $B\Pin^+$ & $\Z/2$ & $0$ & $\Z/2$  
& $\Z/2$ & $\C^\times \times \Z/2$ & $\Z/2$ & $\textcolor{blue}{\C^\times \times \Z/8}$ & $\Z/2$  \\
 TRS squaring to $(-1)^F$ (class DIII) &&&&&&&&&\\ \hline
neutral fermions with  & $B\Pin^-$ & $0$ & $\Z/2$ & $0$ 
& $\Z/2$ & $\textcolor{blue}{\C^\times \times \Z/4}$ & $\Z/2$ & $\C^\times$ & $0$ \\ 
TRS squaring to $1$ (class BDI)&&&&&&&&& \\ \hline
charged fermions with  & $B\Pin^{\tilde{c}+}$ & $0$ & $0$ & $0$ 
& $0$ & $(\C^\times)^2$ & $\Z/2$ & $\textcolor{blue}{\C^\times \times (\Z/2)^2}$ & $0$ \\ 
TRS squaring to $(-1)^F$ (class AII) &&&&&&&&& \\ \hline
charged fermions with  & $B\Pin^{\tilde{c}-}$ & $0$ & $0$ & $0$
& $0$ & $\textcolor{blue}{\C^\times \times \C^\times}$ & $0$ & $\textcolor{blue}{\C^\times}$ & $0$ \\ 
TRS squaring to $1$ (class AI)&&&&&&&&& \\ \hline
fermions without SOC (class C) & $BG^0 = B\Spin^{h}$ & $\Z/2$ & $0$ & $\Z/2$ 
& $\Z/2$ & $\C^\times$ & $0$ & $(\C^\times)^3$ & $\Z/2 \times (\Z/2)^2$ \\ \hline 
fermions with TRS squaring to $1$, & $BG^+ = B\Pin^{h+}$ & $0$ & $0$ & $0$ 
& $0$ & \textcolor{blue}{$\C^\times$} & $0$ & \textcolor{blue}{$\C^\times \times \Z/4$} & $\Z/2$ \\ 
without SOC (class CI) &&&&&&&&&\\ \hline
fermions with TRS squaring to $(-1)^F$, & $BG^- = B\Pin^{h-}$ & $0$ & $0$ & $0$ 
& $0$ & $\textcolor{blue}{\C^\times}$ & $0$ & $\textcolor{blue}{\C^\times \times \Z/2^2}$ & $(\Z/2)^2$\\
 without SOC (class CII) &&&&&&&&&\\ \hline
\end{tabular}}
\end{adjustbox}
\caption{This table shows the group of all invertible TQFTs for the symmetry classes listed on the left in dimensions 1-5.
    In the first column, $(-1)^F$ refers to the fermion parity operator, and we used the abbreviations TRS for time-reversal symmetry and SOC for spin-orbit coupling. See \cite[Proposition 9.4, Proposition 9.16 and Tables (9.24), (9.25)]{freedhopkins} for the notation of the stable tangential structures in the second column and a translation with the first column.
    In columns 3-5 we display the subgroup of $\SKKxi_n$ generated by the bounding sphere in odd dimensions, which agrees with the quotient of the group of invertible TQFTs by the subgroup of unitary invertible TQFTs.
    In even dimensions, the bounding sphere always generates a $\Z$ and therefore this quotient is always $\C^\times$, although it may or may not split off as a subgroup of $ITQFT_n^\xi$. Columns 6-10 show our computations of $ITQFT_n^\xi$ in dimensions 1-5. We coloured the nonsplit cases \textcolor{blue}{in blue}.
    We refer the reader to \cref{rem:tableCaption} for details on how we arrived at the results displayed here.
}\label{tab:physics}
\end{table}

\end{landscape}
\restoregeometry

\subsubsection{TQFTs with all structure groups of \cref{tab:physics} except $\Pin^{\tilde{c}-}$}\label{rem:tableCaption}

The entries of \cref{tab:physics} for odd dimensions are all either consequences of what is proven in the text about $\Spin,\Pin^{\pm}$, $\Pin^{\tilde{c}+}$ and $\Spin^c$, or in some cases consequences of the fact that 
\begin{itemize}
    \item every $\Pin^{\pm}$ manifold is $\Pin^{c}$;
    \item every $\Spin^c$-manifolds is $\Pin^c$;
    \item every $\Pin^{\tilde{c}\pm}$ manifold is $\Pin^{h\pm}$;
    \item and every $\Spin^c$ manifold is $\Spin^h$.
\end{itemize}
In particular, the entries for class AII are a consequence of \cref{pinc+structure} showing that there is a $\Pin^{\tilde{c}+}$ manifold with odd Euler characteristic in every even dimension.
All $\Z/2$-quotients split by mapping the generator to the Kervaire TQFT over $\Z/2$, as a consequence of inheritance of splittings. 

In even dimensions, we apply our splitting result \cref{prop:attempt} to get an explicit expression for $\SKK^\xi_n$.
For this, we need enough information about the Euler characteristic mod 2 map $\Omega^\xi_n \to \Z/2$ to compute the pullback $\Omega^\xi_n \times_{\Z/2} \Z$.
However, we can use some tricks to obtain the isomorphism type of $\SKK^\xi_n$.
If the Euler characteristic of $\xi$-manifolds in the given dimension is always even, then $\SKK^\xi_n \cong \Z \times \Omega^\xi_n$ and we are done. Hence assume instead that $\Omega^\xi_n \to \Z/2$ is surjective. There are some cases where no further analysis is required:
\begin{enumerate}[label=(\roman*)]
    \item If $\Omega^\xi_n \cong \Z/2^k$, then there is only one surjective homomorphism to $\Z/2$ and we readily compute $\SKK^\xi_n \cong \Z \times \Z/2^{k-1}$;
    \item If $\Omega^\xi_n \cong (\Z/2)^k$, then there are many surjective homomorphisms to $\Z/2$, but they are all related by a self-automorphism of $(\Z/2)^k$. 
    It follows that $\SKK^\xi_n \cong \Z \times (\Z/2)^{k-1}$.
\end{enumerate}

In the case $\Omega^{\Pin^c}_4 \cong \Z/8 \times \Z/2$, there are non-isomorphic possible extensions.
However, \cite[Theorem 0.2(b)]{bahrigilkey} shows that $\R \mathbb{P}^4$ and $\C \mathbb{P}^2$ are generators of $\Z/8$ and $\Z/2$ factor respectively.\footnote{
The work \cite{giambalvo1973pin} was used to obtain the results in \cite{bahrigilkey} and \cite{kirbyTaylorPinPlus}
pointed out some mistakes in \cite{giambalvo1973pin}. However, this has no consequences for the generators of $\Omega^{\Pin^c}_4$ we need. This can independently be checked by an Adams spectral sequence argument \cite{beaudrycampbell}.}
It follows that $\chi \pmod 2$ is the sum modulo two $\Z/8 \times \Z/2 \to \Z/2$ and so $\SKK^{\Pin^c}_4 \cong \Z \times \Z/8$.

To determine $\SKK^{G_+}_4$, we use the fact that the explicit generators of $\Omega^{G_+}_4 \cong \Z/4 \times \Z/2$~\cite{freedhopkins} are known \cite[Claim 3]{guo2018time} (which we learned from \cite[Lemma A.29]{debray2023bosonizationanomalyindicators21d}).
It follows that the Euler characteristic modulo two homomorphism 
$$\Omega^{G^+}_4\cong\Z/4\oplus \Z/2\xrightarrow{+} \Z/2$$
is given by the sum modulo two.
    We obtain $\SKK^{G^+}_4 \cong \Z \times \Z/4$.

The only remaining open case in the tenfold way is $\SKK^{\Pin^{\tilde{c}-}}_2$.
We will provide a spectral sequence argument to determine the Euler characteristic homomorphism $\Omega^{\Pin^{\tilde{c}-}}_2 \to \Z/2$ in the next section.
After this computation, we obtain the classification of non-unitary invertible field theories for all groups in the tenfold way in spacetime dimensions up to $5$, as displayed in \cref{tab:physics}.

\subsubsection{TQFTs with structure group $\Pin^{\tilde{c}-}$}

Recall that the structure $B\Pin^{\tilde{c}-}$ is defined as the homotopy pullback of the following diagram.
\[\begin{tikzcd}
B\Pin^{\tilde{c}-}\ar[r]\ar[d]&BO_2\ar[d]\\
BO\ar[r]&(BO)_{\leq 2}
\end{tikzcd}\]
 where $(BO)_{\leq 2}$ is the second  Postnikov stage.

 We have the following result:

 \begin{proposition}[{\cite[Theorem 9.87]{freedhopkins}}]
     We have the following abstract isomorphism
     \[\Omega_2^{\Pin^{\tilde{c}-}}\cong \Z\oplus\Z/2. \]
 \end{proposition}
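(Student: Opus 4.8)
The plan is to compute the bordism group $\Omega_2^{\Pin^{\tilde c-}}$ using the Atiyah--Hirzebruch (or James) spectral sequence for the Madsen--Tillmann / Thom spectrum associated to $B\Pin^{\tilde c-}$, but since we only need the abstract isomorphism type, a more economical route is to realise $\Pin^{\tilde c-}$-bordism as twisted bordism and then use low-degree computations together with known generators. First I would unpack the homotopy pullback defining $B\Pin^{\tilde c-}$: since $(BO)_{\le 2} \simeq K(\Z/2,1)\times K(\Z/2,2)$ with the maps from $BO$ and from $BO_2$ both given by $(w_1,w_2)$, a $\Pin^{\tilde c-}$-structure on an $n$-manifold $M$ is the data of a rank-$2$ bundle $V$ together with identifications $w_1(TM)=w_1(V)$, $w_2(TM)=w_2(V)$, exactly as in the proof of the $\Pin^{\tilde c+}$ lemma in the excerpt (with $BO_2\to (BO)_{\le 2}$ replaced here by the same map, but the \emph{other} Postnikov stage convention — one must be careful which of $w_2$, $w_2+w_1^2$ appears, matching the ``$-$'' in $\Pin^{\tilde c-}$). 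In particular every $\Pin^{\tilde c-}$-manifold is orientable in $\Z/2$-homology is false in general, so I'd instead record that a $\Pin^{\tilde c-}$-structure is equivalent to a reduction of structure group captured by a twist, and that $\Omega_*^{\Pin^{\tilde c-}}$ is the homotopy of a Thom spectrum $M\xi$ over $BO$ whose underlying twist in degree $\le 2$ is controlled by $w_1$ and $w_2$.

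Next I would run the Atiyah--Hirzebruch spectral sequence $E^2_{p,q}=H_p(B\Pin^{\tilde c-};\pi_q(\mathbb S))\Rightarrow \Omega_{p+q}^{\Pin^{\tilde c-}}$ (unstably, using the Thom iso for $MT\xi_3$, but for degree $2$ the stable and twice-stabilised groups agree, so the stable computation suffices). The input is $H_*(B\Pin^{\tilde c-};\Z/2)$ in low degrees, which I'd get from the fibration $B\Pin^{\tilde c-}\to BO\xrightarrow{w_2} K(\Z/2,2)$ (or the pullback square) via a Serre spectral sequence; through degree $3$ this is small and explicit. The contributing cells are $H_2(\;\cdot\;;\pi_0\mathbb S=\Z)$, $H_1(\;\cdot\;;\pi_1\mathbb S=\Z/2)$, and $H_0(\;\cdot\;;\pi_2\mathbb S=\Z/2)$; the $d_2$ differentials are dual to $\Sq^2$ composed with the relevant twist (Wu-type formulas, as in the unoriented/pin computations of Anderson--Brown--Peterson and Kirby--Taylor). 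One then reads off that $\Omega_2^{\Pin^{\tilde c-}}$ has a free rank-one part detected by a characteristic number (the twisted Euler number / $\chi$ itself, since $\C\P^2$ or $S^2$ with its standard structure contributes a $\Z$) and a $\Z/2$ part surviving from the edge; a short extension check (using that a generator of the $\Z/2$ has order exactly $2$, e.g.\ it is $\RP^2$-like and $2[\RP^2]$ bounds) yields $\Omega_2^{\Pin^{\tilde c-}}\cong \Z\oplus\Z/2$. An alternative I would keep in reserve: cite the James spectral sequence computation of \cite{freedhopkins} directly, since the statement is attributed there, and only supply the identification of generators needed downstream (which $\Z$-summand generator has which Euler characteristic parity).

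The main obstacle I anticipate is pinning down the \emph{extension} and, more importantly for the paper's actual use, identifying the Euler-characteristic-mod-$2$ homomorphism $\Omega_2^{\Pin^{\tilde c-}}\to\Z/2$ on explicit generators — the AHSS gives the associated graded but not canonically the map $[M]\mapsto \chi(M)\bmod 2$. I would resolve this by exhibiting explicit generating manifolds (a $\Pin^{\tilde c-}$-surface generating the $\Z$, e.g.\ $S^2$ or a torus with suitable rank-$2$ bundle, which has even $\chi$, and a surface generating the $\Z/2$) and computing $\chi$ on each; the delicate point is checking these manifolds genuinely generate, for which I would either invoke the low-dimensional classification of surfaces together with the obstruction-theoretic description of $\Pin^{\tilde c-}$-structures, or defer to the generator computations already in \cite{freedhopkins}. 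Once the map $\Omega_2^{\Pin^{\tilde c-}}\to\Z/2$ is known, \cref{prop:attempt} immediately gives $\SKK_2^{\Pin^{\tilde c-}}\cong \Z\times_{\Z/2}(\Z\oplus\Z/2)$, completing the remaining open entry of \cref{tab:physics}.
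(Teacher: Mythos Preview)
The paper does not give its own proof of this proposition: it is stated purely as a citation of \cite[Theorem 9.87]{freedhopkins}, and no argument is supplied. So your ``alternative to keep in reserve'' --- simply citing the Freed--Hopkins computation --- is exactly what the paper does here.

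What the paper \emph{does} prove is the subsequent \cref{th:pinctildebordim}, which upgrades the abstract isomorphism to an explicit one via the invariants $\bigl(\tfrac{1}{2}\int_M e(E),\ \chi(M)\bmod 2\bigr)$. That proof takes as input the abstract isomorphism $\Omega_2^{\Pin^{\tilde c-}}\cong\Z\oplus\Z/2$ and then shows the stated map is a well-defined surjective bordism invariant, hence an isomorphism by counting. Your proposal to exhibit explicit generating surfaces and compute $\chi$ on them is essentially the same strategy, though the paper's version is cleaner: rather than hunting for generators it writes down two characteristic-number homomorphisms and checks surjectivity directly (any surface $M$ with any even twisted Euler class on $E$ works, and one simply varies $\chi(M)\bmod 2$). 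Your AHSS sketch would in principle reprove the cited result independently, but note your description of the $\Pin^{\tilde c-}$ condition is slightly off: the paper records it as $w_1(E)=w_1(M)$ together with $w_2(E)=w_2(M)+w_1(M)^2$ (not $w_2(E)=w_2(M)$, which is the $\Pin^{\tilde c+}$ case), so you would need to track that twist correctly through the spectral sequence.
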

 \noindent We now want to determine the Euler characteristic map modulo 2 \[\chi\colon\Omega_2^{\Pin^{\tilde{c}-}}\to \Z/2.\]
 The following theorem is motivated by studying the edge homomorphism in the James spectral sequence~\cite{petethesis} for the fibration
\[
B\Spin \to B\Pin^{\tilde{c}-} \to BO_2.
\]

\begin{theorem}
\label{th:pinctildebordim}
There is a well defined isomorphism $\varphi\colon\Omega^{\Pin^{\tilde{c}-}}_2 \to \Z \times \Z/2$ given as follows.
    Let $(M,E)$ be a $2-$dimensional $\Pin^{\tilde{c}-}$-manifold, i.e. a $2$-dimensional real vector bundle $E \to M$ together with a trivialisation of $w_1(E) + w_1(M)$ and a trivialisation of $w_2(M) + w_1(E)^2 + w_2(E)$.
    Then define $\varphi$ by
    \begin{align*}
        [M,E] \mapsto \left(\frac{1}{2} \int_M e(E),\quad \chi(M) \pmod 2\right)\in \Z \times \Z/2,
    \end{align*}
    where $e(E) \in H^2(M; \Z^{w_1(E)})$ is the twisted Euler class.
\end{theorem}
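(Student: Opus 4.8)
The plan is to exhibit $\varphi$ as a concrete realisation of the edge homomorphism in the James spectral sequence for the fibration $B\Spin \to B\Pin^{\tilde{c}-} \to BO_2$, but to package the argument so that the only genuinely geometric input is the identification of the two summands by explicit generators. First I would show that $\varphi$ is well defined: the twisted Euler number $\tfrac{1}{2}\int_M e(E)$ is an integer because a $\Pin^{\tilde{c}-}$-structure supplies a trivialisation of $w_2(M) + w_1(E)^2 + w_2(E)$ and, via the Wu relation $w_2(M) = w_1(M)^2 + v_2(M)$ together with the trivialisation of $w_1(E)+w_1(M)$, forces $w_2(E) \equiv v_2(M)$; since for a closed surface $\langle v_2(M),[M]\rangle = 0$, the mod $2$ reduction of $e(E)$, which is $w_2(E)$, pairs trivially with the twisted fundamental class, so $\int_M e(E)$ is even. (One has to be mildly careful that $e(E)$ lives in the $w_1(E)$-twisted cohomology, but $w_1(E) = w_1(M)$ identifies this with the orientation-twisted coefficients, so $\int_M$ is the canonical twisted pushforward.) Invariance under $\Pin^{\tilde{c}-}$-bordism of both components follows because $\chi \pmod 2 = \langle w_2(M),[M]\rangle$ is an unoriented bordism invariant, and the twisted Euler number is a bordism invariant of the pair $(M,E)$ by the usual Stokes/cobordism argument applied to a bounding $3$-manifold with its extended bundle.

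Next I would show $\varphi$ is a homomorphism — immediate, since both the twisted Euler number and $\chi\pmod 2$ are additive under disjoint union — and then identify it with an isomorphism. Rather than compute the James spectral sequence in full, I would use the already-cited abstract isomorphism $\Omega^{\Pin^{\tilde{c}-}}_2 \cong \Z \oplus \Z/2$ and simply check that $\varphi$ is surjective: the second factor is hit because $\RP^2$ (with a $\Pin^{\tilde{c}-}$-structure, which exists since $w_2(\RP^2)+w_1(\RP^2)^2 = 0$ allows $E = T\RP^2$) has $\chi = 1$; the first factor is hit by taking $M = S^2$ with $E$ the rank-two bundle of Euler number $2$ (equivalently $TS^2$), which has $\int_M e(E) = 2$, so $\tfrac12\int_M e(E) = 1$, while $\chi(S^2) = 2 \equiv 0$. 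A surjective homomorphism $\Z\oplus\Z/2 \to \Z\oplus\Z/2$ is automatically an isomorphism (the $\Z$-rank matches and a surjection of a finitely generated abelian group onto itself is injective), so $\varphi$ is an isomorphism. This also pins down that the Euler-characteristic-mod-$2$ map is precisely projection to the second factor under $\varphi$, which is exactly the datum needed to feed into \cref{prop:attempt} to compute $\SKK^{\Pin^{\tilde{c}-}}_2 \cong \Z \times_{\Z/2} (\Z \oplus \Z/2) \cong \Z \oplus \Z/2$.

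The main obstacle I anticipate is the bookkeeping of twisted coefficients: making sure that "$\tfrac12 \int_M e(E)$" is unambiguous requires fixing the identification $H^2(M;\Z^{w_1(E)}) \cong H^2(M;\Z^{w_1(M)})$ coming from the trivialisation of $w_1(E)+w_1(M)$, and then checking that the resulting pushforward $\int_M$ is compatible with the twisted fundamental class used in the bordism-invariance argument. A secondary subtlety is that one should confirm $e(E)$ really is divisible by $2$ over $\Z^{w_1}$ and not merely after reduction mod torsion — this is where the $\Pin^{\tilde{c}-}$-trivialisation of $w_2(E)+w_1(E)^2+w_2(M)$, rather than just orientability, is essential, and it is the only place the full strength of the structure is used. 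Everything else — additivity, the computation on $S^2$ and $\RP^2$, and the surjectivity-implies-isomorphism step — is routine once the twisted-coefficient conventions are nailed down. I would present the James-spectral-sequence heuristic only as motivation (the edge map $\Omega_2^{\Pin^{\tilde{c}-}} \to \Omega_2^{SO_2\text{-framed}}$ type statement) and base the actual proof on the explicit generators, since that avoids having to analyse differentials.
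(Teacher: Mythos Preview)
Your approach is essentially the paper's: show $\tfrac12\int_M e(E)\in\Z$ via the Wu relation $w_2(M)=w_1(M)^2$ on surfaces (which forces $w_2(E)=0$), observe bordism invariance as a characteristic-number argument, prove surjectivity by explicit generators, and conclude using the known abstract isomorphism $\Omega^{\Pin^{\tilde{c}-}}_2\cong\Z\oplus\Z/2$.

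There is one concrete error in your surjectivity step. Taking $E=T\RP^2$ does \emph{not} produce a $\Pin^{\tilde{c}-}$-structure: the condition $w_2(M)+w_1(E)^2+w_2(E)=0$ with $E=TM$ reads $w_2(M)+w_1(M)^2+w_2(M)=w_1(M)^2$, which is nonzero on $\RP^2$. (Your parenthetical ``$w_2(\RP^2)+w_1(\RP^2)^2=0$'' is true but is not the condition you need.) The fix is immediate: any rank-two bundle over $\RP^2$ with $w_1=w_1(\RP^2)$ and even twisted Euler class works, e.g.\ $E=\gamma\oplus\underline{\R}$ for $\gamma$ the M\"obius line bundle. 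The paper's surjectivity argument is phrased exactly this way, parametrising the admissible bundles $E$ on a given surface by their (necessarily even) twisted Euler class rather than naming a particular one.

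Your worry about torsion in $H^2(M;\Z^{w_1})$ is unnecessary: for a connected closed surface, twisted Poincar\'e duality gives $H^2(M;\Z^{w_1(M)})\cong H_0(M;\Z)\cong\Z$.
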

\begin{proof}
    Note that the classes $w_1(E) = w_1(M)$ give the same twisted coefficient system, and so we can indeed integrate $e(E)$ over $M$.
    For surfaces we have $w_1(M)^2=w_2(M)$ and so a $\Pin^{\tilde{c}-}$ manifold $(M,E)$ has to have $w_2(E)=0$. 
    Since $\int_M e(E)\equiv \int_M w_2(E)\pmod{2}$, we get that $\int_M e(E)$ is even.
    We see that the provided invariants are given by integrating characteristic classes, so they are bordism invariants\footnote{The proof of this fact is analogous to \cite[Theorem 4.9]{milnorstasheff}.}.

    Finally, we prove that $\varphi$ is surjective. If $M$ is a surface, we have 
\[
H^2(M; \Z^{w_1(M)}) \cong \Z \twoheadrightarrow \Z/2 \cong H^2(M; \Z/2).
\]
A two-dimensional real vector bundle $E$ is classified by its Euler class $e(E) \in H^2(M; \Z^{w_1(E)})$.
For this to form a $\Pin^{\tilde{c}-}$-structure, we need 
$w_1(E) = w_1(M)$ and 
\[
e(E) \pmod 2 = w_2(E) \overset{?}{=} w_1(M)^2 + w_2(M) = 0.
\]
Therefore $e(E)$ can be taken to be any even integer $2n$.
The invariant of $(M,E)$ is $(n, \chi(M) \pmod 2) \in \Z \times \Z/2$.
By taking $M$ to have either even or odd Euler characteristic we have realised the whole codomain of $\varphi$.
\end{proof}

\begin{corollary}
    $\SKK^{\Pin^{\tilde{c}-}}_2 \cong \Z \times \Z.$
\end{corollary}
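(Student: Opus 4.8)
The plan is to combine the general structural result \cref{prop:attempt} with the explicit description of the bordism group from \cref{th:pinctildebordim}. By \cref{prop:attempt}, since $n=2$ is even and $\Pin^{\tilde c-}$ is (at least) once stabilised, there is an isomorphism
\[
\SKK^{\Pin^{\tilde c-}}_2 \xrightarrow{\;\cong\;} \Omega^{\Pin^{\tilde c-}}_2 \times_{\Z/2} \Z,
\]
where the fibre product is taken along the Euler-characteristic-mod-two map $\Omega^{\Pin^{\tilde c-}}_2 \to \Z/2$ and the reduction map $\Z \to \Z/2$. So it suffices to compute this fibre product, for which I need to know the target group together with the Euler characteristic homomorphism on it.

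The key input is that \cref{th:pinctildebordim} provides \emph{both} at once: the isomorphism $\varphi\colon \Omega^{\Pin^{\tilde c-}}_2 \xrightarrow{\cong} \Z \times \Z/2$ has second component exactly $\chi \pmod 2$. Hence, transporting along $\varphi$, the Euler-characteristic-mod-two homomorphism becomes the projection $\mathrm{pr}_2\colon \Z \times \Z/2 \to \Z/2$ onto the second factor. The fibre product therefore decomposes:
\[
\Omega^{\Pin^{\tilde c-}}_2 \times_{\Z/2} \Z \;\cong\; (\Z \times \Z/2) \times_{\Z/2} \Z \;\cong\; \Z \times \bigl(\Z/2 \times_{\Z/2} \Z\bigr),
\]
the first factor $\Z$ splitting off freely since $\mathrm{pr}_2$ ignores it. It remains to observe that $\Z/2 \times_{\Z/2} \Z$, the fibre product of the identity $\Z/2 \to \Z/2$ with the reduction $\Z \to \Z/2$, is $\{(b,c)\in \Z/2\times \Z : b \equiv c \bmod 2\}$, and the projection to the second coordinate $c$ is an isomorphism onto $\Z$ (the value of $b$ being forced). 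Putting this together gives $\SKK^{\Pin^{\tilde c-}}_2 \cong \Z \times \Z$.

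There is no serious obstacle here: essentially all the work is already packaged into \cref{prop:attempt} and \cref{th:pinctildebordim}. The only point requiring minor care is making sure one uses the \emph{correct} map $\Omega^{\Pin^{\tilde c-}}_2 \to \Z/2$ in the fibre product — namely $\chi \bmod 2$, not some other surjection — and this is exactly why \cref{th:pinctildebordim} was stated with the Euler characteristic as one of its two components rather than as a purely abstract isomorphism. If one wished, one could also record the inverse isomorphism explicitly using the formula in \cref{prop:attempt}: an element $\bigl((a,b),c\bigr)$ of the fibre product (so $b\equiv c \bmod 2$) corresponds to $\varphi^{-1}(a,b) + \tfrac{c - \chi}{2}[S^2_b] \in \SKK^{\Pin^{\tilde c-}}_2$, but this is not needed for the isomorphism-type statement.
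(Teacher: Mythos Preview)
Your proof is correct and follows essentially the same approach as the paper: identify the Euler-characteristic-mod-two map on $\Omega^{\Pin^{\tilde c-}}_2 \cong \Z \times \Z/2$ as projection onto the second factor via \cref{th:pinctildebordim}, then invoke \cref{prop:attempt}. You spell out the fibre-product computation more explicitly than the paper does, but the argument is the same.
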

\begin{proof}
    It follows from \cref{th:pinctildebordim} that the Euler characteristic modulo two homomorphism
    \[
    \Omega^{\Pin^{\tilde{c}-}}_2 \cong \Z \times \Z/2 \to \Z/2
    \]
    is given by projection onto the second factor.
    The result follows by \cref{prop:attempt}.
\end{proof}

\subsection{Continuous invertible TQFTs}

Let $\xi\colon B \to BO$ be a stable tangential structure.
    If $\SKKxi_n$ is finitely generated, we can write the group of discrete invertible TQFTs as
    \begin{equation*}
    \Hom(\SKKxi_n,\C^\times) \cong (\C^\times)^k \times  T,
    \end{equation*}
    where $T$ is a finite torsion group abstractly isomorphic to the torsion in $\SKKxi_n$.

In practice, we often want to think of the $\C^\times$ terms as forming a continuous family of invertible TQFTs, hence all sitting in the same deformation class.
To take this Euclidean topology of $\C^\times$ into account, we will generalise the previous considerations from `discrete invertible TQFTs' to `continuous invertible TQFTs', see \cite[Ansatz 5.14 and Ansatz 5.26]{freedhopkins}.

    For this, it is convenient to consider the generalisation of Atiyah's definition of a TQFT to a general target symmetric monoidal $(\infty, 1)$-category $\mathcal{C}$ by requiring a TQFT to be 
a symmetric monoidal functor
\[
Z\colon \Bord_{n-1, n}^\xi \to \mathcal{C}
\]
from the symmetric monoidal $(\infty, 1)$-category of cobordisms $\Bord_{n-1, n}^\xi$ to $\mathcal{C}$. 

Generalising \cref{def:invertible}, a TQFT is \emph{invertible} if it lands in $\mathcal{C}^{\times} \subseteq \mathcal{C}$, the maximal Picard sub-$\infty$-groupoid.
We specialise to the case where the target is the Picard $(\infty,1)$-category of super lines:

\begin{definition}
\label{def:continuousITQFT}
Let $\sline_{\C}^{cts}$ be the $(\infty,1)$-category in which objects are complex one-dimensional super vector spaces and morphisms are invertible linear maps with the Euclidean topology.
A \emph{continuous invertible TQFT} is a TQFT with target $\sline_{\C}^{cts}$.
\end{definition}

By the universal property of $\infty$-groupoidification\footnote{The $\infty$-groupoidification of the bordism category is automatically a Picard $\infty$-groupoid because the bordism category admits duals.} $\|.\|$, a continuous invertible TQFT is equivalent to a map of Picard $\infty$-groupoids
\[
\|\Bord^\xi_{n-1,n}\| \to \sline_{\C}^{cts}.
\]
We can then identify a Picard $\infty$-groupoid with its corresponding infinite loop space (or equivalently the corresponding connective spectrum) to translate the problem of classifying invertible field theories into a problem in stable homotopy theory.
By the appropriate generalisations of the Galatius-Madsen-Tillmann-Weiss theorem \cite{GMTW, nguyen2017infinite, schommerpriesinvertible}, it is known that $\|\Bord_{n-1,n}^\xi\|$ corresponds to the connective cover of the Madsen-Tillmann spectrum $\Sigma MT\xi$.
Therefore, an invertible TQFT is equivalent to a map of connective spectra $\Sigma MT\xi \to \sline_{\C}^{cts}$, see \cite[Section 2.5]{luriecobordismhyp} for more on this perspective.
It is hard to make general statements about maps of spectra,
but in cases where the homotopy groups of $MT\xi$ are known, it is possible to compute simple examples.
This may involve understanding unstable homotopy groups of $MT\xi$ higher than $\SKKxi_n$, 
which can be understood as vector field bordism groups with multiple vector fields \cite{bokstedtsvane},
(see also \cite[Lemma 3.13]{reutterschommerpries} and the discussion above that):

\begin{theorem}
\label{th:ctsITFT}
    Equivalence classes of $n$-dimensional continuous invertible (not necessarily unitary) TQFTs are non-canonically isomorphic to the sum of the torsion subgroup of $\SKKxi_n$ and the free part of $\pi_{1}  MT \xi_n$, the $(n+1)$-dimensional $\xi_n$-bordism group with two linearly independent vector fields.
\end{theorem}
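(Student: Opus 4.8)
The plan is to compute the homotopy classes of maps of connective spectra $\Sigma MT\xi \to \sline_\C^{cts}$ using the Brown–Comenetz-dual description of $\sline_\C^{cts}$. Recall from the discussion preceding the theorem that continuous invertible TQFTs correspond to maps of Picard $\infty$-groupoids $\|\Bord^\xi_{n-1,n}\| \to \sline_\C^{cts}$, and that $\|\Bord^\xi_{n-1,n}\|$ is the connective cover $\tau_{\geq 0}\Sigma MT\xi_n$. Following \cite[Section 5.3]{freedhopkins}, the Picard $\infty$-groupoid $\sline_\C^{cts}$ is a two-stage Postnikov truncation of the Anderson-type spectrum whose homotopy groups are $\pi_0 = \C^\times$ (with its Euclidean topology, i.e.\ $\R/\Z \times \R$ additively, but as a discrete group after $\Hom$) and $\pi_1 = \Z$ coming from $\pi_1 \sline_\C \cong \Z/2$? — here I would be careful: the relevant spectrum is $\Sigma^2 I_{\C^\times}$ truncated, so maps into it from a connective spectrum $X$ fit into a short exact sequence involving $\Hom(\pi_0 X, \C^\times)$ and $\Hom(\pi_1 X, \Z/2)$ (or the appropriate extension group). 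The upshot I want is: equivalence classes of continuous invertible TQFTs are controlled by $\pi_0$ and $\pi_1$ of $\Sigma MT\xi_n$, i.e.\ by $\pi_{-1}MT\xi_n = \Omega^\xi_n$ and $\pi_0 MT\xi_n = \SKK^\xi_n$ together with $\pi_1 MT\xi_n$.

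First I would set up the exact sequence obtained by applying $[-, \sline_\C^{cts}]$ (homotopy classes of spectrum maps) to the Postnikov/Whitehead filtration of $\tau_{\geq 0}\Sigma MT\xi$. Because $\sline_\C^{cts}$ has homotopy concentrated in degrees $0$ and $1$, only $\pi_1$ and $\pi_2$ of $\Sigma MT\xi$ — equivalently $\pi_0 MT\xi_n = \SKK^\xi_n$ and $\pi_1 MT\xi_n$ — can contribute, and one gets an exact sequence
\[
0 \to \operatorname{Ext}^1(\pi_1 MT\xi_n, \C^\times) \to [\tau_{\geq 0}\Sigma MT\xi, \sline_\C^{cts}] \to \Hom(\SKK^\xi_n, \text{"}\C^\times\text{"}) \to \cdots
\]
where the torsion-detecting piece $\Hom(\pi_1 MT\xi_n, \Z/2)$-type term appears. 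The key point is that the \emph{continuous} topology on $\C^\times \simeq \R/\Z \times \R$ means that the free $\R$-direction of $\Hom$ collapses: a homomorphism to the topological group $\C^\times$ that is required to be "continuous/deformation-aware" only sees $\Hom$ into $\R/\Z = U(1)$ modulo connected components, so the free part of $\SKK^\xi_n$ contributes nothing new, while the free part of $\pi_1 MT\xi_n$ re-enters through the $\operatorname{Ext}$ (or suspension) term. Concretely, I would argue that passing from discrete to continuous replaces $\Hom(\SKK^\xi_n, \C^\times)$'s free summand $(\C^\times)^k$ (which is connected, hence a single deformation class) by the discrete data $\pi_1 MT\xi_n \otimes \R / (\text{image})$, i.e.\ the free part of $\pi_1 MT\xi_n$; this is exactly the statement in \cite[Lemma 3.13]{reutterschommerpries} and \cite{bokstedtsvane} that $\pi_1 MT\xi_n$ is the bordism group of $\xi$-manifolds equipped with two linearly independent vector fields.

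The main steps, in order: (1) identify $\sline_\C^{cts}$ as the appropriate $2$-truncated Anderson/Brown–Comenetz spectrum and record that mapping into it from a connective spectrum $X$ is governed by $\pi_0 X$ and $\pi_1 X$ via a universal-coefficient-type short exact sequence — cite \cite[Section 5.3]{freedhopkins}; (2) plug in $X = \tau_{\geq 0}\Sigma MT\xi$, using $\pi_1 X = \SKK^\xi_n$ (established in \cref{subsec:Genauer}) and $\pi_2 X = \pi_1 MT\xi_n$; (3) observe that over $\C^\times$ with its Euclidean topology the divisible/connected part $\C^\times \cong U(1)\times\R_{>0}$ forces the free summand of $\pi_1 X = \SKK^\xi_n$ to contribute only a torsor, not discrete invariants, while the torsion of $\SKK^\xi_n$ survives because $\Hom(\Z/m, U(1)) \cong \Z/m$; (4) identify the remaining free contribution with the free part of $\pi_1 MT\xi_n$ via the vector-field bordism interpretation, and assemble the non-canonical splitting. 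The hard part will be step (3): carefully bookkeeping the topology on $\C^\times$ so that one correctly distinguishes "discrete invertible TQFTs up to iso" from "continuous invertible TQFTs up to deformation," and verifying that no extension problem between the torsion subgroup of $\SKK^\xi_n$ and the free part of $\pi_1 MT\xi_n$ obstructs the claimed non-canonical isomorphism — this is why the statement is only up to non-canonical isomorphism, and I would handle it by choosing splittings of the relevant finitely generated abelian groups and checking the $\operatorname{Ext}^1$ term vanishes after tensoring appropriately (using that $U(1)$ is injective as an abelian group).
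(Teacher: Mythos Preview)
Your overall strategy --- compute maps of connective spectra from $\tau_{\geq 0}\Sigma MT\xi_n$ into the target and control this by $\pi_0 MT\xi_n$ and $\pi_1 MT\xi_n$ --- is the right one, and it is also what the paper does. But the argument you sketch has a real gap at the identification of the target spectrum and the resulting exact sequence.

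The paper's proof is short because it uses one clean fact: $\sline_\C^{cts}$ is the connective cover of $\Sigma^2 I\Z$, where $I\Z$ is the \emph{Anderson dual of the sphere}. The universal property of $I\Z$ then gives, for any spectrum $X$, a short exact sequence
\[
0 \to \Ext^1(\pi_{0} X,\Z) \to [X,\Sigma I\Z] \to \Hom(\pi_{1} X,\Z) \to 0,
\]
which for $X = MT\xi_n$ immediately yields the torsion of $\SKK^\xi_n$ (via $\Ext^1(-,\Z)$) and the free part of $\pi_1 MT\xi_n$ (via $\Hom(-,\Z)$). Passage to connective covers on both sides is then justified because this expression depends only on the two relevant homotopy groups.

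Your exact sequence instead has coefficients in $\C^\times$, and this is where the argument breaks. As an abstract abelian group $\C^\times$ is divisible, so $\Ext^1(-,\C^\times)=0$ identically; the sequence you wrote down collapses to $\Hom(\SKK^\xi_n,\C^\times)$, which is exactly the \emph{discrete} classification you are trying to move away from. Your step~(3), where you propose to repair this by invoking the Euclidean topology on $\C^\times$, is the place where the substance is hiding, and as written it is hand-waving: there is no well-defined ``$\Hom$ into $\C^\times$-with-topology'' operation on abelian groups that produces what you want. The correct way to encode the topology is precisely to replace the Pontryagin/Brown--Comenetz target by the Anderson dual $I\Z$; the ``$\Z$'' in $\Hom(-,\Z)$ and $\Ext^1(-,\Z)$ is what remembers the continuous structure. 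Once you make that replacement, steps~(3) and~(4) become automatic and the non-canonical splitting is just the usual splitting of the universal-coefficient sequence for finitely generated groups.
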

\begin{proof}
    It follows by a $k$-invariant computation that $\sline_{\C}^{cts}$ is the connective cover $\pi_{\geq 0} \Sigma^{2} I\Z$ of the Anderson dual of the sphere. 
    Consider the spectrum of maps from $\Sigma MT \xi$ to $\Sigma^{2} I\Z$.\footnote{This is the same as spectrum maps from $\Sigma^n MT \xi$ to $\Sigma^{n+1} I\Z$, the space of continuous invertible field theories in \cite[Ansatz 5.26]{freedhopkins}.}
    By the universal property of the Anderson dual (see \cite[Equation (5.17)]{freedhopkins}), $\pi_0$ of this spectrum is non-canonically isomorphic to the direct sum of 
    the torsion subgroup of $\pi_0 \Sigma MT \xi $ and the free part of $\pi_{1} \Sigma MT \xi$.
    Since the resulting group 
    only depends on $\pi_0$ and $\pi_{1}$ of $\Sigma MT \xi$, we have that
    \begin{align*}
    \pi_0 \Map (\Sigma MT \xi, \Sigma^{2} I\Z) 
    &= \pi_0 \Map (\pi_{\geq 0} \Sigma MT \xi, \Sigma^{2} I\Z)
    \\
&=  \pi_0 \Map (\pi_{\geq 0} \Sigma MT \xi, \pi_{\geq 0} \Sigma^{2} I\Z)
    \\
     &= \pi_0 \Map(\| \Bord^\xi_{n-1,n}\|, \sline_{\C}^{cts}).
    \end{align*}
    This finishes the proof.
\end{proof}

\begin{remark}
    It would be interesting to compute the free part of the group of $n$-dimensional continuous invertible TQFTs in examples and realise non-trivial group elements as anomalies of non-unitary quantum field theories.
\end{remark}

\begin{remark}
    Our identification of $\Map (\Sigma MT \xi, \Sigma^{2} I\Z)$ with continuous invertible TQFTs 
as in \cref{def:continuousITQFT} only works on the level of $\pi_0$.
    The underlying reason is that $\Map (\Sigma MT \xi, \Sigma^{2} I\Z)$ is expected to be given by \emph{fully extended} continuous invertible field theories \cite[Ansatz 5.26]{freedhopkins}, while our definition of a TQFT is non-extended.
\end{remark}

\appendix

\section{\texorpdfstring{$\xi$}{xi}-structures on vector bundles}
\label{app:defOfXiStructuresOnVectBundles}

\subsection{Manifolds with \texorpdfstring{$\xi$}{xi}-structures and \texorpdfstring{$\xi$}{xi}-diffeomorphism}

This Appendix continues to develop the theory of $\xi$-manifolds from \cref{subsec:TangStruc} on pages \pageref{subsec:TangStruc}-\pageref{section:k-orientability}.

\begin{lemma}
\label{lm:stableEquiv}
Let $E \to X$ be a $k$-dimensional real vector bundle.
    There is a homotopy equivalence between $\xi_n$-structures on $E \oplus \underline{\R}^{n-k}$ and $\xi_{k}$-structures on $E$.
\end{lemma}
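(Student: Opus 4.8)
The statement is essentially a recollection of the defining property of the pullback $B_k$ from \cref{def:homotopyPullbackB_k}, packaged as a statement about spaces of structures rather than sets of structures. The plan is to unwind both sides into mapping spaces over the relevant classifying spaces and then invoke the universal property of a homotopy pullback together with the fact that the stabilisation maps $BO_k \to BO_n$ are compatible with the clutching/Whitney-sum operation on bundles. First I would fix a classifying map $c_E\colon X \to BO_k$ for $E$ and recall that the classifying map for $E \oplus \underline{\R}^{n-k}$ is the composite $s \circ c_E$, where $s\colon BO_k \to BO_n$ is the stabilisation map (this is the homotopy-theoretic shadow of the fact that adding a trivial summand corresponds to the block-inclusion $O_k \hookrightarrow O_n$ used to define $BO$).

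\textbf{Key steps.} (1) By \cref{rm:fibrationOrNot} we may assume $\xi_n$ and $\xi_k$ are fibrations, so a $\xi_n$-structure on $E\oplus\underline{\R}^{n-k}$ is the same as a lift of $s\circ c_E$ along $\xi_n$, and the space of such lifts is (a component-wise description of) the homotopy fibre of $\Map(X, B_n) \to \Map(X, BO_n)$ over the point $s\circ c_E$; similarly a $\xi_k$-structure on $E$ is a point in the homotopy fibre of $\Map(X,B_k)\to\Map(X,BO_k)$ over $c_E$. (2) Since $B_k$ is the homotopy pullback of $\xi_n$ along $s$, the square
\[
\begin{tikzcd}
B_k \arrow[r] \arrow[d, "\xi_k"'] & B_n \arrow[d, "\xi_n"] \\
BO_k \arrow[r, "s"'] & BO_n
\end{tikzcd}
\]
is homotopy cartesian, and applying $\Map(X,-)$ (which preserves homotopy pullbacks) yields a homotopy-cartesian square of mapping spaces. (3) Taking homotopy fibres of the vertical maps over the compatible basepoints $c_E \in \Map(X,BO_k)$ and $s\circ c_E \in \Map(X,BO_n)$, the standard fact that a homotopy pullback induces an equivalence on homotopy fibres of the vertical (equivalently horizontal) maps gives the desired homotopy equivalence between the space of $\xi_k$-structures on $E$ and the space of $\xi_n$-structures on $E\oplus\underline{\R}^{n-k}$.

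\textbf{Main obstacle.} The genuinely content-bearing point, and the one I would be most careful about, is step (1): identifying the "space of $\xi$-structures" on a bundle — which in \cref{def:xiStructure} is defined as a space of triangle-fillings, i.e. homotopies in $\Map(X,BO_m)$ — with the homotopy fibre of the induced map of mapping spaces over a \emph{fixed} classifying map. This requires being honest about the fact that a $\xi$-structure is the data of a map to $B_m$ \emph{plus} a homotopy to the chosen representative $c_E$ (not merely a homotopy class of lift), and that changing the representative $c_E$ within its homotopy class gives equivalent spaces. Once this identification is set up cleanly, the remainder is formal: homotopy pullbacks are preserved by $\Map(X,-)$, and they induce equivalences on fibres. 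I would also remark that the equivalence is natural in $X$ and compatible with pullback of bundles, which is what licenses the "abuse of notation" the paper wants to make afterwards.
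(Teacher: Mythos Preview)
Your proposal is correct and takes essentially the same approach as the paper: both invoke the universal property of the homotopy pullback defining $B_k$ to identify lifts along $\xi_k$ over $c_E$ with lifts along $\xi_n$ over $s\circ c_E$. The paper's proof is a one-line diagram with the same content, while you have spelled out more carefully the identification of the space of triangle-fillings with a homotopy fibre of mapping spaces and the preservation of homotopy pullbacks under $\Map(X,-)$; this extra care is warranted but not a different argument.
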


\begin{proof}
    This follows from the universal property of the homotopy pullback:
    \[
    \begin{tikzcd}
        X \ar[dr, dashed,"\ol{c}"description] \ar[drr,dashed,"c"description, bend left=15] \ar[ddr,"E", bend right=15,swap] & & 
        \\
        & B_k \ar[d,"\xi_k"] \ar[r] & B_n \ar[d,"\xi_n"]
        \\
        & BO_k \ar[r,"\oplus \underline{\R}^{n-k}"] & BO_n
    \end{tikzcd}
    \]
    i.e.\! the maps $\bar{c}$ fitting in the diagram are in one-to-one correspondence with maps $c$ fitting in the diagram.
\end{proof}

\begin{remark}
In principle, given a structure $\xi_n\colon B_n \to BO_n$ that is only defined up to dimension $n$, one could consider a stabilisation $\xi_{n+1}'$ by composing with the canonical map $BO_n \to BO_{n+1}$. Note that for an $(n+1)$-manifold to have a $\xi_{n+1}' = (\xi_n \oplus \underline{\R})$-structure, we need the tangent bundle to be isomorphic to the direct sum of $\underline{\R}$ and an $n$-dimensional bundle with $\xi_n$-structure. 
Moreover, if we take the pullback of the diagram 
\[
\begin{tikzcd}
B'_n \arrow[d,"\xi_n'"] \arrow[r] & B_{n} \arrow[d, "\xi_{n+1}'"] \\
BO_n \arrow[r]                   & BO_{n+1}      
\end{tikzcd}
\]
then $B'_n $ is typically not homotopy equivalent to $B_n$, because $BO_n \to BO_{n+1}$ is not a homotopy equivalence.
In particular, $\xi_n$-structures on $n$-manifolds $M$ will not correspond to $\xi_{n+1}$-structures on $TM \oplus \underline{\R}$.
Concretely, a $\xi_n'$-structure on an $n$-dimensional vector bundle $E$ consists of a isomorphism of vector bundles $E \oplus \underline{\R} \cong E' \oplus \underline{\R}$ with an $n$-dimensional vector bundle $E'$ together with a $\xi_n$-structure on $E'$.
This construction can be a useful tool for understanding the cut-and-paste groups we consider (see \cite[section 3.2]{reutterschommerpries}), but will not be studied further in this paper.
    \end{remark}

\begin{remark}[details on the definition of orientation reversal]
\label{rem:orientationreversaldetails}
    Let $\xi_{n+1}\colon B_{n+1}\to BO_{n+1}$ be a tangential structure. Let $M$ be a closed $k$-dimensional $\xi$-manifold, $k\leq n$. 
    The vector bundle $TM \oplus \underline{\R}$ corresponds to the composition $BO_n \to BO_n \times BO_1 \to BO_{n+1}$.
    This composition has a self-homotopy given as follows.
    Consider the self-homotopy of the inclusion of the basepoint of $BO_1$ inducing the generator of $\pi_1 BO_1$.
    Note that this homotopy is induced by the automorphism $-\id_\R$ of the trivial one-dimensional vector bundle over the point.
    This induces a self-homotopy of the map $BO_n \to BO_n \times BO_1$, which is given by the inclusion of the basepoint in the second factor.
We can change our given $\xi_{k+1}$-structure by picking the same map to $B$, but changing the homotopy filling the triangle by the induced self-homotopy of $M \to BO_{n}$.
Since up to homotopy, $\xi_{k+1}$-structures on $TM \oplus \underline{\R}$ correspond to $\xi_{k}$-structures on $TM$ (see Lemma \ref{lm:stableEquiv}), we obtain an operation on manifolds $M$ with $\xi_n$-structures $M \mapsto \overline{M}$ that we will call \emph{orientation reversal}, see \cref{def:orientationreversal}.
Note that the orientation-reversal for a $\xi$-manifold $M$ is only defined if our tangential structure is at least once stabilised with respect to the dimension of the manifold.
\end{remark}

Note that $n$-dimensional manifolds $M$ not only come equipped with a natural map to $BO_n$, but these maps are natural in diffeomorphisms of manifolds in the sense that the isomorphism of vector bundles $df$ induces a homotopy of the following diagram
\[\begin{tikzcd}
        M_1 \ar[rr, "f"] \ar[dr] & & M_2 \ar[dl]
        \\
        & BO_n.  &
    \end{tikzcd}\]    Continuing to follow the logic of including higher coherent homotopies, we thus arrive at the following definition for equivalences between manifolds with $\xi$-structures.

\begin{definition}\label{def:xidiffeoTetrahedron}
Let $M_1,M_2$ be two $n$-dimensional manifolds with $\xi_{n+k}\colon B_{n+k}\to BO_{n+k}$-structure for $k\geq 0$. 
Then a $\xi_{n+k}$-diffeomorphism consists of a diffeomorphism $f\colon M_1 \to M_2$, a homotopy filling the triangle
\[
    \begin{tikzcd}
        M_1 \ar[rr, "f"] \ar[dr] & & M_2 \ar[dl]
        \\
        & B_{n}  &
    \end{tikzcd}
    \]    
and a homotopy between the homotopies filling the tetrahedron
\begin{equation}
    \begin{tikzcd}[row sep={7.2em,between origins}, column sep={9.0em,between origins}, ampersand replacement=\&]
    \&
    B_{n}
    \arrow[rdd, "\xi_{n}", bend left=20]
    \&\\\&
    M_2
    \arrow[u, "\tau_2", dash pattern=on 4.0pt off 4.0pt]
    \arrow[rd, "TM_2"'{name=f13}, dash pattern=on 4.0pt off 4.0pt, swap, pos=0.4]
    \&\\
    M_1
    \arrow[ruu, "\tau_1"{name=f02}, bend left=20]
    \arrow[rr, "TM_1"{name=f03},bend right=20]
    \arrow[ru, "f", dash pattern=on 4.0pt off 4.0pt]
    \&\&
    BO_{n}
    \arrow[from=2-2, to=f02, Rightarrow, shorten=1.0em, pos=0.475]
    \arrow[from=1-2, to=f13, Rightarrow, yshift=-0.25em, xshift=0.75em, shorten=3.5em, pos=0.475]
    \arrow[from=2-2, to=f03, Rightarrow, shorten=2.5em, "df", pos=0.475]
    \arrow[from=1-2, to=f03, bend left=20, yshift=+0.0em, xshift=0.0em, crossing over, Rightarrow, shorten=2.0em, pos=0.425, crossing over clearance=1.5ex]
\end{tikzcd}
\end{equation}
\end{definition}

\begin{remark}
    We obtain the homotopy $df$ above from a diffeomorphism $M_1\xrightarrow{f} M_2$ as follows: A model for $BO_{n}$ is the space of $n$-dimensional subspaces in $\R^{\infty}$. Then any embedding $M_i\hookrightarrow \R^{\infty}$ gives us a map $M_i\to BO_{n}$ by considering the tangent planes as affine planes in $\R^{\infty}$. The space of embeddings $\Emb(M_i,\R^{\infty})$ is contractible.
So for any pair of embeddings $\iota_i\colon M_i\hookrightarrow BO_{n}$, the embeddings $\iota_2f$ and $\iota_1$ are regularly isotopic through a homotopy $df$, which is unique up to a contractible choice. 
\end{remark}

\begin{example}[An orientation reversed manifold is $\xi$-diffeomorphic to the original manifold for $BO$ ]

Consider the tangential structure $B_{n+1} = BO_{n+1}$ with $\xi_{n+1} = \id$.
    Let $M \to BO_n$ be a manifold with its canonical $\xi$-structure.
    By definition, its orientation reversal $\ol{M}$ is the $\xi$-structure which destabilises the self homotopy $H$ of $M \to BO_{n+1}$ given by the reflection in the $(n+1)^{st}$ coordinate.
    Then the identity on $M$ can be made into a $\xi$-diffeomorphism $M \to \ol{M}$.
    Indeed, in the tetrahedron at the $(n+1)^{st}$ level there are two triangles of the form
    \[
    \begin{tikzcd}
        M \ar[d,"\id_M", swap] \ar[r,"TM"] & BO_{n+1}
        \\
        \ol{M} \ar[ru,"TM", swap] & 
    \end{tikzcd}.
    \]
    One of these is filled with the homotopy $H$ by definition of the orientation-reversal, the other one is part of the data of a $\xi$-diffeomorphism.
    Therefore, we are free to choose that triangle to also get filled by $H$.
    Destabilising the resulting strict filling of this tetrahedron shows that $M \cong \ol{M}$ as $\xi$-manifolds.
\end{example}

\begin{example}[The two orientations on a point are not $\xi$-diffeomorphic for $BSO$]\label{ex:twoorientationspoint}
Let $\xi$ be the map $BSO\to BO$. For zero-dimensional manifolds, we have to consider the map $\xi_0\colon BSO_0\rightarrow BO_0$. We have $BSO_0\simeq \{*,*\}$ by \cref{def:homotopyPullbackB_k} and so $\xi_0$ can be taken to be the pointed map from two points into one point. The two orientations on a point are given by the two different lifts.  Let $f\colon *_+\to *_-$ be the constant map between points with different orientations. Then $f$ is not a $\xi_0$-diffeomorphism because the diagram 
\[
\begin{tikzcd}
    *_+\ar[r]\ar[d]&BSO_0\\
    *_-\ar[ru]&
\end{tikzcd}\]
does not commute up to homotopy.
\end{example}

\subsection{Cobordisms with \texorpdfstring{$\xi$}{xi}-structure}

The following result shows that the bordism category given in \cref{def:cob} for a once stabilised structure is reversible in the sense of \cref{def:reversible}.

\begin{proposition}\label{prop:turningBordismsAround}
  Let $\xi_{n+1}\colon B_{n+1} \to BO_{n+1}$ be a tangential structure.
For every $n$-dimensional $\xi$-bordism $M$ from $Y_0$ to $Y_1$, there exists some $\xi$-bordism $M'$ from $Y_1$ to $Y_0$, where $Y_0$ and $Y_1$ are some $(n-1)$-dimensional $\xi$-manifolds.
\end{proposition}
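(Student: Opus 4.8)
The plan is to reverse a bordism by reflecting the "time" direction. Concretely, given an $n$-dimensional $\xi$-bordism $M$ from $Y_0$ to $Y_1$, I would produce $M'$ as the \emph{same} underlying manifold $M$, but with the roles of $\partial_{in}$ and $\partial_{out}$ swapped and with a modified $\xi$-structure obtained by an orientation-reversal procedure applied along a collar of the boundary. The key point is that the definition of a $\xi$-bordism (\cref{def:xiBordism}) already builds in a choice of in-versus-out normal via orientation reversal on $\partial_{in}$: we have $\xi$-diffeomorphisms $\overline{M_0}\cong \partial_{in}W$ and $M_1\cong\partial_{out}W$. So the task is to manufacture, from the given $\xi$-structure on $M$, a new $\xi$-structure $M'$ on the same manifold whose induced boundary structures are $\overline{M_1}$ on what is now $\partial_{in}$ and $M_0$ on what is now $\partial_{out}$.

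\textbf{Step 1: reduce to a neighbourhood of the boundary.} Using a collar neighbourhood $\partial M\times[0,1)\hookrightarrow M$, the $\xi$-structure on $M$ restricted to the collar is equivalent to a $\xi_n$-structure on the $n$-dimensional bundle $T\partial M\oplus\underline{\R}$, pulled back from $\partial M$; by \cref{lm:stableEquiv} this corresponds to a $\xi_{n-1}$-structure on $T\partial M$ (together with the choice of normal encoded as in \cref{rem:ConventionMfldWithBoundary}).

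\textbf{Step 2: flip the normal direction.} Apply the self-homotopy $\id_{T\partial M}\oplus(-\id_{\underline{\R}})$ of $T\partial M\oplus\underline{\R}$, exactly as in \cref{def:orientationreversal}, but now globally on $M$: reflect in the normal-to-the-boundary direction. More precisely, choose a bicollar of a separating hypersurface (or simply use the product collar of $\partial M$), and modify the $\xi$-structure of $M$ by composing the classifying homotopy with the self-homotopy of $TM$ induced by reflecting the collar coordinate. This is well-defined up to homotopy and produces a genuine new $\xi$-structure on the manifold $M$ — call the result $M'$. Since the reflection swaps inward and outward normals along $\partial M$, the induced boundary $\xi$-structures of $M'$ are precisely $M_0$ on the component that was $\partial_{in}W$ (the bar is cancelled, as $\overline{\overline{M_0}}\cong M_0$ up to the relevant homotopy) and $\overline{M_1}$ on the component that was $\partial_{out}W$.

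\textbf{Step 3: repackage as a bordism.} Set $\partial_{in}M' := $ (old $\partial_{out}W$) and $\partial_{out}M':=$ (old $\partial_{in}W$); by Step 2 we have $\xi$-diffeomorphisms $\overline{M_1}\cong\partial_{in}M'$ and $M_0\cong\partial_{out}M'$, so $M'$ is a $\xi$-bordism from $Y_1$ to $Y_0$, as required.

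\textbf{Main obstacle.} The delicate part is bookkeeping the coherence data of the $\xi$-structure across the collar: a $\xi$-structure is not just a lift but a homotopy (and, through $\xi$-diffeomorphisms, a homotopy of homotopies, cf.\ \cref{def:xidiffeoTetrahedron}), so one must check that the reflection self-homotopy glues correctly with the interior $\xi$-structure and that the double orientation-reversal on $\partial_{in}$ really cancels up to the canonical $\xi$-diffeomorphism $\overline{\overline{N}}\cong N$. I would handle this by working with a fibration model for $\xi_{n+1}$ (\cref{rm:fibrationOrNot}) so that $\xi$-structures are honest lifts, and by performing the reflection within a fixed product collar $\partial M\times[0,1]$ where the tangent bundle splits canonically as $T\partial M\oplus\underline{\R}$; then the modification is literally the one in \cref{rem:orientationreversaldetails} applied fibrewise over the collar, interpolated to the identity outside the collar. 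The statement only asserts \emph{existence} of $M'$, so no naturality or uniqueness is needed, which keeps the coherence check manageable.
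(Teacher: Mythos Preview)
Your overall plan is right—take $M'=\overline M$ and swap the in/out boundaries—but Step~2 as written has a genuine gap. You propose to modify the $\xi_n$-structure on $TM$ by reflecting the collar coordinate near $\partial M$ and ``interpolating to the identity outside the collar.'' That interpolation does not exist: reflection in one tangent coordinate is a bundle automorphism of determinant $-1$, hence lies in the non-identity component of $GL_n(\R)$ and cannot be joined to $\id_{TM}$ through bundle automorphisms. So no new $\xi_n$-structure on $TM$ arises this way; the self-homotopy you want is not defined globally on $M$.

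The paper's construction avoids this by reflecting not a tangent direction of $M$ but the \emph{stabilised} direction: one sets $M'\coloneqq\overline M$, obtained by taking the $\xi_{n+1}$-structure on $TM\oplus\underline\R$ (which exists precisely because $\xi_n$ is once stabilised) and composing with $\id_{TM}\oplus(-\id_{\underline\R})$, then destabilising back to a $\xi_n$-structure. This is a \emph{global} operation; no interpolation across a collar is needed. The boundary check you flag as the ``main obstacle'' is then carried out one stabilisation higher: on $T\partial M\oplus\underline\R^2$, the $\xi$-structure of $\partial_{in}\overline M$ differs from that of $\partial_{in}M$ by $\id\oplus\id_{\underline\R}\oplus(-\id_{\underline\R})$ (the second $\R$ is the one reflected in forming $\overline M$), whereas $\overline{Y_1}$ differs from $Y_1$ by $\id\oplus(-\id_{\underline\R})\oplus\id_{\underline\R}$ (the first $\R$ is the one reflected in forming $\overline{Y_1}$). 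These two reflections of $\underline\R^2$ are homotopic through a rotation, and that homotopy is exactly what furnishes the required $\xi$-diffeomorphisms $\overline{Y_1}\cong\partial_{in}M'$ and $Y_0\cong\partial_{out}M'$. Your intuition that the collar direction and the stabilised direction ``trade places'' is correct; it just has to be implemented at the level of $\xi_{n+1}$-structures on $T\partial M\oplus\underline\R^2$, not by a local modification of the $\xi_n$-structure on $M$.
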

\begin{proof}
    Consider the $(B_n,\xi)$-manifold $M' \coloneqq \overline{M}$, which exists because $B_n$ admits the stabilisation $B_{n+1}$ by assumption.
    Define the decomposition $\partial_{out} M' \coloneqq \partial_{in} M$ and $\partial_{in} M' \coloneqq \partial_{out} M$ of $\partial M'$, which is equal to $\partial M$ as a smooth manifold.
    Let $\phi_1\colon \partial_{out} M \to Y_1$ denote the $\xi_{n-1}$-diffeomorphism, which is part of the data of being a bordism.
    We will now show that this induces a $\xi_{n-1}$-diffeomorphism $\partial_{in} \overline{M} \to \overline{Y}_1$.
    
    Indeed, consider the situation after stabilising twice.
    First of all note that the $(B_{n+1},\xi)$-structure on $T \partial_{in} M \oplus \underline{\R}^2$ would be defined by taking the $(B_n,\xi)$-structure on $TM$, stabilising it once and restricting to $\partial_{in} M$.
    Similarly, the $(B_{n+1},\xi)$-structure on $T \partial_{in} \overline{M} \oplus \R^2$ is defined in the same way, except that we compose the $(B_{n+1},\xi)$-structure on $TM \oplus \underline{\R}$ with $\id_{TM} \oplus -\id_{\underline{\R}}$ to reverse the orientation.
    Therefore, comparing the $(B_{n+1},\xi)$-structure on $T \partial_{in} \overline{M} \oplus \underline{\R}^2$ with the $(B_{n+1},\xi)$-structure on $T \partial_{in} M \oplus \underline{\R}^2$, the only thing changed is that we composed with $\id_{T\partial_{in} M} \oplus \id_{\underline{\R}} \oplus - \id_{\underline{\R}}$.
    Similarly, the $(B_{n+1},\xi)$-structure on the twice stabilised tangent bundle of $\overline{Y}_1$ is the twice stabilised $(B_{n+1},\xi)$-structure on $Y_1$ composed with $\id_{T Y_1} \oplus -\id_{\underline{\R}} \oplus \id_{\underline{\R}}$.
    Since the two vector bundle automorphisms $\id_{\underline{\R}} \oplus - \id_{\underline{\R}}$ and $-\id_{\underline{\R}} \oplus \id_{\underline{\R}}$ are homotopic, composing the $(B_{n+1},\xi)$-structure on $TY_1 \oplus \underline{\R}^2$ with $\id_{T Y_1} \oplus -\id_{\underline{\R}} \oplus \id_{\underline{\R}}$ and $\id_{T Y_1} \oplus \id_{\underline{\R}} \oplus - \id_{\underline{\R}}$ yield equivalent $(B_{n+1},\xi)$-structures.
    Therefore, the vector bundle isomorphism $\partial_{in} \overline{M} \oplus \underline{\R}^2 \to \overline{Y}_1 \oplus \underline{\R}^2$ induced by $\phi_1$ is still compatible with the $(B_{n+1},\xi)$-structures.
    This shows that it defines a $\xi$-diffeomorphism.
    
    Showing that the $\xi_{n-1}$- diffeomorphism $\phi_2\colon \partial_{out} M \to \overline{Y_2}$ induces a $\xi_{n-1}$- diffeomorphism $\partial_{out} \overline{M} \to Y_2$ is analogous.
    This shows that $\overline{M}$ defines a $(B,\xi)$-bordism from $Y_1$ to $Y_0$.
\end{proof}

\begin{corollary}\label{cor:bordreversible}
    For $\xi\colon B_{n+1} \to BO_{n+1}$ a once-stabilised tangential structure, the category $\Cob^\xi_{n-1,n}$ is reversible, in the sense of \cref{def:reversible}.
\end{corollary}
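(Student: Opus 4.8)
The plan is to read this statement off directly from Proposition~\ref{prop:turningBordismsAround}. First I would recall the relevant definitions: by Definition~\ref{def:cob_cat} the objects of $\Cob^\xi_{n-1,n}$ are closed $(n-1)$-dimensional $\xi$-manifolds, and a morphism $Y_0 \to Y_1$ is the class, under $\xi$-diffeomorphism relative boundary, of an $n$-dimensional $\xi$-bordism $M$ from $Y_0$ to $Y_1$; while by Definition~\ref{def:reversible_text} reversibility at every object just means that whenever there is a morphism $Y_0 \to Y_1$ there is also some morphism $Y_1 \to Y_0$. Here $\xi_n$ is understood as the homotopy pullback of the given $\xi_{n+1}\colon B_{n+1}\to BO_{n+1}$ along $BO_n \to BO_{n+1}$ (Definition~\ref{def:homotopyPullbackB_k}), so that the once-stabilised hypothesis provides exactly the data needed in the next step.

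Next I would invoke Proposition~\ref{prop:turningBordismsAround}: given a morphism $Y_0 \to Y_1$ of $\Cob^\xi_{n-1,n}$, choose a representative $n$-dimensional $\xi$-bordism $M$ from $Y_0$ to $Y_1$. That proposition — whose proof uses precisely the existence of the stabilisation $\xi_{n+1}$ to form the orientation reversal $\overline M$ and to interchange the incoming and outgoing parts of $\partial M$ — produces an $n$-dimensional $\xi$-bordism $M'$ from $Y_1$ to $Y_0$. Passing to its class relative boundary yields a morphism $Y_1 \to Y_0$ in $\Cob^\xi_{n-1,n}$. Since every morphism of $\Cob^\xi_{n-1,n}$ is by definition represented by some bordism, this verifies the reversibility criterion of Definition~\ref{def:reversible_text} at every object, completing the argument.

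I expect no real obstacle here; the only point requiring care is the bookkeeping of indices, namely that the $n$-dimensional bordisms between $(n-1)$-dimensional manifolds entering $\Cob^\xi_{n-1,n}$ are exactly the objects to which Proposition~\ref{prop:turningBordismsAround}, phrased for $\xi_{n+1}\colon B_{n+1}\to BO_{n+1}$, applies. If desired, one could add a remark that the identical argument goes through verbatim for the topological (or $(\infty,1)$-) bordism category $\Bord^{\xi}_{n-1,n}$ and its free-boundary variant $\Bord^{\xi,\partial}_{n,n+1}$, as used in \cref{subsec:Genauer}.
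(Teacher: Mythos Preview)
Your proposal is correct and is exactly the approach the paper takes: the corollary is stated without proof immediately after Proposition~\ref{prop:turningBordismsAround}, as it follows directly from that proposition by the argument you describe. Your observation about the index bookkeeping is the only nontrivial point to check, and you have handled it correctly.
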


\section{\texorpdfstring{$\SKK$}{SKK} of a category}
\label{ap:ProofPi1ofCategories}

In this Appendix, we will discuss a more abstract perspective on $\SKK$ groups that the authors learned independently of Stephan Stolz and Achim Krause.
This generalises the relation between $\SKK$ and the fundamental group of the cobordism category proved in \cite{bokstedtsvane}.

\subsection{Reversibility and \texorpdfstring{$\SKK$}{SKK} of a category}
If $\mathcal{C}$ is a category, there exists a smallest groupoid that contains $\mathcal{C}$ in which all morphisms are invertible.
More precisely, the \emph{groupoidification} $\hat{\mathcal{C}}$ is the image of $\mathcal{C}$ under the left adjoint to the inclusion $\mathrm{Gpd} \to \mathrm{Cat}$ of the category of groupoids into the category of categories, and it is the universal groupoid receiving a map from $\mathcal{C}$.
Concretely, $\hat{\mathcal{C}}$ is defined to be the category with objects the objects of $\mathcal{C}$ and morphisms 
given by equivalence classes of zigzags of morphisms in $\mathcal{C}$.
Here a zigzag from $Y$ to $Y'$ is a sequence of morphisms of the form
\[\begin{tikzcd}
    Y' = Y_0\ar[r,leftrightarrow,"X_0"]& Y_1\ar[r,leftrightarrow,"X_1"]&\dots\ar[r,leftrightarrow,"X_{n-1}"] &Y_n\ar[r,leftrightarrow,"X_n"]&Y_{n+1}=Y
\end{tikzcd}
\]
where each $X_i$ is either a morphism from $Y_i$ to $Y_{i+1}$ or from $Y_{i+1}$ to $Y_i$.
We quotient by the equivalence relation given by replacing two composable morphisms pointing in the same direction (either left or right) by their composition, and defining $Y \xrightarrow{X} Y'$ to be inverse to $Y' \xleftarrow{X} Y$.
With composition given by concatenation of zigzags, $\hat{\mathcal{C}}$ becomes a groupoid.
The relations imply that if a morphism is invertible in $\mathcal{C}$, then its formal inverse in $\hat{\mathcal{C}}$ is equal to its inverse. Therefore, we can abuse notation and write zigzags as 
\[
X_0^{\epsilon_0} X_1^{\epsilon_1}  \dots X_n^{\epsilon_n}, 
\]
where $\epsilon_i \in \{\pm 1\}$ and $X_i$ is a morphism in $\mathcal{C}$.
Note that the domain of the above morphism is the domain of $X_n$ if $\epsilon_n = 1$ and the codomain of $X_n$ if $\epsilon_n = -1$.

Given a groupoid $\mathcal{G}$, a classifying space construction gives a space $B \mathcal{G}$. 
An object $x \in \mathcal{G}$ gives a point in $B\mathcal{G}$ and $\pi_1(B \mathcal{G},x) = \End_{\mathcal{G}}(x)$.
From now on we will consider \emph{pointed categories}, i.e. we fix an object $1 \in \mathcal{C}$ that we consider as a basepoint.
If $\mathcal{C}$ is monoidal (such as the bordism category) we take $1$ to be the monoidal unit.
Our goal is to give a concrete description of the group
$\pi_1 (B\hat{\CC},1) \cong \End_{\hat{\mathcal{C}}} (1) $
under some mild assumptions.

Note that there is a monoid homomorphism
\[
\End_\mathcal{C}(1) \to \End_{\hat{\mathcal{C}}}(1).
\]
Since the latter is a group, this induces a group homomorphism 
\[
\phi\colon \Gr (\End_\mathcal{C}(1)) \to \End_{\hat{\mathcal{C}}}(1)
\]
from the nonabelian Grothendieck group of the nonabelian monoid $\End_\mathcal{C}(1)$.
Concretely, this Grothendieck group has elements of the form $X_0^{\epsilon_0} X_1^{\epsilon_1}  \dots X_n^{\epsilon_n}$, where $X_i \in \End_\mathcal{C}(1), \epsilon_i \in \{\pm 1\}$ and $X_i^{-1}$ denotes the formal inverse.
Observe that a general element $X_0^{\epsilon_0} X_1^{\epsilon_1} \dots X_n^{\epsilon_n}$ of $\End_{\hat{\mathcal{C}}}(1)$ need not be in $\Gr (\End_\mathcal{C}(1))$ since the $X_i$ need not have source and target equal to $1$.

\begin{example}
\label{ex:groupoidifycounterex}
    Let $\mathcal{C}$ consist of two objects $1$ and $Y$ and a single non-trivial morphism $X\colon 1 \to Y$.
    Then $\End_{\mathcal{C}} (1)$ and $\End_{\hat{\mathcal{C}}}(1)$ are trivial, and so the induced group homomorphism $\phi$ is a map between trivial groups.
\end{example}

\begin{example}
    Let $\mathcal{C}$ consist of two objects $1$ and $Y$ and two parallel morphisms $X_1,X_2\colon 1 \to Y$.
    Then $\End_{\mathcal{C}} (1) = 1$, but $\End_{\hat{\mathcal{C}}}(1)$ is a free group generated by $X_1^{-1} X_2$. 
    So the induced group homomorphism $\phi$ is not surjective.
\end{example}

The condition we will require on $\mathcal{C}$ in order to compare $\End_{\mathcal{C}} (1)$ and $\End_{\hat{\mathcal{C}}}(1)$ is reversibility of morphisms:

\begin{definition}\label{def:reversible}
    Let $(\mathcal{C}, 1)$ be a pointed category.
        We say that $(\mathcal{C}, 1)$ is \emph{reversible} with respect to $1$ if for any morphism $X\colon 1 \to Y$ there exists a morphism $X'\colon Y \to 1$.
\end{definition}

The following is an example of a non-reversible category.

\begin{example} 
\label{ex:UnstableFramingNotReversible}
    Let $\mathcal{C}$ be the framed bordism category $\Cob_{2,1}^{B_2 = *}$ in which morphisms are unstably framed surfaces as bordisms between $1$-dimensional manifolds with a framing of their once stabilised tangent bundle, compare \cref{rm:BordantIsNotSymmetric}.

    Then this category is not reversible.  Indeed, consider for $g>1$ a  genus $g$  surface with one boundary component $\Sigma^1_g$. Because it is homotopy equivalent to a one-dimensional CW-complex, this surface has an unstable framing. 
    Consider $\Sigma^1_g$ as a bordism $\varnothing\to (S^1,f)$, where $f$ is the once stabilised framing of $S^1$ induced by restricting the framing of $\Sigma^1_g$. Then there is no framed bordism $(S^1,f) \to\varnothing$. For assume there was such a bordism $\Sigma^1_{g'}$. We could then form the composition $\Sigma^1_g \cup_{S^1} \Sigma^{1}_{g'}$, a framed surface of genus $g+g'>1$, which is not possible. 

    This gives another proof that $B_{2} = *$ cannot be stabilised, see \cref{lem:UnstableFramingStructureCannotBeStabilised}.
\end{example}

\begin{remark}
    In \cite{KST}, it is shown that for every $n > 2$ and every $\xi\colon B \to BO_n$ the bordism category $\Cob^\xi_{n-1,n}$ is reversible at $\varnothing$.
\end{remark}

Note that if $(\mathcal{C}, 1)$ is reversible and $Y_1$ and $Y_2$ are both connected to $1$ by some zigzag, then there exists a morphism $Y_1 \to Y_2$.
The proof of the following Lemma was communicated to the second author by Stephan Stolz, also see \cite[Proposition 3.2]{juer2013localisations}.

\begin{lemma}
\label{lm:phisurj}
    If $(\mathcal{C}, 1)$ is reversible, then $\phi$ is surjective.
\end{lemma}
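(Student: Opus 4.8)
The goal is to show that when $(\mathcal{C},1)$ is reversible, the homomorphism
\[
\phi\colon \Gr(\End_\mathcal{C}(1)) \to \End_{\hat{\mathcal{C}}}(1)
\]
is surjective. An arbitrary element of $\End_{\hat{\mathcal{C}}}(1)$ is represented by a zigzag
\[
1 = Y_0 \xleftrightarrow{X_0} Y_1 \xleftrightarrow{X_1} \cdots \xleftrightarrow{X_n} Y_{n+1} = 1,
\]
i.e.\ a word $X_0^{\epsilon_0} X_1^{\epsilon_1} \cdots X_n^{\epsilon_n}$ where each $X_i$ is a morphism of $\mathcal{C}$ (not necessarily with source or target $1$). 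The plan is to rewrite this zigzag, inside $\hat{\mathcal{C}}$, as a word in endomorphisms of $1$.

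\textbf{Key steps.} First I would use reversibility to produce, for each intermediate object $Y_i$ with $1 \le i \le n$, a pair of morphisms $a_i\colon 1 \to Y_i$ and $b_i\colon Y_i \to 1$ in $\mathcal{C}$; such morphisms exist because $Y_i$ is connected to $1$ by a zigzag (namely the initial segment $X_0,\dots,X_{i-1}$ of the given one), and applying reversibility repeatedly along that zigzag yields an actual morphism $1 \to Y_i$, and then again a morphism $Y_i \to 1$. Set $a_0 = b_0 = a_{n+1} = b_{n+1} = \id_1$. The second step is the telescoping trick: in $\hat{\mathcal{C}}$ we may insert $b_i^{-1} b_i$ (or $a_i a_i^{-1}$) between consecutive letters without changing the morphism, since these are genuine invertible elements once we pass to $\hat{\mathcal{C}}$. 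Concretely, for each $i$ choose to insert the identity morphism $a_i b_i$ after conjugating, writing
\[
X_0^{\epsilon_0} X_1^{\epsilon_1} \cdots X_n^{\epsilon_n}
= \prod_{i=0}^{n} \bigl( b_i^{-1}\, X_i^{\epsilon_i}\, (\text{suitable } a \text{ or } b)^{\pm 1} \bigr),
\]
where the bracketing is arranged so that each factor $b_i^{-1} X_i^{\epsilon_i} a_{i+1}$ (if $\epsilon_i = 1$, so $X_i\colon Y_i \to Y_{i+1}$) or $b_{i+1}^{-1} X_i^{-1} a_i$ (if $\epsilon_i = -1$) becomes a composite of $\mathcal{C}$-morphisms whose source and target are both $1$, hence lies in $\End_\mathcal{C}(1)$, possibly with a formal inverse. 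The third step is bookkeeping: verify that the inserted terms cancel in pairs in $\hat{\mathcal{C}}$ (since $a_i a_i^{-1} = \id$, etc.), so the product equals the original zigzag; and verify that each factor is either $\phi$ applied to a generator or its inverse, so the whole word lies in the image of $\phi$.

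\textbf{Main obstacle.} The delicate point is purely notational: getting the conjugation pattern exactly right so that every one of the $n+1$ letters $X_i^{\epsilon_i}$ gets sandwiched by the correct pair of ``bridging'' morphisms, with the right variances, so that the sandwiched letter genuinely composes to an element of $\End_\mathcal{C}(1)$ and the leftover bridging morphisms telescope away. I would handle this cleanly by induction on the length $n$ of the zigzag: peel off the first letter $X_0^{\epsilon_0}$, bridge it via $b_1$ or $a_1$ to an element of $\End_\mathcal{C}(1)$, and observe that what remains is (a conjugate of) a shorter zigzag from $1$ to $1$, to which the inductive hypothesis applies. The base case $n=0$ is immediate. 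No analytic or deep categorical input is needed beyond the definition of $\hat{\mathcal{C}}$ and the reversibility hypothesis; the content is entirely in organizing the telescoping.
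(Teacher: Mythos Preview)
Your proposal is correct and uses essentially the same idea as the paper: insert ``bridging'' morphisms to and from $1$, obtained via reversibility, so that each letter of the zigzag becomes (a formal inverse of) a genuine endomorphism of $1$. The paper organises this slightly differently---it first composes adjacent letters with equal sign to reduce to the alternating case, then inducts on the number of intermediate objects $\neq 1$, inserting a single bridge $X_i'$ at the leftmost bad spot---whereas you propose to bridge every intermediate object at once with a pair $(a_i,b_i)$ and telescope; both arrangements work and the content is the same.

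One small caution: your displayed factor $b_i^{-1} X_i^{\epsilon_i} a_{i+1}$ does not compose as written (with either convention for the direction of $X_i$), and more importantly it still contains a formal inverse $b_i^{-1}$ of a non-endomorphism, so it is not yet visibly in the image of $\phi$. You need one more insertion---e.g.\ rewrite $a_i^{-1} X_i a_{i+1}$ (for $\epsilon_i=1$) as $(b_i a_i)^{-1}(b_i X_i a_{i+1})$---so that each factor is a product of an element of $\End_\mathcal{C}(1)$ and the formal inverse of another. You clearly anticipate this (``getting the conjugation pattern exactly right''), so this is just a matter of executing the bookkeeping you already flagged.
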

\begin{proof}
    Let $X_0^{\epsilon_0} X_1^{\epsilon_1} \dots X_n^{\epsilon_n} \in \End_{\hat{\mathcal{C}}}(1)$, where $\epsilon_i \in \{\pm 1\}$ and $X_i$ is a morphism in $\mathcal{C}$.
    By composing morphisms that are composable in $\mathcal{C}$ we can assume without loss of generality that $\epsilon_i \neq \epsilon_{i+1}$ for all $i$.
    We will perform an induction on the number of morphisms that do not have domain and codomain equal to $1$.
    Suppose $X_0, \dots, X_{i-1} \in \End_{\mathcal{C}}(1)$ for some $i \geq 0$.
    Assume first that $\epsilon_i = 1$ so that $\epsilon_{i+1} = -1$ and $X_i$ is a morphism from some $Y$ to $1$.
    Let $X'_i$ be a morphism from $1$ and $Y$ so that $X_i X_i' \in \End_\mathcal{C} (1)$.
Then
\begin{align*}
X_0^{\epsilon_0} X_2^{\epsilon_2} \dots X_n^{\epsilon_n} &= X_0^{\epsilon_0} X_2^{\epsilon_2} \dots X_{i-1}^{\epsilon_{i-1}} (X_i X_i') (X_{i+1} X_{i}')^{-1}X_{i+2}^{\epsilon_{i+2}} \dots  X_n^{\epsilon_n}
\end{align*}

has one less occurrence of an object different from $1$.
    We can do a similar computation if $\epsilon_i = -1$ when $X_i$ is a morphism from $1$ to $Y$ by taking $X_i'$ to go from $Y$ to $1$.
\end{proof}

Under the above assumption, we can ask what the kernel of $\phi$ is to get an explicit description of $\End_{\hat{\mathcal{C}}} (1)$ as a quotient group of the Grothendieck group of $\End_\mathcal{C} (1)$.
It turns out the kernel is generated by a kind of $\SKK$ relation:

\begin{definition}\label{def:chimaeraRelations}
    Given a specified basepoint $1 \in \operatorname{ob} \mathcal{C}$ the \emph{SKK group of $\mathcal{C}$}, $\SKK(\mathcal{C}, 1)$,
    is the Grothendieck group of the monoid 
    $\End_{\mathcal{C}} (1)$ modulo the so-called chimaera relations saying that
    \[
    (X_1' \circ X_2)^{-1} \circ (X_1' \circ X_1) \sim (X_2' \circ X_2)^{-1} \circ (X_2' \circ X_1)
    \]
    for all $X_1,X_2 \in \Hom_\mathcal{C}(1,Y)$ and $X_1',X_2' \in \Hom_\mathcal{C}(Y,1)$.

\end{definition}

The alternative chimaera relation
\[(X_1' \circ X_1) \circ (X_2' \circ X_1)^{-1} \sim  (X_1' \circ X_2) \circ (X_2' \circ X_2)^{-1}\]
is sometimes added in the literature, which is an easy consequence of the above one.

In the specific situation where $\mathcal{C} = \Cob^\xi_{n-1,n}$ is the bordism category with $\xi_n$ tangential structure and $1 = \varnothing$ is the monoidal unit, we have that $\SKK(\Cob^\xi_n, 1) = \SKKxi_n$.
Indeed, note that the chimaera relation exactly corresponds to the alternative SKK relation in \cref{prop:SecondDefOfTheSKKRelation}.

\begin{lemma}\label{lm:induced}
The map $\phi$ induces a map
\[
\SKK(\mathcal{C}, 1) \to \End_{\hat{\mathcal{C}}}(1).
\]
\end{lemma}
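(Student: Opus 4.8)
The statement asserts that $\phi\colon \Gr(\End_{\mathcal{C}}(1)) \to \End_{\hat{\mathcal{C}}}(1)$ descends to the quotient $\SKK(\mathcal{C},1)$, i.e.\ that $\phi$ kills the chimaera relations of \cref{def:chimaeraRelations}. This is the easy direction of the identification; the genuinely hard part (computing the kernel of $\phi$ exactly, so that the induced map is an isomorphism) is presumably a later lemma, not this one. So the plan is simply: take $X_1, X_2 \in \Hom_{\mathcal{C}}(1,Y)$ and $X_1', X_2' \in \Hom_{\mathcal{C}}(Y,1)$, and verify directly in the groupoid $\hat{\mathcal{C}}$ that
\[
(X_1' \circ X_2)^{-1} \circ (X_1' \circ X_1) \;=\; (X_2' \circ X_2)^{-1} \circ (X_2' \circ X_1)
\]
as elements of $\End_{\hat{\mathcal{C}}}(1)$, where now $(-)^{-1}$ is a genuine inverse in the groupoid.

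The computation is a short manipulation of zigzags. In $\hat{\mathcal{C}}$ every morphism of $\mathcal{C}$ is invertible, so $X_1, X_2\colon 1 \to Y$ and $X_1', X_2'\colon Y \to 1$ all have honest inverses, and composites in $\mathcal{C}$ become composites of these invertible arrows. Using $(X_1' \circ X_1)$ and the relation $(b \circ a)^{-1} = a^{-1} \circ b^{-1}$ valid in any groupoid, the left-hand side becomes
\[
(X_1' \circ X_2)^{-1} \circ (X_1' \circ X_1) \;=\; X_2^{-1} \circ (X_1')^{-1} \circ X_1' \circ X_1 \;=\; X_2^{-1} \circ X_1,
\]
and identically the right-hand side becomes $X_2^{-1} \circ (X_2')^{-1} \circ X_2' \circ X_1 = X_2^{-1} \circ X_1$. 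Hence both sides equal $X_2^{-1} \circ X_1$ in $\End_{\hat{\mathcal{C}}}(1)$, so $\phi$ sends the two sides of the chimaera relation to the same element. The alternative chimaera relation is handled the same way (both sides equal $X_1 \circ X_2^{-1}$). Since $\phi$ is already a group homomorphism out of $\Gr(\End_{\mathcal{C}}(1))$ and it respects all the defining relations of $\SKK(\mathcal{C},1)$, it factors through the quotient, giving the desired map $\SKK(\mathcal{C},1) \to \End_{\hat{\mathcal{C}}}(1)$.

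There is really no obstacle here beyond bookkeeping: the only subtlety worth a sentence is that the expressions $(X_1'\circ X_2)^{-1}\circ(X_1'\circ X_1)$ etc.\ must be read with $(-)^{-1}$ meaning the formal (equivalently, actual) inverse in $\hat{\mathcal{C}}$, which is exactly the convention already set up in the paragraph preceding \cref{def:reversible}. One should also note in passing that $\phi$ being well-defined on $\Gr(\End_{\mathcal{C}}(1))$ is automatic from the universal property of the Grothendieck group applied to the monoid homomorphism $\End_{\mathcal{C}}(1) \to \End_{\hat{\mathcal{C}}}(1)$, which was already observed in the text. So the proof is a two-line groupoid calculation plus the remark that a homomorphism killing the relations descends to the quotient.
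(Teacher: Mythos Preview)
Your proof is correct and follows essentially the same approach as the paper's own proof: both reduce the chimaera relation in $\hat{\mathcal{C}}$ to the common value $X_2^{-1}\circ X_1$ by expanding $(X_i'\circ X_j)^{-1}$ via the groupoid inverse formula and cancelling. The paper's computation is written as a single chain of equalities but the content is identical to yours.
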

\begin{proof}
We have to show that the chimaera relation is in the kernel of $\phi$.
This follows by the computation
\begin{align*}
    (X_1' \circ X_2)^{-1} \circ X_1' \circ X_1 &= X_2^{-1} \circ (X_1')^{-1} \circ X_1' \circ X_1 
    \\
    &= X_2^{-1} \circ X_1 = X_2^{-1} \circ (X_2')^{-1} \circ X_2' \circ X_1 
    \\
    &= (X_2' \circ X_1)^{-1} \circ (X_2' \circ X_2)
\end{align*}
in $\hat{\mathcal{C}}$. 
\end{proof}

\begin{theorem}
\label{th:SKKispi1ofcob}
Suppose $(\mathcal{C},1)$ is reversible.
    Then $\phi$ induces an isomorphism
    \[
    \SKK(\mathcal{C}, 1) \cong \End_{\hat{\mathcal{C}}}(1) \cong \pi_1 (\|\mathcal{C}\|).
    \]
\end{theorem}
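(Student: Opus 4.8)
The plan is to show that the surjection $\phi\colon \SKK(\mathcal{C},1)\to\End_{\hat{\mathcal{C}}}(1)$ from \cref{lm:phisurj} and \cref{lm:induced} is also injective, and then to identify $\End_{\hat{\mathcal{C}}}(1)$ with $\pi_1(\|\mathcal{C}\|)$. For injectivity, suppose $X_0^{\epsilon_0}\cdots X_n^{\epsilon_n}$ is a zigzag of morphisms in $\End_\mathcal{C}(1)$ (formal inverses allowed) which becomes trivial in $\hat{\mathcal{C}}$. I would analyse the combinatorics of the word relations defining $\hat{\mathcal{C}}$: a zigzag is trivial iff it can be reduced to the empty word by finitely many moves, each of which either (i) composes two adjacent morphisms pointing the same way, (ii) inserts or deletes an adjacent pair $X^{+1}X^{-1}$ or $X^{-1}X^{+1}$, or (iii) replaces a morphism by an equal one. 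The key point is that moves (i)--(iii) between \emph{loops at $1$}, when the intermediate morphisms do not all have source and target $1$, are detected by the chimaera relation: using reversibility, choosing ``reversing'' morphisms $Y\to 1$ as in the proof of \cref{lm:phisurj} lets one ``clip'' each excursion away from $1$ into $\End_\mathcal{C}(1)$, and the ambiguity in this clipping is precisely a chimaera relation. So I would induct on the total number of morphisms in a reduction sequence whose source or target is $\neq 1$, showing each such move changes the class in $\SKK(\mathcal{C},1)$ by nothing.

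More precisely, I would set up a ``clipping'' map sending a zigzag $w$ from $1$ to $1$ (through arbitrary objects) to an element $c(w)\in\SKK(\mathcal{C},1)$, defined by, at each point where $w$ passes through an object $Y\neq 1$, inserting a reversing morphism and its formal inverse; one checks $c$ is well-defined (independence of the choice of reversing morphisms is exactly the chimaera relation, and independence of where one clips follows by composing adjacent same-direction morphisms) and that $c$ is invariant under moves (i)--(iii). Since the empty zigzag maps to $0$, any zigzag representing the identity in $\hat{\mathcal{C}}$ has $c(w)=0$; and for a zigzag already lying in $\End_\mathcal{C}(1)$ one has $c(w)=[w]\in\SKK(\mathcal{C},1)$. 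Hence $[w]=0$, giving $\ker\phi=0$. Combined with \cref{lm:phisurj} this yields $\SKK(\mathcal{C},1)\cong\End_{\hat{\mathcal{C}}}(1)$.

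For the second isomorphism $\End_{\hat{\mathcal{C}}}(1)\cong\pi_1(\|\mathcal{C}\|)$, I would invoke the standard fact that groupoidification models the fundamental groupoid: the classifying space (geometric realisation of the nerve) $\|\mathcal{C}\|$ has fundamental groupoid $\Pi_1(\|\mathcal{C}\|)\simeq\hat{\mathcal{C}}$, since localising a category at all its morphisms computes $\Pi_1$ of its nerve (see e.g.\ Gabriel--Zisman; this is also the statement that $B\hat{\mathcal{C}}\simeq\|\mathcal{C}\|$ on $1$-truncations). Taking endomorphisms of the basepoint object $1$ then gives $\End_{\hat{\mathcal{C}}}(1)\cong\pi_1(\|\mathcal{C}\|,1)$. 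I expect the main obstacle to be the bookkeeping in the injectivity argument: carefully verifying that the clipping map $c$ is well-defined and move-invariant requires tracking several cases (which direction the boundary morphisms of an excursion point, what happens when two excursions are adjacent, and the interaction of move (ii) with the clipping), and it is here that reversibility is used essentially. Everything else is formal.
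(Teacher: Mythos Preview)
The paper actually omits the proof of this theorem entirely, stating only that it ``can be shown via adaptations of the techniques in [B\"okstedt--Svane].'' So there is no argument in the paper to compare against for the first isomorphism $\SKK(\mathcal{C},1)\cong\End_{\hat{\mathcal{C}}}(1)$. Your approach via a clipping retraction $c$ is a sound direct strategy and is arguably cleaner than adapting the cobordism-specific computations of that reference. The key identification you make---that the chimaera relation is precisely the ambiguity in the choice of reversing morphism---is correct and is the heart of the matter; the computation in the proof of \cref{lm:induced} already exhibits one instance of this.

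One small imprecision: describing the clipping as ``inserting a reversing morphism and its formal inverse at each $Y\neq 1$'' does not quite land you in $\Gr(\End_\mathcal{C}(1))$, because a segment $Y_1\to Y_2$ between two non-basepoint objects still has neither end at $1$ after such insertions. What you really want is to define $c$ as the output of the inductive reduction procedure in \cref{lm:phisurj} (which chooses the direction of the reversing morphism according to the sign pattern $\epsilon_i$), and then prove that this output, read in $\SKK(\mathcal{C},1)$, is independent of all choices. You correctly anticipate that this is where the bookkeeping lives: one must check independence of the order in which intermediate objects are eliminated as well as of the reversing morphisms chosen, and then move-invariance under (i)--(iii). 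All of these do reduce to instances of the chimaera relation (and its companion form), but there are genuinely several cases to track.

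For the second isomorphism $\End_{\hat{\mathcal{C}}}(1)\cong\pi_1(\|\mathcal{C}\|)$, the paper does prove this, as \cref{lem:groupoidfundamantalgroup} in the following subsection, using exactly the argument you sketch: groupoidification computes the fundamental groupoid of the nerve, so endomorphisms of the basepoint compute $\pi_1$.
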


We omit the proof of \cref{th:SKKispi1ofcob}, which can be shown via adaptations of the techniques in \cite{bokstedtsvane}.

\begin{remark}
    Example \ref{ex:groupoidifycounterex} shows that there are cases in which the conclusion of the above theorem holds, but the assumption of reversibility of arrows from the basepoint does not.
    We do not know the weakest possible assumption for which the $\SKK$ group of $\mathcal{C}$ is isomorphic to $\End_{\hat{\mathcal{C}}}(1).$
    However, note that generalizing Example \ref{ex:groupoidifycounterex}, we could allow morphisms $X\colon 1 \to Y$ for which there is no morphism $Y \to 1$ as long as every zigzag from $1$ to $Y$ in $\hat{\mathcal{C}}$ is equal to $X$.

    For another example of a non-reversible category for which $\phi$ is an isomorphism, consider $\mathcal{C} = \Cob_{1,2}^{B_2 = *}$ of \cref{ex:UnstableFramingNotReversible}.
    By \cite{GMTW}, $\pi_1$ of $\Cob_{1,2}^{B_2 = *}$ is the stably framed bordism group in dimension two.
 This is the second stable stem, which is $\Z/2$ generated by the torus with the Lie group framing.
 Independently, it was shown in \cite{lorant} that for $B_2=*$, which corresponds to $\Spin^r$ for $r=0$, it holds that
 $\SKK^{B_2 = *}_2 \cong \Z/2$ is generated by the same element, from which it also follows that $\phi$ is an isomorphism\footnote{In \cite{lorant}, Szegedy falsely claims that any rigid symmetric monoidal category is reversible in order to deduce an isomorphism $\pi_1(\| \Cob^{\Spin^r}_2 \|) \cong \SKK^{\Spin^r}_2$. However, his computation of $\SKK^{\Spin^r}_2$ does not use this isomorphism. This also applies to \cref{rem:spinr}.}.
\end{remark}

\subsection{Geometric realisations of \texorpdfstring{$\infty$}{infinity}-categories}
\label{sec:highercats}

We provide a further abstract setting that will be useful to compare with the analogous $\infty$-categorical setting.
Consider the diagram of $(\infty,1)$-categories
\begin{equation}
\label{eq:infinitysquare}
\begin{tikzcd}[column sep =30,row sep=30]
\Gpd_1 \ar[r, hookrightarrow,"N"] \ar[d, hookrightarrow] & \Gpd_\infty \ar[d, hookrightarrow] \ar[l, bend right, "\pi_{\leq 1}", swap]
\\
    \Cat_1 \ar[r, hookrightarrow, "N"] \ar[u, bend right,"\hat{(.)}",pos=0.55,swap] & \Cat_\infty \ar[u, bend right,"\| .\|", swap] \ar[l, bend right, "\operatorname{ho}",swap]
\end{tikzcd},
\end{equation}
where $\Gpd_1$ denotes the $(2,1)$-category of groupoids, $\Gpd_\infty$ the $(\infty,1)$-category of spaces (also known as $\infty$-groupoids), $\Cat_1$ the $(2,1)$-category of categories and $\Cat_\infty$ the $(\infty,1)$-category of $(\infty,1)$-categories.
We have written down the obvious fully faithful inclusions between them making the square commute and all inclusions are reflective.
Their left adjoints, given by the $1$-categorical and the $\infty$-categorical version of groupoidification and the homotopy category.
If we are working with the model in which $(\infty,1)$-categories are quasi-categories and $\infty$-groupoids Kan complexes, then the inclusion $\Cat_1 \hookrightarrow \Cat_\infty$ is given by the nerve and the $\infty$-groupoidification $\| .\|\colon\Cat_\infty \to \Gpd_\infty$ by the geometric realisation.
The homotopy category of an $\infty$-groupoid represented by a Kan complex is given by its fundamental groupoid. 

We can realise the $\SKK$ group of a reversible category as the fundamental group of the geometric realisation of its nerve using the following lemma:

\begin{lemma}
\label{lem:groupoidfundamantalgroup}
Let $\mathcal{C}$ be a category with basepoint $1$.
Then $\End_{\hat{\mathcal{C}}}(1)$ is the fundamental group of $\| N \mathcal{C} \|$ at the basepoint $1 \in \mathcal{C}$. 
\end{lemma}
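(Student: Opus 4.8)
The plan is to identify $\|N\mathcal{C}\|$, the geometric realisation of the nerve of $\mathcal{C}$, with the classifying space of the groupoidification $\hat{\mathcal{C}}$, and then read off the fundamental group. The key input is the commutativity of the square in \cref{eq:infinitysquare}: since the inclusions $\Gpd_1 \hookrightarrow \Gpd_\infty$ and $\Cat_1 \hookrightarrow \Cat_\infty$ are both given by the nerve, and all the inclusions are reflective with left adjoints given by the various (1- and $\infty$-categorical) groupoidifications, the square of right adjoints commuting implies the corresponding square of left adjoints commutes as well. Concretely this says that for a $1$-category $\mathcal{C}$, the $\infty$-groupoidification $\|N\mathcal{C}\|$ of its nerve agrees with the nerve $N\hat{\mathcal{C}}$ of its $1$-categorical groupoidification, i.e.\ $\|N\mathcal{C}\| \simeq N\hat{\mathcal{C}}$ in $\Gpd_\infty$. (This is the standard fact that $B\mathcal{C}$ has the homotopy type of $B\hat{\mathcal{C}}$; one may instead cite this directly, e.g.\ via the universal property that $\|N(-)\|$ inverts all morphisms of $\mathcal{C}$, hence factors uniquely through $\hat{\mathcal{C}}$, and conversely the map $N\mathcal{C}\to N\hat{\mathcal{C}}$ realises to an equivalence.)

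First I would spell out the universal-property argument: $\|N(-)\|\colon \Cat_1 \to \Gpd_\infty$ is the composite of the nerve with $\infty$-groupoidification, and since $\hat{\mathcal{C}}$ is the initial groupoid under $\mathcal{C}$ while $\|N\mathcal{C}\|$ is the initial $\infty$-groupoid under $N\mathcal{C}$, and $N\hat{\mathcal{C}}$ is already an $\infty$-groupoid receiving a map from $N\mathcal{C}$, one gets a canonical map $\|N\mathcal{C}\| \to N\hat{\mathcal{C}}$. To see it is an equivalence, note both sides have the same set of path components (the isomorphism classes of objects of $\hat{\mathcal{C}}$) and that on each component one can compare fundamental groups; more cleanly, one cites the classical theorem (e.g.\ Gabriel--Zisman, or Quillen's Theorem A-type arguments) that the localisation map $B\mathcal{C} \to B(\mathcal{C}[\mathcal{C}^{-1}])$ is a weak equivalence when one inverts all morphisms.

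Given the homotopy equivalence $\|N\mathcal{C}\| \simeq N\hat{\mathcal{C}}$ respecting basepoints, the conclusion is immediate: for a groupoid $\mathcal{G}$ and object $x$, the classifying space $B\mathcal{G} = N\mathcal{G}$ (up to the standard identification of the realisation of the nerve with the classifying space) satisfies $\pi_1(B\mathcal{G}, x) \cong \operatorname{Aut}_{\mathcal{G}}(x) = \operatorname{End}_{\mathcal{G}}(x)$, since in a groupoid every endomorphism is an automorphism, and higher homotopy groups of $B\mathcal{G}$ vanish. Applying this with $\mathcal{G} = \hat{\mathcal{C}}$ and $x = 1$ yields $\pi_1(\|N\mathcal{C}\|, 1) \cong \operatorname{End}_{\hat{\mathcal{C}}}(1)$, which is the assertion of the lemma.

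The main obstacle, such as it is, is not conceptual but a matter of bookkeeping between the various models: making precise that ``$\|N(-)\|$'' on a $1$-category really does compute the $\infty$-groupoidification of the nerve (rather than, say, only the $1$-truncation), and that the commuting square of adjoints is set up so that the two routes from $\Cat_1$ to $\Gpd_\infty$ agree. I would handle this by invoking the already-given commutativity and reflectivity of \cref{eq:infinitysquare} rather than re-proving it, and by citing the standard identification $\pi_1(B\mathcal{G}) \cong \operatorname{Aut}_{\mathcal{G}}(x)$ for groupoids. No delicate estimate or construction is required; the lemma is essentially a translation of the diagram \cref{eq:infinitysquare} into a statement about $\pi_1$.
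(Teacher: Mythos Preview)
Your approach mirrors the paper's: use the square \cref{eq:infinitysquare} to identify $\|N\mathcal{C}\|$ with $N\hat{\mathcal{C}}$, then read off $\pi_1$. However, the intermediate claim $\|N\mathcal{C}\| \simeq N\hat{\mathcal{C}}$ in $\Gpd_\infty$ is \emph{false} in general: the nerve of a $1$-groupoid is always a $1$-type, whereas $\|N\mathcal{C}\| = B\mathcal{C}$ can have nontrivial higher homotopy (every homotopy type arises as $B\mathcal{C}$ for some small category, so take one with $B\mathcal{C} \simeq S^2$; then $\hat{\mathcal{C}}$ is equivalent to the trivial groupoid and $N\hat{\mathcal{C}} \simeq *$). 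Your justification --- ``the square of right adjoints commutes, hence so does the square of left adjoints'' --- is a correct principle, but the two routes you compare from $\Cat_1$ to $\Gpd_\infty$ each mix one left adjoint with one right adjoint (the nerve $N$ is a right adjoint in both paths), so uniqueness of adjoints does not force them to agree. The cited ``classical theorem'' that $B\mathcal{C} \to B(\mathcal{C}[\mathcal{C}^{-1}])$ is a weak equivalence when inverting \emph{all} morphisms is likewise not true without further hypotheses.

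The fix is small and stays within your framework. What \emph{does} follow from uniqueness of adjoints is that the two composites of left adjoints $\Cat_\infty \to \Gpd_1$ agree: $\pi_{\leq 1} \circ \|\cdot\| \simeq \widehat{(\cdot)} \circ \operatorname{ho}$. Precomposing with the fully faithful $N\colon \Cat_1 \hookrightarrow \Cat_\infty$ and using $\operatorname{ho} \circ N \simeq \id$ gives $\pi_{\leq 1}\|N\mathcal{C}\| \simeq \hat{\mathcal{C}}$ as $1$-groupoids. Since $\pi_1(X,x) = \End_{\pi_{\leq 1} X}(x)$ for any space $X$, this yields $\pi_1(\|N\mathcal{C}\|,1) \cong \End_{\hat{\mathcal{C}}}(1)$, which is the lemma. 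In short, only the $1$-truncation of $\|N\mathcal{C}\|$ agrees with $N\hat{\mathcal{C}}$, but that is exactly what is needed. (The paper's own one-line argument is phrased with the same imprecision about which diagonal of the square is being traversed; the substantive content is the identity $\pi_{\leq 1}\|N\mathcal{C}\| \simeq \hat{\mathcal{C}}$.)
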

\begin{proof}
First, note that $N(\hat{\mathcal{C}}) = \| N(\mathcal{C})\|$.
Indeed, going through square \ref{eq:infinitysquare} from the southwest to the northeast corner is independent of which of the two paths one takes by uniqueness of adjoints.   
    If $\mathcal{G} \in \Gpd_1$ is a groupoid with basepoint $1 \in \obj \mathcal{G}$, then $\End_{\mathcal{G}}(1)$ agrees with $\pi_1$ based at $1$ of $N\mathcal{G} \in \Gpd_\infty$.
It then also follows by the commutativity of the square \ref{eq:infinitysquare}.
\end{proof}

\begin{definition}
    If $\mathcal{C}$ is an $\infty$-category, we define 
    \[
    \SKK(\mathcal{C}) \coloneqq \SKK(\operatorname{ho} \mathcal{C}).
    \]
\end{definition}

\begin{lemma}
    Let $\mathcal{C}$ be an $\infty$-category such that $\operatorname{ho} \mathcal{C}$ is reversible at $1 \in \mathcal{C}$.
    Then 
    \[
    \SKK(\mathcal{C}) \cong \pi_1 (\|\mathcal{C}\|, 1).
    \]
\end{lemma}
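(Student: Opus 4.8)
The plan is to reduce the statement directly to the $1$-categorical results already proven, namely \cref{th:SKKispi1ofcob} and \cref{lem:groupoidfundamantalgroup}. The key observation is that every ingredient in the $\infty$-categorical statement is defined through its homotopy category: by definition $\SKK(\mathcal{C}) = \SKK(\operatorname{ho}\mathcal{C},1)$, so I only need to relate $\pi_1(\|\mathcal{C}\|,1)$ to $\pi_1(\|N(\operatorname{ho}\mathcal{C})\|,1)$.

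First I would recall that the geometric realisation (i.e.\ $\infty$-groupoidification) $\|\mathcal{C}\|$ of an $\infty$-category only depends on $\mathcal{C}$ through its homotopy category \emph{up to $\pi_1$}: the canonical functor $\mathcal{C}\to \operatorname{ho}\mathcal{C}$ (viewing the $1$-category $\operatorname{ho}\mathcal{C}$ as an $\infty$-category via the nerve) induces a map $\|\mathcal{C}\| \to \|N(\operatorname{ho}\mathcal{C})\|$, and this map is an isomorphism on $\pi_0$ and $\pi_1$. Indeed, $\|\mathcal{C}\|$ has the same set of path components as $\mathcal{C}$ (and hence as $\operatorname{ho}\mathcal{C}$), and $\pi_1$ of a simplicial set only depends on its $2$-skeleton data, which is exactly the data of composable pairs of morphisms and the composition relation — this is precisely what is retained in passing to $\operatorname{ho}\mathcal{C}$. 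Concretely, $\pi_1(\|\mathcal{C}\|,1)$ is computed from generators given by morphisms $1\to 1$ (or zigzags thereof) modulo relations coming from $2$-cells, and two morphisms bound a $2$-cell in $\mathcal{C}$ if and only if they become equal in $\operatorname{ho}\mathcal{C}$; so $\pi_1(\|\mathcal{C}\|,1)\cong \pi_1(\|N(\operatorname{ho}\mathcal{C})\|,1)$.

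Next, applying \cref{lem:groupoidfundamantalgroup} to the ($1$-)category $\operatorname{ho}\mathcal{C}$ with basepoint $1$ identifies $\pi_1(\|N(\operatorname{ho}\mathcal{C})\|,1)$ with $\End_{\widehat{\operatorname{ho}\mathcal{C}}}(1)$. By hypothesis $\operatorname{ho}\mathcal{C}$ is reversible at $1$ in the sense of \cref{def:reversible}, so \cref{th:SKKispi1ofcob} applies and gives $\SKK(\operatorname{ho}\mathcal{C},1)\cong \End_{\widehat{\operatorname{ho}\mathcal{C}}}(1)$. Chaining these isomorphisms yields
\[
\SKK(\mathcal{C}) = \SKK(\operatorname{ho}\mathcal{C},1) \cong \End_{\widehat{\operatorname{ho}\mathcal{C}}}(1) \cong \pi_1(\|N(\operatorname{ho}\mathcal{C})\|,1) \cong \pi_1(\|\mathcal{C}\|,1),
\]
which is the claim.

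The main obstacle is justifying rigorously that $\|\mathcal{C}\|\to\|N(\operatorname{ho}\mathcal{C})\|$ is a $\pi_1$-isomorphism; while morally clear from the $2$-skeleton argument above, a clean proof is cleanest via the commutative square \cref{eq:infinitysquare}: the homotopy category functor $\operatorname{ho}\colon\Cat_\infty\to\Cat_1$ and the truncation $\pi_{\leq 1}\colon\Gpd_\infty\to\Gpd_1$ are compatible with geometric realisation, so $\pi_{\leq 1}\|\mathcal{C}\| \simeq \pi_{\leq 1}\|N(\operatorname{ho}\mathcal{C})\|$ as fundamental groupoids, hence the same $\pi_1$ at $1$. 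One should also note that reversibility of $\operatorname{ho}\mathcal{C}$ at $1$ is exactly the hypothesis needed to invoke \cref{th:SKKispi1ofcob}, so no further assumptions on $\mathcal{C}$ are required. I would present this as a short three-line proof citing \cref{lem:groupoidfundamantalgroup}, \cref{th:SKKispi1ofcob}, and the compatibility of $\operatorname{ho}$ with $\|\cdot\|$ at the level of fundamental groupoids.
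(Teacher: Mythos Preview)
Your proposal is correct and follows essentially the same approach as the paper: both reduce to \cref{th:SKKispi1ofcob} and \cref{lem:groupoidfundamantalgroup}, and both justify the key identification $\pi_{\leq 1}\|\mathcal{C}\|\simeq \widehat{\operatorname{ho}\mathcal{C}}$ by the uniqueness of left adjoints in the square \eqref{eq:infinitysquare}. The only difference is the order of exposition; your final ``three-line proof'' is exactly what the paper writes.
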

\begin{proof}
    By \cref{th:SKKispi1ofcob}, it suffices to show that
    \[
    \pi_1 (\|\mathcal{C}\|, 1) = \End_{\widehat{\operatorname{ho} \mathcal{C}}}(1).
    \]
    It follows by \cref{lem:groupoidfundamantalgroup} that 
    \[
    \End_{\widehat{\operatorname{ho} \mathcal{C}}}(1) = \pi_1( \|N(\widehat{\operatorname{ho} \mathcal{C}})\|, 1).
    \]
    Note that going through square \ref{eq:infinitysquare} from the southeast to the northwest corner is independent of which of the two paths one takes, because left adjoints are unique.
    It follows that $\pi_{\leq 1}( \|\mathcal{C}\|) = \pi_{\leq 1}(\|N( \widehat{\operatorname{ho} \mathcal{C}})\|)$.
\end{proof}

We have shown in \cref{cor:bordreversible} that if $\xi_n\colon B_{n} \to BO_n$ admits a single stabilisation, then $\Cob^{\xi_n}_{n-1,n}$ is a reversible category at every basepoint.
In particular, we recover the original result of \cite{bokstedtcomputations}:

\begin{corollary}
    Let $\xi\colon B_{n+1} \to BO_{n+1}$ be a tangential structure.
    Then $$\pi_1 (\|\Bord^\xi_{n-1,n}\|) \cong \pi_1 (\|N \Cob^\xi_{n-1,n}\|) 
    \cong \SKK(\Cob^\xi_{n-1,n}) = \SKK^\xi_n.$$
\end{corollary}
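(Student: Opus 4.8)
The plan is to deduce this corollary by chaining together the two lemmas on $\SKK$-groups of $(\infty\text{-})$categories proved just above with \cref{cor:bordreversible}; the sole non-formal input required is the identification $\operatorname{ho}\Bord^\xi_{n-1,n}\simeq\Cob^\xi_{n-1,n}$ between the homotopy category of the topological (or $(\infty,1)$-categorical) bordism category and the naive $1$-categorical bordism category of \cref{def:cob_cat}.

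First I would use that $\xi\colon B_{n+1}\to BO_{n+1}$ restricts to a once-stabilised structure $\xi_n$, so that \cref{cor:bordreversible} shows the pointed category $(\Cob^\xi_{n-1,n},\varnothing)$ is reversible. Hence \cref{th:SKKispi1ofcob} (together with \cref{lem:groupoidfundamantalgroup}, identifying $\|{-}\|$ with $\|N({-})\|$) yields
\[
\SKK(\Cob^\xi_{n-1,n},\varnothing)\;\cong\;\End_{\widehat{\Cob^\xi_{n-1,n}}}(\varnothing)\;=\;\pi_1\bigl(\|N\Cob^\xi_{n-1,n}\|,\varnothing\bigr),
\]
which is the middle isomorphism of the corollary. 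Moreover, by the discussion following \cref{def:chimaeraRelations} (and \cref{prop:SecondDefOfTheSKKRelation}), the chimaera relations on $\End_{\Cob^\xi_{n-1,n}}(\varnothing)$ are precisely the alternative \ref{SKKrelations}, so that $\SKK(\Cob^\xi_{n-1,n},\varnothing)=\SKK^\xi_n$ by definition of the $\SKK^\xi_n$-group; this is the right-hand equality.

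It then remains to produce the left-hand isomorphism $\pi_1(\|\Bord^\xi_{n-1,n}\|)\cong\pi_1(\|N\Cob^\xi_{n-1,n}\|)$. For this I would first record that $\operatorname{ho}\Bord^\xi_{n-1,n}\simeq\Cob^\xi_{n-1,n}$, as anticipated in \cref{sec:highercats}: taking $\pi_0$ on the morphism spaces of the topological bordism category (e.g.\ \cite{GMTW,Steimle21}; for the $(\infty,1)$-categorical models cf.\ \cite{calaquescheimbauer,schommerpriesinvertible}) replaces the space of $\xi$-bordisms between fixed boundary data by its set of path components, which — by contractibility of the relevant spaces of embeddings into $\R^\infty\times[0,1]$ and of boundary collars — is exactly the set of $\xi$-bordisms up to $\xi$-diffeomorphism relative boundary of \cref{def:cob_cat}, with composition matching. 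In particular $\operatorname{ho}\Bord^\xi_{n-1,n}$ is reversible at $\varnothing$, so the $\infty$-categorical lemma above applies with $\mathcal{C}=\Bord^\xi_{n-1,n}$ and gives $\pi_1(\|\Bord^\xi_{n-1,n}\|)\cong\SKK(\Bord^\xi_{n-1,n})$. By definition $\SKK(\Bord^\xi_{n-1,n})\coloneqq\SKK(\operatorname{ho}\Bord^\xi_{n-1,n})=\SKK(\Cob^\xi_{n-1,n})$, and chaining with the isomorphisms of the previous paragraph finishes the proof.

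The main (indeed only non-formal) obstacle is the identification $\operatorname{ho}\Bord^\xi_{n-1,n}\simeq\Cob^\xi_{n-1,n}$: one must check that $\pi_0$ of the mapping spaces of whichever model of the topological/$\infty$-categorical bordism category one adopts genuinely recovers diffeomorphism-relative-boundary classes of $\xi$-bordisms and is compatible with composition. This is well known but model-dependent; I would either cite it from \cite{GMTW,Steimle21,calaquescheimbauer,schommerpriesinvertible} or, in the simplest topological-category model, reduce it to the contractibility of the pertinent embedding spaces and of the space of collars — the same contractibility that underlies the discussion in \cref{sec:highercats} and the recovery of \cite{bokstedtcomputations}.
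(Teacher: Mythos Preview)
Your proposal is correct and follows essentially the same approach as the paper: the paper states the corollary immediately after noting that \cref{cor:bordreversible} gives reversibility of $\Cob^\xi_{n-1,n}$, treating the chain of isomorphisms as an immediate consequence of the preceding two lemmas (\cref{th:SKKispi1ofcob} and the $\infty$-categorical lemma) together with the identification $\operatorname{ho}\Bord^\xi_{n-1,n}\simeq\Cob^\xi_{n-1,n}$ already alluded to in \cref{subsec:Genauer} and \cref{sec:highercats}. Your write-up is in fact more explicit than the paper's, which gives no proof beyond the preceding sentence.
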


\begin{remark}
    The higher homotopy groups of $\|\Bord^\xi_{n-1,n}\|$ and  $\|N \Cob^\xi_{n-1,n}\|$ are in general different.
\end{remark}

\bibliography{biblio.bib}{}

\newcommand{\etalchar}[1]{$^{#1}$}
\providecommand{\bysame}{\leavevmode\hbox to3em{\hrulefill}\thinspace}
\providecommand{\MR}{\relax\ifhmode\unskip\space\fi MR }
\providecommand{\MRhref}[2]{%
  \href{http://www.ams.org/mathscinet-getitem?mr=#1}{#2}
}
\providecommand{\href}[2]{#2}
\begin{thebibliography}{GMTW09}

\bibitem[ABP69]{anderson1969pin}
Donald Anderson, Edgar Brown, and Franklin Peterson, \emph{Pin cobordism and
  related topics}, Commentarii Mathematici Helvetici \textbf{44} (1969),
  462--468.

\bibitem[AM21]{spinh}
Michael Albanese and Aleksandar Milivojevi{\'c}, \emph{Spin$^h$ and further
  generalisations of spin}, Journal of Geometry and Physics \textbf{164}
  (2021), 104--174.

\bibitem[Ati88]{atiyahtft}
Michael Atiyah, \emph{Topological quantum field theory}, Publications
  Mathématiques de l'Institut des Hautes \'Etudes Scientifiques \textbf{68}
  (1988), 175--186.

\bibitem[AZ97]{altlandzirnbauer}
Alexander Altland and Martin Zirnbauer, \emph{Nonstandard symmetry classes in
  mesoscopic normal-superconducting hybrid structures}, Physical Review B
  \textbf{55} (1997), no.~2, 1142.

\bibitem[Bae20]{baez2020tenfold}
John Baez, \emph{The tenfold way}, 2020, ArXiv preprint:
  \url{https://arxiv.org/abs/2011.14234}.

\bibitem[BC18]{beaudrycampbell}
Agn{\`e}s Beaudry and Jonathan~A Campbell, \emph{A guide for computing stable
  homotopy groups}, Topology and quantum theory in interaction \textbf{718}
  (2018), 89--136.

\bibitem[BDS15]{bokstedtcomputations}
Marcel B{\"o}kstedt, Johan Dupont, and Anne~Marie Svane, \emph{Cobordism
  obstructions to independent vector fields}, The Quarterly Journal of
  Mathematics \textbf{66} (2015), no.~1, 13--61.

\bibitem[BG87]{bahrigilkey}
Anthony Bahri and Peter Gilkey, \emph{The eta invariant, ${\mathrm{pin}^c}$
  bordism, and equivariant ${\mathrm{spin}^c}$ bordism for cyclic 2-groups},
  Pacific journal of mathematics \textbf{128} (1987), no.~1, 1--24.

\bibitem[BS14]{bokstedtsvane}
Marcel B{\"o}kstedt and Anne~Marie Svane, \emph{A geometric interpretation of
  the homotopy groups of the cobordism category}, Algebraic \& Geometric
  Topology \textbf{14} (2014), no.~3, 1649--1676.

\bibitem[CL21]{chang2021exotic}
Chi-Ming Chang and Ying-Hsuan Lin, \emph{On exotic consistent anomalies in
  ${(1+ 1)d}$: A ghost story}, SciPost Physics \textbf{10} (2021), no.~5, 119.

\bibitem[CS19]{calaquescheimbauer}
Damien Calaque and Claudia Scheimbauer, \emph{A note on the $(\infty,
  n)$-category of cobordisms}, Algebraic \& Geometric Topology \textbf{19}
  (2019), no.~2, 533--655.

\bibitem[DHH11]{douglas2011homological}
Christopher Douglas, Andr{\'e} Henriques, and Michael Hill, \emph{Homological
  obstructions to string orientations}, International Mathematics Research
  Notices \textbf{2011} (2011), no.~18, 4074--4088.

\bibitem[DYY23]{debray2023bosonizationanomalyindicators21d}
Arun Debray, Weicheng Ye, and Matthew Yu, \emph{Bosonization and anomaly
  indicators of (2+1)-d fermionic topological orders}, 2023, arXiv preprint:
  \url{https://arxiv.org/abs/2312.13341}.

\bibitem[Ebe13]{ebert}
Johannes Ebert, \emph{A vanishing theorem for characteristic classes of
  odd-dimensional manifold bundles}, Journal f{\"u}r die reine und angewandte
  Mathematik (Crelles Journal) \textbf{2013} (2013), no.~684, 1--29.

\bibitem[FH21]{freedhopkins}
Daniel Freed and Michael Hopkins, \emph{Reflection positivity and invertible
  topological phases}, Geometry \& Topology \textbf{25} (2021), no.~3,
  1165--1330.

\bibitem[FHJF{\etalchar{+}}24]{higherdagger}
Giovanni Ferrer, Brett Hungar, Theo Johnson-Freyd, Cameron Krulewski, Lukas
  Müller, Nivedita, David Penneys, David Reutter, Claudia Scheimbauer, Luuk
  Stehouwer, and Chetan Vuppulury, \emph{Dagger $n$-categories}, 2024, arXiv
  preprint: \url{https://arxiv.org/abs/2403.01651}.

\bibitem[FM06]{freed2006setting}
Daniel~S Freed and Gregory~W Moore, \emph{Setting the quantum integrand of
  m-theory}, Communications in mathematical physics \textbf{263} (2006), no.~1,
  89--132.

\bibitem[Fre14]{freed2014anomalies}
Daniel Freed, \emph{Anomalies and invertible field theories}, Proc. Symp. Pure
  Math, vol.~88, 2014, pp.~25--46.

\bibitem[Fre19]{freed2019lectures}
\bysame, \emph{Lectures on field theory and topology}, vol. 133, American
  Mathematical Soc., 2019.

\bibitem[Gen12]{genauer}
Josh Genauer, \emph{Cobordism categories of manifolds with corners},
  Transactions of the American Mathematical Society \textbf{364} (2012), no.~1,
  519--550.

\bibitem[Gia73]{giambalvo1973pin}
Vince Giambalvo, \emph{Pin and {P}in' cobordism}, Proceedings of the American
  Mathematical Society \textbf{39} (1973), no.~2, 395--401.

\bibitem[GJ12]{glimmjaffe}
James Glimm and Arthur Jaffe, \emph{Quantum physics: a functional integral
  point of view}, Springer Science \& Business Media, 2012.

\bibitem[GMTW09]{GMTW}
S{\o}ren Galatius, Ib~Madsen, Ulrike Tillmann, and Michael Weiss, \emph{The
  homotopy type of the cobordism category}, Acta mathematica \textbf{202}
  (2009), no.~2, 195--239.

\bibitem[GPW18]{guo2018time}
Meng Guo, Pavel Putrov, and Juven Wang, \emph{Time reversal, su (n) yang--mills
  and cobordisms: Interacting topological superconductors/insulators and
  quantum spin liquids in 3+ 1d}, Annals of Physics \textbf{394} (2018),
  244--293.

\bibitem[Hat05]{hatcher}
Allen Hatcher, \emph{Algebraic topology}, Cambridge University Press, 2005,
  Available at \url{https://pi.math.cornell.edu/~hatcher/AT/ATpage.html}.

\bibitem[Hoe18]{ReneeEulerChar}
Renee~S. Hoekzema, \emph{Manifolds with odd euler characteristic and higher
  orientability}, International Mathematics Research Notices (2018),
  4496–4511.

\bibitem[Hoe20]{ReneeRosenfeld}
Renee~S. Hoekzema, \emph{Orientability of high-dimensional manifolds with odd
  {E}uler characteristic}, 2020, arXiv preprint:
  \url{https://arxiv.org/abs/2007.05451}.

\bibitem[HSV]{PinPaper}
Renee~S Hoekzema, Luuk Stehouwer, and Simona Vesel\'a, \emph{{The Euler
  characteristic of $\Pin^{\pm}$ manifolds}}, To appear.

\bibitem[HTY22]{TachikawaYonekura}
Chang-Tse Hsieh, Yuji Tachikawa, and Kazuya Yonekura, \emph{Anomaly inflow and
  p-form gauge theories}, Communications in Mathematical Physics \textbf{391}
  (2022), no.~2, 495--608.

\bibitem[JO12]{johnsonosorno}
Niles Johnson and Ang{\'e}lica Osorno, \emph{Modeling stable one-types}, Theory
  and Applications of Categories \textbf{26} (2012), no.~20, 520--537.

\bibitem[JT13]{juer2013localisations}
R.~Juer and U.~Tillmann, \emph{Localisations of cobordism categories and
  invertible {TFT}s in dimension two}, Homology, Homotopy and Applications
  \textbf{15} (2013), no.~2, 195–225.

\bibitem[Ker57]{Kervaire57}
Michel Kervaire, \emph{Relative characteristic classes}, American Journal of
  Mathematics \textbf{79} (1957), no.~3, 517--558.

\bibitem[Kit09]{kitaev2009periodic}
Alexei Kitaev, \emph{Periodic table for topological insulators and
  superconductors}, AIP conference proceedings, vol. 1134, American Institute
  of Physics, 2009, pp.~22--30.

\bibitem[KKNO73]{skbook}
Ulrich Karras, Matthias Kreck, Walter Neumann, and Erich Ossa, \emph{Cutting
  and pasting of manifolds; {SK}-groups}, Mathematics lecture series, no.~1,
  Publish or Perish, Incorporated, 1973.

\bibitem[Koc04]{kock2004frobenius}
Joachim Kock, \emph{Frobenius algebras and 2-d topological quantum field
  theories}, no.~59, Cambridge University Press, 2004.

\bibitem[KST]{KST}
Matthias Kreck, Stephan Stolz, and Peter Teichner, \emph{Invertible field
  theories are {SKK}-manifold invariants part 1,2 and 3}, unpublished.

\bibitem[KSUS19]{kawabata2019symmetry}
Kohei Kawabata, Ken Shiozaki, Masahito Ueda, and Masatoshi Sato, \emph{Symmetry
  and topology in non-hermitian physics}, Physical Review X \textbf{9} (2019),
  no.~4, 041015.

\bibitem[KT90a]{kirbyTaylorPinPlus}
Robion Kirby and Laurence Taylor, \emph{A calculation of {$Pin^+$} bordism
  groups}, Commentarii mathematici Helvetici \textbf{65} (1990), no.~3,
  434--447.

\bibitem[KT90b]{kirbytaylor}
\bysame, \emph{Pin structures on low-dimensional manifolds}, Geometry of
  low-dimensional manifolds \textbf{2} (1990), 177--242.

\bibitem[KT17]{kapustinturzillo}
Anton Kapustin and Alex Turzillo, \emph{Equivariant topological quantum field
  theory and symmetry protected topological phases}, Journal of High Energy
  Physics \textbf{2017} (2017), no.~3, 1--20.

\bibitem[LM16]{lawsonmichelsohn}
Blaine Lawson and Marie-Louise Michelsohn, \emph{Spin geometry}, vol.~38,
  Princeton university press, 2016.

\bibitem[LMP69]{Lusztig69}
George Lusztig, John Milnor, and Franklin Peterson, \emph{Semi-characteristics
  and cobordism}, Topology \textbf{8} (1969), no.~4, 357--359.

\bibitem[LOZ23]{Li_2023}
Linhao Li, Masaki Oshikawa, and Yunqin Zheng, \emph{Noninvertible duality
  transformation between symmetry-protected topological and spontaneous
  symmetry breaking phases}, Physical Review B \textbf{108} (2023), no.~21, 19.

\bibitem[Lur08]{luriecobordismhyp}
Jacob Lurie, \emph{On the classification of topological field theories},
  Current developments in mathematics \textbf{2008} (2008), no.~1, 129--280.

\bibitem[Mil65]{milnorhcobordism}
John Milnor, \emph{Lectures on the h-cobordism theorem}, vol. 2258, Princeton
  university press, 1965.

\bibitem[Mon15]{monnieranomalies}
Samuel Monnier, \emph{Hamiltonian anomalies from extended field theories},
  Communications in Mathematical Physics \textbf{338} (2015), 1327--1361.

\bibitem[Moo15]{mooretenfold}
Gregory~W Moore, \emph{Quantum symmetries and {K}-theory}, Lecture notes, 2015,
  Lecture notes can be found at
  \url{https://www.physics.rutgers.edu/~gmoore/PiTP-LecturesA.pdf}.

\bibitem[MS74]{milnorstasheff}
John Milnor and James Stasheff, \emph{Characteristic classes}, no.~76,
  Princeton university press, 1974.

\bibitem[MS24]{luukasreflection}
Lukas Muller and Luuk Stehouwer, \emph{Reflection structures and spin
  statistics in low dimensions}, Reviews in Mathematical Physics (2024), 130.

\bibitem[Ngu17]{nguyen2017infinite}
Hoang~Kim Nguyen, \emph{On the infinite loop space structure of the cobordism
  category}, Algebraic \& Geometric Topology \textbf{17} (2017), no.~2,
  1021--1040.

\bibitem[OS23]{nonhermitian}
Nobuyuki Okuma and Masatoshi Sato, \emph{Non-hermitian topological phenomena: A
  review}, Annual Review of Condensed Matter Physics \textbf{14} (2023), no.~1,
  83--107.

\bibitem[Qui95]{quinn1995lectures}
Frank Quinn, \emph{Lectures on axiomatic topological quantum field theory},
  Geometry and quantum field theory (Park City, UT, 1991) \textbf{1} (1995),
  323--453.

\bibitem[RS22]{Carmenitft}
Carmen Rovi and Matthew Schoenbauer, \emph{Relating cut and paste invariants
  and {TQFT}s}, The Quarterly Journal of Mathematics \textbf{73} (2022), no.~2,
  579--607.

\bibitem[RSP22]{reutterschommerpries}
David Reutter and Christopher Schommer-Pries, \emph{Semisimple field theories
  detect stable diffeomorphism}, 2022, arXiv preprint:
  \url{https://arxiv.org/abs/2206.10031}.

\bibitem[RW14]{oscarpin}
Oscar Randal-Williams, \emph{Homology of the moduli spaces and mapping class
  groups of framed, r-spin and pin surfaces}, Journal of Topology \textbf{7}
  (2014), no.~1, 155--186.

\bibitem[Sha23]{shao2023s}
Shu-Heng Shao, \emph{What's done cannot be undone: Tasi lectures on
  non-invertible symmetries}, arXiv preprint arXiv:2308.00747 (2023), 95.

\bibitem[S{\'i}n82]{sinhthesis}
Ho{\`a}ng~Xu{\^a}n S{\'i}nh, \emph{Cat{\'e}gories de {P}icard restreintes},
  Ph.D. thesis, University of Toulouse, 1982, pp.~117--122.

\bibitem[SP24]{schommerpriesinvertible}
Christopher Schommer-Pries, \emph{Invertible topological field theories},
  Journal of Topology \textbf{17} (2024), no.~2, 1--64.

\bibitem[SSGR18]{shiozaki2018many}
Ken Shiozaki, Hassan Shapourian, Kiyonori Gomi, and Shinsei Ryu,
  \emph{Many-body topological invariants for fermionic short-range entangled
  topological phases protected by antiunitary symmetries}, Physical Review B
  \textbf{98} (2018), no.~3, 035151.

\bibitem[SSS09]{fivebrane09}
Hisham Sati, Urs Schreiber, and Jim Stasheff, \emph{Fivebrane structures},
  Reviews in Mathematical Physics \textbf{21} (2009), no.~10, 1197--1240.

\bibitem[Ste21]{Steimle21}
Wolfgang Steimle, \emph{{An additivity theorem for cobordism categories}},
  Algebraic \& Geometric Topology \textbf{21} (2021), no.~2, 601 -- 646.

\bibitem[Ste22]{stehouwermorita}
Luuk Stehouwer, \emph{Interacting {SPT} phases are not {M}orita invariant},
  Letters in Mathematical Physics \textbf{112} (2022), no.~3, 64.

\bibitem[Ste24]{luukthesis}
\bysame, \emph{Unitary fermionic topological field theory}, Ph.D. thesis,
  Universit{\"a}ts-und Landesbibliothek Bonn, 2024.

\bibitem[Sto63]{Stong63}
Robert Stong, \emph{Determination of ${H}^*({BO}(k,...,\infty), \mathbb{Z}_2)$
  and ${H}^*({BU}(k,..., \infty),\mathbb{Z}_2)$}, Transactions of the American
  Mathematical Society \textbf{107} (1963), no.~3, 526--544.

\bibitem[Sto76]{Stong76}
\bysame, \emph{Torsion in manifolds and the semicharacteristic}, Indiana
  University Mathematics Journal \textbf{25} (1976), no.~10, 989--993.

\bibitem[Sze23]{lorant}
L{\'o}r{\'a}nt Szegedy, \emph{On invertible 2-dimensional framed and r-spin
  topological field theories}, Homology, Homotopy and Applications \textbf{25}
  (2023), no.~1, 105--126.

\bibitem[Tei92]{petethesis}
Peter Teichner, \emph{Topological four-manifolds with finite fundamental
  group}, Shaker Verlag, 1992.

\bibitem[Til96]{tillmann1996classifying}
Ulrike Tillmann, \emph{The classifying space of the 1 + 1 dimensional cobordism
  category.}, Journal {f\"ur} die reine und angewandte Mathematik \textbf{479}
  (1996), 67--76.

\bibitem[TV17]{turaev}
Vladimir Turaev and Alexis Virelizier, \emph{Monoidal categories and
  topological field theory}, Progress in Mathematics, vol. 322, Springer, 2017.

\bibitem[Wal64]{wallgradedbrauer}
Charles T~C Wall, \emph{Graded {B}rauer groups}, Journal f\"ur die reine und
  angewandte Mathematik \textbf{213} (1964), 187--199.

\bibitem[Yon19]{yonekura}
Kazuya Yonekura, \emph{On the cobordism classification of symmetry protected
  topological phases}, Communications in Mathematical Physics \textbf{368}
  (2019), no.~3, 1121--1173.

\end{thebibliography}
\bibliographystyle{amsalpha}

\end{document}